\newtheorem{dfn}{Definition}[section]
\newtheorem{thm}[dfn]{Theorem}
\newtheorem{prop}[dfn]{Proposition}
\newtheorem{lem}[dfn]{Lemma}
\newtheorem{cor}[dfn]{Corollary}
\newtheorem{rem}[dfn]{Remark}
\newtheorem{car}[dfn]{Computer Assisted Result}
\newtheorem{ass}[dfn]{Assumption}
\newtheorem{nota}[dfn]{Notations}
\newcommand{\Inv}{{\rm Inv}}
\newcommand{\Int}{{\rm int}}
\newcommand{\re}{{\rm Re}}
\newcommand{\diag}{{\rm diag}}
\newcommand{\diam}{{\rm diam}}
\newcommand{\dist}{{\rm dist}}
\newcommand{\exit}{{\rm exit}}
\newcommand{\ent}{{\rm ent}}
\newcommand{\dep}{{\rm dep}}
\newcommand{\arr}{{\rm arr}}
\numberwithin{equation}{section}
\begin{document}

\title{Rigorous numerics for fast-slow systems with one-dimensional slow variable: topological shadowing approach}

\author{Kaname Matsue \thanks{The Institute of Statistical Mathematics, Tachikawa, 190-8562, Tokyo, Japan ({\tt kmatsue@ism.ac.jp})}
}
\lhead{Kaname Matsue}
\rhead{Rigorous Numerics for Fast-Slow Systems}
\maketitle

\begin{abstract}
We provide a rigorous numerical computation method to validate periodic, homoclinic and heteroclinic orbits as the continuation of singular limit orbits for the fast-slow system
\begin{equation*}
\begin{cases}
x' = f(x,y,\epsilon), & \\
y' =\epsilon g(x,y,\epsilon) &
\end{cases}
\end{equation*}
with one-dimensional slow variable $y$. 
Our validation procedure is based on topological tools called isolating blocks, cone condition and covering relations.
Such tools provide us with existence theorems of global orbits which shadow singular orbits in terms of a new concept, the covering-exchange. 
Additional techniques called slow shadowing and $m$-cones are also developed. These techniques give us not only generalized topological verification theorems, but also easy implementations for validating trajectories near slow manifolds in a wide range, via rigorous numerics.
Our procedure is available to validate global orbits not only for sufficiently small $\epsilon > 0$ but all $\epsilon$ in a given half-open interval $(0,\epsilon_0]$. 
Several sample verification examples are shown as a demonstration of applicability.
\end{abstract}

{\bf Keywords:} singular perturbations, periodic orbits, connecting orbits, rigorous numerics, covering-exchange.

\bigskip
{\bf AMS subject classifications : } 34E15, 37B25, 37C29, 37C50, 37D10, 65L11

%
%
\section{Introduction}
\label{section-intro}

%
%
\subsection{Background of problems and our aims}
In this paper, we consider the dynamical system in $\mathbb{R}^n \times \mathbb{R}$ of the following form:
\begin{equation}
\label{fast-slow}
\begin{cases}
x' = f(x,y,\epsilon), & \\
y' =\epsilon g(x,y,\epsilon), &
\end{cases}
\end{equation}
where $' = d/dt$ is the time derivative and $f,g$ are $C^r$-functions with $r\geq 1$. 
The factor $\epsilon$ is a nonnegative but sufficiently small real number. 
We shall write (\ref{fast-slow}) as (\ref{fast-slow})$_\epsilon$ if we explicitly represent the $\epsilon$-dependence of the system.
The system (\ref{fast-slow}) can be reformulated with a change of time-scale variable as 
\begin{equation}
\label{slow-fast}
\begin{cases}
\epsilon \dot x = f(x,y,\epsilon), & \\
\dot y = g(x,y,\epsilon), &
\end{cases}
\end{equation}
where $\dot{} = d/d\tau$ and $\tau = t/\epsilon$. 
One tries to analyze the dynamics of (\ref{fast-slow}), 
equivalently (\ref{slow-fast}), by suitably combining the dynamics of the {\em layer problem}
\begin{equation}
\label{layer}
\begin{cases}
x' = f(x,y,0), & \\
y' =0, &
\end{cases}
\end{equation}
and the dynamics of the {\em reduced problem}
\begin{equation}
\label{reduced}
\begin{cases}
0 = f(x,y,0), & \\
\dot y = g(x,y,0), &
\end{cases}
\end{equation}
which are the limiting problems for $\epsilon = 0$ on the fast and the slow time scale, respectively. 
Notice that (\ref{reduced}) makes sense {\em only on $f(x,y,0)=0$}, while (\ref{layer}) makes sense in whole $\mathbb{R}^{n+1}$
 as the $y$-parameter family of $x$-systems. 
 The meaning of the \lq\lq $\epsilon\to 0$-limit" is thus different between (\ref{fast-slow}) and (\ref{slow-fast}). 
 This is why (\ref{fast-slow}) or (\ref{slow-fast}) is a kind of {\em singular perturbation problems}. In particular, (\ref{fast-slow}) or (\ref{slow-fast})
  is known as {\em fast-slow systems} (or {\em slow-fast systems}), 
  where $x$ dominates the behavior in the fast time scale and $y$ dominates the behavior in the slow time scale.

When we study the dynamical system of the form (\ref{fast-slow}), 
we often consider limit systems (\ref{layer}) and (\ref{reduced}) independently at first. 
Then ones try to match them in an appropriate way to obtain trajectories for the full system (\ref{fast-slow}).
One of major methods for completely solving singularly perturbed systems like (\ref{fast-slow}) 
is the {\em geometric singular perturbation theory} formulated by Fenichel \cite{F}, Jones-Kopell \cite{JK}, Szmolyan \cite{S} and many researchers. 
A series of theories are established so that formally constructed singular limit orbits 
of (\ref{layer}) and (\ref{reduced}) can perturb to true orbits of (\ref{fast-slow}) for sufficiently small $\epsilon > 0$.
In geometric singular perturbation theory, there are mainly two key points to consider.
One is the description of slow dynamics for sufficiently small $\epsilon$ near the nullcline $\{(x,y)\mid f(x,y,0)=0\}$.
The other is the matching of fast and slow dynamics. 
As for the former, Fenichel \cite{F} provided the {\em Invariant Manifold Theorem} for describing the dynamics 
on and around locally invariant manifolds, called slow manifolds, for sufficiently small $\epsilon > 0$. 
Such manifolds can be realized as the perturbation of normally hyperbolic invariant manifolds at $\epsilon = 0$, 
which are often given by submanifolds of nullcline in (\ref{layer}).
As for the latter, Jones and Kopell \cite{JK} originally formulated the geometric answer 
for the matching problem deriving {\em Exchange Lemma}. This lemma informs that 
the manifold configuration upon exit from the neighborhood of slow manifolds under the assumption of transversal intersection 
between tracking invariant manifolds and the stable manifold of slow manifolds. 
Afterwards, Exchange Lemma has been extended in various directions, e.g. \cite{JKK, L, TKJ}.
Combining these terminologies, one can prove the existence of homoclinic or heteroclinic orbits of invariant sets near singular orbits for sufficiently small $\epsilon > 0$. 
There are also topological ways to prove their existence for sufficiently small $\epsilon$ provided by, say, Carpenter \cite{C}, Gardner-Smoller \cite{GS} and so on,  by using algebraic-topological concepts such as the mapping degree or the Conley index \cite{Con, Mis}.
Such topological approaches also mention the existence of periodic orbits near singular orbits.
\par
On the other hand, all such mathematical results do not give us {\it how large such sufficiently small $\epsilon$ is}. 
In other words, it remains an open problem whether there exist global orbits  given by the continuation of singular orbits
for a given $\epsilon$. This intrinsic problem has been mentioned in many discussions (e.g. \cite{Jones1984}).
From the viewpoint of numerical computations, if $\epsilon > 0$ is sufficiently small, 
(\ref{fast-slow}) becomes the stiff problem and numerically unstable.
Although the effective method for computing slow manifolds is provided by Guchenheimer and Kuehn \cite{GK}, 
computations for extremely small $\epsilon$ (e.g. close to machine epsilon) is still hard to operate correctly. 
These circumstances show that there are gaps between mathematical results (i.e. sufficiently small $\epsilon$)
 and numerical observations (i.e. given $\epsilon$) 
 for completely understanding dynamics of fast-slow systems.
 These are mainly because there are no estimations to measure mathematically rigorous consequences 
 not only quantitatively but also qualitatively. 
 The construction of procedures which bridge mathematical results and numerical observations 
 is necessary to completely understand phenomena in concrete dynamical systems, 
 which is also the case of singular perturbation problems.

\bigskip
Our main aim in this paper is to provide implementations 
for validating the continuation of various global orbits of (\ref{fast-slow}) 
for all $\epsilon \in [0,\epsilon_0]$ rigorously, where $\epsilon_0 > 0$ is a {\em given} number. 
In other words, we provide a method to validate
\begin{description}
\item[{ (Main 1)}] the singular limit orbit $H_0$ for (\ref{fast-slow}) with $\epsilon = 0$, as well as
\item[{ (Main 2)}] global orbits $H_\epsilon$ near $H_0$ for all $\epsilon \in (0,\epsilon_0]$ for a given $\epsilon_0 > 0$.
\end{description}
Singular limit orbit means the union of several heteroclinic orbits in (\ref{layer}) and the submanifolds of the nullcline $\{f(x,y,0) = 0\}$. 
Global orbits mean homoclinic, heteroclinic orbits of invariant sets and periodic orbits.

\bigskip
To this end, we provide the notion of {\em covering-exchange}, which is a topological analogue of Exchange Lemma. 
This concept consists of the following topological notions with suitable assumptions: (i) isolating blocks, (ii) cone conditions and (iii) covering relations. 
These three notions have been already applied to validations of global orbits in dynamical systems very well, as stated in Section \ref{section-preceding}. 
The covering-exchange is constructed by those notions, and informs us
\begin{itemize}
\item the existence of slow manifolds for (\ref{fast-slow}) as well as normally hyperbolic invariant manifolds for (\ref{layer}), and
\item the existence of trajectories not only which converge to invariant sets on slow manifolds but also which exits neighborhoods of slow manifolds after time $T=O(1/\epsilon)$.
\end{itemize}
These properties and the general consequence of covering relations yield the existence of global orbits.
We also generalize the covering-exchange by introducing the additional notion of {\em slow shadowing}, which guarantees the local existence of trajectories which shadow slow manifolds with nonlinear structure. 
This notion enables us to trace trajectories which not only tend to slow manifolds but also stay near slow manifolds for time $O(1/\epsilon)$ from topological viewpoints.
This concept is also very compatible with numerical computations, in particular, for validating trajectories near slow manifolds.

The other main tool to establish our procedure is the assistance of {\em rigorous numerics}, 
namely, computations of enclosures where mathematically correct objects are contained in the phase space. 
All such computations can be realized by {\em interval arithmetics} and mathematical error estimates. 
Combination of the covering-exchange, slow shadowing and rigorous numerics in reasonable processes
provide us with a method proving {\bf (Main 1)} and {\bf (Main 2)} simultaneously.

\par
Note that there are two approaches to consider singular perturbation problems; one is the continuation of structures from the singular limit systems ($\epsilon = 0$) to the full systems (i.e. $\epsilon > 0$), and the other is the consideration of full systems to the singular limit $\epsilon\to 0$.
Our attitude is the former.
In particular, we consider our problems via topological approach on the basis of geometric singular perturbation theory.

\bigskip
This paper is organized as follows. 
In Section \ref{section-preliminary}, we briefly review Fenichel's invariant manifold theorem and topological notions called covering relations and isolating blocks.
A systematic procedure of isolating blocks for validations of invariant manifolds with computer assistance (e.g. \cite{ZM, Mat}) is also discussed.
\par
In Section \ref{section-inv-mfd}, we show how slow manifolds can be validated in given regions with an explicit range $[0,\epsilon_0]$ of $\epsilon$. 
One sees that our fundamental arguments are basically followed by the proof in Jones' article \cite{Jones}.
Such arguments can be validated via the construction of isolating blocks
and singular perturbation problems' version of {\it cone conditions} (cf. \cite{ZCov}) and Lyapunov condition \cite{Mat}. 

In Section \ref{section-slow-dynamics}, we discuss treatments of slow dynamics.
First we introduce the new notion called the {\em covering-exchange} 
for describing the behavior of trajectories around slow manifolds (Section \ref{section-exchange}). 
This concept is a topological analogue of Exchange Lemma 
so that we can reasonably validate tracking invariant manifolds near slow manifolds in a suitable sense. 
This concept also solves the matching problem between fast and slow dynamics.
We also provide a generalization of the covering-exchange; a collection of local behavior near slow manifolds called
{\em slow shadowing}, {\em drop} and {\em jump} (Section \ref{section-show-shadowing}). 
These concepts enable us to construct true trajectories in full system which shadow ones on slow manifolds in reasonable ways via rigorous numerics.
Furthermore, we provide a slight extension of cones, called {\em $m$-cones}, which enables us to sharpen enclosures of stable and unstable manifolds of normally hyperbolic invariant manifolds (Section \ref{section-m-cones}).
The main idea itself is just a slight modification of cone conditions stated in Section \ref{section-inv-mfd}.  
But this technique gives us a lot of benefits in many scenes incorporating with rigorous numerics. 
On the other hand, dynamics on slow manifolds should be considered when slow manifolds exhibit the nontrivial dynamics 
such as fixed points, periodic orbits, homoclinic orbits, etc. 
As an example, we discuss validations of nontrivial fixed points on slow manifolds (Section \ref{section-inv-set-on-mfd}).
In the end of Section \ref{section-slow-dynamics}, we discuss unstable manifolds of invariant sets on slow manifolds (Section \ref{section-fiber-unstable}). 
To deal with these manifolds, we discuss the invariant foliations of slow manifolds, and translate this fiber bundle structure into the terms of cones and covering relations.
This is one of key considerations of heteroclinic orbits in (\ref{fast-slow})$_\epsilon$.

In Section \ref{section-existence}, the existence theorems for periodic and heteroclinic orbits near singular limit orbits with an explicit range $[0,\epsilon_0]$ of $\epsilon$ are presented.

\bigskip
As demonstrations of our proposing implementations, we study the FitzHugh-Nagumo equation: 
\begin{equation}
\label{FN-intro}
\begin{cases}
u' = v & \text{}\\
v' = \delta^{-1}(cv - f(u) + w) & \\
w' = \epsilon c^{-1}(u-\gamma w),
\end{cases}
\end{equation}
where $f(u) = u(u-a)(1-u)$, $a\in (0,1/2)$ and $c,\gamma, \delta > 0$.

The existence of global orbits such as periodic or homoclinic orbits for sufficiently small $\epsilon$ are widely discussed by many authors (e.g. \cite{C, GS}). Note that, as we mentioned before, the existence of those orbits {\em for a given $\epsilon$} remains an open question.
Our proposing ideas lead a road to answer this question.

\begin{car}[Existence of homoclinic orbits]
Consider (\ref{FN-intro}) with $a=0.3$, $\gamma = 10.0$ and $\delta = 9.0$. Then for all $c \in [0.799,0.801]$, there exist the following two kinds of trajectories: 
\begin{enumerate}
\item At $\epsilon = 0$, a singular heteroclinic chain $H_0$ consisting of 
\begin{itemize}
\item heteroclinic orbit from the equilibrium $p_0$ near $(0, 0, 0)\in \mathbb{R}^3$ to the equilibrium $q_1$ near $(1, 0, 0)\in \mathbb{R}^3$,
\item heteroclinic orbit from the equilibrium $p_1$ near $(0.870020061, 0, 0.06362)\in \mathbb{R}^3$ to the equilibrium $q_0$ near $(-0.12966517, 0, 0.06335)\in \mathbb{R}^3$, and
\item two branches of nullcline $\{(u,v,w)\mid v = 0, f(u)=w\}$ connecting two heteroclinic orbits.
\end{itemize}
\item For all $\epsilon \in (0,5.0\times 10^{-5}]$, a homoclinic orbit $H_\epsilon$ of $p^\epsilon \approx p_0$ near $H_0$.
\end{enumerate}
\end{car}
The precise statements and other sample validation results are shown in Section \ref{section-examples}.
Throughout the rest of this paper we make the following assumption, which is essential to our whole discussions herein.
\begin{ass}
\label{ass-paper}
Vector fields $f$ and $g$ have the following form: 
\begin{align*}
f(x,y,\epsilon) &= f_0(x,y) + \sum_{i=1}^m \epsilon^i f_i(x,y) + o(\epsilon^m),\quad f_0\not \equiv 0,\\
g(x,y,\epsilon) &= g_0(x,y) + \sum_{i=1}^m \epsilon^i g_i(x,y) + o(\epsilon^m),\quad g_0\not \equiv 0.
\end{align*}
\end{ass}

General dynamical systems depend on parameters. 
For example, the FitzHugh-Nagumo system (\ref{FN-intro}) contains $a, c, \gamma, \delta$ as parameters. 
Throughout this paper we do not care about parameter dependence of dynamical systems unless otherwise specified.

%
%
\subsection{Several preceding works related to global trajectories and singular perturbation problems with rigorous numerics}
\label{section-preceding}
There are many preceding works for the existence of global orbits with rigorous numerics for {\it regular} dynamical systems, namely, $g\equiv 0$ case. For example, Wilczak and Zgliczy\'{n}ski \cite{WZ} apply the topological tool called covering relations to the existence of various type of trajectories in dynamical systems, such as periodic orbits, homoclinic orbits and heteroclinic orbits.
The essence of covering relations is to describe behavior of rectangular-like sets called $h$-sets and apply the mapping degree to the existence of solutions. One of powerful properties of covering relations is that every $h$-sets can be used as a joint of trajectories and that we can validate various complicating behavior of dynamical systems. Indeed, for example, \cite{W} and \cite{W2} by Wilczak validate various type of complex trajectories such as Shi'lnikov homoclinic solutions, heteroclinic solutions and infinitely many periodic solutions in concrete systems (e.g. Michelson system or R\"{o}ssler system).

On the other hand, van der Berg et. al. \cite{BMLM} produce the other approach of rigorous numerical computations of connecting orbits using {\em radii polynomials} and {\em parametrization} technique. Their main idea is to reduce the original problem to a projected boundary value problem in an infinite dimensional functional space via a fixed point argument. Their formulation involves a higher order parametrization of invariant manifolds near equilibria for describing stable and unstable manifolds. Their approach is free from integrations of vector fields.
Hence one can validate various additional properties of invariant manifolds without any knowledges of the existence of trajectories \cite{CL}.

\bigskip
As for rigorous numerics for singular perturbation problems, Gameiro et. al. \cite{GGKKMO} provide a validation method combined with the algebraic-topological singular perturbation analysis. Such analysis is knows as the Conley index theory \cite{Con, Mis}. They actually apply its singular perturbation version \cite{GKMOR, GKMO} to the singularly perturbed predetor-prey model with two slow variables. As a result, they prove the existence of topological horseshoe, in particular, infinite number of periodic orbits with computer assistance. Their results  show, however, the existence of solutions for only sufficiently small $\epsilon$ and hence the bound of $\epsilon > 0$ where the existence result holds is not given. When we apply the Conley index technique, it is necessary to provide an appropriate neighborhood of desiring orbits whose boundary {\em transversally} intersects the vector field defined by the full system (\ref{fast-slow}).
\par
On the other hand, Guckenheimer et. al. \cite{GJM2012} discusses rigorous enclosures of slow manifolds with computer assistance within explicit ranges of $\epsilon$. They introduce a concept of computational slow manifolds related to slow manifolds in geometric singular perturbation theory and succeed validations of slow manifolds in their settings with in explicit ranges of $\epsilon$, while validated ranges of $\epsilon$ are bounded away from $\epsilon = 0$, say, $\epsilon\in [10^{-6}, 10^{-2}]$.
Note that \cite{GJM2012} also discusses validations of tangential bifurcations of slow manifolds for all $\epsilon \in (0,\epsilon_0]$, where $\epsilon_0$ is a given number, say, $10^{-3}$.
One of the other works is a very recent one by Arioli and Koch \cite{AK2015}, which studies the existence and the stability of traveling pulse solutions for the (singularly perturbed) FitzHugh-Nagumo system.
This study, however, focuses only for the parameter range which is {\em not so small}, say, $\epsilon\approx 0.1$ or $0.001$.
In other words, the singular perturbation structure is ignored.

\bigskip
In the case of singular perturbation problems, direct computations of global orbits without any ideas are not practical in various scenes both in the rigorous and in the non-rigorous sense. 

Direct applications of such preceding works without any modifications would yield the failure of operations if we try to cover not only $\epsilon$ which is not so small (e.g. $\epsilon = 10^{-5}$ or $10^{-6}$) but also extremely small $\epsilon$, possibly smaller than machine epsilon.
This failure is because either the stiffness of problems or the effect of fast dynamics. 
Even if it succeeds, there would be huge computation costs due to very slow behavior around slow manifolds. 
Of course, there is a matching problem connecting fast and slow behavior, which is generally arisen in singular perturbation problems. 
If we can overcome all such difficulties as simple as possible, the scope of applications of preceding concepts will dramatically extend.

%
%
\section{Preliminaries}
\label{section-preliminary}
When we consider a fast-slow system (\ref{fast-slow}) from the viewpoint of geometric singular perturbation theory following Fenichel (e.g. \cite{F}), the central issue is {\em normally hyperbolic invariant manifolds}.
Fenichel's theory tells us very rich structure of normally hyperbolic invariant manifolds consisting of equilibria and their small perturbations.
Such structures are fully applied to our arguments. 
In the beginning of this section, we review several results about normally hyperbolic invariant manifolds for fast-slow systems.
\par
Our main methodologies to consider fast-slow systems are well-known topological tools called {\em covering relations}, {\em isolating blocks} and {\em cone conditions}.
These tools well describe behavior of solution sets as well as their asymptotic behavior. 
In successive sections we see that these tools work well even for fast-slow systems.
In this section, we also review two of such tools. 
We also provide a procedure of isolating blocks suitable for fast-slow systems so that they validate slow manifolds, which are available to various systems with computer assistance.
Cone conditions for singular perturbation problems are stated later.
\par
Finally note that readers who are familiar with these topics can skip this section.

%
%
\subsection{Fenichel's Invariant Manifold Theorems : review}
\label{section-Fenichel}
Here we briefly review the {\em Fenichel's Invariant Manifold Theorems} (e.g. \cite{F}), following arguments in \cite{Jones}.
The central goal of these results is the description of flow near the set $S_0 = \{(x,y,0)\mid f(x,y,0)=0\}$ with manifold structures: the critical manifold.
The critical manifold $S_0$ can be considered as the $y$-parameter family of equilibria of the layer problem (\ref{layer}). 
Under appropriate hypotheses, $S_0$ can be represented by the graph of a function $x = h(y)$ for $y\in K$, where $K\subset \mathbb{R}^l$ is a compact, simply connected set.

A central assumption among Fenichel's theory is the {\em normal hyperbolicity} and the graph representation of  $S_0$.
\begin{description}
\item[(F)] The set $S_0$ is given by the graph of the $C^\infty$ function $h^0(y)$ for $y\in K$, where the set $K$ is a compact, simply connected domain whose boundary is an $(l-1)$-dimensional $C^\infty$ submanifold. Moreover, assume that $S_0$ is normally hyperbolic. 
\end{description}

\begin{rem}[cf. \cite{Jones}]\rm
Recall that the manifold $S_0$ is {\em normally hyperbolic} if the linearization of (\ref{layer}) at each point in $S_0$ has exactly $l$ eigenvalues on the imaginary axis.

A set $M$ is said to be {\em locally invariant under the flow generated by (\ref{fast-slow})} if it has a neighborhood $V$ of $M$ so that no trajectory can leave $M$ without also leaving $V$. In other words, it is locally invariant if for all $x\in M$, then $\varphi ([0,t],x) \subset V$ implies that $\varphi( [0,t], x)\subset M$, similarly with $[0,t]$ replaced by $[t,0]$ when $t < 0$, where $\varphi$ is a flow.
\end{rem}

Without the loss of generality, we can assume that $h^0(y) = 0$ for all $y\in K$.
Then, there exist the stable and unstable eigenspaces, $S(y) $ and $U(y)$, such that $\dim S(y) = s$ and $\dim U(y) = u$ hold for all $y\in K$. 
With this in mind, we take the transformation $x(\in \mathbb{R}^n) \mapsto (a,b)\in \mathbb{R}^{u + s}$ so that (\ref{fast-slow}) is expressed by 
\begin{equation}
\label{abstract-form0}
\begin{cases}
a' = A(y)a + F_1(x,y,\epsilon) & \\
b' = B(y)b + F_2(x,y,\epsilon) & \\
y' = \epsilon g(x,y,\epsilon) & 
\end{cases}.
\end{equation}
Here $A(y)$ denotes the $u\times u$ matrix which all eigenvalues have positive real part and $B(y)$ denotes the $s\times s$ matrix which all eigenvalues have negative real part. $F_1$ and $F_2$ denotes the higher order term which admit a positive number $\gamma > 0$ satisfying 
\begin{equation*}
|F_i| \leq \gamma(|x| + \epsilon)\quad \text{ as }|x|\to 0.
\end{equation*}
More precise assumptions for $A(y)$ and $B(y)$ are as follows: there exist the quantities $\lambda_A > 0$ and $\mu_B < 0$ such that
\begin{align}
&\lambda_A < {\rm Re}\lambda\quad \text{ for all }\quad \lambda\in {\rm Spec}(A(y))\text{ and }y\in K,\\
&\mu_B > {\rm Re}\lambda\quad \text{ for all }\quad \lambda\in {\rm Spec}(B(y))\text{ and }y\in K.
\end{align}

The key consideration of slow manifolds is the following Fenichel's invariant manifold theorem in terms of graph representations.
\begin{prop}[Persistence of invariant manifolds. cf. \cite{Jones}]
\label{prop-Fen1}
Under the assumption (F), for sufficiently small $\epsilon > 0$, there is a function $x = h^\epsilon(y)$ defined on $K$. 
The graph $S_\epsilon = \{(x,y)\mid x=h^\epsilon(y)\}$ is locally invariant under (\ref{fast-slow}). Moreover $h^\epsilon$ is $C^r$, for any $r < +\infty$, jointly in $y$ and $\epsilon$.
\end{prop}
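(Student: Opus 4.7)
The plan is the classical graph-transform / Perron integral-equation proof of Fenichel's persistence theorem, adapted to the slow-fast normal form (\ref{abstract-form0}). I would work in slow time $\tau = \epsilon t$, in which $(a,b)$ are the singularly perturbed fast variables and $y$ moves at rate $O(1)$. Since (F) only guarantees normal hyperbolicity on the compact domain $K$, the first step is to cut off the nonlinearities $F_1, F_2$ and the slow field $g$ using a smooth bump function supported in a thin tubular neighborhood $\mathcal{N}$ of $S_0|_K = K\times\{0\}$. Outside $\mathcal{N}$ the system reduces to the linear normally hyperbolic system $a' = A(y)a,\ b' = B(y)b$, extended trivially to all of $\mathbb{R}^{u+s+l}$. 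Any invariant graph of the modified system that lies inside the region where the cut-off is inactive will automatically be locally invariant for the original system.

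Next, set up a Banach space $\mathcal{H}$ of Lipschitz sections $h : K \to \mathbb{R}^{u+s}$ with $\|h\|_\infty \leq \delta$ and $\mathrm{Lip}(h) \leq L$ for constants $\delta, L$ fixed below, and define a graph transform $\mathcal{T}_\epsilon : \mathcal{H} \to \mathcal{H}$ through a Perron integral equation. The key observation is that a trajectory of the cut-off system that stays bounded for all $\tau$ is forced by the spectral signs to obey
\[
a(\tau) = \int_{-\infty}^\tau \Phi_A(\tau, s) F_1 \, ds, \qquad b(\tau) = -\int_\tau^{+\infty} \Phi_B(\tau, s) F_2 \, ds,
\]
where $\Phi_A, \Phi_B$ are the linear propagators of $a' = A(y(\cdot))a$ and $b' = B(y(\cdot))b$ along the slow trajectory through a target point $y_1 \in K$. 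Substituting $(a(s),b(s)) = h(y(s))$ into the right-hand side and evaluating at $\tau = 0$ yields $(\mathcal{T}_\epsilon h)(y_1)$.

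Then I would use the spectral gaps $\lambda_A > 0$, $\mu_B < 0$ together with $|F_i| \leq \gamma(|x|+\epsilon)$ to show that $\mathcal{T}_\epsilon$ is a uniform contraction on $\mathcal{H}$ in the $C^0$ norm. Banach's fixed-point theorem then produces a unique Lipschitz invariant graph $h^\epsilon$, and local invariance for the original flow follows from $\|h^\epsilon\|_\infty \leq \delta$ so that $h^\epsilon(K) \subset \mathcal{N}$. To upgrade from Lipschitz to $C^r$ smoothness jointly in $y$ and $\epsilon$, I would apply a fiber contraction argument on the formal derivative of $\mathcal{T}_\epsilon$ acting on candidate $r$-jets, treating $\epsilon$ as an additional state variable satisfying $\epsilon' = 0$; iterating yields a $C^r$ fixed point for each finite $r$.

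The main obstacle will be the uniformity of the $C^0$-contraction estimate as $\epsilon \to 0$. The propagators contain factors like $e^{\lambda_A(\tau-s)/\epsilon}$ and $e^{\mu_B(\tau-s)/\epsilon}$, which make the Perron integrals converge rapidly but can also amplify small errors accumulated over slow-time intervals of order one (that is, fast-time intervals of length $O(1/\epsilon)$). To close the estimate one must carefully order the constants: first fix $\delta, L$ small enough to absorb the nonlinear contribution $\gamma(|x|+\epsilon)$ into a fraction of the spectral gap, then choose $\epsilon_0 > 0$ small enough that the slow variation of $A(y), B(y)$ along a sweep through $K$ and the $\epsilon$-part of the bound on $F_i$ are dominated by the remaining gap. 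Once the $C^0$ contraction is established, the $C^r$-step is standard but computationally tedious.
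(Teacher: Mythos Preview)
Your Lyapunov--Perron approach is a valid and standard route to Fenichel persistence, but it is not what the paper does. The paper does not prove Proposition~\ref{prop-Fen1} directly; it is quoted as a review result, with the one-line justification that it ``is just a consequence of the second Invariant Manifold Theorem'' (Proposition~\ref{prop-Fen2}), i.e.\ one takes $S_\epsilon = W^s(S_\epsilon)\cap W^u(S_\epsilon)$. The substantive argument is deferred to Theorem~\ref{thm-inv-mfd-rigorous} and follows Jones's geometric scheme rather than an integral-equation fixed point: one constructs $W^s(S_\epsilon)$ and $W^u(S_\epsilon)$ separately by (i) modifying the slow field near $\partial K$ to make the block inflowing (resp.\ overflowing), (ii) applying the Wazewski retract principle on a fast-saddle-type block to produce at least one forward-bounded point over each $(b,y,\eta)$, and (iii) using a moving-cone comparison (Proposition~\ref{prop-cone}) to get uniqueness and the Lipschitz bound. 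Your contraction-mapping argument builds $S_\epsilon$ in a single shot and is arguably cleaner analytically; the block-and-cone route is preferred here because it outputs explicit, numerically checkable inequalities (the cone conditions of Definition~\ref{dfn-cone}), which is precisely what the downstream rigorous-numerics program needs.

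One technical slip in your Perron formulas: since ${\rm Spec}\,A(y)$ sits in the open right half-plane and ${\rm Spec}\,B(y)$ in the left, the bounded-for-all-time representation has the opposite limits, namely
\[
a(\tau)=-\int_\tau^{+\infty}\Phi_A(\tau,s)\,\epsilon^{-1}F_1\,ds,\qquad
b(\tau)=\int_{-\infty}^{\tau}\Phi_B(\tau,s)\,\epsilon^{-1}F_2\,ds.
\]
As you wrote them the integrals diverge. This does not affect the overall strategy, only the bookkeeping.
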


Proposition \ref{prop-Fen1} is just a consequence of the second Invariant Manifold Theorem as follows.

\begin{prop}[Stable and unstable manifolds. cf. \cite{Jones}]
\label{prop-Fen2}
Under the assumption (F), for sufficiently small $\epsilon > 0$, then for some $\Delta > 0$,
\begin{enumerate}
\item there is a function $a = h_s(b,y,\epsilon)$ defined on $\{(b,y,\epsilon)\mid |b|\leq \Delta, y\in K\}$, such that the graph $W^s(S_\epsilon) = \{(a,b,y)\mid a = h_s(b,y,\epsilon)\}$ is locally invariant under (\ref{fast-slow}). 
Moreover, $a = h_s(b,y,\epsilon)$ is $C^r$, for any $r < +\infty$, jointly in $y$ and $\epsilon$.
\item there is a function $b = h_u(s,y,\epsilon)$ defined on $\{(a,y,\epsilon)\mid |a|\leq \Delta, y\in K\}$, such that the graph $W^u(S_\epsilon) = \{(a,b,y)\mid b = h_u(a,y,\epsilon)\}$ is locally invariant under (\ref{fast-slow}). 
Moreover, $b = h_u(a,y,\epsilon)$ is $C^r$, for any $r < +\infty$, jointly in $y$ and $\epsilon$.
\end{enumerate}
\end{prop}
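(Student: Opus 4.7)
The plan is to prove existence of $W^s(S_\epsilon)$ by the classical Lyapunov--Perron fixed point argument; the statement for $W^u(S_\epsilon)$ then follows by running time backwards, under which the roles of $A(y)$ and $B(y)$ are interchanged.

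First I would localize. Choose a smooth cut-off equal to $1$ on $\{|a|,|b|\leq \Delta,\ \dist(y,K)\leq \Delta\}$ and vanishing outside a slightly larger neighborhood, and multiply $F_1$, $F_2$ and $g$ by it. This produces a globally defined modified system that agrees with (\ref{abstract-form0}) on the region of interest and whose nonlinearities have Lipschitz constants that can be made arbitrarily small by shrinking $\Delta$ and $\epsilon$. Any bounded trajectory of the modified system lying in the uncut region is then a trajectory of the original one.

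Next I characterize points of $W^s(S_\epsilon)$ as those initial data admitting a forward orbit that stays $O(\Delta)$-close to $S_\epsilon$ for all $t\geq 0$. Variation of constants along the time-dependent linear parts shows that such an orbit $(a(t),b(t),y(t))$ through $(a_0,b_0,y_0)$ at $t=0$ must satisfy
\begin{align*}
a(t) &= -\int_t^{\infty} \Phi_A(t,s)\, F_1(a(s),b(s),y(s),\epsilon)\, ds, \\
b(t) &= \Phi_B(t,0)\, b_0 + \int_0^t \Phi_B(t,s)\, F_2(a(s),b(s),y(s),\epsilon)\, ds, \\
y(t) &= y_0 + \epsilon \int_0^t g(a(s),b(s),y(s),\epsilon)\, ds,
\end{align*}
where $\Phi_A(t,s),\Phi_B(t,s)$ are the principal fundamental matrices of $\xi'=A(y(t))\xi$ and $\eta'=B(y(t))\eta$. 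The first line encodes the requirement that the unstable component not blow up, and its value at $t=0$ forces $a_0 = -\int_0^{\infty}\Phi_A(0,s) F_1\, ds$, which is the desired graph $a_0 = h_s(b_0,y_0,\epsilon)$. I would then run a contraction-mapping argument in the Banach space of continuous $(a,b,y)$ on $[0,\infty)$ satisfying $|a(t)|\leq \Delta e^{-\lambda_A t/2}$, $|b(t)|\leq 2\Delta$, and $y(t)$ inside a small neighborhood of $y_0$, equipped with a weighted supremum norm. The spectral bounds on $A(y),B(y)$ make both improper integrals converge uniformly, and smallness of the Lipschitz constants of $F_1,F_2,\epsilon g$ yields uniform contraction.

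The fixed point provides a continuous $h_s(b_0,y_0,\epsilon)$ whose graph is forward-invariant, hence locally invariant in the uncut region. Promotion to $C^r$ regularity jointly in $(b,y,\epsilon)$ proceeds by the fiber-contraction theorem: one writes the formal equation for the derivative of the fixed point, shows it is itself a contraction on a companion function space of candidate derivatives, and bootstraps through $r$ derivatives. The main obstacle I foresee is the slow-drift coupling: because $y'=\epsilon g$, the fundamental matrices $\Phi_A,\Phi_B$ depend on the unknown $y(t)$, so one must secure spectral bounds $\lambda_A$ and $|\mu_B|$ uniformly along every admissible trajectory staying near $K$, and keep every contraction constant explicitly controlled in $\epsilon$ so that a single $\epsilon_0>0$ threshold works. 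Compactness of $K$ and continuity of $A(\cdot),B(\cdot)$ in hypothesis (F) are precisely what makes this uniformity attainable.
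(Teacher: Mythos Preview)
Your Lyapunov--Perron argument is a valid and classical route to Fenichel's second theorem, but it is genuinely different from what the paper does. Proposition~\ref{prop-Fen2} is quoted without its own proof (it is a review result from \cite{Jones}), but the paper essentially reproves it in Theorem~\ref{thm-inv-mfd-rigorous} following Jones's topological approach: existence of a point over each $(b,y,\eta)$ is obtained from the Wazewski Principle and the No-Retract Theorem applied to an isolating block with a modified slow flow; uniqueness and Lipschitz continuity come from a moving-cone argument (Proposition~\ref{prop-cone}), showing that the quadratic form $M(t)=|\Delta a|^2-|\Delta\zeta|^2$ is strictly increasing whenever $M(t)=0$; smoothness is then inherited from the standard arguments in \cite{Jones}. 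There is no integral-equation fixed point and no fundamental-matrix machinery at all.

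The two methods buy different things. Your approach packages existence, uniqueness and regularity into a single contraction, and handles smoothness cleanly via fiber contraction; its cost is the bookkeeping around $y$-dependent fundamental solutions $\Phi_A,\Phi_B$ and the weighted norm, which you correctly flag as the delicate point. The paper's Wazewski/cone route is more robust topologically and, crucially for its purposes, reduces everything to explicit, pointwise-verifiable inequalities on singular values of Jacobian submatrices (Assumptions~\ref{ass-cone-unstable}--\ref{ass-cone-stable}). That computability is the whole reason the paper chooses this method: the same cone inequalities that prove the abstract theorem can be checked with interval arithmetic over a concrete block, yielding the manifold for an \emph{explicit} range $\epsilon\in[0,\epsilon_0]$ rather than merely ``sufficiently small'' $\epsilon$.
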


Fenichel's theorems, Propositions \ref{prop-Fen1} and \ref{prop-Fen2}, insist that normally hyperbolic invariant manifolds as well as their stable and unstable manifolds persist to locally invariant manifolds in the full system (\ref{fast-slow}) for sufficiently small $\epsilon > 0$. 
In other words, slow manifolds can be realized by the $\epsilon$-continuation of normally hyperbolic critical manifolds in the layer problem (\ref{layer}).
The perturbed manifold $S_\epsilon$ for $\epsilon > 0$ is called {\em a slow manifold}.
One of strategies for constructing such manifolds is the construction of a family of {\em isolating blocks} and {\em moving cones} (\cite{Jones}). 
Isolating blocks describe the behavior of vector fields which are transversal to their boundaries, which are well discussed in the Conley index theory \cite{Con, Mis}.
Isolating blocks are reviewed in Section \ref{section-isolatingblock}.

\bigskip
Fenichel's invariant manifold theory informs us not only the existence of perturbed slow manifolds but also invariant foliations of $W^u(S_\epsilon)$ and $W^u(S_\epsilon)$. More precisely, the following result (the third Invariant Manifold Theorem) is known.

\begin{prop}[Fenichel fibering, cf. Theorem 6, 7 in \cite{Jones}]
\label{prop-Fen3}
Under the assumption (F), for sufficiently small $\epsilon > 0$ the following statements hold. For each $v = v_\epsilon = (\hat y, \epsilon)\in S_\epsilon$,
\begin{enumerate}
\item there is a function $(a, y) = h^v_s(b)$ for $|b|\leq \Delta$ sufficiently small so that the graph
\begin{equation*}
W^s(v) = \{(a,b,y,\epsilon)\mid (a, y) = h^v_s(b)\} \subset W^s(S_\epsilon)
\end{equation*}
forms a locally invariant manifold in the sense that
$\varphi_{\epsilon,N}(t, W^s(v)) \subset W^s(\varphi_\epsilon(t,v))$ holds if $\varphi_\epsilon(s,v)\in N$ for all $s\in [0,t]$.
Here the set $\varphi_{\epsilon,N}(t, A)$ denotes the forward evolution of a set $A$ restricted to $N$ given by
\begin{equation*}
\varphi_{\epsilon,N}(t, A) = \{\varphi_\epsilon(t,u)\mid u\in A\text{ and }\varphi_\epsilon([0,t],u)\subset N\}.
\end{equation*}
Moreover, $(a, y) = h^v_s(b)$ is $C^r$ in $v$ and $\epsilon$ jointly for any $r<\infty$.
\item there is a function $(b, y) = h^v_u(a)$ for $|b|\leq \Delta$ sufficiently small so that the graph
\begin{equation*}
W^u(v) = \{(a,b,y,\epsilon)\mid (a, y) = h^v_s(b)\} \subset W^u(S_\epsilon)
\end{equation*}
forms a locally invariant manifold in the sense that
$\varphi_{\epsilon,N}(t, W^u(v)) \subset W^u(\varphi_\epsilon(t,v))$ holds if $\varphi_\epsilon(s,v)\in N$ for all $s\in [t,0]$.
Moreover, $(b, y) = h^v_u(a)$ is $C^r$ in $v$ and $\epsilon$ jointly for any $r<\infty$.
\end{enumerate}
\end{prop}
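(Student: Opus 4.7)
The plan is to prove Proposition \ref{prop-Fen3} by the standard Hadamard graph-transform technique, treating the two items as dual cases related by time reversal; I would focus on the stable fiber $W^s(v)$ and obtain the unstable fiber $W^u(v)$ by applying the same argument to the backward flow of (\ref{abstract-form0}). First I would straighten the geometry supplied by Propositions \ref{prop-Fen1} and \ref{prop-Fen2}: by a $C^r$ change of variables depending $C^r$-smoothly on $\epsilon$, I may assume in a neighborhood $N$ of the critical set that $S_\epsilon = \{a=0,\, b=0\}$, that $W^s(S_\epsilon) = \{a=0\}$, and that $W^u(S_\epsilon) = \{b=0\}$. In these rectified coordinates the flow restricted to $W^s(S_\epsilon)$ takes the form
\begin{align*}
b' &= B(y)\,b + \tilde F_2(b, y, \epsilon), \\
y' &= \epsilon\, \tilde g(b, y, \epsilon),
\end{align*}
with $\tilde F_2$ vanishing to second order along $\{b=0\}\cup\{\epsilon=0\}$. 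Dynamically, the stable fiber through $v = (0, \hat y, \epsilon)$ should be the set of initial conditions in $W^s(S_\epsilon)$ whose forward orbit converges to $\varphi_\epsilon(t, v)$ at an exponential rate strictly faster than any rate achievable by the slow motion along $S_\epsilon$.

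Next I would recast invariance as a fixed-point problem. Seek the fiber as a graph $(a, y) = h^v_s(b) = (0, \eta^v(b))$ with $\eta^v(0) = \hat y$, and require the family $\{\eta^v\}_{v\in S_\epsilon}$ to satisfy the functional equation obtained by pushing the graph through the time-$T$ flow, reparametrizing by the new $b$-coordinate, and demanding that the image coincide with $\eta^{\varphi_\epsilon(T, v)}$. Define a graph transform $\mathcal{T}$ on the Banach space of uniformly Lipschitz sections $v\mapsto\eta^v$ with Lipschitz constant in $b$ bounded by a small prescribed number. The contraction estimate for $\mathcal{T}$ rests on the spectral gap: the $b$-direction contracts at a rate bounded above by $\mu_B < 0$ in the sense of (2.2), whereas the transverse motion along $S_\epsilon$ has speed at most $C\epsilon$, so once $\epsilon$ is small enough that $C\epsilon/|\mu_B| < 1$, the transform becomes a strict contraction on an appropriate $C^0$-Lipschitz ball. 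Its unique fixed point is the desired fiber family, and the local invariance statement $\varphi_{\epsilon,N}(t, W^s(v))\subset W^s(\varphi_\epsilon(t,v))$ is built into the construction.

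To obtain $C^r$ joint regularity in $v$ and $\epsilon$ for any finite $r$, I would differentiate the fixed-point equation $r$ times and run the contraction in a space of $C^r$ sections; the required sharper gap condition is roughly $rC\epsilon < |\mu_B|$, which for any prescribed $r < \infty$ holds once $\epsilon > 0$ is sufficiently small, matching the statement of the proposition. Smoothness in $\epsilon$ down to $\epsilon = 0$ is obtained by regarding $\epsilon$ as an extra parameter together with $v$ and applying the standard smooth-parameter-dependence version of the contraction principle to the preparation system.

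The main obstacle I expect is controlling this regularity uniformly as $\epsilon \downarrow 0$. The variational equations along the $y' = \epsilon g$ direction are singular in the fast time scale, so naive Gronwall bounds on derivatives deteriorate as $t = O(1/\epsilon)$; the remedy is to split the estimate, using slow time $\tau = \epsilon t$ to track the base-point motion on $S_\epsilon$ and fast time $t$ to exploit the contraction in the $b$-direction, and to verify that each higher-order derivative of $\mathcal{T}$ inherits the same $\epsilon$-independent contraction factor before one multiplies by $r$. Once these bookkeeping issues are settled, the characterization of $W^s(v)$ as an exponentially fast decaying stable set of $\varphi_\epsilon(t,v)$ ensures the fibers indeed foliate $W^s(S_\epsilon)$, completing the proof; the unstable statement follows verbatim from the dual argument applied to $-t$.
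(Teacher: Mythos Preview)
The paper does not supply its own proof of Proposition~\ref{prop-Fen3}: it is stated as a review result in Section~\ref{section-Fenichel}, with an explicit citation to Theorems~6 and~7 of Jones' lecture notes, and is used later only as background for Corollary~\ref{cor-fiber-rigorous} (which is the paper's own quantitative fiber statement under its cone conditions). So there is no in-paper proof to compare against.

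That said, your sketch is the standard Hadamard graph-transform argument and is essentially the route taken in the cited reference. The key ingredients---straightening $S_\epsilon$, $W^s(S_\epsilon)$, $W^u(S_\epsilon)$ via Propositions~\ref{prop-Fen1}--\ref{prop-Fen2}, setting up a graph transform on Lipschitz sections $v\mapsto\eta^v$, and extracting contraction from the spectral gap $|\mu_B|$ versus the $O(\epsilon)$ tangential drift---are all correct and in the right order. Your remark that the $C^r$ gap condition $rC\epsilon < |\mu_B|$ is satisfied for any fixed $r$ once $\epsilon$ is small enough is exactly the mechanism behind the ``$C^r$ for any $r<\infty$'' conclusion. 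The one place to be careful, which you flag yourself, is the uniformity of the $C^r$ estimates as $\epsilon\downarrow 0$: the clean way to handle this is to append $\epsilon'=0$ (or $\eta'=0$ as the paper does in (\ref{abstract-form})) and treat $\epsilon$ as a center variable alongside $y$, so that the graph transform acts on sections over the full $(y,\epsilon)$-base and smoothness in $\epsilon$ comes out of the same contraction rather than from a separate parameter-dependence argument.
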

This invariant foliation is sometimes referred to as {\em Feniciel fibering}. This fibering ensures us the following representations:
\begin{equation*}
W^u(A) = \bigcup_{v\in A}W^u(v),\quad W^s(A) = \bigcup_{v\in A}W^s(v),
\end{equation*}
where $A$ is a subset of slow manifolds.

%
%
\subsection{Covering relations : review}
\label{section-cov}

Our main approach to tracking solution orbits is a topological tool called {\em covering relations}.
Covering relations describe topological transversality of rectangular-like domains called {\em $h$-sets} relative to continuous map, and
there are various studies not only from the mathematical viewpoint but also for applications with rigorous numerics (e.g. \cite{CAPD, W, WZ, ZCov, ZG}).
In the present study, we apply this topological methodology to singular perturbation problems. 
In this section, we summarize notions among the theory of covering relations.
For a given norm on $\mathbb{R}^m$, let $B_m(c,r)$ be the open ball of radius $r$ centered at $c\in \mathbb{R}^m$. 
For simplicity, also let $B_m = B_m(0,1)$. We set $\mathbb{R}^0 = \{0\}$, $B_0(0,r) = \{0\}$ and $\partial B_0(0,r) = \emptyset$.

\begin{dfn}[$h$-set, cf. \cite{ZCov, ZG}]\rm
\label{dfn-hset}
An {\em $h$-set} consists of the following set, integers and a map:
\begin{itemize}
\item A compact subset $N\subset \mathbb{R}^m$.
\item Nonnegative integers $u(N)$ and $s(N)$ such that $u(N) + s(N) = n$ with $n\leq m$.
\item A homeomorphism $c_N:\mathbb{R}^n\to \mathbb{R}^{u(N)}\times \mathbb{R}^{s(N)}$ satisfying
\begin{equation*}
c_N(N) = \overline{B_{u(N)}}\times \overline{B_{s(N)}}.
\end{equation*}
\end{itemize}
Finally define the {\em dimension} of an $h$-set $N$ by $\dim N:= n$.
\end{dfn}
We shall write an $h$-set $(N,u(N),s(N),c_N)$ simply by $N$ if no confusion arises. Let
\begin{align*}
N_c &:=  \overline{B_{u(N)}} \times  \overline{B_{s(N)}},\\
N_c^- &:= \partial \overline{B_{u(N)}} \times  \overline{B_{s(N)}},\\
N_c^+ &:= \overline{B_{u(N)}} \times  \partial \overline{B_{s(N)}},\\
N^- &:= c_N^{-1}(N_c^-), \quad N^+:= c_N^{-1}(N_c^+)
\end{align*}

The following notion describes the topological transversality between two $h$-sets relative to continuous maps.

\begin{dfn}[Covering relations, cf. \cite{ZCov, ZG}]\rm
\label{dfn-covrel}
Let $N, M\subset \mathbb{R}^m$ be $h$-sets with $u(N)+s(N), u(M)+s(M)\leq m$ and $u(N)=u(M) = u$. $f: N \to \mathbb{R}^{\dim M}$ denotes a continuous mapping and $f_c:= c_M\circ f\circ c_N^{-1}: N_c\to \mathbb{R}^{u} \times \mathbb{R}^{s(M)}$. We say $N$ {\em $f$-covers} $M$ ($N\overset{f}{\Longrightarrow}M$) if the following statements hold: 
\begin{enumerate}
\item There exists a continuous homotopy $h:[0,1]\times N_c\to \mathbb{R}^{u}\times \mathbb{R}^{s(M)}$ satisfying
\begin{align*}
&h_0 = f_c,\\
&h([0,1],N_c^-)\cap M_c = \emptyset,\\
&h([0,1],N_c)\cap M_c^+ = \emptyset,
\end{align*}
where $h_\lambda = h(\lambda, \cdot)$ ($\lambda \in [0,1]$).
\item There exists a mapping $A:\mathbb{R}^{u}\to \mathbb{R}^{u}$ such that
\begin{equation}
\label{cov-degree}
\begin{cases}
h_1(p,q) = (A(p),0), &\\
A(\partial B_u (0,1)) \subset \mathbb{R}^u \setminus \overline{B_u}(0,1), & \\
\deg(A, \overline{B_u}, 0)\not = 0
\end{cases}
\end{equation}
holds for $p\in \overline{B_u}(0,1), q\in \overline{B_s}(0,1)$.
\end{enumerate}
\end{dfn}

\begin{rem}\rm
In definition of covering relation between $N$ and $M$, the disagreement of $\dim N$ and $\dim M$ is not essential. On the contrary, the equality $u(N) = u(M) = u$ is essential because the mapping degree of $u$-dimensional mapping $A$ should be derived.
\end{rem}

The following propositions gives us useful sufficient conditions for detecting covering relations in practical situations.

\begin{prop}[Finding covering relations, Theorem 15 in \cite{ZG}]
\label{prop-find-CR}
Let $N,M$ be two $h$-sets in $\mathbb{R}^n$ such that $u(N)=u(M) = u$ and $s(N)=s(M)=s$. Let $f:N\to \mathbb{R}^n$ be continuous. Let $f_c = c_M\circ f \circ c_N^{-1} : N_c \to \mathbb{R}^u \times \mathbb{R}^s$. Assume that there exists $q_0 \in \overline{B_s}$ such that following conditions are satisfied:
\begin{enumerate}
\item Setting $S(M)_c^- = \{(p,q)\in \mathbb{R}^u\times \mathbb{R}^s \mid \|p\| > 1\}$,
\begin{align*}
&f_c(\overline{B_u}\times \{q_0\}) \subset \Int(S(M)_c^- \cup M_c),\\
&f_c(N_c^-)\cap M_c = \emptyset,\\
&f_c(N_c)\cap M_c^+ = \emptyset.
\end{align*}
\item Define a map $A_{q_0} : \mathbb{R}^u \to \mathbb{R}^u$ by
\begin{equation*}
A_{q_0}(p) := \pi_u(f_c(p,q_0)),
\end{equation*}
where $\pi_u : \mathbb{R}^u\times \mathbb{R}^s \to \mathbb{R}^u$ be the orthogonal projection onto $\mathbb{R}^u$, $\pi_u(p,q) = p$. Assume that 
\begin{equation*}
A_{q_0}(\partial B_u) \subset \mathbb{R}^u \setminus \overline{B_u},\quad  \deg(A_{q_0}, \overline{B_u},0) \not = 0.
\end{equation*}
\end{enumerate}
Then $N  \overset{f}{\Longrightarrow} M$.
\end{prop}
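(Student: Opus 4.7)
The plan is to verify Definition \ref{dfn-covrel} directly by taking $A := A_{q_0}$ and constructing an explicit two-stage homotopy $h:[0,1]\times N_c\to \mathbb{R}^u\times \mathbb{R}^{s}$. For $\lambda\in[0,1/2]$, set $h(\lambda,(p,q)) := f_c(p,(1-2\lambda)q + 2\lambda q_0)$, which continuously deforms each $q$-fibre to the single fibre over $q_0$. For $\lambda\in[1/2,1]$, writing $B(p) := \pi_s(f_c(p,q_0))$, set $h(\lambda,(p,q)) := (A_{q_0}(p),(2-2\lambda)B(p))$, a linear retraction of the $s$-component onto $0$. The two pieces agree at $\lambda=1/2$ since $f_c(p,q_0) = (A_{q_0}(p),B(p))$; moreover $h_0 = f_c$, and $h_1(p,q) = (A_{q_0}(p),0)$ has exactly the product form demanded by (\ref{cov-degree}). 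The degree clause in (\ref{cov-degree}) is then hypothesis 2 verbatim.

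It remains to verify $h([0,1],N_c^-)\cap M_c = \emptyset$ and $h([0,1],N_c)\cap M_c^+ = \emptyset$. For $\lambda\in [0,1/2]$ both follow directly from the product hypotheses $f_c(N_c^-)\cap M_c=\emptyset$ and $f_c(N_c)\cap M_c^+=\emptyset$, since $(1-2\lambda)q+2\lambda q_0\in\overline{B_s}$ by convexity, so the image still lies in $f_c(N_c^-)$ or $f_c(N_c)$ respectively. For $\lambda\in [1/2,1]$ and $p\in \partial B_u$, hypothesis 2 gives $A_{q_0}(p)\notin\overline{B_u}$, so $h(\lambda,(p,q))\notin M_c$, settling the $N_c^-$ case.

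The delicate case is $\lambda\in[1/2,1]$ with $(p,q)\in N_c$ and $A_{q_0}(p)\in \overline{B_u}$: one must rule out $(2-2\lambda)B(p)\in \partial B_s$. This is where the precise content of the interior condition in hypothesis 1 enters, and I expect this to be the only nonroutine step. A direct set-theoretic computation identifies $\Int(S(M)_c^-\cup M_c) = \{(\xi,\eta)\in\mathbb{R}^u\times\mathbb{R}^s : \|\xi\|>1\text{ or }\|\eta\|<1\}$, so the inclusion $f_c(\overline{B_u}\times\{q_0\})\subset\Int(S(M)_c^-\cup M_c)$ forces the \emph{strict} inequality $\|B(p)\|<1$ whenever $\|A_{q_0}(p)\|\leq 1$. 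This strict estimate is preserved under the rescaling $B(p)\mapsto (2-2\lambda)B(p)$ for any $\lambda\in[1/2,1]$, so the image remains in $B_s$ and avoids $\partial B_s$. Once this unpacking is in place, every clause of Definition \ref{dfn-covrel} is satisfied, yielding $N\overset{f}{\Longrightarrow} M$.
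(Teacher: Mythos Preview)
Your proof is correct. The paper does not supply its own proof of this proposition: it is quoted as Theorem~15 of \cite{ZG} and stated without argument, so there is nothing to compare against in the paper itself. Your two-stage homotopy (first sliding the $q$-fibre to $q_0$, then retracting the $s$-component to zero) is the standard construction for this kind of sufficient condition, and your identification of $\Int(S(M)_c^-\cup M_c)=\{(\xi,\eta):\|\xi\|>1\text{ or }\|\eta\|<1\}$ and the resulting strict bound $\|B(p)\|<1$ when $\|A_{q_0}(p)\|\le 1$ is exactly the point that makes the second stage avoid $M_c^+$. One cosmetic remark: Definition~\ref{dfn-covrel} asks for $A:\mathbb{R}^u\to\mathbb{R}^u$, whereas $A_{q_0}$ is a~priori defined only on $\overline{B_u}$; any continuous extension suffices since the conditions in (\ref{cov-degree}) reference only $\overline{B_u}$ and $\partial B_u$.
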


\begin{prop}[Covering relation in the case $u=1$, Definition 10 in \cite{Z3}]
\label{prop-CR-u1}
Let $N, M$ be $h$-sets with $u(N) = u(M) = 1$. Let $f : N\to \mathbb{R}^{\dim M}$ be continuous. Set
\begin{align*}
&N_c^L := \{-1\}\times \overline{B_{s(N)}},\quad N_c^R := \{+1\}\times \overline{B_{s(N)}},\\
&S(N)_c^L := (-\infty, -1)\times \mathbb{R}^{s(N)},\quad S(N)_c^R := (+1,+\infty)\times \mathbb{R}^{s(N)}.
\end{align*}
Assume that there exists $q_0\in \overline{B_{s(N)}}$ such that
\begin{align*}
f(c_N([-1,1]\times \{q_0\})) & \subset \Int (S(M)^L \cup M \cup S(M)^R),\\
f(N)\cap M^+ = \emptyset
\end{align*}
and either of the following conditions holds:
\begin{align*}
&f(N^L)\subset S(M)^L\quad \text{ and }f(N^L)\subset S(M)^R,\\
&f(N^L)\subset S(M)^R\quad \text{ and }f(N^L)\subset S(M)^L.
\end{align*}
Then $N  \overset{f}{\Longrightarrow} M$. 
\end{prop}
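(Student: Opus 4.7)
My plan is to verify the conditions of Definition \ref{dfn-covrel} directly by exhibiting an explicit two-stage homotopy, exploiting the one-dimensional ``unstable'' factor to reduce the Brouwer-degree clause to the intermediate value theorem. Because the proposition does not require $s(N)=s(M)$, I cannot simply quote Proposition \ref{prop-find-CR}; a direct construction is cleaner.

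For the homotopy $h:[0,1]\times N_c\to\mathbb{R}\times\mathbb{R}^{s(M)}$ I would take two stages. For $\lambda\in[0,\tfrac12]$ I slide the stable argument toward $q_0$ by
\begin{equation*}
h(\lambda,p,q) := f_c\bigl(p,\,(1-2\lambda)q+2\lambda q_0\bigr),
\end{equation*}
and for $\lambda\in[\tfrac12,1]$ I collapse the stable component of the image linearly to zero via
\begin{equation*}
h(\lambda,p,q) := \bigl(\pi_1 f_c(p,q_0),\;(2-2\lambda)\,\pi_2 f_c(p,q_0)\bigr),
\end{equation*}
where $\pi_1,\pi_2$ denote the projections onto the first coordinate and the last $s(M)$ coordinates of $\mathbb{R}\times\mathbb{R}^{s(M)}$. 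Then $h_0=f_c$ and $h_1(p,q)=(A_{q_0}(p),0)$ with $A_{q_0}(p):=\pi_1 f_c(p,q_0)$, as required.

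Next I would check the two exclusion conditions. During Stage~1 the argument $(1-2\lambda)q+2\lambda q_0$ is a convex combination and so stays in $\overline{B_{s(N)}}$; consequently the restriction of $h$ to $N_c^L\cup N_c^R$ has image inside $f_c(\{-1\}\times\overline{B_{s(N)}})\cup f_c(\{+1\}\times\overline{B_{s(N)}})$, which by the ``either/or'' hypothesis lies in $S(M)_c^L\cup S(M)_c^R$ and is therefore disjoint from $M_c$; also $h([0,\tfrac12],N_c)\subset f_c(N_c)$ misses $M_c^+$ because of $f(N)\cap M^+=\emptyset$. During Stage~2 the ``either/or'' hypothesis again forces $|A_{q_0}(\pm1)|>1$, keeping the two vertical faces out of $M_c$; for the top-and-bottom exclusion I would invoke the central-slice hypothesis: whenever $A_{q_0}(p)\in[-1,1]$, the point $f_c(p,q_0)$ lies in $\Int(S(M)_c^L\cup M_c\cup S(M)_c^R)$, which at first coordinate in $[-1,1]$ forces $|\pi_2 f_c(p,q_0)|<1$, and the scaling by $(2-2\lambda)\le 1$ then keeps the second coordinate strictly inside $B_{s(M)}$.

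The degree clause $\deg(A_{q_0},\overline{B_1},0)\ne 0$ is where $u=1$ really pays off. The ``either/or'' hypothesis puts $A_{q_0}(-1)$ and $A_{q_0}(+1)$ on opposite sides of $[-1,1]$, so $A_{q_0}(\partial B_1)\subset\mathbb{R}\setminus\overline{B_1}$ and the intermediate value theorem yields some $p^*\in(-1,1)$ with $A_{q_0}(p^*)=0$; the local degree is $+1$ in one case and $-1$ in the other. The only subtle point to handle is Stage~2 of the homotopy, where one must ensure that no trajectory is swept onto $M_c^+$ while the stable component is being contracted, and this is exactly what the central-slice interior hypothesis is designed to provide; everything else is bookkeeping.
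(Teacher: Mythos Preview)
Your proof is correct. The paper itself does not supply a proof of this proposition; it is quoted as a known result from \cite{Z3}, so there is no ``paper's proof'' to compare against. Your two-stage homotopy (first sliding the $s(N)$-argument to $q_0$, then linearly collapsing the $s(M)$-output) is the standard direct verification of Definition~\ref{dfn-covrel}, and your case analysis for the exclusion conditions and the degree computation via the intermediate value theorem is accurate. You are also right that Proposition~\ref{prop-find-CR} does not apply verbatim here because it assumes $s(N)=s(M)$; your construction handles the general case $s(N)\neq s(M)$ without difficulty. One tiny cosmetic point: Definition~\ref{dfn-covrel} asks for $A$ defined on all of $\mathbb{R}$, while your $A_{q_0}$ is a priori defined only on $[-1,1]$; any continuous extension (e.g.\ affine on each ray) works since the degree and boundary conditions depend only on $A|_{[-1,1]}$.
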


We also consider covering relations with respect to the inverse of continuous maps.

\begin{dfn}[Back-covering relation, Definition 3 and 4 in \cite{ZG}]\rm
Let $N$ be an $h$-set. Define the $h$-set $N^T$ as follows:
\begin{itemize}
\item The compact subset of the quadruple $N^T$ is $N$ itself.
\item $u(N^T) = s(N)$, $s(N^T) = u(N)$.
\item The homeomorphism $c_{N^T} : \mathbb{R}^n \to \mathbb{R}^n = \mathbb{R}^{u(N^T)}\times \mathbb{R}^{s(N^T)}$ is given by
\begin{equation*}
c_{N^T}(x) = j(c_N(x)),
\end{equation*}
where $j : \mathbb{R}^{u(N)}\times \mathbb{R}^{s(N)} \to  \mathbb{R}^{s(N)}\times \mathbb{R}^{u(N)}$ is given by $j(p,q) = (q,p)$.
\end{itemize}
Notice that $N^{T,+} = N^-$ and $N^{T,-} = N^+$.

Next, let $N, M$ be $h$-sets such that $u(N)=u(M)$. 
Let $g:\Omega \subset \mathbb{R}^n\to \mathbb{R}^n$.
Assume that $g^{-1}:M\to \mathbb{R}^n$ is well-defined and continuous. 
Then we say $N$ {\em $g$-back-covers $M$} ($N\overset{g}{\Longleftarrow} M$) if $M^T\overset{g^{-1}}{\Longrightarrow} N^T$ holds.
\end{dfn}

A fundamental result in the theory of covering relations is the following proposition.

\begin{prop}[Theorem 4 in \cite{ZG}]
\label{ZG-periodic}
Let $N_i, i=0,1,\cdots, k$ be $h$-sets such that $u(N_i)= u$ for $i=0,1,\cdots, k$ and let $f_i: N_i \to \mathbb{R}^{\dim (N_{i+1})}$, $i=0,1,\cdots, k-1$, be continuous. 
Assume that, for all $i=0,1,\cdots, k-1$, either of the following holds:
\begin{equation*}
N_i\overset{f_i}{\Longrightarrow}N_{i+1}
\end{equation*}
or
\begin{equation*}
N_i \subset D(f_i^{-1})\quad \text{ and }\quad N_i\overset{f_i}{\Longleftarrow}N_{i+1}.
\end{equation*}
Then there is a point $p\in \Int N_0$ such that
\begin{equation*}
f_i \circ f_{i-1}\circ \cdots f_0(p)\in \Int N_i\quad \text{ for all }i=0,\cdots, k-1.
\end{equation*}
If we additionally assume $N_k\overset{f_k}{\Longrightarrow}N_0$, then the point $p\in \Int N_0$ can be chosen so that 
\begin{equation*}
f_k \circ f_{k-1}\circ \cdots f_0(p)=p.
\end{equation*}
\end{prop}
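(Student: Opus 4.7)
The plan is to lift everything to chart coordinates and turn the orbit-tracking problem into a single degree computation on a product of closed balls, then deform the resulting map to a decoupled model via the homotopies supplied by the covering-relation hypotheses. Write $\widetilde N_i := \overline{B_u}\times \overline{B_{s(N_i)}}$ for the chart image of $N_i$ and $\widetilde f_i := c_{N_{i+1}}\circ f_i\circ c_{N_i}^{-1}$ for $f_i$ read in charts. Finding a point $p\in\Int N_0$ whose iterates stay in the interiors of the $N_i$ amounts to finding a sequence $(x_0,\ldots,x_k)\in \prod_{i=0}^{k}\Int\widetilde N_i$ satisfying $x_{i+1}=\widetilde f_i(x_i)$, and in the periodic statement also $x_0=\widetilde f_k(x_k)$. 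I would split each $x_i=(p_i,q_i)$ with $p_i\in\overline{B_u}$, $q_i\in\overline{B_{s(N_i)}}$, and combine all compatibility relations into a single equation $\Phi(x_0,\ldots,x_k)=0$ on the product $\mathcal{P}:=\prod_i\widetilde N_i$.

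For each index with $N_i\Rightarrow N_{i+1}$, the definition of covering relation supplies a homotopy $H_i(\lambda,\cdot)$ from $\widetilde f_i$ to the model map $(p,q)\mapsto(A_i(p),0)$ that keeps $N_{i+1}^{c,+}$ disjoint from its image and sends $N_i^{c,-}$ away from $N_{i+1}^c$ at every $\lambda$. For each back-covering link, the same construction applies to $\widetilde f_i^{-1}$ through the $N^T$-structure, producing an analogous homotopy with the roles of $p$ and $q$ interchanged. Inserting these homotopies into the compatibility equations yields a continuous family $\Phi_\lambda$ on $\mathcal{P}$ with $\Phi_0=\Phi$. The exit-face conditions guarantee that no zero of $\Phi_\lambda$ lies on $\partial\mathcal{P}$ for any $\lambda\in[0,1]$: a boundary zero would force some $p_i\in\partial\overline{B_u}$ or some $q_j\in\partial\overline{B_{s(N_j)}}$, each of which is ruled out at the corresponding step of the chain by one of the two forbidden-intersection conditions. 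This is exactly the non-degeneracy required for homotopy invariance of the Brouwer degree on $\Int\mathcal{P}$.

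At $\lambda=1$ the system decouples: forward links read $p_{i+1}=A_i(p_i)$, $q_{i+1}=0$; back-covered links read the analogous condition with the indices reversed. Each factor contributes a self-map of $\overline{B_u}$ of nonzero Brouwer degree by the degree hypothesis, so $\deg(\Phi_1,\Int\mathcal{P},0)$ factors as a nonzero product of the individual $\deg(A_i,\overline{B_u},0)$. Homotopy invariance then yields a zero of $\Phi$ in $\Int\mathcal{P}$, whose $0$-th coordinate is the desired $p\in\Int N_0$ after applying $c_{N_0}^{-1}$. For the periodic statement I would adjoin the closing relation $x_0=\widetilde f_k(x_k)$ as an extra block of $\Phi$ and repeat the argument to obtain a cyclic zero, i.e.\ a fixed point of $f_k\circ\cdots\circ f_0$. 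The main obstacle I expect is the bookkeeping for the back-covering blocks: because $N^T$ swaps the $u$- and $s$-directions, one must verify that the forbidden-intersection conditions still translate into boundary escape from $\mathcal{P}$ in the transposed coordinates, and that the orientation contributions to the total degree combine as a genuine nonzero product rather than accidentally cancelling.
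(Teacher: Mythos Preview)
The paper does not prove this proposition; it is quoted verbatim as Theorem~4 of \cite{ZG} and used as a black box throughout. So there is no ``paper's own proof'' to compare against. That said, your outline is precisely the argument given in the original Zgliczy\'nski--Gidea paper: pass to chart coordinates, encode the orbit (or periodic orbit) as a zero of a single map $\Phi$ on the product $\prod_i \widetilde N_i$, use the homotopies from the definition of covering relation to deform $\Phi$ to a decoupled model while the exit/entrance conditions keep zeros off $\partial\mathcal P$, and conclude by the product formula for Brouwer degree together with the hypothesis $\deg(A_i,\overline{B_u},0)\neq 0$. Your identification of the back-covering bookkeeping (transposing the $u$/$s$ roles and checking that the boundary-avoidance conditions survive) as the only genuinely delicate point is also accurate; in \cite{ZG} this is handled by writing the $i$-th block of $\Phi$ as $x_i - \widetilde f_i^{-1}(x_{i+1})$ rather than $x_{i+1}-\widetilde f_i(x_i)$, which restores the symmetry. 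One small correction: in the periodic case the closing relation is not ``adjoined'' as an extra block but replaces the terminal constraint---the system is cyclic from the outset, with indices read mod $k+1$---so the total dimension count is unchanged.
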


We sometimes consider an infinite sequence of covering relations.
To deal with such a situation, we define the following concept.
\begin{dfn}[Admissibility, cf. Definition 2.4 in \cite{W}]\rm
\label{dfn-cov-admissibility}
Let $\{M_i\}_{i=1}^k$ be $h$-sets in $\mathbb{R}^m$ and $f : \bigcup_{i=1}^m M_i \to \mathbb{R}^m$ be a continuous map. We say the index sequence $\{i_j\}_{j\in \mathbb{Z}} \subset \{1,\cdots, k\}^\mathbb{Z}$ is {\em admissible with respect to $f$} if $M_{i_j}\overset{f}{\Longrightarrow} M_{i_{j+1}}$ holds of all $j\in \mathbb{Z}$.
Similarly, we say the index sequence $\{i_j\}_{j\in \mathbb{Z}} \subset \{1,\cdots, k\}^\mathbb{Z}$ is {\em back-admissible with respect to $f$} if $M_{i_j}\overset{f}{\Longleftarrow} M_{i_{j+1}}$ holds of all $j\in \mathbb{Z}$. In this case, $f^{-1}$ is assumed to be well-defined in a neighborhood of $\bigcup_{i=1}^m M_i$ and continuous.
\end{dfn}

Recall that the second Invariant Manifold Theorem, Proposition \ref{prop-Fen2}, claims that the stable and unstable manifolds of normally hyperbolic invariant manifolds  can be described by graphs of smooth functions.
The concept of {\em horizontal and vertical disks} are useful to describe asymptotic trajectories in terms of covering relations for describing these situations.

\begin{dfn}[Horizontal and vertical disk, e.g. \cite{W, ZCov}]\rm
\label{dfn-disks}
Let $N$ be an $h$-set. 
Let $b_s: \overline{B_{s(N)}}\to N$ be continuous and let $(b_s)_c = c_N \circ b_s$. We say that $b_s$ is a {\em vertical disk in $N$} if there exists a homotopy $h: [0,1]\times \overline{B_{s(N)}}\to N_c$ such that
\begin{align*}
h_0 &= (b_s)_c,\\
h_1(x) &= (0,x),\quad \text{ for all }x\in \overline{B_{s(N)}},\\
h(t,x) &\in N_c^+,\quad \text{ for all }t\in [0,1]\text{ and }x\in \partial B_{s(N)}.
\end{align*}
Let $b_u: \overline{B_{u(N)}}\to N$ be continuous and let $(b_u)_c = c_N \circ b_u$. We say that $b_u$ is a {\em horizontal disk in $N$} if there exists a homotopy $h: [0,1]\times \overline{B_{u(N)}}\to N_c$ such that
\begin{align*}
h_0 &= (b_u)_c,\\
h_1(x) &= (x,0),\quad \text{ for all }x\in \overline{B_{u(N)}},\\
h(t,x) &\in N_c^-,\quad \text{ for all }t\in [0,1]\text{ and }x\in \partial B_{u(N)}.
\end{align*}
\end{dfn}

Combining these concepts with covering relations, we obtain the following result, 
which is often applied to the existence of homoclinic and heteroclinic orbits.
\begin{prop}[Theorem 3 in \cite{WZ}, Theorem 3.9 in \cite{W}]
\label{WZ-heteroclinic}
Let $N_i, i=0,1,\cdots, k$ be $h$-sets such that $u(N_i)= u$ for $i=0,1,\cdots, k$ and let $f_i: N_i \to \mathbb{R}^{\dim (N_{i+1})}$, $i=0,1,\cdots, k-1$, be continuous. Let $b: \overline{B_u}\to N_0$ be a horizontal disk in $N_0$ and let $v: \overline{B_{s(N_k)}}\to N_k$ be a vertical disk in $N_k$. If $N_i\overset{f_i}{\Longrightarrow}N_{i+1}$ holds for $i=0,1,\cdots, k-1$, then there exists $\tau \in \overline{B_u}$ such that
\begin{align*}
&(f_i\circ f_{i-1}\circ \cdots \circ f_0)(b(\tau)) \in N_{i+1},\quad \text{ for }i=0,1,\cdots, k-2,\\
&(f_{k-1}\circ f_{k-2}\circ \cdots \circ f_0)(b(\tau))\in v(\overline{B_{s(N_k)}}).
\end{align*}
\end{prop}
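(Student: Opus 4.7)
The plan is a Brouwer degree argument in the spirit of Wilczak--Zgliczy\'{n}ski, reducing the chain of covering relations to a single map whose degree can be computed explicitly. First, I would move everything into chart coordinates: set $b_c := c_{N_0}\circ b$, $v_c := c_{N_k}\circ v$, and for $j=0,\ldots,k-1$, let
\[
F^{j+1}_c := c_{N_{j+1}}\circ f_j\circ c_{N_j}^{-1},\qquad F_c := F^k_c \circ F^{k-1}_c \circ \cdots \circ F^1_c.
\]
The conclusion is equivalent to showing the existence of $(\tau,\sigma)\in\overline{B_u}\times \overline{B_{s(N_k)}}$ such that $F_c(b_c(\tau)) = v_c(\sigma)$ and such that the partial images $F^j_c\circ\cdots\circ F^1_c(b_c(\tau))$ lie in $(N_j)_c$ for every intermediate $j$. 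I would package this as the zero set of
\[
H(\tau,\sigma) := F_c(b_c(\tau)) - v_c(\sigma) \;\in\; \mathbb{R}^{u}\times \mathbb{R}^{s(N_k)}
\]
and prove $\deg(H,B_u\times B_{s(N_k)},0)\neq 0$.

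Next I would construct a master homotopy $H_\lambda$ deforming $H_0 = H$ into the normal form $H_1(\tau,\sigma) = (A(\tau),-\sigma)$, where $A:\mathbb{R}^u\to\mathbb{R}^u$ is the composition of the unstable maps arising in each individual covering relation. The construction proceeds in three phases, concatenated on a partition of $[0,1]$: (i) apply the horizontal-disk homotopy to $b_c$, driving it to $\tau\mapsto(\tau,0)$; (ii) for $i=0,1,\ldots,k-1$ in sequence, apply the covering-relation homotopy $h^i$ deforming $(f_i)_c$ into its product form $(p,q)\mapsto(A_i(p),0)$, using that this normal form already maps the entire cube onto the unstable axis so that the next stage composes cleanly; (iii) deform $v_c$ to $\sigma\mapsto(0,\sigma)$ via the vertical-disk homotopy. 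A direct check shows $H_1(\tau,\sigma)=(A(\tau),-\sigma)$ with $A=A_{k-1}\circ\cdots\circ A_0$.

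The degree of $H_1$ splits as a product: since each $A_i$ satisfies $A_i(\partial B_u)\subset\mathbb{R}^u\setminus\overline{B_u}$ with $\deg(A_i,\overline{B_u},0)\neq 0$, the product formula gives $\deg(A,\overline{B_u},0)=\prod_i\deg(A_i,\overline{B_u},0)\neq 0$, and therefore $\deg(H_1,B_u\times B_{s(N_k)},0) = \deg(A,B_u,0)\cdot(-1)^{s(N_k)}\neq 0$. Homotopy invariance then yields $\deg(H,B_u\times B_{s(N_k)},0)\neq 0$, producing a pair $(\tau,\sigma)$ with $F_c(b_c(\tau))=v_c(\sigma)$. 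The side conditions in Definition~\ref{dfn-disks} (for the disks) and Definition~\ref{dfn-covrel} (for the covering relations) ensure that for $\tau\in\overline{B_u}$ realizing a zero, the intermediate images do in fact land inside each $(N_j)_c$, because images leaving the cube are trapped in the exit strips $S(N_{j+1})^-$ which cannot intersect $N_{j+1}$.

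The main obstacle is the bookkeeping required to prove that $H_\lambda(\tau,\sigma)\neq 0$ for all $\lambda\in[0,1]$ whenever $(\tau,\sigma)\in\partial(\overline{B_u}\times\overline{B_{s(N_k)}})$. This must be verified separately in each of the three homotopy phases and involves threading the three forbidden-region conditions from each $h^i$ together with the corresponding conditions on the horizontal and vertical disks. The delicate point is that when the inner-stage homotopy $h^i$ is active, points of the composed map traverse regions that are not obviously controlled by later $h^{i'}$ with $i'>i$; the fix is to use that at the instant $h^i$ is acting, all later $h^{i'}$ have already been driven to their product normal forms, so the unstable coordinate is simply being pushed further out of $\overline{B_u}$ at every subsequent stage, which keeps the composite clear of the target $v_c(\overline{B_{s(N_k)}})$.
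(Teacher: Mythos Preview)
The paper does not prove this proposition; it is quoted from \cite{WZ} and \cite{W} without argument, so there is no in-paper proof to compare against. Your degree-theoretic strategy is the right framework and matches the approach in those references, and the endpoint computation $\deg(H_1)=\deg(A)\cdot(-1)^{s(N_k)}\neq 0$ via the product formula is correct.

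There is, however, a genuine gap. The composition $F_c=F_c^k\circ\cdots\circ F_c^1$ is not defined on all of $b_c(\overline{B_u})$: each $(f_i)_c$ has domain only $(N_i)_c$, and intermediate images can leave these cubes. Indeed for $\tau\in\partial B_u$ the horizontal-disk condition gives $b_c(\tau)\in(N_0)_c^-$, and then the covering condition $h^0([0,1],(N_0)_c^-)\cap(N_1)_c=\emptyset$ forces $(f_0)_c(b_c(\tau))\notin(N_1)_c$, so $F_c^2$ cannot be applied there. Thus $H_0$ is not a continuous map on $\overline{B_u}\times\overline{B_{s(N_k)}}$ and its degree is undefined. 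Your final paragraph does not resolve this: the assertion that images leaving the cube are ``trapped in the exit strips $S(N_{j+1})^-$'' is not part of Definition~\ref{dfn-covrel}, which only controls images of $N_c^-$ and forbids hitting $M_c^+$; it says nothing about arbitrary images of $N_c$ lying outside $M_c$. The same domain issue recurs in your phase~(ii): once the earlier maps are in normal form $(p,q)\mapsto(A_i(p),0)$, the input $(A_{i-1}\circ\cdots\circ A_0(\tau),0)$ to $h^i_\lambda$ may have unstable norm exceeding $1$, so $h^i_\lambda$ is again undefined.

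The standard remedy is to avoid composing altogether: introduce intermediate variables $z_j\in(N_j)_c$ for $j=1,\ldots,k-1$ and apply the degree to
\[
\Psi(\tau,z_1,\ldots,z_{k-1},\sigma)=\bigl((f_0)_c(b_c(\tau))-z_1,\;(f_1)_c(z_1)-z_2,\;\ldots,\;(f_{k-1})_c(z_{k-1})-v_c(\sigma)\bigr)
\]
on $\overline{B_u}\times\prod_{j=1}^{k-1}(N_j)_c\times\overline{B_{s(N_k)}}$. Each factor is now globally defined on this domain; the covering and disk homotopies act componentwise; the three conditions in Definitions~\ref{dfn-covrel} and~\ref{dfn-disks} give nonvanishing on each boundary face directly; and the degree of the normal form factors exactly as you computed. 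A zero of $\Psi$ is precisely a chain $b_c(\tau)\mapsto z_1\mapsto\cdots\mapsto v_c(\sigma)$ with each $z_j\in(N_j)_c$, which is the statement.
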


%
%
\subsection{Isolating blocks : review and applications to fast-slow systems}
\label{section-isolatingblock}

A concept of {\em isolating blocks} are typically discussed in the Conley index theory (e.g. \cite{Con, Mis}), which studies the structure of isolated invariant sets from the algebraic-topological viewpoint.
Central notions are {\em isolating neighborhoods} or {\em index pairs} in the Conley index theory, but we concentrate our attentions on {\em isolating blocks} defined as follows. 
In our case, the blocks can be considered very flexible from the viewpoint of not only covering relations but also rigorous numerics.
Moreover, isolating blocks play central roles for the existence of slow manifolds (cf. \cite{Jones} and Section \ref{section-inv-mfd}). 
In this section, we firstly review the definition of isolating blocks. 
Secondly, we show a procedure of isolating blocks around equilibria and their alternatives for fast-slow systems with computer assistance, following \cite{ZM}.

%
%
\subsubsection{Definition}
\begin{dfn}[Isolating block]\rm
\label{dfn-isolation}
Let $N\subset \mathbb{R}^m$ be a compact set. We say $N$ an {\em isolating neighborhood} if $\Inv(N)\subset \Int (N)$ holds, where 
\begin{equation*}
\Inv(N):= \{x\in N \mid \varphi(\mathbb{R},x)\subset N\}
\end{equation*}
for a flow $\varphi: \mathbb{R}\times \mathbb{R}^m\to \mathbb{R}^m$ on $\mathbb{R}^m$.
Next let $B\subset \mathbb{R}^m$ be a compact set and $x\in \partial B$. We say $x$ an {\em exit} ({\em resp. entrance}) point of $B$, if for every solution $\sigma:[-\delta_1,\delta_2]\to \mathbb{R}^N$ through $x= \sigma(0)$, with $\delta_1\geq 0$ and $\delta_2 > 0$ there are $0\leq \epsilon_1 \leq \delta_1$ and $0 < \epsilon_2 \leq \delta_2$ such that for $0 < t \leq \epsilon_2$,
\begin{equation*}
\sigma(t)\not \in B\ (\text{resp. } \sigma(t)\in \Int(B)),
\end{equation*}
and for $-\epsilon_1 \leq t < 0$,
\begin{equation*}
\sigma(t)\not \in \partial B\ (\text{resp. } \sigma(t)\not \in B)
\end{equation*}
hold. $B^{\exit}$ (resp. $B^{\ent}$) denote the set of all exit (resp. entrance) points of the closed set $B$. We call $B^{\exit}$ and $B^{\ent}$ {\em the exit} and {\em the entrance} of $B$, respectively.
Finally $B$ is called {\em an isolating block} if $\partial B = B^{\exit}\cup B^{\ent}$ holds and $B^{\exit}$ is closed in $\partial B$.
\end{dfn}
Obviously, an isolating block is also an isolating neighborhood.

%
%
\subsubsection{Construction around equilibria via rigorous numerics : a basic form}
\label{section-block-basic}
There is a preceding work for the systematic construction of isolating blocks around equilibria \cite{ZM}. 
Here we briefly review the method therein keeping the fast-slow system (\ref{fast-slow}) in mind. 
The first part is the review of the preceding work \cite{ZM}.
As the second part, we discuss the analogue of arguments to fast-slow systems.
One will see that such procedures are very suitable for analyzing dynamics around invariant manifolds.

\bigskip
Let $K \subset \mathbb{R}^l$ be a compact, connected and simply connected set. 
Consider first the differential equation of the following abstract form: 
\begin{equation}
\label{abstract-layer}
x' = f(x,\lambda), \quad x\in \mathbb{R}^n,\quad \lambda\in K,\quad  f: \mathbb{R}^n\times K \to \mathbb{R}^n,
\end{equation}
which corresponds to the layer problem (\ref{layer}). 
For simplicity, assume that $f$ is $C^\infty$. 
Our purpose here is to construct an isolating block which contains an equilibrium of (\ref{abstract-layer}).

\bigskip
Let $p_0$ be a numerical equilibrium of (\ref{abstract-layer}) at $\lambda_0 \in K$ and rewrite (\ref{abstract-layer}) as a series around $(p_0,\lambda_0)$:
\begin{equation}
\label{Taylor}
x' = f_x(p_0,\lambda_0)(x-p_0) + \hat f(x,\lambda),
\end{equation}
where $f_x(p_0,\lambda_0)$ is the Fr\'{e}chet differential of $f$ with respect to $x$-variable at $(p_0,\lambda_0)$. $\hat f(x,\lambda)$ denotes the higher order term of $f$ with $O(|x-p_0|^2 + |\lambda-\lambda_0|)$. This term may in general contain an additional term arising from the numerical error $f(p_0,\lambda_0)\approx 0$.

Here assume that the $n\times n$-matrix $f_x(p_0,\lambda_0)$ is nonsingular. Diagonalizing $f_x(p_0,\lambda_0)$, which is generically possible, (\ref{Taylor}) is further rewritten by the following perturbed diagonal system around $(p_0,\lambda_0)$:
\begin{equation}
\label{ode-y-coord}
z_j' = \mu_j z_j + \tilde f_j(z,\lambda),\quad j=1,\cdots, n.
\end{equation}
Here $\mu_j\in \mathbb{C}$, $z = (z_1,\cdots, z_n)$ and $\tilde f (z,\lambda) = (\tilde f_1(z,\lambda), \cdots, \tilde f_n(z,\lambda))^T$ are defined by $x = Pz + p_0$ and $\tilde f(x,\lambda) = P(\hat f(z,\lambda))$, where $P=(P_{ij})_{i,j=1,\cdots, n}$ is a nonsingular matrix diagonalizing $f_x(p_0,\lambda_0)$ and $\ast^T$ is the transpose.

Let $N\subset \mathbb{R}^n$ be a compact set containing $p_0$.
Assume that each $\tilde f_j(z,\lambda)$ has a bound $[\delta_j^-, \delta_j^+]$ in $N\times K$, namely,
\begin{equation*}
\label{error-bounds}
\left\{ \tilde f_j(z,\lambda) \mid x = Pz+p_0\in N, \lambda \in K\right\}\subsetneq [\delta_j^-, \delta_j^+].
\end{equation*}
Then $z_j'$ must satisfy 
\begin{equation*}
\mu_j \left(z_j + \frac{\delta_j^-}{\mu_j}\right) < z_j' < \mu_j \left(z_j + \frac{\delta_j^+}{\mu_j}\right),\quad \forall z \text{ with }x=Pz+p_0\in N,\ \forall \lambda \in K.
\end{equation*}
For simplicity we assume that each $\mu_j$ is real. 
We then obtain the candidate of an isolating block $B$ in $z$-coordinate given by the following:
\begin{align}
\label{block-exit}
B:= \prod_{j=1}^n B_j,\quad B_j = [z_j^-, z_j^+] &:= \left[-\frac{\delta_j^+}{\mu_j}, -\frac{\delta_j^-}{\mu_j} \right]\quad \text{ if }\mu_j > 0,\\
\label{block-entrance}
B_j = [z_j^-, z_j^+] &:= \left[-\frac{\delta_j^-}{\mu_j}, -\frac{\delta_j^+}{\mu_j} \right]\quad \text{ if }\mu_j < 0.
\end{align}

\begin{rem}\rm
In the case that $\mu_j$ is complex-valued for some $i$, $f_x(p_0,\lambda_0)$ contains the complex conjugate of $\mu_j$ as the other eigenvalue. 
Without the loss of generality, we may assume $\mu_j = \alpha_j + \sqrt{-1}\beta_j$, $\mu_{j+1} = \bar \mu_j = \alpha_j - \sqrt{-1}\beta_j$, $\beta_j \not = 0$. 
To be simplified, we further assume that $\mu_j$ and $\mu_{j+1}$ are the only complex pair of eigenvalues of $f_x(p_0,\lambda_0)$. 
The general case can be handled in the same manner.
The dynamics for $z_j$ and $z_{j+1}$ is formally written by
\begin{align*}
z_j' &= \mu_j z_j + \tilde f_j(z,\lambda),\\
z_{j+1}' &= \mu_{j+1} z_{j+1} + \tilde f_{j+1}(z,\lambda).
\end{align*}
Now we would like to consider real dynamical systems. To do this we transform the above form into 
\begin{align*}
w_j' &= \alpha_j w_j + \beta_j w_{j+1} + \bar f_j(w,\lambda),\\
w_{j+1}' &= -\beta_j w_j + \alpha_j w_{j+1} + \bar f_{j+1}(w,\lambda)
\end{align*}
via $Q = \begin{pmatrix} 1 & 1 \\ \sqrt{-1} & -\sqrt{-1} \end{pmatrix}$, $(w_j,w_{j+1})^T=Q(y_j,y_{j+1})^T$ and $(\bar f_j,\bar f_{j+1})^T=Q(\tilde f_j,\tilde f_{j+1})^T$, where $w=(w_1,\cdots, w_n)$ is the new coordinate satisfying $w_i= y_i$ for $i\not = j,j+1$. 
Let $r_j(w,\lambda):= \sqrt{\bar f_j(w,\lambda)^2 + \bar f_{j+1}(w,\lambda)^2}$ and assume that $r_j(w,\lambda)$ is bounded by a positive number $\bar r_j$ uniformly on $N\times K$. 
Our aim here is to construct a candidate of isolating block and hence we assume that
\begin{itemize}
\item the scalar product of the vector field and the coordinate vector
\begin{equation*}
(w_j,w_{j+1}) \cdot (\alpha_j w_j + \beta_j w_{j+1} + \bar f_j(w,\lambda), -\beta_j w_j + \alpha_j w_{j+1} + \bar f_{j+1}(w,\lambda))
\end{equation*}
has the identical sign and the above function never attain $0$ on $\left\{(w_j,w_{j+1}) \mid \sqrt{w_j^2 + w_{j+1}^2} \leq b_j \right\}$ ($b_j > 0$).
\end{itemize}
With this assumption in mind, we set the candidate of isolating block in $i$- and $i+1$-th coordinate
\begin{equation*}
B_{j,j+1}:= \left\{(w_j,w_{j+1}) \mid \sqrt{w_j^2 + w_{j+1}^2} \leq \frac{\bar r_j}{|\alpha_j|} \right\}.
\end{equation*}
Its boundary becomes exit if $\alpha_j > 0$ and entrance if $\alpha_j < 0$. Finally, replace $B_j\times B_{j+1}$ in the definition of $B$ ((\ref{block-exit}) and (\ref{block-entrance})) by $B_{j,j+1}$.
\end{rem}

A series of estimates for error terms involves $N$ and it only makes sense if it is self-consistent, namely, $\{p_0\}+PB\subset N$.
If it is the case, then $B$ is desiring isolating block for (\ref{abstract-layer}). Indeed, if $\mu_j > 0$, then
\begin{equation*}
z_j'\mid_{z_j = z_j^-} < 0\quad \text{ and }\quad z_j'\mid_{z_j = z_j^+} > 0
\end{equation*}
hold. Namely, the set $\{z\in B \mid z_j = z_j^{\pm}\}$ is contained in the exit. Similarly if $\mu_j < 0$ then 
\begin{equation*}
z_j'\mid_{z_j = z_j^-} > 0\quad \text{ and }\quad z_j'\mid_{z_j = z_j^+} < 0
\end{equation*}
hold. Namely, the set $\{z\in B \mid z_j = z_j^{\pm}\}$ is contained in the entrance. Obviously $\partial B$ is the union of the closed exit and the entrance, which shows that $B$ is an isolating block.

Once such an isolating block $B$ is constructed, one obtains an equilibrium in $B$. 
\begin{prop}[cf. \cite{ZM}]\rm
\label{prop-existence-fixpt}
Let $B$ be an isolating block constructed as above. Then $B$ contains an equilibrium of (\ref{abstract-layer}) for all $\lambda \in K$. 
\end{prop}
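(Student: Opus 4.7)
The approach I would take is to establish the existence of an equilibrium by showing that the Brouwer degree of the vector field $F_\lambda(z) := \mu z + \tilde f(z,\lambda)$ on the block $B$ with respect to $0\in\mathbb{R}^n$ is nonzero, uniformly in $\lambda\in K$. In the purely real-eigenvalue setting, this is in fact the conclusion of the Poincar\'e--Miranda theorem applied directly to the sign data built into the construction of $B$.

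The first step is to verify that $F_\lambda$ has no zero on $\partial B$. The strict containment $\tilde f_j(z,\lambda)\in(\delta_j^-,\delta_j^+)$ imposed in the construction gives, for $\mu_j>0$, $z_j'=-\delta_j^++\tilde f_j<0$ at $z_j=z_j^-=-\delta_j^+/\mu_j$ and $z_j'=-\delta_j^-+\tilde f_j>0$ at $z_j=z_j^+=-\delta_j^-/\mu_j$; the signs reverse when $\mu_j<0$. In particular, on each face the appropriate component of $F_\lambda$ has a definite sign, so $F_\lambda\ne0$ on $\partial B$ and $\deg(F_\lambda,B,0)$ is well defined. The Poincar\'e--Miranda theorem, whose hypothesis is precisely the coordinatewise sign alternation just stated (up to the sign $\mathrm{sgn}(\mu_j)$), then yields a zero of $F_\lambda$ in $B$. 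Equivalently, I would set up the linear straight-line homotopy $H(z,s)=(1-s)F_\lambda(z)+s\,D(z-z^*)$, where $z^*$ is the center of $B$ and $D=\mathrm{diag}(\mathrm{sgn}(\mu_1),\dots,\mathrm{sgn}(\mu_n))$. On each face the two ends of the homotopy have matching signs in the corresponding component, so their convex combination never vanishes there; homotopy invariance gives $\deg(F_\lambda,B,0)=\deg(D(z-z^*),B,0)=\prod_j\mathrm{sgn}(\mu_j)=\pm1\ne0$. Hence $F_\lambda^{-1}(0)\cap B\ne\emptyset$, and since this bound is uniform in $\lambda$ via the global bounds $\delta_j^\pm$ over $N\times K$, the zero exists for every $\lambda\in K$. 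Pulling back by the diagonalizing change of variables $x=Pz+p_0$ produces the desired equilibrium of (\ref{abstract-layer}) in $B$.

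The main obstacle lies in the complex eigenvalue case described in the Remark, since Miranda does not directly apply to a disk-shaped face. Here I would replace the two coordinates $(w_j,w_{j+1})$ with polar coordinates and use the assumption in the Remark that the scalar product of the vector field with the radial vector has constant nonzero sign on the bounding circle $\{w_j^2+w_{j+1}^2=(\bar r_j/|\alpha_j|)^2\}$. The linear model $(w_j,w_{j+1})\mapsto(\alpha_jw_j+\beta_jw_{j+1},-\beta_jw_j+\alpha_jw_{j+1})$ has the same radial sign, equal to $\mathrm{sgn}(\alpha_j)(w_j^2+w_{j+1}^2)$, so the straight-line homotopy keeps a nonzero radial component on the circle throughout, and the planar degree of the model at the origin equals $+1$. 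Combining this planar factor with the product over the remaining real coordinates yields a nonzero total degree, and the argument concludes as before. Self-consistency of the block $\{p_0\}+PB\subset N$ is used to justify that the bounds $\delta_j^\pm$ are valid on the whole of $B$, which is what keeps the above sign calculations legitimate uniformly in $\lambda\in K$.
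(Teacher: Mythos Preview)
Your argument is correct, and it takes a genuinely different route from the paper. The paper does not give a proof at all beyond the single sentence ``This proposition is the consequence of general theory of the Conley index (\cite{Mc}),'' i.e.\ it invokes the fact that an isolating block of this shape has the Conley index of a hyperbolic equilibrium, hence the maximal invariant set is nonempty and contains a rest point. You instead give a direct, self-contained degree-theoretic argument: the construction of $B$ builds in exactly the coordinatewise sign conditions of Poincar\'e--Miranda on the rectangular faces, and your straight-line homotopy to $D(z-z^*)$ with $D=\mathrm{diag}(\mathrm{sgn}\,\mu_j)$ makes the degree computation explicit. Your treatment of the complex-pair case via the radial component and the planar degree of the rotation--dilation is also correct and matches the sign information the Remark imposes.

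What each approach buys: the Conley-index route is one line but imports substantial machinery and is natural in the broader context of the paper, where isolating blocks and index arguments recur. Your degree argument is more elementary, fully explicit, and makes transparent exactly which inequalities on $\partial B$ are doing the work (the strict containment $\tilde f_j\in(\delta_j^-,\delta_j^+)$ and, in the complex case, the nonvanishing radial component). Either way, the uniformity in $\lambda\in K$ is immediate because the bounds $\delta_j^\pm$ are taken over $N\times K$.
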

This proposition is the consequence of general theory of the Conley index (\cite{Mc}). 
Note that the construction of isolating blocks stated in Proposition \ref{prop-existence-fixpt} around points which are {\em not necessarily equilibria} implies the existence of {\em rigorous} equilibria inside blocks.
With an additional property such as uniqueness or hyperbolicity of equilibria, 
this procedure will provide the smooth $\lambda$-parameter family of equilibria, 
which is stated in Section \ref{section-inv-mfd}. 

\bigskip
When we apply these ideas to the fast-slow system (\ref{fast-slow}), we only consider the fast system $x' = f(x,y,\epsilon)$.
Let $K \subset \mathbb{R}^l$ be as above, $\epsilon_0 > 0$ and $x = p_0$ be a numerical zero of $f(x,y_0,0)$ at $y = y_0\in K$. In this case we set $\epsilon = 0$ for computing numerical zeros. 
Via the procedure in the above, the fast system $x' = f(x,y,\epsilon)$ can be generically formulated by the following form: 
\begin{equation}
\label{ode-ab-coord}
a' = Aa + F_1(a,b,y,\epsilon),\quad b' = Bb + F_2(a,b,y,\epsilon).
\end{equation}
Here $A$ is a $u\times u$-dimensional diagonal matrix each of whose eigenvalues has positive real part. 
Similarly, $B$ is an $s\times s$-dimensional ($u+s=n$) diagonal matrix each of whose eigenvalues has negative real part. 
$F_1$ and $F_2$ are higher order terms depending on $p_0$ and $y_0$. 
Equivalently, writing (\ref{ode-ab-coord}) component-wise, 
\begin{align*}
a_j' &= \mu_j^a a_j  + F_{1,j}(x,y,\epsilon),\quad {\rm Re}\mu_j^a > 0,\quad j=1,\cdots, u,\\
b_j' &= \mu_j^b b_j  + F_{2,j}(x,y,\epsilon),\quad {\rm Re}\mu_j^b < 0,\quad j=1,\cdots, s.
\end{align*}
Let $N\subset \mathbb{R}^n$ be a compact set containing $p_0$. 
As before, assume that every $\mu_j^a$ and $\mu_j^b$ is real and that each $F_{i,j_i}$, $i=1,2$, $j_1 = 1,\cdots, u$, $j_2 = 1,\cdots, s$, admits the following enclosure with respect to $N\times K \times [0,\epsilon_0]$: 
\begin{equation}
\label{error-bounds-fast}
\left\{ F_{i,j_i}(x,y,\epsilon) \mid x = Py+p_0\in N, y\in K, \epsilon \in [0,\epsilon_0]\right\}\subsetneq [\delta_{i,j_i}^-, \delta_{i,j_i}^+].
\end{equation}
Define the set $D_c\subset \mathbb{R}^{n+l}$ by the following: 
\begin{equation}
\label{fast-block}
D_c:= \prod_{j=1}^{u} [a_j^-, a_j^+]\times \prod_{j=1}^{s} [b_j^-, b_j^+]\times K,\quad
[a_j^-, a_j^+]:= \left[-\frac{\delta_{1,j}^+}{\mu^a_j},-\frac{\delta_{1,j}^-}{\mu^a_j} \right],\quad
[b_j^-, b_j^+]:= \left[-\frac{\delta_{2,j}^-}{\mu^b_j},-\frac{\delta_{2,j}^+}{\mu^b_j} \right].
\end{equation}
If $\{(p_0,y_0)\}+PD_c\subset N\times K$ holds with an appropriate linear transform $P$ on $\mathbb{R}^{n+l}$, then this procedure is self-consistent. Under this self-consistence, we immediately know that
\begin{align*}
a_j ' > 0&\quad \forall (a,b,y,\epsilon) \in D_c\times [0,\epsilon_0] \text{ with }a_j= a_j^+,\\
a_j ' < 0&\quad \forall (a,b,y,\epsilon) \in D_c\times [0,\epsilon_0] \text{ with }a_j= a_j^-,\\
b_j ' < 0&\quad \forall (a,b,y,\epsilon) \in D_c\times [0,\epsilon_0] \text{ with }b_j= b_j^+,\\
b_j ' > 0&\quad \forall (a,b,y,\epsilon) \in D_c\times [0,\epsilon_0] \text{ with }b_j= b_j^-.
\end{align*}
Remark that these inequalities hold for all $\epsilon \in [0,\epsilon_0]$. This observation is the key point of the construction not only of limiting critical manifolds but of slow manifolds for $\epsilon \in (0,\epsilon_0]$, which is stated in Section \ref{section-inv-mfd}.

\begin{dfn}[Fast-saddle-type block]\rm
Let $D_c\subset \mathbb{R}^{n+l}$ be constructed by (\ref{fast-block}). 
Assume that $K = \overline{B_l} \subset \mathbb{R}^l$. 
We say $D_c$, equivalently $D:= c_D^{-1}(D)$ via a homeomorphism $c_D$, {\em a fast-saddle-type block}. 
Moreover, set
\begin{align*}
D_c^{f,-} &:= \{(a,b,y)\in D_c \mid a_j =a_j^\pm,\ j=1,\cdots, u\},\\
D_c^{f,+} &:= \{(a,b,y)\in D_c \mid b_j =b_j^\pm,\ j=1,\cdots, s\},\\
D_c^s &:= \{(a,b,y)\in D_c \mid y\in \partial B_l\},\\
D^{f,-} &:= c_D^{-1}(D_c^{f,-}),\quad D^{f,+}:= c_D^{-1}(D_c^{f,+}),\quad D^s:= c_D^{-1}(D_c^s).
\end{align*}
We say $D_c^{f,-}$ (equivalently $D^{f,-}$) {\em the fast-exit of $D$} and $D_c^{f,+}$ (equivalently $D^{f,+}$) {\em the fast-entrance of $D$}. 
\end{dfn}
\begin{rem}\rm
Obviously $D$ is an $h$-set, but the integer $u(D)$ and $s(D)$ in Definition \ref{dfn-hset} are not necessarily equal to $u$ and $s$, respectively. We do not have any assumptions for $D^s$ in the above definition. Indeed, $D$ is not necessarily an isolating block in the sense of Definition \ref{dfn-isolation}. 
That is why we omit the word \lq\lq isolating" from the definition of $D$.
\end{rem}

\bigskip
This construction can be slightly extended as follows. Let $\{\eta_{i,j_i}^\pm\}_{j_1=1,\cdots, u, j_2 = 1,\cdots, s}^{i=1,2}$ be a sequence of positive numbers. Defining 
\begin{align}
\notag
&\hat D_c:= \prod_{j=1}^{u} [\hat a_j^-, \hat a_j^+]\times \prod_{j=1}^{s} [\hat b_j^-, \hat b_j^+]\times K,\\
\label{fast-block-2}
&[\hat a_j^-, \hat a_j^+]:= \left[-\frac{\delta_{1,j}^+}{\mu^a_j} - \eta_{1,j_1}^-,-\frac{\delta_{1,j}^-}{\mu^a_j} + \eta_{1,j_1}^+ \right],\quad
[\hat b_j^-, \hat b_j^+]:= \left[-\frac{\delta_{2,j}^-}{\mu^b_j} - \eta_{2,j_2}^-,-\frac{\delta_{2,j}^+}{\mu^b_j} + \eta_{2,j_2}^+ \right],
\end{align}
we can prove that $\hat D_c$ is also a fast-saddle-type block if $P\hat D_c +\{(p_0,y_0)\} \subset N\times K$ holds. We further know
\begin{align*}
a_j ' > 0&\quad \forall (a,b,y,\epsilon) \in \hat D_c\times [0,\epsilon_0] \text{ with }a_j\in [a_j^+, \hat a_j^+],\\
a_j ' < 0&\quad \forall (a,b,y,\epsilon) \in \hat D_c\times [0,\epsilon_0] \text{ with }a_j\in [\hat a_j^-, a_j^-],\\
b_j ' < 0&\quad \forall (a,b,y,\epsilon) \in \hat D_c\times [0,\epsilon_0] \text{ with }b_j\in [b_j^+, \hat b_j^+]\\
b_j ' > 0&\quad \forall (a,b,y,\epsilon) \in \hat D_c\times [0,\epsilon_0] \text{ with }b_j\in [\hat b_j^-, b_j^-].
\end{align*}

This extension leads to the explicit lower bound estimate of distance between $\hat D^{f,\pm}$ and slow manifolds, which is stated in Section \ref{section-inv-mfd}.

%
%
\subsubsection{Construction around equilibria via rigorous numerics : the predictor-corrector approach}
\label{section-block-pred-corr}

Here we provide another approach for validating fast-saddle-type blocks.
In previous subsection, fast-saddle type blocks are constructed centered at $\{(\bar x, y)\mid y\in K\}$, where $K\subset \mathbb{R}^l$ is a small compact neighborhood of $\bar y \in K$. 
All transformations concerning eigenpairs are done at a point $(\bar x, \bar y)$.
Fig. \ref{fig-prod_corr}-(a) briefly shows this situation.
On the other hand, we can reselect the center of the candidate of blocks so that blocks can be chosen smaller.
As continuations of equilibria with respect to parameters, {\em the predictor-corrector approach} is one of effective approaches.
We now revisit the construction of fast-saddle type blocks with the predictor-corrector approach.

Let $(\bar x, \bar y)$ be a (numerical) equilibrium for (\ref{layer}), i.e., $f(\bar x, \bar y,0)\approx 0$, such that $f_x(\bar x, \bar y,0)$ is invertible.
Let $K$ be a compact neighborhood of $\bar y$.
The central idea is to choose the center as follows instead of $(\bar x, \bar y)$:
\begin{equation}
\label{new-center}
\left(\bar x + \frac{dx}{dy}(\bar y)(y-\bar y),y\right)\equiv \left(\bar x - f_x(\bar x, \bar y)^{-1}f_y(\bar x, \bar y)(y-\bar y),y\right),
\end{equation}
where $x = x(y)$ is the parametrization of $x$ with respect to $y$ such that $\bar x = x(\bar y)$ and that $f(x(y),y, 0) = 0$, which is actually realized in a small neighborhood of $\bar y$ in $\mathbb{R}^l$ since $f_x(\bar x, \bar y)$ is invertible. See Fig. \ref{fig-prod_corr}-(b).
Obviously, the identification in (\ref{new-center}) makes sense thanks to the Implicit Function Theorem.

\begin{figure}[htbp]\em
\begin{minipage}{0.45\hsize}
\centering
\includegraphics[width=8.0cm]{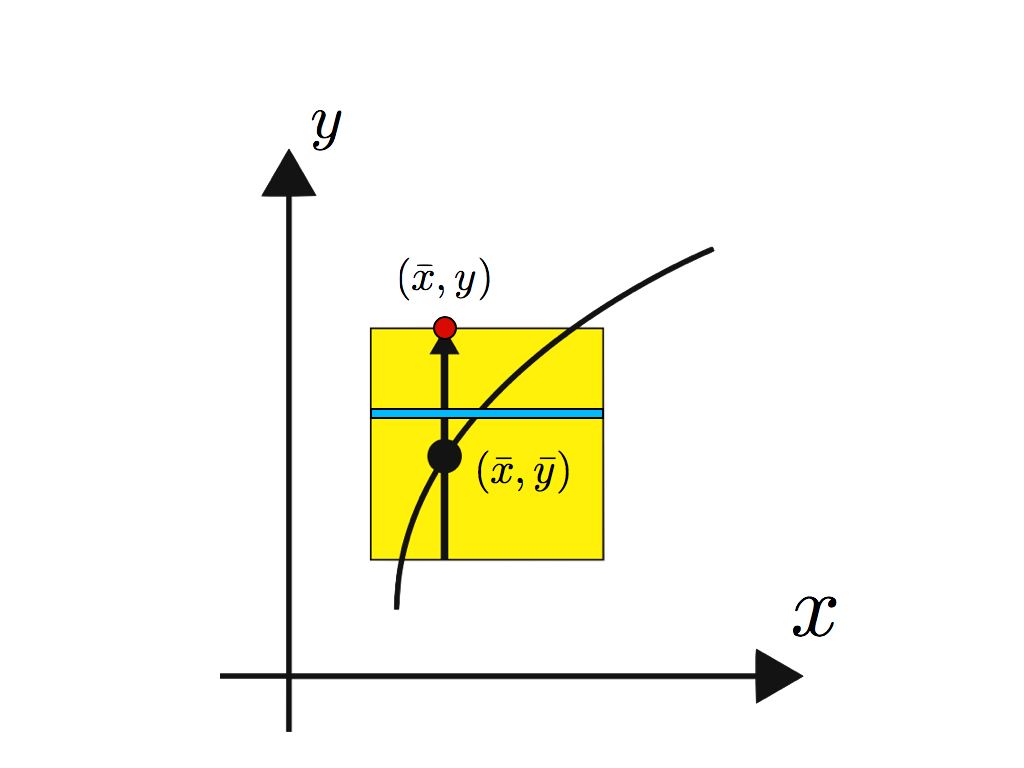}
(a)
\end{minipage}
\begin{minipage}{0.45\hsize}
\centering
\includegraphics[width=8.0cm]{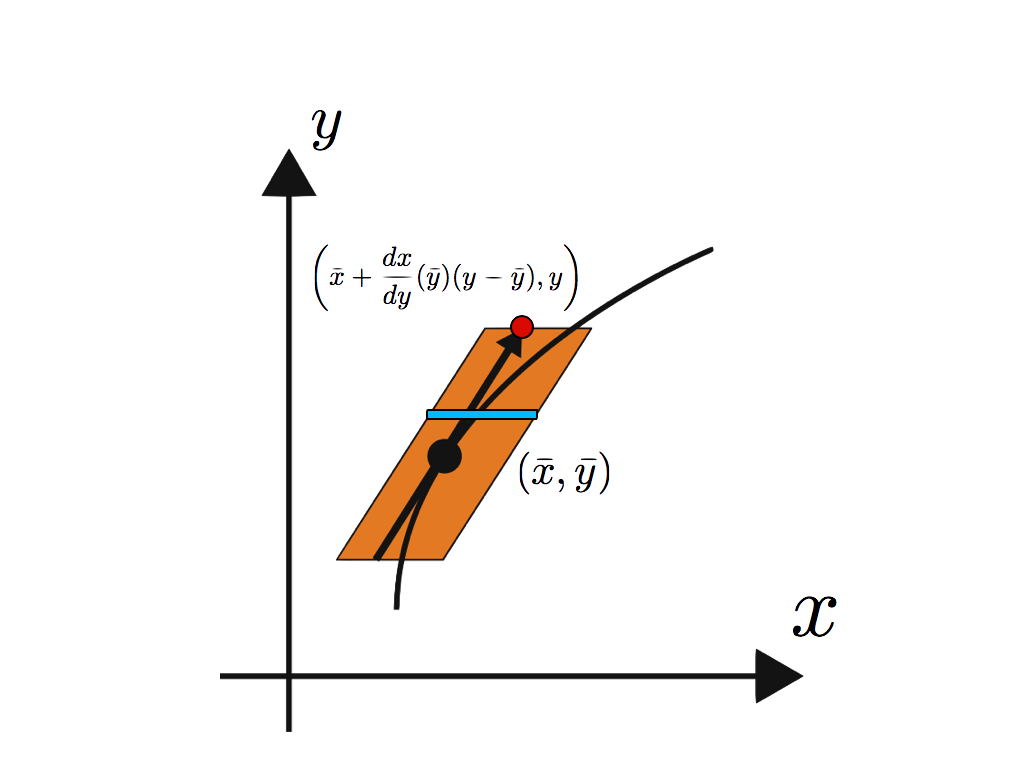}
(b)
\end{minipage}
\caption{Fast-saddle-type blocks with (a) basic form, and (b) predictor-corrector form.}
\label{fig-prod_corr}
(a): The center point $\bar x$ of blocks in $x$-coordinate is fixed.
The blue rectangle shows a fast-saddle-type block at $y\in K$. 
This procedure can be realized just by extending error bounds in (\ref{error-bounds-fast}) to $y$-directions.
However, the \lq\lq higher order" term $F_{i,j_i}(x,y,\epsilon)$ may contain the linear term with respect to $y$. 
This may cause the increase of error bounds.
\par
(b): The center point of blocks is moved along the tangent line (or plane for higher dimensional systems) at $(\bar x, \bar y)$.
The blue rectangle shows a fast-saddle-type block at $y\in K$. 
In principle, size of blocks will be smaller than (a) since the linear term with respect to $y$ in error term is approximately dropped.
\end{figure}

Around the new center, we define the new affine transformation $T : (z,w)\mapsto (x,y)$ as
\begin{equation}
(x,y) = T(z,w) := \left(Pz + \bar x - f_x(\bar x, \bar y)^{-1}f_y(\bar x, \bar y)w, w + \bar y\right).
\end{equation}
where $P$ is a nonsingular matrix diagonalizing $D_x(\bar x, \bar y)$.
Over the new $(z,w)$-coordinate, the fast system $x' = f(x,y,\epsilon)$ is transformed into the following:
\begin{align*}
z' &= P^{-1}\left(x' + \overline{f_x}^{-1}\overline{f_y}w'\right)\\
 	&= P^{-1}\left( f(x,y,\epsilon) + \epsilon\overline{f_x}^{-1}\overline{f_y}g(x,y,\epsilon) \right)\\
	&= P^{-1}\left( \overline{f_x}(Pz - \overline{f_x}^{-1}\overline{f_y} w) + \hat f(z,w,\epsilon)+ \epsilon \overline{f_x}^{-1}\overline{f_y}g(x,y,\epsilon) \right)\\
	&= \Lambda z + P^{-1}\left(-\overline{f_y}w + \hat f(z,w,\epsilon)+ \epsilon \overline{f_x}^{-1}\overline{f_y}g\left(Pz + \bar x - \overline{f_x}^{-1} \overline{f_y} w, w + \bar y, \epsilon\right) \right)\\
	&\equiv  \Lambda z + F(z,w,\epsilon),
\end{align*}
where $\overline{f_x} = f_x(\bar x, \bar y)$ and $\overline{D_y f}=f_y(\bar x, \bar y)$,  and $\Lambda = \diag(\mu^a_1,\cdots, \mu^a_u, \mu^b_1,\cdots, \mu^b_s)$.
The function $\hat f(z,w,\epsilon)$ denotes the higher order term of $f$ with $O(|z|^2 + |w|)$.
Dividing $z$ into $(a,b)$ corresponding to eigenvalues with positive real parts and negative real parts, respectively, as in (\ref{ode-ab-coord}), we can construct a candidate set of fast-saddle-type blocks as in (\ref{fast-block}).
By the similar implementations to Section \ref{section-block-basic}, we can construct fast-saddle-type blocks centered at (\ref{new-center}) for $y\in K$.
\par
Note that the higher order term $\hat f(z,w,\epsilon)$ contains the linear term of $w$ as $\overline{f_y}w$ with small errors in a sufficiently small neighborhood $K$ of $\bar y$.
This fact indicates that, in principle, size of blocks becomes smaller than those in Section \ref{section-block-basic}, as seen in Fig. \ref{fig-prod_corr}-(b).
This benefit is also useful for the following arguments.

%
%
\section{Slow manifold validations}

\label{section-inv-mfd}
In this section, we provide a verification theorem of slow manifolds as well as their stable and unstable manifolds.
Our goal here is to provide sufficient conditions to validate not only the critical manifold $S_0$ but also the perturbed slow manifold $S_\epsilon$ of (\ref{fast-slow})$_\epsilon$ {\em for all $\epsilon \in (0,\epsilon_0]$ in given regions}. 

Recall that Fenichel's results (Propositions \ref{prop-Fen1}, \ref{prop-Fen2}) assume normal hyperbolicity and graph representation of the critical manifold $S_0$.
These assumptions are nontrivial, but very essential to prove the persistence. 
Our verification theorem contains verification of normal hyperbolicity and graph representation of $S_0$.

The main idea is based on discussions in \cite{Jones}. 
For technical reasons, we use a multiple of $\epsilon$ as the new auxiliary variable. 
We set $\epsilon = \eta \sigma$ and $\sigma:= \epsilon_0 > 0$, where $\epsilon_0$ is a given positive number. 
We add the equation $\eta' = 0$ to (\ref{abstract-form0}). 
Furthermore, we consider the following system instead of (\ref{abstract-form0}) for simplicity: 
\begin{equation}
\label{abstract-form}
\begin{cases}
a' = Aa + F_1(x,y,\epsilon) & \\
b' = Bb + F_2(x,y,\epsilon) & \\
y' = \epsilon g(x,y,\epsilon) & \\
\eta' = 0 &
\end{cases}.
\end{equation}
Here $A$ denotes the $u\times u$ matrix which all eigenvalues have positive real part 
and $B$ denotes the $s\times s$ matrix which all eigenvalues have negative real part. 
Note that matrices $A$ and $B$ are (locally) independent of $y$.
This formulation is natural when we the construction of fast-saddle-type blocks stated in Section \ref{section-isolatingblock} is taken into account.

Let $M$ be a fast-saddle type block for (\ref{fast-slow}). 
Section \ref{section-isolatingblock} implies that the coordinate representation, $M_c$, is given by (\ref{fast-block}) (or (\ref{fast-block-2})), which is directly obtained from the system (\ref{abstract-form}). 
A fast-saddle-type block $M$ has the form (\ref{fast-block}), which has $a$-coordinate, $b$-coordinate and $y$-coordinate following (\ref{abstract-form}). 
With this in mind, we put  following notations.

\begin{nota}\rm
Let $\pi_a$, $\pi_b$, $\pi_y$, $\pi_{a,b}$, $\pi_{a,y}$ and $\pi_{b,y}$ be the projection onto the $a$-, $b$-, $y$-, $(a,b)$-, $(a,y)$- and $(b,y)$-coordinate in $M$, respectively. If no confusion arises, we drop the phrase \lq\lq in $M$" in their notations.

We identify nonlinear terms $F_1(x,y,\epsilon)$, $F_2(x,y,\epsilon)$ and $g(x,y,\epsilon)$ with $F_1(a,b,y,\epsilon)$, $F_2(a,b,y,\epsilon)$ and $g(a,b,y,\epsilon)$, respectively, via an affine transform $x(\in \mathbb{R}^n)\mapsto (a,b)\in \mathbb{R}^{u+s}$.

For a squared matrix $A$ with ${\rm Spec}(A) \subset \{\lambda \in \mathbb{C}\mid \re \lambda > 0\}$, $\lambda_A > 0$ denotes a positive number such that
\begin{equation}
\label{bound-unst-ev}
\lambda_A < \re \lambda,\quad \forall \lambda \in {\rm Spec}(A).
\end{equation}
Similarly, for a squared matrix $B$ with ${\rm Spec}(B) \subset \{\lambda \in \mathbb{C}\mid \re \lambda < 0\}$, $\mu_B < 0$ denotes a negative number such that
\begin{equation}
\label{bound-st-ev}
\mu_B > \re \mu,\quad \forall \mu \in {\rm Spec}(B).
\end{equation}

Finally, let $\dist(\cdot, \cdot)$ be the distance between $A$ and $B$, $A,B\subset \mathbb{R}^{n+l}$, given by $\dist(A,B) = \inf_{x\in A, y\in B}|x-y|$.
\end{nota}

The following assumptions are other keys of our verification theorem, so-called {\em cone conditions}.
\begin{ass}
\label{ass-cone-unstable}
Consider (\ref{abstract-form}). Let $N\subset \mathbb{R}^{n+l}$ be a fast-saddle-type block such that the coordinate representation $N_c$ is actually given by (\ref{fast-block}), and $z = (x,y,\epsilon)$. 

Define $\sigma_{\mathbb{A}_1^u} = \sigma_{\mathbb{A}_1^u}(z)$, $\sigma_{\mathbb{A}_2^u} = \sigma_{\mathbb{A}_2^u}(z)$, $\sigma_{\mathbb{B}_1^u} = \sigma_{\mathbb{B}_1^u}(z)$, $\sigma_{\mathbb{B}_2^u} = \sigma_{\mathbb{B}_2^u}(z)$, $\sigma_{g_1^u} = \sigma_{g_1^u}(z)$ and $\sigma_{g_2^u} = \sigma_{g_2^u}(z)$ be maximal singular values of the following matrices at $z$, respectively: 
\begin{align*}
\sigma_{\mathbb{A}_1^u}: \ & \mathbb{A}_1^u(z) = \left(  \frac{\partial F_1}{\partial a}(z) \right) \quad \text{: $u\times u$-matrix},\\
\sigma_{\mathbb{A}_2^u}: \ & \mathbb{A}_2^u(z) =  \left( \frac{\partial F_1}{\partial b}(z) \quad \frac{\partial F_1}{\partial y}(z) \quad \frac{\partial F_1}{\partial \eta}(z)\right)\quad \text{: $u\times (s+l+1)$-matrix},\\
\sigma_{\mathbb{B}_1^u}: \ & \mathbb{B}_1^u(z) = \left( \frac{\partial F_2}{\partial a}(z)\right) \quad \text{: $s\times u$-matrix},\\
\sigma_{\mathbb{B}_2^u}: \ & \mathbb{B}_2^u(z) =  \left( \frac{\partial F_2}{\partial b}(z) \quad \frac{\partial F_2}{\partial y}(z) \quad \frac{\partial F_2}{\partial \eta}(z)\right)\quad \text{: $s\times (s+l+1)$-matrix},\\
\sigma_{g_1^u}: \ & g_1^u(z) = \left(  \frac{\partial g}{\partial a}(z)\right) \quad \text{: $l\times u$-matrix},\\
\sigma_{g_2^u}: \ & g_2^u(z) = \left(\frac{\partial g}{\partial b}(z) \quad \frac{\partial g}{\partial y}(z) \quad \frac{\partial g}{\partial \eta}(z)\right)\quad \text{: $l\times (s+l+1)$-matrix}.
\end{align*}
Assume that the following inequalities hold:
\begin{align}
\label{ineq-graph-unstable}
&\lambda_A - \sup \sigma_{\mathbb{A}_1^u} + \sup \sigma_{\mathbb{A}_2^u} > 0,\\
\label{ineq-cone-unstable}
&\lambda_A + |\mu_B| -  \left\{ \sup \sigma_{\mathbb{A}_1^u} + \sup \sigma_{\mathbb{A}_2^u} + \sup \sigma_{\mathbb{B}_1^u} + \sup \sigma_{\mathbb{B}_2^u} + \sigma \left( \sup \sigma_{g_1^u} + \sup \sigma_{g_2^u} \right) \right\} >0,
\end{align}
where $\lambda_A$ and $\mu_B$ are real numbers satisfying (\ref{bound-unst-ev}) and (\ref{bound-st-ev}), respectively,
and the notation \lq\lq $\ \sup$" means the supremum on $N\times [0,\epsilon_0]$.
\end{ass}

\begin{ass}
\label{ass-cone-stable}
Consider (\ref{abstract-form}). Let $N\subset \mathbb{R}^{n+l}$ be a fast-saddle-type block such that the coordinate representation $N_c$ is actually given by (\ref{fast-block}), and $z = (x,y,\epsilon)$. 

Define $\sigma_{\mathbb{A}_1^s} = \sigma_{\mathbb{A}_1^s}(z)$, $\sigma_{\mathbb{A}_2^s} = \sigma_{\mathbb{A}_2^s}(z)$, $\sigma_{\mathbb{B}_1^s} = \sigma_{\mathbb{B}_1^s}(z)$, $\sigma_{\mathbb{B}_2^s} = \sigma_{\mathbb{B}_2^s}(z)$, $\sigma_{g_1^s} = \sigma_{g_1^s}(z)$ and $\sigma_{g_2^s} = \sigma_{g_2^s}(z)$ be maximal singular values of the following matrices at $z$, respectively: 
\begin{align*}
\sigma_{\mathbb{A}_1^s}: \ & \mathbb{A}_1^s(z) = \left(  \frac{\partial F_1}{\partial b}(z) \right) \quad \text{: $u\times s$-matrix},\\
\sigma_{\mathbb{A}_2^s}: \ & \mathbb{A}_2^s(z) =  \left( \frac{\partial F_1}{\partial a}(z) \quad \frac{\partial F_1}{\partial y}(z) \quad \frac{\partial F_1}{\partial \eta}(z)\right)\quad \text{: $u\times (u+l+1)$-matrix},\\
\sigma_{\mathbb{B}_1^s}: \ & \mathbb{B}_1^s(z) = \left( \frac{\partial F_2}{\partial b}(z)\right) \quad \text{: $s\times s$-matrix},\\
\sigma_{\mathbb{B}_2^s}: \ & \mathbb{B}_2^s(z) =  \left( \frac{\partial F_2}{\partial a}(z) \quad \frac{\partial F_2}{\partial y}(z) \quad \frac{\partial F_2}{\partial \eta}(z)\right)\quad \text{: $s\times (u+l+1)$-matrix},\\
\sigma_{g_1^s}: \ & g_1^s(z) = \left(  \frac{\partial g}{\partial b}(z)\right) \quad \text{: $l\times s$-matrix},\\
\sigma_{g_2^s}: \ & g_2^s(z) = \left(\frac{\partial g}{\partial a}(z) \quad \frac{\partial g}{\partial y}(z) \quad \frac{\partial g}{\partial \eta}(z)\right)\quad \text{: $l\times (u+l+1)$-matrix}.
\end{align*}
Assume that the following inequalities hold:
\begin{align}
\label{ineq-graph-stable}
&|\mu_B| - \sup \sigma_{\mathbb{B}_1^s} + \sup \sigma_{\mathbb{B}_2^s} > 0,\\
\label{ineq-cone-stable}
&\lambda_A + |\mu_B| -  \left\{ \sup \sigma_{\mathbb{A}_1^s} + \sup \sigma_{\mathbb{A}_2^s} + \sup \sigma_{\mathbb{B}_1^s} + \sup \sigma_{\mathbb{B}_2^s} + \sigma \left(\sup \sigma_{g_1^s} + \sup \sigma_{g_2^s} \right) \right\} >0,
\end{align}
where $\lambda_A$ and $\mu_B$ are real numbers satisfying (\ref{bound-unst-ev}) and (\ref{bound-st-ev}), respectively,
and the notation \lq\lq $\ \sup$" means the supremum on $N\times [0,\epsilon_0]$.
\end{ass}

\begin{dfn}[Cone conditions]\rm
\label{dfn-cone}
We say a fast-saddle-type block $N$ satisfies {\em the unstable cone condition} for (\ref{fast-slow})$_\epsilon$ if Assumption \ref{ass-cone-unstable} holds. 
Similarly, we say a fast-saddle-type block $N$ satisfies {\em the stable cone condition} for (\ref{fast-slow})$_\epsilon$ if Assumption \ref{ass-cone-stable} holds.
They make sense only when $\epsilon \in [0,\epsilon_0]$. 
\end{dfn}

By following the proof of Theorem 4 in \cite{Jones}, we obtain the following theorem, which is the main result in this section.
\begin{thm}
\label{thm-inv-mfd-rigorous}
Consider (\ref{abstract-form}). Let $N\subset \mathbb{R}^{n+l}$ be a fast-saddle-type block such that the coordinate representation $N_c$ is actually given by (\ref{fast-block}) with $\pi_y N = K\subset \mathbb{R}^l$. Then
\begin{enumerate}
\item if we assume  
\begin{equation}
\label{Lyapunov-condition}
\lambda_A - \left[ \sup \sigma_{\mathbb{A}_1^u} + \frac{ \sup \sigma_{\mathbb{A}_2^u} +  \sup \sigma_{\mathbb{B}_2^s} }{2} \right] > 0,\quad 
|\mu_B| -\left[ \sup \sigma_{\mathbb{B}_1^s} + \frac{ \sup \sigma_{\mathbb{A}_2^u} +  \sup \sigma_{\mathbb{B}_2^s} }{2} \right] > 0,
\end{equation}
there is an $l$-dimensional normally hyperbolic invariant manifold $S_0 \cap N$ in $\mathbb{R}^{n+l}$ for the limit system (\ref{layer}) such that $S_0 \cap N$ is the graph of a smooth function depending on $y$.
\item if the stable cone condition is satisfied, there exists a Lipschitz function $a = h_s(b,y,\epsilon)$ defined in $\overline{B_s}\times \overline{B_l} \times [0,\epsilon_0]$ such that the graph
\begin{equation*}
W^s(S_\epsilon):= \{(a,b,y,\epsilon)\mid a = h_s(b,y,\epsilon)\}
\end{equation*}
is locally invariant with respect to (\ref{abstract-form}). 
Moreover, $h_s(b,y,\epsilon)$ is $C^r$ for any $r < \infty$ on $\overline{B_s}\times \overline{B_l} \times [0,\epsilon_0]$.
\item if the unstable cone condition is satisfied, there exists a Lipschitz function $b = h_u(a,y,\epsilon)$ defined in $\overline{B_u}\times \overline{B_l} \times [0,\epsilon_0]$ such that the graph
\begin{equation*}
W^u(S_\epsilon):= \{(a,b,y,\epsilon)\mid b = h_u(a,y,\epsilon)\}
\end{equation*}
is locally invariant with respect to (\ref{abstract-form}). Moreover, $h_u(a,y,\epsilon)$ is $C^r$ for any $r < \infty$ on $\overline{B_u}\times \overline{B_l} \times [0,\epsilon_0]$.
\end{enumerate}
As a consequence, under stable and unstable cone conditions, $S_\epsilon:= W^s(S_\epsilon)\cap W^u(S_\epsilon)$ can be defined by the locally invariant $l$-dimensional submanifold of $\mathbb{R}^{n+l}$ inside $N$ for any $\epsilon \in [0,\epsilon_0]$. 
\end{thm}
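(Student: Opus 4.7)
The plan is to follow the strategy of Theorem 4 in \cite{Jones}, adapted to the augmented system (\ref{abstract-form}) in which $(y,\eta)$ plays the role of slow variables and in which all singular-value estimates are taken uniformly over $N\times[0,\epsilon_0]$. Throughout, $N$ being a fast-saddle-type block serves two purposes: its exit/entrance structure on the $a$- and $b$-faces topologically confines candidate trajectories until they leave $N$, while the coordinate representation (\ref{fast-block}) provides a common domain on which the bounds of Assumptions \ref{ass-cone-unstable}--\ref{ass-cone-stable} are evaluated. The supremum constants $\sup\sigma_{\ast}$ take over the role that linear-algebraic norm bounds play in the classical proof.

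For item (1), I would work directly with the parametrized layer problem (\ref{layer}), viewing $y\in K$ as a parameter. For each $y$, Proposition \ref{prop-existence-fixpt} applied to the slice $N\cap\{y\}$ yields an equilibrium. The Lyapunov-type condition (\ref{Lyapunov-condition}) forces the $a$-block of the linearization, $A+\partial F_1/\partial a$, to have spectral bound at least $\lambda_A - \sup\sigma_{\mathbb{A}_1^u} - \tfrac{1}{2}(\sup\sigma_{\mathbb{A}_2^u}+\sup\sigma_{\mathbb{B}_2^s})>0$, and symmetrically for the $b$-block. This gives hyperbolicity, hence uniqueness of the equilibrium in each slice via a monotone quadratic Lyapunov function on $|a|^2-|b|^2$. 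Smooth dependence $h^0\in C^\infty(K)$ follows from the Implicit Function Theorem, and normal hyperbolicity is the spectral gap just established.

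For items (2) and (3), I set up cone fields and run a graph-transform / Wa\.{z}ewski argument. For the unstable manifold, define at each $z=(a,b,y,\eta)\in N$ the unstable quadratic form
\begin{equation*}
Q^u(\Delta z):= |\Delta a|^2 - |\Delta b|^2 - |\Delta y|^2 - |\Delta \eta|^2,
\end{equation*}
and differentiate along solutions of the variational equation of (\ref{abstract-form}). The diagonal contribution is bounded below by $2\lambda_A|\Delta a|^2 + 2|\mu_B|(|\Delta b|^2+|\Delta y|^2)$ (the $\Delta\eta$-contribution is exactly zero since $\eta'=0$), while the off-diagonal couplings produced by $F_1$, $F_2$ and $\epsilon g$ are bounded in modulus by $\sup\sigma_{\mathbb{A}_1^u},\ldots,\sup\sigma_{\mathbb{B}_2^u}$ and $\sigma(\sup\sigma_{g_1^u}+\sup\sigma_{g_2^u})$, each times $|\Delta z|^2$. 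Inequality (\ref{ineq-cone-unstable}) is precisely the algebraic condition ensuring $\frac{d}{dt}Q^u>0$ whenever $Q^u\ge 0$ and $\Delta z\ne 0$, so the unstable cone field is forward-invariant in $N$. The graph inequality (\ref{ineq-graph-unstable}) then rules out vertical tangents and, combined with the $a$-exit structure of $N$, forces $W^u(S_\epsilon)\cap N$ to be the graph of a Lipschitz function $h_u(a,y,\epsilon)$ on $\overline{B_u}\times\overline{B_l}\times[0,\epsilon_0]$; $C^r$ regularity follows from the fibre-contraction argument of Hirsch--Pugh--Shub applied to the higher jets. Item (3) is the time-reversed mirror with Assumption \ref{ass-cone-stable}.

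For the final assertion $S_\epsilon = W^s(S_\epsilon)\cap W^u(S_\epsilon)$, I would solve the coupled Lipschitz system $a=h_s(b,y,\epsilon)$, $b=h_u(a,y,\epsilon)$ as a contraction in $(a,b)$ for each $(y,\epsilon)$, whose contraction constant is bounded away from $1$ by (\ref{ineq-graph-stable}) and (\ref{ineq-graph-unstable}). The main technical obstacle I anticipate is the uniform-in-$\epsilon$ control of cone invariance all the way down to $\epsilon=0$: the cross-coupling from $\epsilon g$ vanishes only in the singular limit, and replacing the floating $\epsilon$ by its a priori upper bound $\sigma=\epsilon_0$ inside (\ref{ineq-cone-unstable}) and (\ref{ineq-cone-stable}) is precisely the device that makes the estimates close uniformly over the full interval $[0,\epsilon_0]$ rather than only asymptotically as in the classical Fenichel theorem.
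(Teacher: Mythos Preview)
Your outline follows the paper's architecture---a Lyapunov function for (i), and a Wa\.{z}ewski-plus-cone argument for (ii)/(iii)---but there is one genuine gap and one bookkeeping error.

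The gap is in the existence step. For the Wa\.{z}ewski/No-Retract argument to work, the immediate exit set of the block must be exactly the fast-unstable face $\{a\in\partial B_u\}$, so that each fibre $\{(a,\hat\zeta)\}$ retracts onto its boundary sphere. But for $\epsilon>0$ nothing in your setup controls the slow face $\{y\in\partial K\}$: trajectories can leak through it, which contaminates the exit set and kills the retraction. The paper handles this by enlarging $K$ to a slightly bigger $\hat K$ and modifying the $y$-equation with a bump term $\delta\rho(y)n_y$ (with $\delta>\epsilon_0\sup|g|$) so that $\partial\hat K$ is forced to be an entrance when building $W^s$ (respectively an exit, via the outward normal, when building $W^u$). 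After restricting back to $K$, the modified and original systems agree. You do not mention this modification, and without it the existence argument does not close.

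The bookkeeping error is in your differentiation of $Q^u$. The claimed diagonal lower bound $2\lambda_A|\Delta a|^2 + 2|\mu_B|(|\Delta b|^2+|\Delta y|^2)$ is wrong in the $|\Delta y|^2$ slot: the $y$-equation is $y'=\epsilon g$, so its linearization contributes no $B$-type term, only $O(\sigma)$ cross-couplings bounded by $\sigma(\sigma_{g_1^u}+\sigma_{g_2^u})$. The slow variable is a genuine center direction with no intrinsic rate. The cone estimate still closes, but only because on the cone boundary one has $|\Delta\zeta|=|\Delta a|$, which lets you absorb the $\Delta y$-contributions into the fast coefficients; this is precisely how the paper's Proposition~\ref{prop-cone} arrives at the combination $\lambda_A+|\mu_B|-\{\cdots\}$ multiplying $|\Delta a|^2$. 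Also, a minor slip: you invoke the ``$a$-exit structure'' for $W^u$, but $W^u$ consists of backward-bounded points, so the relevant No-Retract argument runs in reverse time and uses the $b$-faces as the (backward) exit.
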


\begin{proof}
Our proof is based on the slight modification of the proof of Theorem 4 in \cite{Jones} for our current setting. 
The proof consists of three parts: 
(i) normal hyperbolicity and graph representation of $S_0$, 
(ii) existence of the graph representation of $W^s(S_\epsilon)$ and 
(iii) coincidence of the graph of such a derived function with all of $W^s(S_\epsilon)$. 
We shall prove them by tracing discussions in \cite{Jones}. 
Readers who are not familiar with the invariant manifold theorem in singular perturbation problems very well can compare our discussions with Chapter 2 in \cite{Jones}. 
Here we shall derive discussions only for the stable manifold $W^s(S_\epsilon)$, while discussions for the unstable manifold can be also derived replacing matrices in Assumption \ref{ass-cone-stable} by those in Assumption \ref{ass-cone-unstable} and time reversal. 
Remark that an assumption for $N_c$ implies that all discussions are done in terms of (\ref{fast-block}). Without the loss of generality, we may assume $N=N_c$.

\bigskip
\begin{description}
\item[(i): Normal hyperbolicity and the graph representation of $S_0$.] 
\end{description}
First of all, consider the special case $\epsilon = 0$.
In this case, the slow variable $y$ is just a parameter. 
Inequalities in (\ref{Lyapunov-condition}) yield the existence of local Lyapunov functions, which is a consequence of arguments in \cite{Mat} with a little modifications as follows. 
Fix $y\in K$. Define a function $L_y:\mathbb{R}^n\to \mathbb{R}$ by
\begin{equation*}
L_y(a(t),b(t)) := - |a(t) - a_y|^2 + |b(t) - b_y|^2,
\end{equation*}
where $(a_y, b_y, y)$ is an equilibrium for (\ref{abstract-form}) with $\epsilon = 0$ in $N$ at $y$. The existence of $(a_y, b_y, y)$ follows from Proposition \ref{prop-existence-fixpt}.

Then
\begin{align*}
\frac{dL_y}{dt}&(a(t),b(t))\mid_{t=0}= -\frac{d}{dt}(a - a_y)^T \cdot (a - a_y) -(a- a_y)^T \cdot \frac{d}{dt}(a - a_y)\\
	&\quad + \frac{d}{dt}(b - b_y)^T \cdot (b - b_y) + (b - b_y)^T \cdot \frac{d}{dt}(b - b_y) \\
&= - (Aa + F_1(a,b,y,0))^T \cdot (a - a_y) -(a - a_y)^T \cdot (Aa + F_1(a,b,y,0))\\
	&\quad + (Bb + F_1(a,b,y,0))^T \cdot (b - b_y) + (b - b_y)^T \cdot (Bb + F_1(a,b,y,0)). 
\end{align*} 
Let $0_k$ be the $k$-dimensional $k$-vector and $O_{k_1,k_2}$ be the $(k_1,k_2)$-zero matrix. 
Now we obtain
\begin{align*}
- &(Aa + F_1(a,b,y,0))^T \cdot (a - a_y) \\
	&= -\begin{pmatrix}
a-a_y \\ b-b_y \\ 0_l \\ 0
\end{pmatrix}^T \begin{pmatrix}
A + \frac{\partial F_1}{\partial a}(\tilde z_1) & \frac{\partial F_1}{\partial b}(\tilde z_1) & \frac{\partial F_1}{\partial y}(\tilde z_1) & \frac{\partial F_1}{\partial \eta}(\tilde z_1)\\
O_{u,s} & O_{s,s} & O_{l,s} & O_{1,s}\\
O_{u,l} & O_{s,l} & O_{l,l} & O_{1,l}\\
\end{pmatrix}^T
\begin{pmatrix}
a-a_y \\ b-b_y \\ 0_l 
\end{pmatrix}\\
	&\leq -\lambda_A |a-a_y|^2 + \sigma_{\mathbb{A}_1^u}(\tilde z_1) |a-a_y|^2 +  \sigma_{\mathbb{A}_2^u}(\tilde z_1) |a-a_y| |b-b_y|\\
	&\leq -\lambda_A |a-a_y|^2 + \sigma_{\mathbb{A}_1^u}(\tilde z_1) |a-a_y|^2 +  \frac{\sigma_{\mathbb{A}_2^u}(\tilde z_1) }{2}(|a-a_y|^2 + |b-b_y|^2)
\end{align*}
via the Mean Value Theorem, where $\tilde z_1=(\tilde a_1, \tilde b_1, y,0)\in N\times [0,\epsilon_0]$. The same inequalities hold for $(a - a_y)^T \cdot (Aa + F_1(a,b,y,0))$. Similarly, we obtain
\begin{align*}
&(Bb + F_2(a,b,y,0))^T \cdot (b - b_y) \\
	&= \begin{pmatrix}
a-a_y \\ b-b_y \\ 0_l \\ 0
\end{pmatrix}^T \begin{pmatrix}
O_{u,u} & O_{s,u} & O_{l,u} & O_{1,u}\\
 \frac{\partial F_2}{\partial a}(\tilde z_2) & B+ \frac{\partial F_2}{\partial b}(\tilde z_2) & \frac{\partial F_2}{\partial y}(\tilde z_2) & \frac{\partial F_2}{\partial \eta}(\tilde z_2)\\
O_{u,l} & O_{s,l} & O_{l,l} & O_{1,l}\\
\end{pmatrix}^T
\begin{pmatrix}
a-a_y \\ b-b_y \\ 0_l 
\end{pmatrix}\\
	&\leq \mu_B |b-b_y|^2 + \sigma_{\mathbb{B}_1^s}(\tilde z_2) |b-b_y|^2 +  \sigma_{\mathbb{B}_2^s}(\tilde z_2) |a-a_y| |b-b_y|\\
	&\leq \mu_B |b-b_y|^2 + \sigma_{\mathbb{B}_1^s}(\tilde z_2) |b-b_y|^2 +  \frac{\sigma_{\mathbb{B}_2^s}(\tilde z_2) }{2}(|a-a_y|^2 + |b-b_y|^2)
\end{align*}
via the Mean Value Theorem, where $\tilde z_2=(\tilde a_2, \tilde b_2, y,0)\in N\times [0,\epsilon_0]$. The same inequalities hold for $(b - b_y)^T \cdot (Bb + F_2(a,b,y,0))$. Summarizing these inequalities, we obtain
\begin{align}
\notag
&\frac{1}{2}\frac{dL_y}{dt}(a(t),b(t))\mid_{t=0} \\
\notag
	&\leq \left(-\lambda_A + \sigma_{\mathbb{A}_1^u}(\tilde z_1) +  \frac{\sigma_{\mathbb{A}_2^u}(\tilde z_1) + \sigma_{\mathbb{B}_2^s}(\tilde z_2)}{2}\right)|a-a_y|^2 + \left( \mu_B + \sigma_{\mathbb{B}_1^s}(\tilde z_2) + \frac{\sigma_{\mathbb{A}_2^u}(\tilde z_1) + \sigma_{\mathbb{B}_2^s}(\tilde z_2)}{2}\right)|b-b_y|^2\\
\notag
	&\leq \left(-\lambda_A + \sup_{N\times [0,\epsilon_0]}\sigma_{\mathbb{A}_1^u} +  \frac{ \sup_{N\times [0,\epsilon_0]} \sigma_{\mathbb{A}_2^u} + \sup_{N\times [0,\epsilon_0]} \sigma_{\mathbb{B}_2^s}}{2}\right)|a-a_y|^2\\
\notag
	&\quad  + \left( \mu_B + \sup_{N\times [0,\epsilon_0]} \sigma_{\mathbb{B}_1^s} + \frac{ \sup_{N\times [0,\epsilon_0]} \sigma_{\mathbb{A}_2^u} + \sup_{N\times [0,\epsilon_0]} \sigma_{\mathbb{B}_2^s}}{2}\right)|b-b_y|^2.
\end{align} 
The right-hand side is strictly negative unless $(a,b,y) = (a_y, b_y, y)$, which follows from (\ref{ineq-graph-unstable}) and (\ref{ineq-graph-stable}). Obviously, $dL_y/dt = 0$ if and only if $(a,b,y) = (a_y, b_y, y)$. Therefore, $L_y$ is a Lyapunov function. This observation leads to the following facts:
\begin{itemize}
\item $(a,b,y) = (a_y, b_y, y)$ is an equilibrium for (\ref{layer}) which is unique in $N$ with fixed $y$.
\item The linearized matrix
\begin{equation*}
\begin{pmatrix}
-A - \frac{\partial F_1}{\partial a}(\tilde z_1) & -\frac{\partial F_1}{\partial b}(\tilde z_1)\\
 \frac{\partial F_2}{\partial a}(\tilde z_2) & B+ \frac{\partial F_2}{\partial b}(\tilde z_2)
\end{pmatrix}
\end{equation*}
is strictly negative definite for all $\tilde z_1, \tilde z_2\in N$. This implies that $(a,b,y) = (a_y, b_y, y)$ is a hyperbolic fixed point for the layer problem (\ref{layer}).
\end{itemize}
These observations hold for arbitrary $y\in K$. Thanks to the Implicit Function Theorem, we can construct the graph of a smooth $y$-parameter family of such hyperbolic fixed points, which is $S_0$. In particular, $S_0$ has the structure of normally hyperbolic invariant manifold with graph representation.

\begin{description}
\item[(ii): The graph representation of $W^s(S_\epsilon)$.] 
\end{description}
We perform the modification to deal with slow directions. 
We choose a set $\hat K$ whose interior contains $K$ so that its boundary is given by the condition $\hat \nu(y) = 0$ for some $C^\infty$ function $\hat \nu(y)$ and that $\hat \nu(y)$ satisfies $\nabla \hat \nu(y)\not = 0$ for all $y\in \partial \hat K$. 
The function $\hat \nu(y)$ is assumed to be normalized 
so that $\nabla \hat \nu (y) = n_y$ is a unit {\em inward}
\footnote{In the lecture note by Jones \cite{Jones}, $n_y$ denotes {\em outward} normal vector. We should remark that it is wrong. 
Indeed, we determine the immediate exit and entrance of a block later. 
Our claim here is that the exit is $\{(a,\zeta \mid a\in \partial (\pi_a N))\}$ and does not contain $\partial \hat K$. 
To this end, $n_y$ should be the inward normal vector.
If we prove the existence of $W^u(S_\epsilon)$, $n_y$ is chosen as the outward normal vector.
} 
normal to $\partial \hat K$. 
Let $\rho(y)$ be a $C^\infty$ function that has the following values
\begin{equation*}
\rho(y) = \begin{cases}
	1 & \text{if $y\in \hat K^c$,}\\
	0 & \text{if $y\in K$}
	\end{cases}.
\end{equation*}
We now modify our system (\ref{abstract-form}) by adding the term $\delta \rho(y)n_y$, where $\delta$ is a positive number which remains to be chosen.

We add an equation for the small parameter $\epsilon$. 
Following Jones \cite{Jones}, we use a multiple of $\epsilon$ as the new auxiliary variable,  $\epsilon = \eta \sigma$, and append the equation $\eta'=0$ to (\ref{abstract-form}), as noted in the beginning of this section. 
We then obtain the system
\footnote{This modification guarantees that the vector field has the {\em inflowing property} with respect to $(\hat N, \partial \hat K)$ (e.g. \cite{CLY, F}). 
Namely, at any point $p\in \partial \hat K$, the vector field at $p$ goes inside $\hat N$.
This property plays an important role to prove the existence of center-stable manifolds via the graph representation.
In the case of $W^u(S_\epsilon)$ or generally center-unstable manifolds, we modify the vector field so that the vector field has the {\em overflowing property} with respect to $(\hat N, \partial \hat K)$.
Namely, at any point $p\in \partial \hat K$, the vector field at $p$ goes outside $\hat N$ or tangent to $\partial \hat N$.
In our case, it is sufficient to choose the unit vector $n_y$ as the unit {\em outward} normal vector to $\partial \hat K$ and to choose $\nu$ in the similar manner to $W^s(S_\epsilon)$.
} 
\begin{equation}
\label{abstract-form-modify}
\begin{cases}
a' = Aa + F_1(x,y,\epsilon), & \\
b' = Bb + F_2(x,y,\epsilon), & \\
y' = \eta \sigma g(x,y,\epsilon) + \delta \rho(y) n_y, & \\
\eta' = 0. & 
\end{cases}
\end{equation}
As in (\ref{abstract-form}) it is understood that $x$ is a function of $a$ and $b$, which is already realized in Section \ref{section-isolatingblock}, and $\epsilon$ is a function of $\eta$. 
If Theorem \ref{thm-inv-mfd-rigorous} is restated with $\epsilon$ replaced by $\eta$ and proved in that formulation, 
its original version can easily be recaptured by substituting $\epsilon$ back in.

We define the new family of sets $\hat N$ by
\begin{equation*}
\hat N:= \pi_{a,b}N\times \hat K \times [0,1].
\end{equation*}
Define the set 
\begin{equation*}
\Gamma_s:= \{(a,b,y,\eta)\mid \tilde \varphi_\epsilon (t, (a,b,y,\eta)) \in \hat N \text{ for all }t\geq 0\},
\end{equation*}
where $\tilde \varphi_\epsilon$ is the flow generated by (\ref{abstract-form-modify}).
In this part we shall prove that $\Gamma_s$ is the graph of a function of $(b,y,\eta)$, say, $a=h_s(b,y,\eta)$. Set $\zeta = (b,y,\eta)$ and $\hat N_{\hat \zeta}$ denote the crossing section of $\hat N$ at $\zeta = \hat \zeta$: 
\begin{equation*}
\hat N_{\hat \zeta}:= \left\{(a,\hat \zeta)\in \hat N \right\}.
\end{equation*}
We shall show that there is at least one point $(a,\hat \zeta) \in \hat N$ for which $\tilde \varphi_\epsilon (t, (a,\hat \zeta)) \in \hat N$ for all $t\geq 0$. To achieve this, we use the Wazewski Principle. 
If the exit $\hat N^\exit$ is closed in $\partial \hat N$ and $\hat N$ is an isolating block for (\ref{abstract-form-modify}), then the mapping $W:\hat N \to \hat N^\exit$ given by
\begin{equation*}
W(z) = \tilde \varphi_\epsilon(\tau^-(z),z),\quad z=(a,b,y,\eta),\ \tau^-(z) = \sup\{t\geq 0\mid \tilde \varphi_\epsilon([0,t],z)\cap N^\exit = \emptyset\},
\end{equation*}
is continuous. 
It is the general consequence of isolating blocks (see \cite{Smo}, Chapter 22. In \cite{Jones}, such $\hat N$ is called Wazewski set). 
Since $N$ is of saddle-type, the flow is repelling on the fast-exit $\hat N^{f,-}$ for all $\eta\in [0,1]$ and is attracting on the fast-entrance $\hat N^{f,+}$ for all $\eta\in [0,1]$. 
It is actually satisfied by choosing $\hat K\supset K$ sufficiently small, if necessary.
The rest are the $y$-direction and the $\eta$-direction. 
Note that an appropriate choice of $\hat K$ enables us to construct local Lyapunov functions $L_y$ in {\bf (i)} for all $y\in \hat K$.

For points on $\hat N$ with $y\in \partial \hat K$, we know
\begin{equation*}
\langle y', n_y\rangle = \epsilon \langle g(x,y,\epsilon), n_y\rangle + \delta \langle n_y, n_y\rangle
\end{equation*}
since $\rho = 1$ holds on $\partial \hat K$, where $\langle \cdot, \cdot \rangle$ denotes an inner product on appropriate vector spaces. Setting $M_{\hat N}:= \sup_{\hat N}\{|g|, |Dg|\}$, the above can be estimated by
\begin{equation*}
\langle y', n_y\rangle \geq \delta - \epsilon_0 M_{\hat N} > 0,
\end{equation*}
if $\delta > \epsilon_0 M_{\hat N}$. Since we can choose $\delta$ arbitrarily, then it can be achieved. 
Therefore, $\partial \hat K\cap \hat N$ is a part of entrance. 
Finally, in the case $\eta = 0$ or $1$, both of these sets are invariant since $\eta'=0$ holds everywhere and thus render neither the entrance nor the exit. 

\bigskip
The exit $\hat N^\exit$ is then seen to be $\{(a,\zeta)\mid a\in \partial(\pi_a N)\}$. 
By our construction of $N$, for any $\hat \zeta$, the set $\hat N_{\hat \zeta}$ is a ball of dimension $u$. 
Suppose that $\Gamma_s \cap \hat N_{\hat \zeta}=\emptyset$. 
All points in $\hat N_{\hat \zeta}$ then go to $\hat N^\exit$ in finite time and hence, restricting Wazewski map to the crossing section, we obtain a continuous mapping
\begin{equation*}
W: \hat N_{\hat \zeta} \to \hat N^\exit.
\end{equation*}
If we follow this by the projection $\pi_a(a,\zeta) = a$, we see that $\pi_a\circ W$  maps a $u$-dimensional ball onto its boundary, while keeping the boundary fixed. 
This contradicts the well-known No-Retract Theorem. There is thus a point in $\Gamma_s \cap \hat N_{\hat \zeta}$. Since $\hat \zeta$ is arbitrary, this gives at least one value for $a$ as a function of $(b,y,\eta)$.
We shall put it $h_s(b,y,\eta)$.

\bigskip
\begin{description}
\item[(iii): Uniqueness of $W^s(S_\epsilon)$ as a graph of $a=h_s(b,y,\eta)$.] 
\end{description}
In the sequel we show that the graph of the above derived function is all of $\Gamma_s$. At the same time, it will be shown that the function is, in fact, Lipschitz continuous with Lipschitz constant $1$. A comparison between the growth rates in different directions will be derived in the following proposition in our setting. Let $(a_i(t), \zeta_i(t))$, $i=1,2$, be two solutions of (\ref{abstract-form-modify}), set $\Delta a:= a_1(t) - a_2(t)$ and $\Delta \zeta:= \zeta_2(t) - \zeta_1(t)$. Further we define
\begin{equation*}
M(t):= |\Delta a(t)|^2 - |\Delta \zeta(t)|^2.
\end{equation*}

\begin{prop}[cf. Lemma 2 in \cite{Jones}]
\label{prop-cone}
Under assumptions of Theorem \ref{thm-inv-mfd-rigorous}, if $M(t) = 0$ then $M'(t) > 0$ holds as long as the two solutions stay in $\hat N$, unless $\Delta a = 0$.
\end{prop}

\begin{proof}[Proof of Proposition \ref{prop-cone}]
Discussions in our proof are based on the proof of Lemma 2 in \cite{Jones} with arrangements in our setting. We will make on each of the quantities $\langle \Delta a, \Delta a\rangle$ etc. The equation for $\Delta a$ is 
\begin{equation*}
\Delta a' = A(a_2-a_1) + F_1(a_2, b_2, y_2, \eta_2 \sigma) - F_1(a_1, b_1, y_1, \eta_1 \sigma),
\end{equation*}
which we rewrite as
\begin{equation}
\label{differential-difference}
\Delta a' = A\Delta a + \Delta F_1,
\end{equation}
where $\Delta F_1 = F_1(a_2, b_2, y_2, \eta_2 \sigma) - F_1(a_1, b_1, y_1, \eta_1 \sigma)$.
Since $\hat N$ is the product of $h$-sets associated with an affine transformation, thanks to the Mean Value Theorem, 
one can derive the following equality:  
\begin{equation*}
\Delta F_1 = \frac{\partial F_1}{\partial a}(\tilde a_1, \tilde b_1, \tilde y_1, \tilde \eta_1)\Delta a  + \frac{\partial F_1}{\partial b}(\tilde a_1, \tilde b_1, \tilde y_1, \tilde \eta_1)\Delta b + \frac{\partial F_1}{\partial y}(\tilde a_1, \tilde b_1, \tilde y_1, \tilde \eta_1)\Delta y + \frac{\partial F_1}{\partial \eta}(\tilde a_1, \tilde b_1, \tilde y_1, \tilde \eta_1)\Delta \eta
\end{equation*}
for some points $(\tilde a_1, \tilde b_1, \tilde y_1, \tilde \eta_1) =: \tilde z_1\in \hat N$. 
Such an estimate makes sense since $\hat N$ is convex.

We can estimate $\langle \Delta a, \Delta a\rangle' = 2\langle \Delta a', \Delta a\rangle$ by taking the inner product of (\ref{differential-difference}) with $\Delta a$. 
As a result, we obtain
\begin{equation}
\label{eq-a}
\langle \Delta a', \Delta a\rangle = (\Delta a)^T \cdot \left\{ A\Delta a + \frac{\partial F_1}{\partial a}(\tilde z_1)\Delta a  + \frac{\partial F_1}{\partial b}(\tilde z_1)\Delta b + \frac{\partial F_1}{\partial y}(\tilde z_1)\Delta y + \frac{\partial F_1}{\partial \eta}(\tilde z_1)\Delta \eta \right\}.
\end{equation}
We also obtain the following equalities in the same manner: 
\begin{equation}
\label{eq-b}
\langle \Delta b', \Delta b\rangle = (\Delta b)^T \cdot \left\{ B\Delta b + \frac{\partial F_2}{\partial a}(\tilde z_2)\Delta a  + \frac{\partial F_2}{\partial b}(\tilde z_2)\Delta b + \frac{\partial F_2}{\partial y}(\tilde z_2)\Delta y + \frac{\partial F_2}{\partial \eta}(\tilde z_2)\Delta \eta \right\}
\end{equation}
as well as
\begin{equation}
\label{eq-y}
\langle \Delta y', \Delta y\rangle = \sigma (\Delta y)^T \cdot \left\{ \frac{\partial g}{\partial a}(\tilde z_3)\Delta a  + \frac{\partial g}{\partial b}(\tilde z_3)\Delta b + \frac{\partial g}{\partial y}(\tilde z_3)\Delta y + \frac{\partial g}{\partial \eta}(\tilde z_3)\Delta \eta \right\}
\end{equation}
for some $\tilde z_i\in \hat N$, $i=2,3$. Obviously $\langle \Delta \eta', \Delta \eta\rangle = 0$ since $\eta' = 0$.

Recall that our final objective is the estimate of $M'(t) = 2\left\{\langle \Delta a', \Delta a\rangle - (\langle \Delta b', \Delta b\rangle + \langle \Delta y', \Delta y\rangle)\right\}$. 
To this end, we estimate (\ref{eq-a}), (\ref{eq-b}) and (\ref{eq-y}) as the quadratic form associated with non-square matrices. 
For example, the equality (\ref{eq-a}) can be rewritten by the following matrix form: 
\begin{equation*}
\langle \Delta a', \Delta a\rangle = (\Delta a)^T  A \Delta a  + (\Delta a)^T \mathbb{A}^u \begin{pmatrix}\Delta a \\ \Delta b \\ \Delta y \\ \Delta \eta\end{pmatrix} := (\Delta a)^T  A \Delta a  + (\Delta a)^T \mathbb{A}_1^u \Delta a +  (\Delta a)^T \mathbb{A}_2^u \Delta \zeta,
\end{equation*}
where $\mathbb{A}_1^u$ is the $u\times u$-matrix given by $(\partial F_1/\partial a)(\tilde z_1)$ and $\mathbb{A}_2^u$ is the $u \times (s + l + 1)$-matrix given by 
\begin{equation*}
\mathbb{A}_2^u = \mathbb{A}_2^u(\tilde z_1):=\left(\frac{\partial F_1}{\partial b}(\tilde z_1) \quad \frac{\partial F_1}{\partial y}(\tilde z_1) \quad \frac{\partial F_1}{\partial \eta}(\tilde z_1)\right).
\end{equation*}
Denoting $\|\cdot \|_2$ the (matrix) $2$-norm,  general theory of linear algebra yields
\begin{align*}
 (\Delta a)^T \mathbb{A}^u \begin{pmatrix}\Delta a \\ \Delta b \\ \Delta y \\ \Delta \eta\end{pmatrix} &\leq  |\Delta a | (\|\mathbb{A}_1^u\|_2 |\Delta a| + \|\mathbb{A}_2^u\|_2 |\Delta \zeta |)\\
 &= \sigma_{\mathbb{A}_1^u}(\tilde z_1)|\Delta a|^2 + \sigma_{\mathbb{A}_2^u}(\tilde z_1)|\Delta a | |\Delta \zeta |,
\end{align*}
where $\sigma_ {\mathbb{A}_i^u}= \sigma_{\mathbb{A}_i^u}(\tilde z_1)$, $i=1,2$, is the maximal singular value of $\mathbb{A}_i^u$ at $\tilde z_1 \in \hat N$ stated in Assumption \ref{ass-cone-stable}. 
We obtain the following estimate of (\ref{eq-a}):
\begin{equation}
\label{a-estimate}
\langle \Delta a', \Delta a\rangle \geq (\lambda_A - \sup_{\hat N}\sigma_{\mathbb{A}_1^u}) |\Delta a|^2 - \sup_{\hat N}\sigma_{\mathbb{A}_2^u} |\Delta a| |\Delta \zeta |,
\end{equation}
where $\lambda_A$ is a positive number satisfying (\ref{bound-unst-ev}).
In the similar manner we also obtain the estimate of $\langle \Delta b', \Delta b\rangle$ and $\langle \Delta y', \Delta y\rangle$ by
\begin{align*}
\langle \Delta b', \Delta b\rangle &\leq \mu_B  |\Delta b|^2 +  |\Delta b|(\sup_{\hat N}\sigma_{\mathbb{B}_1^u} |\Delta a| + \sup_{\hat N}\sigma_{\mathbb{B}_2^u} |\Delta \zeta |),\\
\langle \Delta y', \Delta y\rangle &\leq \sigma |\Delta y|(\sup_{\hat N}\sigma_{g_1^u} |\Delta a| + \sup_{\hat N}\sigma_{g_2^u} |\Delta \zeta |),
\end{align*}
where $\mu_B$ denotes a negative number satisfying (\ref{bound-st-ev}).
Functions $\sigma_{\mathbb{B}_1^u} = \sigma_{\mathbb{B}_1^u}(\tilde z)$, $\sigma_{\mathbb{B}_2^u} = \sigma_{\mathbb{B}_2^u}(\tilde z)$, $\sigma_{g_1^u} = \sigma_{g_1^u}(\tilde z)$ and $\sigma_{g_2^u} = \sigma_{g_2^u}(\tilde z)$ are maximal singular values of the matrix valued functions defined in Assumption \ref{ass-cone-unstable} at $\tilde z$, respectively.

In particular, at a point satisfying $M(t) = 0$, we further obtain 
\begin{align*}
\langle \Delta a', \Delta a\rangle &\geq (\lambda_A - \sup_{\hat N}\sigma_{\mathbb{A}_1^u} - \sup_{\hat N}\sigma_{\mathbb{A}_2^u}) |\Delta a|^2,\\
\langle \Delta b', \Delta b\rangle &\leq (\mu_B  + \sup_{\hat N}\sigma_{\mathbb{B}_1^u} + \sup_{\hat N}\sigma_{\mathbb{B}_2^u})|\Delta a|^2,\\
\langle \Delta y', \Delta y\rangle &\leq \sigma |\Delta y|(\sup_{\hat N}\sigma_{g_1^u} |\Delta a| + \sup_{\hat N}\sigma_{g_2^u} |\Delta \zeta |)
\end{align*}
to show
\begin{equation*}
M'(t) \geq \left[ \lambda_A + |\mu_B| - \left\{ \sup_{\hat N}\sigma_{\mathbb{A}_1^u} + \sup_{\hat N}\sigma_{\mathbb{A}_2^u} + \sup_{\hat N}\sigma_{\mathbb{B}_1^u} + \sup_{\hat N}\sigma_{\mathbb{B}_2^u} + \sigma \left( \sup_{\hat N}\sigma_{g_1^u} + \sup_{\hat N}\sigma_{g_2^u} \right) \right\} \right] |\Delta a|^2,
\end{equation*}
which proves our statement.
Note that the coefficient of $|\Delta a|^2$ can be positive if $\hat K \supset K$ is sufficiently small, thanks to cone conditions.
\end{proof}

We go back to the proof in {\bf (iii)}. 
We have already shown that the set $\Gamma_s$ contains the graph of a function denoted by 
$a = h_s(b,y,\eta)$. 
Suppose that $\Gamma_s$ contains more than one point with the same value of $b,y,\eta$. 
There would then be $a_1$ and $a_2$ so that both $(a_1,b,y,\eta)$ and $(a_2,b,y,\eta)$ lie in $\Gamma_s$. At $t=0$, we have $|\Delta a| \geq |\Delta \zeta|$. 
By Proposition \ref{prop-cone}, 
\begin{equation*}
|\Delta a(t)| \geq |\Delta \zeta(t)|
\end{equation*}
holds for all $t\geq 0$. In the estimate (\ref{a-estimate}) we can then replace $|\Delta \zeta|$ by $|\Delta a|$ to obtain
\begin{equation*}
\{|\Delta a|^2\}' \geq (\lambda_A - \beta)|\Delta a|^2.
\end{equation*}
A positive number $\beta$ can be chosen as $\sup_{\hat N}\sigma_{\mathbb{A}_1^u} + \sup_{\hat N}\sigma_{\mathbb{A}_2^u}$. 
See (\ref{ineq-graph-unstable}).  
From which it can be easily concluded that $\Delta a$ grows exponentially, which contradicts the hypothesis that both points stay in $\hat N$ for all $t\geq 0$. 

The same argument can be used to show that $h_s$ is also Lipschitz. 
If $(a_1, \zeta_1)$ and $(a_2, \zeta_2)$ are both in $\Gamma_s$ and $|a_2-a_1| \geq |\zeta_2 - \zeta_1|$, then $|a_2-a_1|$ grows exponentially, which contradicts the hypothesis again that both points lie in $\Gamma_s$. 
We then have shown that the set $\Gamma_s$ is the graph of a Lipschitz function. 
In particular, the Lipschitz function $a=h_s(b,y,\epsilon)$ is well-defined.
This manifold is $W^s(S_\epsilon)$ when $y\in \hat K$ is restricted to $K$, in which the modified equation (\ref{abstract-form-modify}) agrees with the original one (\ref{abstract-form}).
Smoothness result follows from the same arguments as \cite{Jones}.
\end{proof}

A direct consequence of graph representations of $W^s(S_\epsilon)$ as well as $W^u(S_\epsilon)$ is that they are homotopic to flat hyperplanes inside $h$-sets.
In terms of the theory of covering relations (Section \ref{section-cov}), $W^s(S_\epsilon)$ is a vertical disc and $W^u(S_\epsilon)$ is a horizontal disc in a given fast-saddle-type block.
\begin{cor}
\label{cor-disk}
Let $N\in \mathbb{R}^{n+l}$ be a fast-saddle-type block for (\ref{abstract-form}) such that the coordinate representation $N_c$ is actually of the form (\ref{fast-block}). 
Assume that $N$ satisfies (\ref{ineq-graph-unstable}) and (\ref{ineq-cone-unstable}).
Let $b = h_u(a,y,\epsilon)$ be a function defining $W^u(S_\epsilon)$ in $N_c$. 
Then there is a homotopy $H: [0,1]\times N_c\times [0,\epsilon_0]\to N_c\times [0,\epsilon_0]$ satisfying 
\begin{equation*}
H_0(a,b,y,\epsilon) = H(0,a,b,y,\epsilon) = (a,h_u(a,y,\epsilon), y,\epsilon),\quad H_1(a,b,y,\epsilon) = (a,0,y,\epsilon).
\end{equation*}

Similarly, if $N_c$ satisfies (\ref{ineq-graph-stable}) and (\ref{ineq-cone-stable}) and $a = h_s(b,y,\epsilon)$ denotes a function defining $W^s(S_\epsilon)$ in $N$, then there is a homotopy $H: [0,1]\times N_c\times [0,\epsilon_0]\to N_c\times [0,\epsilon_0]$ satisfying
\begin{equation*}
H_0(a,b,y,\epsilon) = H(0,a,b,y,\epsilon) = (h_s(b,y,\epsilon),b,y,\epsilon),\quad H_1(a,b,y,\epsilon) = (0,b,y,\epsilon).
\end{equation*}
\end{cor}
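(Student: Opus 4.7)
The proposal is to observe that Corollary \ref{cor-disk} is essentially a direct consequence of the graph representations already supplied by Theorem \ref{thm-inv-mfd-rigorous}: once $W^u(S_\epsilon)$ is known to be the graph of a well-defined function $b = h_u(a,y,\epsilon)$ over all of $\overline{B_u}\times \overline{B_l}\times [0,\epsilon_0]$ (with values in $\overline{B_s}$), the desired homotopy to the flat disk $\{b=0\}$ can be constructed by a straight-line interpolation in the $b$-coordinate. The analogous construction will handle $W^s(S_\epsilon)$ with the roles of $a$ and $b$ interchanged.

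Concretely, I would define, for the unstable case,
\begin{equation*}
H(\tau,a,b,y,\epsilon) := \bigl(a,\, (1-\tau)\,h_u(a,y,\epsilon),\, y,\, \epsilon\bigr),\qquad (\tau,a,b,y,\epsilon)\in [0,1]\times N_c\times [0,\epsilon_0],
\end{equation*}
and for the stable case,
\begin{equation*}
H(\tau,a,b,y,\epsilon) := \bigl((1-\tau)\,h_s(b,y,\epsilon),\, b,\, y,\, \epsilon\bigr).
\end{equation*}
Continuity of $H$ in all variables follows from the Lipschitz (in fact $C^r$) regularity of $h_u$ and $h_s$ asserted in Theorem \ref{thm-inv-mfd-rigorous}. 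The boundary conditions $H_0 = (a,h_u(a,y,\epsilon),y,\epsilon)$ and $H_1 = (a,0,y,\epsilon)$ (respectively their stable analogues) are immediate from the definition.

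The only non-automatic check is that the image of $H$ indeed lies in $N_c\times [0,\epsilon_0]$. Recall that a fast-saddle-type block is, up to affine coordinates, of the product form $N_c = \overline{B_u}\times \overline{B_s}\times \overline{B_l}$ given in \eqref{fast-block}. The $a$, $y$, and $\epsilon$ components of $H$ are unchanged from the input and hence remain in their respective factors. For the $b$-component, Theorem \ref{thm-inv-mfd-rigorous} asserts that $h_u(a,y,\epsilon)\in \overline{B_s}$ whenever $(a,y,\epsilon)\in \overline{B_u}\times \overline{B_l}\times [0,\epsilon_0]$; since $\overline{B_s}$ is convex and contains $0$, the segment $\{(1-\tau)h_u(a,y,\epsilon):\tau\in[0,1]\}$ lies entirely in $\overline{B_s}$. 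The stable case is symmetric, using that $h_s\in \overline{B_u}$ and $\overline{B_u}$ is convex.

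I do not anticipate a serious obstacle: the only delicate point is ensuring the straight-line path stays inside the block, which is handled entirely by the convexity of the standard balls $\overline{B_u}$ and $\overline{B_s}$ together with the already-established domain and range of $h_u$, $h_s$. Thus, up to writing out the verification above, the corollary follows at once from Theorem \ref{thm-inv-mfd-rigorous}.
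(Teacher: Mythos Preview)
Your proposal is correct and matches the paper's proof essentially line for line: the paper defines the same straight-line homotopy $H(\lambda,a,b,y,\epsilon) = (a,(1-\lambda)h_u(a,y,\epsilon),y,\epsilon)$, appeals to continuity of $h_u$, and uses convexity of $N_c$ to conclude the image stays in $N_c\times[0,\epsilon_0]$. Your verification is in fact slightly more explicit than the paper's in checking the $b$-component via convexity of $\overline{B_s}$ about the origin.
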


\begin{proof}
When $N_c$ satisfies (\ref{ineq-graph-unstable}) and (\ref{ineq-cone-unstable}), define a map $H$ on $[0,1]\times N_c\times [0,\epsilon_0] \to \mathbb{R}^{n+l+1}$ by 
\begin{equation*}
H(\lambda,a,b,y,\epsilon):= (a, (1-\lambda)h_u(a,y,\epsilon), y,\epsilon).
\end{equation*}
By the contraction of $h_u$, $H$ is continuous. Since $N_c$ is convex, then $H$ maps $[0,1]\times N_c\times [0,\epsilon_0]$ into $N_c\times [0,\epsilon_0]$. Thus $H$ is our desiring homotopy. The proof of the case that $N_c$ satisfies (\ref{ineq-graph-stable}) and (\ref{ineq-cone-stable}) is similar. 
\end{proof}

\begin{rem}\rm
We do not need to assume the normal hyperbolicity of the critical manifold $S_0$ and the graph representation of $S_0$ in Theorem \ref{thm-inv-mfd-rigorous}, while the Fenichel's classical theory needs to assume them. 
Indeed, Part {\bf (i)} in the proof of the theorem shows that this assumption can be validated by a computable estimate (\ref{Lyapunov-condition}).
\end{rem}

\begin{rem}\rm
It is well known that {\em $S_\epsilon$ is not uniquely determined in general}.
\end{rem}

Let $D$ and $\hat D$ be fast-saddle type blocks given by (\ref{fast-block}) and (\ref{fast-block-2}), respectively.
Theorem \ref{thm-inv-mfd-rigorous} says that, under cone conditions, the slow manifold $S_\epsilon$ is contained in $D$. Obviously $S_\epsilon$ is also contained in $\hat D$ since $D\subset \hat D$. Moreover, $S_\epsilon$ is uniquely determined in $\hat D$.
If $d_0 \leq \dist(\partial (\pi_{a,b}\hat D), \pi_{a,b}D)$, then our observations imply that the distance between $S_\epsilon$ and $\partial \hat D$ in fast components is greater than $d_0$.
For example, the distance of $\partial \hat D_c$ and $S_\epsilon$ in $a_1$-component is greater than $\min \{\eta_{1,1}^\pm\}$.
Summarizing these arguments we have the following result, which is essential to Section \ref{section-show-shadowing}.

\begin{cor}[Fast-saddle-type blocks with spaces]
Consider (\ref{ode-ab-coord}). Let $D, \hat D\subset \mathbb{R}^{n+l}$ be fast-saddle-type blocks for (\ref{ode-ab-coord}) such that the coordinate representations $D_c$ and $\hat D_c$ are actually given by (\ref{fast-block}) and (\ref{fast-block-2}), respectively, for a given sequence of positive numbers $\{\eta_{i,j_i}^\pm\}_{j_1=1,\cdots, u, j_2 = 1,\cdots, s}^{i=1,2}$. Assume that stable and unstable cone conditions hold in $\hat D_c$. Then the same statements as Theorem \ref{thm-inv-mfd-rigorous} holds in $\hat D_c$. Moreover, the distance between $\partial \hat D_c$ and the validated slow manifold $S_\epsilon$ is estimated by
\begin{align*}
&\dist(\pi_a(\partial \hat D_c), \pi_a(W^s(S_\epsilon))) \geq \min_{j=1,\cdots, u}\{\eta_{1,j}^\pm\},\\
&\dist(\pi_b(\partial \hat D_c), \pi_b(W^u(S_\epsilon))) \geq \min_{j=1,\cdots, s}\{\eta_{2,j}^\pm\}\quad \text{ for }\epsilon \in [0,\epsilon_0].
\end{align*}
\end{cor}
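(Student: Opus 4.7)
The plan is to reduce everything to Theorem \ref{thm-inv-mfd-rigorous} applied to the outer block $\hat D$, and then to use the one-sided monotonicity of $a'_j$ and $b'_j$ on the outer shells (established at the end of Section \ref{section-isolatingblock}) to trap $W^s(S_\epsilon)$ and $W^u(S_\epsilon)$ inside the inner block $D$.

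First I would note that $\hat D$ is itself a fast-saddle-type block: its coordinate representation (\ref{fast-block-2}) has exactly the structure of (\ref{fast-block}) with $(a_j^\pm, b_j^\pm)$ replaced by $(\hat a_j^\pm, \hat b_j^\pm)$, centered at the same base point with the same matrices $A$, $B$ and the same $(a,b,y)$ splitting used for $D$. Since the stable and unstable cone conditions are assumed on $\hat D_c$, Theorem \ref{thm-inv-mfd-rigorous} applies verbatim and yields, for each $\epsilon\in[0,\epsilon_0]$, the Lipschitz graphs $W^s(S_\epsilon)=\{a=h_s(b,y,\epsilon)\}$ and $W^u(S_\epsilon)=\{b=h_u(a,y,\epsilon)\}$ over their full $(b,y)$- and $(a,y)$-domains in $\hat D_c$, together with $S_\epsilon=W^s(S_\epsilon)\cap W^u(S_\epsilon)$. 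This gives the first assertion.

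Next I would localize $W^s(S_\epsilon)$ inside the inner $a$-box. The extension stated just after (\ref{fast-block-2}) provides, uniformly for all $\epsilon\in[0,\epsilon_0]$, that $a_j'>0$ on the outer shell $a_j\in[a_j^+,\hat a_j^+]$ and $a_j'<0$ on $a_j\in[\hat a_j^-,a_j^-]$. From part (ii) of the proof of Theorem \ref{thm-inv-mfd-rigorous}, every point of $W^s(S_\epsilon)$ has forward orbit contained in $\hat D$. If some point $p\in W^s(S_\epsilon)$ had $\pi_{a_j}(p)\in[a_j^+,\hat a_j^+]$, the monotonicity above would force $a_j(t)$ to increase until it crossed $\{a_j=\hat a_j^+\}$, violating forward-invariance; the symmetric shell is ruled out identically. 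Hence
\begin{equation*}
\pi_a(W^s(S_\epsilon))\subset\prod_{j=1}^{u}[a_j^-,a_j^+].
\end{equation*}
Running the same argument in backward time on the $b$-shells (where $b_j'$ points out of the block in reverse time) gives the analogous inclusion $\pi_b(W^u(S_\epsilon))\subset\prod_{j=1}^{s}[b_j^-,b_j^+]$.

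Finally, the distance estimate is a direct geometric consequence: every point of $\prod_{j}[a_j^-,a_j^+]$ lies at distance at least $\min_j\{\eta_{1,j}^-,\eta_{1,j}^+\}$ from the $a$-boundary $\partial\prod_{j}[\hat a_j^-,\hat a_j^+]$ of $\hat D_c$, by the very definition of $\hat a_j^\pm=a_j^\pm\pm\eta_{1,j}^\pm$ in (\ref{fast-block-2}); combined with the inclusion above, this gives the bound for $W^s(S_\epsilon)$, and the $a\leftrightarrow b$ symmetric argument gives the bound for $W^u(S_\epsilon)$. The only piece of real content is the trapping step in the previous paragraph; the rest is bookkeeping. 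The main obstacle, such as it is, lies in being careful that the monotonicity on the outer shells from Section \ref{section-isolatingblock} is uniform in $\epsilon\in[0,\epsilon_0]$ and in the full range of $(b,y)\in\pi_{b,y}\hat D$, so that it applies along entire forward orbits — but this uniformity is already built into the construction of $\hat D$.
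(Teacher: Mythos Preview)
Your argument is correct and follows essentially the same route as the paper. The paper does not give a formal proof environment for this corollary; the discussion immediately preceding it (invoking Theorem \ref{thm-inv-mfd-rigorous} on $\hat D$, together with the sign information on the outer shells established right after (\ref{fast-block-2})) is the intended justification, and your proposal is a careful spelling-out of exactly that mechanism: forward orbits of points on $W^s(S_\epsilon)$ cannot linger in the $a$-shells because of the strict monotonicity there, hence $\pi_a(W^s(S_\epsilon))\subset\prod_j[a_j^-,a_j^+]$, and symmetrically for $W^u(S_\epsilon)$.
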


The main feature of our present results is that slow manifolds as well as their stable and unstable manifolds in Fenichel's theorems are validated in given blocks with an explicit range $\epsilon\in [0,\epsilon_0]$.
Our criteria can be explicitly validated with rigorous numerics, as many preceding works (e.g. \cite{W, WZ}).
We end this section noting that all results in this section holds if we replace $y\in \mathbb{R}$ and $\mathbb{R}$-valued function $g$ by $y\in \mathbb{R}^l$ and $\mathbb{R}^l$-valued function $g$, respectively, with $l>1$.

%
%
\section{Covering-Exchange and dynamics around slow manifolds}
\label{section-slow-dynamics}
In Section \ref{section-inv-mfd} we have discussed the validation of slow manifolds.
In this section, we move to the next stage; the behavior near and on validated slow manifolds. 
There are mainly two cases of the exhibition of slow dynamics. 
One is monotone and the other is nontrivial in the sense that  dynamics on slow manifolds admit nontrivial invariant sets such as fixed points or periodic orbits.
\par
First we consider the dynamics near slow manifolds. 
If, for sufficiently small $\epsilon > 0$, a point $q_\epsilon \in \mathbb{R}^{n+1}$ is sufficiently close to slow manifolds, the trajectory through $q_\epsilon $ moves near slow manifolds spending a long time. 
However, the precise description of trajectories off slow manifolds is not easy since they clearly have an influence of fast dynamics $\dot x = f(x,y,\epsilon)$. 
In order to describe such behavior as simple as possible, we introduce an extension of covering relations: the {\em covering-exchange} (Section \ref{section-exchange}), so that it can be appropriately applied to singular perturbation problems.
This extension is a topological analogue of well-known {\em Exchange Lemma}.
In the successive subsections, we provide a generalization of covering-exchange: the {\em slow shadowing} and {\em $m$-cones} (Sections \ref{section-rate} - \ref{section-m-cones}).
These concepts enable us to validate slow manifolds with nonlinear structures as well as their stable and unstable manifolds in reasonable ways via rigorous numerics, keeping the essence of covering-exchange property.
\par
Next, we consider the nontrivial dynamics on slow manifolds, namely, the presence of nontrivial invariant sets. 
We can use ideas discussed in Section \ref{section-isolatingblock} to validate nontrivial invariant sets even on slow manifolds. 
\par
Finally, we discuss the unstable manifold of invariant sets on slow manifolds. 
The invariant foliation structure with respect to slow manifolds is essentially applied to validating unstable manifolds of invariant sets in terms of covering relations.

%
%
\subsection{Covering-Exchange with one-dimensional slow variable}
\label{section-exchange}

First we consider the case that the vector field near slow manifolds is monotone. 
In such a case, geometric singular perturbation theory has an answer which describes dynamics around slow manifolds in terms of locally invariant manifolds; {\em Exchange Lemma} (e.g. \cite{JKK, JK, Jones, TKJ}).

Exchange Lemma solves matching problems between fast dynamics and slow dynamics. 
More precisely, if a family of tracking invariant manifolds $\{\Sigma_\epsilon\}_{\epsilon \geq 0}$ have transversal intersection $\Sigma_0 \cap_T W^s(S_0)$ at $\epsilon = 0$, 
then a trajectory $\varphi_\epsilon([0,T_\epsilon], q_\epsilon)$ for a point $q_\epsilon \in \Sigma_\epsilon$
describes the match of fast and slow trajectories near the slow manifold in the full system (\ref{fast-slow})$_\epsilon$ for sufficiently small $\epsilon > 0$.
However, Exchange Lemma requires the assumption of transversality 
between a tracking (locally invariant) manifold and the stable manifold of a slow manifold, which is not easy to validate in rigorous numerics.
Alternatively, we consider the topological analogue of statements in ($C^0$-)Exchange Lemma.

\bigskip
Every trajectories in fast-saddle-type blocks leaves them through exits. Note that our desiring trajectories are the continuation of chains consisting of critical manifolds and heteroclinic orbits in the limit system (\ref{layer}). One thus expects that singularly perturbed global trajectories leave fast-saddle-type blocks through {\em fast-exits}.
To describe our expectations precisely, we define the following notions.
For our purpose, we restrict $u = u(M)$ for $h$-sets to $u=1$ unless otherwise noted.

\begin{dfn}[Fast-exit face]\rm
\label{dfn-face}
Let $M$ be an $(n+1)$-dimensional $h$-set with the following expression via a homeomorphism $c_M: \mathbb{R}^{n+1}\to \mathbb{R}^{u(M)+s(M)}$:
\begin{equation*}
M_c = c_M(M) = \overline{B_u} \times \overline{B_s}\times [0,1]
\end{equation*}
with $u = u(M) =1$ and $s(M) = s + 1$. We say an $h$-set $M^{a}$ {\em a fast-exit face} of $M$ if $u(M^{a}) = u$, $s(M^{a}) = s$ and there exist an element $a\in \partial B_u$ and a compact interval $K_0\subset (0,1)$ such that
\begin{equation*}
c_M(M^{a}) = \{a\}\times \overline{B_s}\times K_0.
\end{equation*}
See also Fig. \ref{fig-CE}.
\end{dfn}

\begin{dfn}[Covering-Exchange]\rm
\label{dfn-CE}
Consider (\ref{fast-slow})$_{\epsilon}$ with fixed $\epsilon \geq 0$. Let $N\subset \mathbb{R}^{n+1}$ be an $h$-set with $u(N)=u= 1$ and $M$ be an $(n+1)$-dimensional $h$-set in $\mathbb{R}^{n+1}$ with $u(M) = 1$. We say $N$ satisfies {\em the covering-exchange property for (\ref{fast-slow})$_\epsilon$ with respect to $M$} if the following statements are satisfied:
\begin{description}
\item[(CE1)] $M$ is a fast-saddle-type block with the coordinate system
\begin{align*}
M_c  = \overline{B_1}\times \overline{B_s} \times [0,1],\quad a\in \overline{B_1},\quad (b_1,\cdots, b_s)\in \overline{B_s}, \quad y\in [0,1].
\end{align*}
\item[(CE2)] Stable and unstable cone conditions (Definition \ref{dfn-cone}) for (\ref{fast-slow})$_\epsilon$ are satisfied in $M$.
\item[(CE3)] For $q\in \{\pm 1\}$, $q\cdot g(x,y,\epsilon) > 0$ holds in $M$. We shall say $q$ {\em the slow direction number}.
\item[(CE4)] Letting $\varphi_\epsilon$ be the flow of (\ref{fast-slow})$_\epsilon$, there exists $T > 0$ such that $N \overset{\varphi_\epsilon(T,\cdot)}{\Longrightarrow} M$.
\item[(CE5)] $M$ possesses a fast-exit face $M^a \subset M^{f,-}$ with the expression
\begin{equation*}
c_M(M^a) = \{a\}\times \overline{B_s} \times [y^-,y^+],\quad a\in \partial B_1,\quad [y^-,y^+]\subset (0,1)
\end{equation*}
such that 
\begin{equation*}
\begin{cases}
\sup (\pi_y \varphi_\epsilon(T, N)\cap M) < y^- & \text{ if $q=+1$}\\
\inf (\pi_y \varphi_\epsilon(T,N)\cap M) > y^+ & \text{ if $q=-1$}
\end{cases}
\end{equation*}
holds.
\end{description}
In this situation we shall call the pair $(N,M)$ {\em the covering-exchange pair} for (\ref{fast-slow})$_{\epsilon}$.
\end{dfn}

A brief illustration of the covering-exchange property is shown in Fig. \ref{fig-CE}. 
(CE2) implies the existence of the slow manifold $S_\epsilon$ as well as a limiting normally hyperbolic critical manifold $S_0$ at $\epsilon = 0$, as shown in Theorem \ref{thm-inv-mfd-rigorous}. 
Combining with (CE1), one sees that $M$ is repelling in $a$-direction and attracting in $b$-direction. 
(CE3) means that slow dynamics in $M$ is monotone. Remark that the monotonicity is assumed not only on $S_\epsilon$ but also in the whole region $M$. 
(CE4) topologically describes the transversality of the stable manifold $W^s(S_\epsilon)$ of the slow manifold $S_\epsilon$ and the $h$-set $\varphi_\epsilon(T,N)$. 

For a fast-saddle-type block $M$ of the form in Definition \ref{dfn-CE} with $q=+1$, 
we say $M_c^{s,-}\equiv \overline{B_1}\times \overline{B_s} \times \{1\}$ and $M_c^{s,+}\equiv \overline{B_1}\times \overline{B_s} \times \{0\}$ (equivalently $M^{s,-}$ and $M^{s,+}$) {\em the slow exit} and {\em the slow entrance} of $M$, respectively. 
Similarly, in the case of $q=-1$, $M_c^{s,-}\equiv \overline{B_1}\times \overline{B_s} \times \{0\}$ and $M_c^{s,+}\equiv \overline{B_1}\times \overline{B_s} \times \{1\}$ be the slow exit and the slow entrance of $M$, respectively.

\begin{figure}[htbp]\em
\begin{minipage}{1\hsize}
\centering
\includegraphics[width=6.0cm]{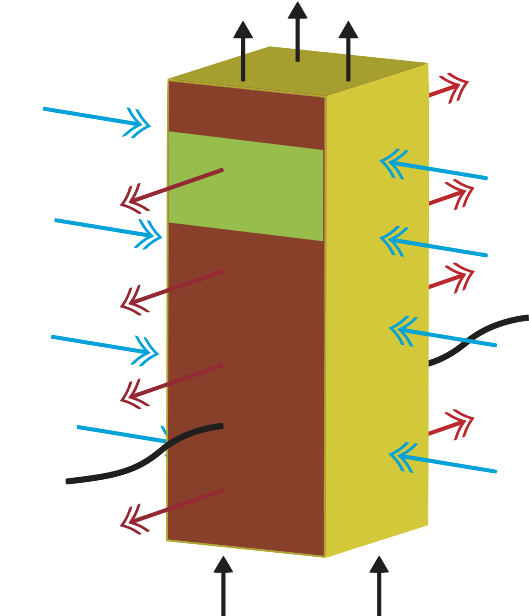}
\end{minipage}
\caption{Illustrations of the Covering-Exchange pair and a fast-exit face.}
\label{fig-CE}
The rectangular parallelepiped $M$ is an $h$-set with $u(M) = 1$ and $s(M)=2$. 
Arrows colored by red, blue and black show the fast unstable direction, fast stable direction and slow direction of (\ref{abstract-form}) at corresponding boundaries, respectively.
The black curve is an image $\varphi_\epsilon(T,N)$ of another $h$-set $N$ for some $T > 0$. 
This figure describes $N \overset{\varphi_\epsilon(T,\cdot )}{\Longrightarrow}M$. 
Since the $y$-component of vector field in $M$ is monotone and the fast component of the vector field on $\partial M$ is already validated concretely, 
then $\varphi_\epsilon(T,N)$ keeps the covering relation $N \overset{\varphi_\epsilon(t,\cdot )}{\Longrightarrow}M$ for all $t \geq T$ until $N$ exits $M$ from its top.
A fast-exit face $M^a$ is colored by green, in which case $u=1, s=1, l=1$.
\end{figure}

\bigskip
Define the Poincar\'{e} map $P^M_\epsilon: M \to \partial M$ in $M$ by
\begin{equation*}
P^M_\epsilon(z):= \varphi_\epsilon(t_\epsilon(z),z),\quad 
t_\epsilon(z):= \sup\{t\mid \varphi_\epsilon([0,t],z)\subset M\}.
\end{equation*}
If no confusion arises, we use this notation for representing the Poincar\'{e} map in a set $M$.

\begin{nota}\rm
For $\hat y \in [0,1]$ and any set $A\subset \mathbb{R}^{n+1}$, let $A_{\hat y} := A\cap \{y= \hat y\}$, $A_{\leq \hat y} := A\cap \{y\leq \hat y\}$ and $A_{\geq \hat y} := A\cap \{y\geq \hat y\}$. 
\end{nota}

\begin{lem}
\label{lem-Poincare-hset}
Fix $\epsilon > 0$. Let $(N,M)$ be a covering-exchange pair for (\ref{fast-slow})$_\epsilon$ with the slow direction number $q=+1$ and the fast-exit face $M^{{\rm exit}}$ with $(M^{{\rm exit}})^+ \subset M^{f,+}$, where
\begin{equation*}
c_M(M^\exit) = \{1\}\times \overline{B_s}\times [y^-,y^+],\quad [y^-,y^+]\subset (0,1).
\end{equation*} 
\par
Then, for each $\hat y\in [0, y^-)$, the restriction of the Poincar\'{e} map $P^M_\epsilon$ to $((P^M_\epsilon)^{-1}(M^{{\rm exit}}))_{\hat y}$ is a homeomorphism.
Moreover, there exists an $h$-set $\tilde M\subset M$ such that $ \tilde M  \overset{P_\epsilon^M}{\Longrightarrow} M^{{\rm exit}}$. 
\end{lem}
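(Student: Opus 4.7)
The plan is to handle the two assertions in turn, leaning on the stable manifold $W^s(S_\epsilon)=\{a=h_s(b,y,\epsilon)\}$ provided by (CE2) together with Theorem \ref{thm-inv-mfd-rigorous}, and on the strict monotonicity $y'=\epsilon g>0$ in $M$ from (CE3).

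For the homeomorphism claim, every point $z\in U_{\hat y}:=((P^M_\epsilon)^{-1}(M^{\exit}))_{\hat y}$ first hits $\partial M$ inside the fast-exit face $M^{\exit}\subset M^{f,-}$, on which the vector field is uniformly outward in the $a$-direction by the fast-saddle block construction of Section \ref{section-block-basic}. Transversality of the flow to $M^{f,-}$ makes the exit time $t_\epsilon$ continuous on $U_{\hat y}$, so $P^M_\epsilon|_{U_{\hat y}}$ is continuous. Injectivity reduces to $y'>0$: for $z_1,z_2\in U_{\hat y}$, equality of images forces equal exit times (otherwise the flow would return to the hyperplane $\{y=\hat y\}$, contradicting monotonicity), and then uniqueness of solutions yields $z_1=z_2$. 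The same monotonicity shows that $\{y=\hat y\}$ is transverse to the reverse-time flow emanating from $\partial M$, so the inverse is continuous.

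For the covering relation, the guiding geometric picture is that on the side $\{a>h_s(b,y,\epsilon)\}$ of $W^s(S_\epsilon)$, every trajectory eventually exits through $\{a=1\}$ (or through the slow exit $\{y=1\}$), and the exit $y$-coordinate is a continuous monotone function of the initial $a$: it tends to $1$ as $a\downarrow h_s$ (shadowing $S_\epsilon$ forces a long slow drift before escape) and tends to $\hat y_0$ as $a\uparrow 1$ (essentially instantaneous escape). Fixing any $\hat y_0\in[0,y^-)$, I use this to pick values $h_s(b,\hat y_0,\epsilon)<a_1<a_2\leq 1$, uniform in $b\in\overline{B_s}$, such that every trajectory from $\{a=a_1\}\times\overline{B_s}$ exits with $y$-coordinate $>y^+$ and every trajectory from $\{a=a_2\}\times\overline{B_s}$ exits with $y$-coordinate $<y^-$. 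Setting
\begin{equation*}
\tilde M:=\{(a,b,\hat y_0)\mid a\in[a_1,a_2],\ b\in\overline{B_s}\}
\end{equation*}
with the $h$-set structure that makes $a$ the $u$-direction and $b$ the $s$-direction, Proposition \ref{prop-CR-u1} applied with $q_0=0$ yields $\tilde M\overset{P^M_\epsilon}{\Longrightarrow} M^{\exit}$: the two $a$-faces of $\tilde M$ map into the wings $S(M^{\exit})^R$ (the region $y>y^+$) and $S(M^{\exit})^L$ (the region $y<y^-$) respectively; the $s$-boundary $(M^{\exit})^+=\{|b|=1\}$ is avoided because $b$ is strictly contracted by the fast-stable direction (stable cone condition); and the center segment $a\mapsto P^M_\epsilon(a,0,\hat y_0)$ is a continuous arc that sweeps monotonically across $M^{\exit}$.

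The main obstacle will be the quantitative, uniform-in-$b$ choice of $a_1<a_2$. The monotone dependence of the exit $y$-coordinate on the initial $a$ can be extracted from the unstable cone condition in (CE2), which forces trajectories differing only in their initial $a$-coordinate to separate along a common cone. The limiting behavior as $a\to h_s$ uses local invariance of $W^s(S_\epsilon)$ together with the drift of $S_\epsilon$ up to $y=1$, and uniformity in $b$ follows from compactness of $\overline{B_s}$ combined with continuous dependence of solutions on initial data.
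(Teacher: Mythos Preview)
Your argument for the homeomorphism assertion is sound and in substance matches the paper's (the paper phrases continuity of the forward and backward Poincar\'e maps via the isolating-block/Wazewski machinery rather than via direct transversality, but the content is the same).

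The genuine gap is in your construction of $\tilde M$. You claim one can choose \emph{constants} $a_1<a_2$, independent of $b$, with $h_s(b,\hat y_0,\epsilon)<a_1$ for every $b$ and such that every trajectory starting on $\{a_1\}\times\overline{B_s}\times\{\hat y_0\}$ exits with slow coordinate larger than $y^+$. Your appeal to ``compactness of $\overline{B_s}$ combined with continuous dependence'' does not supply this. Write $\alpha^+(b)$ for the initial $a$-value whose forward orbit exits through $\{a=1\}$ at height exactly $y^+$. The unstable cone estimate (Lemma~\ref{lem-ratio}) forces $\alpha^+(b)-h_s(b,\hat y_0,\epsilon)$ to be of order $\exp(-\lambda_{\min}(y^+-\hat y_0)/\bar\epsilon)$, exponentially small in $1/\epsilon$, whereas $b\mapsto h_s(b,\hat y_0,\epsilon)$ is only Lipschitz with constant $1$ and can vary by an amount comparable to $\mathrm{diam}(\overline{B_s})$. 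Thus in general $\sup_b h_s(b,\hat y_0,\epsilon)>\inf_b\alpha^+(b)$, and no constant $a_1$ satisfies both $a_1>h_s(b,\hat y_0,\epsilon)$ and $a_1<\alpha^+(b)$ for all $b$ simultaneously. If instead you drop the first requirement, then for those $b$ with $a_1<h_s(b,\hat y_0,\epsilon)$ the orbit exits through $\{a=-1\}$, the centre arc $a\mapsto P^M_\epsilon(a,q_0,\hat y_0)$ no longer lands in a coordinate patch on which $c_{M^{\exit}}$ is defined, and the degree verification in Proposition~\ref{prop-CR-u1} cannot be carried out.

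The paper's remedy is exactly to let the boundaries of $\tilde M$ be $b$-dependent: it sets
\[
\tilde M_{\hat y}=\{(a,b,\hat y):\psi^+(b)\le a\le\psi^-(b),\ b\in\overline{B_s}\},
\]
where $\psi^{\pm}:\overline{B_s}\to\overline{B_1}$ are (slight perturbations of) the continuous functions $b\mapsto a_b^{\pm}$ determined by $\pi_y P^M_\epsilon(a_b^{\pm},b,\hat y)=y^{\pm}$. These exist for each $b$ by the intermediate-value/monotonicity picture you correctly describe, and their continuity in $b$ follows from the homeomorphism part. With this $b$-dependent $h$-set Proposition~\ref{prop-find-CR} applies directly: $P^M_\epsilon(\tilde M_{\hat y}^-)$ lands on $\{a=1\}$ at heights outside $[y^-,y^+]$ by construction, $P^M_\epsilon(\tilde M_{\hat y})$ avoids $(M^{\exit})^+\subset M^{f,+}$ since that face is a fast entrance, and the degree condition holds for any $q_0\in\overline{B_s}$ by the monotonicity coming from Proposition~\ref{prop-cone}.
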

\begin{proof}
Via a homeomorphism $c_M$, we may assume that $M = \overline{B_1}\times \overline{B_s} \times [0,1]$.

Since $M$ is an isolating block for $\varphi_\epsilon$, then $P^M_\epsilon$ is continuous on $M$, which is the consequence of standard theory of isolating blocks \cite{Con, Jones, Smo}. 
Consider the backward Poincar\'{e} map $P^{M,-}_\epsilon : M\to \partial M$ given by
\begin{equation*}
P^{M,-}_\epsilon(z):= \varphi_\epsilon(\tau(z),z),\quad 
\tau(z):= \inf\{t \leq 0\mid \varphi_\epsilon([t,0],z)\subset M\}.
\end{equation*}
This is obviously well-defined via the property of flows. 
Since $M$ is also an isolating block for the backward flow, then the exit of $M$ in backward time is closed and hence $P^{M,-}_\epsilon$ is also continuous. 
The correspondence between points on $(P^M_\epsilon)^{-1}(M^{{\rm exit}})\cap \{y = \hat y\}$ and their image is one-to-one and onto under $P^M_\epsilon$.
This property holds since $M\cap \{y = \hat y\}$ is a crossing section from (CE3).
By restricting $M$ to $M_{\geq \hat y}$ and redefining $P^M_\epsilon$ and $P^{M,-}_\epsilon$ for $M_{\geq \hat y}$, their continuity yields that $(P^M_\epsilon)^{-1}(M^{{\rm exit}})\cap \{y = \hat y\}$ is compact. 
Therefore, the restriction of $P^M_\epsilon$ to 
$(P^M_\epsilon)^{-1}(M^{{\rm exit}})\cap \{y = \hat y\}$ is a homeomorphism.

\bigskip
Next consider $P^{M_{\geq \hat y}}_\epsilon$ instead of $P^M_\epsilon$.
Since $M\subset \mathbb{R}^{n+1}$ is a fast-saddle-type block with $u(M) = 1$ satisfying stable and unstable cone conditions, then the stable manifold $W^s(S_\epsilon)$ divides $M$ into two components, where $S_\epsilon$ is the slow manifold in $M$. 
Let $M_+$ be the component of $M\setminus W^s(S_\epsilon)$ containing $M^{{\rm exit}}$. 
Now we can construct an $h$-set with desiring covering relation as follows.

Consider a section $M_{+,\hat b} = \{(a,\hat b,\hat y)\mid (a,\hat b,\hat y)\in N\}$. Let $a_{\hat b}\in \overline{B_1}$ be such that $W^s(S_\epsilon)\cap \{(b,y)=(\hat b, \hat y)\} = \{(a_{\hat b},\hat b,\hat y)\}$.
Since $P^{M_{\geq \hat y}}_\epsilon \mid_{M_+} : M_+\to P^{M_{\geq \hat y}}_\epsilon (M_+)\subset \partial M$ is homeomorphic, $g(x,y,\epsilon) > 0$ holds and 
\begin{equation*}
P^{M_{\geq \hat y}}_\epsilon(1,\hat b, \hat y) = (1,\hat b, \hat y)\in M^{f,-},\quad P^{M_{\geq \hat y}}_\epsilon(a_{\hat b},\hat b, \hat y) \in M^{s,-}
\end{equation*}
hold, Proposition \ref{prop-cone} implies that $z_0, z_1, z_2 \in M$ with $z_0\in W^s(S_\epsilon)$, $z_1, z_2 \in C^u(z_0)$, $|\pi_a(z_1-z_0)| > |\pi_a(z_2-z_0)|$ and $\pi_y(z_1) = \pi_y(z_2)$ satisfy $\pi_y( P^{M_{\geq \hat y}}_\epsilon (z_1)) < \pi_y(P^{M_{\geq \hat y}}_\epsilon (z_2))$.
Here $C^u(z)$ denotes the unstable cone with the vertex $z$:
\begin{equation*}
C^u(z) = \{(a,b,y)\mid |a - \pi_a(z)|^2 \geq  |b - \pi_b(z)|^2 +  |y - \pi_y(z)|^2\}. 
\end{equation*}

Thus there exist points $(a_{\hat b}^\pm,\hat b, \hat y)\in M_{+,\hat b}$ such that  $P^{M_{\geq \hat y}}_\epsilon (a_{\hat b}^\pm,\hat b, \hat y) \in \partial M \cap \{y=y^\pm\}$ with $a_{\hat b} < a^+_{\hat b} < a^-_{\hat b}$.
\par
Let $\psi^\pm : \overline{B_s}\to \overline{B_1}$ be given by $\psi^\pm(\hat b) = a^\pm_{\hat b}$. 
Thanks to the continuity of $P^{M_{\geq \hat y}}_\epsilon$, both $\psi^\pm$ are continuous. 
Finally, define $\tilde M_{\hat y}$ by
\begin{equation*}
\tilde M_{\hat y} := \{(a,b,\hat y)\mid \psi^+(b) \leq a \leq \psi^-(b), b\in\overline{B_s}\}. 
\end{equation*}
It is indeed an $h$-set with
\begin{equation}
\label{new-hset-bd}
{\tilde M_{\hat y}}^- := \{(a,b,\hat y)\mid a = \psi^\pm(b), b\in\overline{B_s}\},\quad {\tilde M_{\hat y}}^+ = \{(a,b,\hat y)\mid \psi^+(b) \leq a \leq \psi^-(b), b\in \partial B_s\}. 
\end{equation}
Slight modifications of $\psi^\pm$ yield the correspondence 
\begin{equation*}
\pi_y(P^{M_{\geq \hat y}}_\epsilon (\psi^-(b),b,\hat y))\in (\hat y,y^-),\quad \pi_y(P^{M_{\geq \hat y}}_\epsilon (\psi^+(b),b,\hat y))\in (y^+,1)\quad \text{ for }b\in \overline{B_s}.
\end{equation*}
We rewrite the corresponding $h$-set as $\tilde M_{\hat y}$ again.

\bigskip
Note that $P^{M_{\geq \hat y}}_\epsilon ({\tilde M_{\hat y}}^-) \cap M^\exit = \emptyset$ by definition of $\psi^\pm$.
Also, $P^{M_{\geq \hat y}}_\epsilon (\tilde M_y)\cap (M^\exit)^+ = \emptyset$ holds since $(M^\exit)^+$ is on $M^{f,+}$. 
The arbitrary choice of $q_0\in \overline{B_s}$ leads to the statement in Proposition \ref{prop-find-CR}. 
In particular, $\tilde M_{\hat y} \overset{P^{M_{\geq \hat y}}_\epsilon }{\Longrightarrow} M^{{\rm exit}}$ holds. 
Of course, $\tilde M_{\hat y} \overset{P^M_\epsilon }{\Longrightarrow} M^{{\rm exit}}$ also holds since $P^{M_{\geq \hat y}}_\epsilon$ is just a restriction of $P^M_\epsilon$ in $M_{\geq \hat y}$.

Moreover, the union $\tilde M := \bigcup_{y\in [0, \hat y]} \tilde M_{\hat y}$ also satisfies $\tilde M \overset{P_\epsilon^M}{\Longrightarrow} M^{{\rm exit}}$.
Proof of the case $q=-1$ is similar.
\end{proof}

The following proposition is the core of the covering-exchange property.
\begin{prop}
\label{prop-CE}
Let $(N,M)$ be a covering-exchange pair for (\ref{fast-slow})$_\epsilon$ with a fast-exit face $M^{{\rm exit}}$. Then there exists an $h$-set $\tilde M\subset M$ such that 
\begin{equation*}
N \overset{ \varphi_\epsilon(T,\cdot) }{\Longrightarrow} \tilde M  \overset{P_\epsilon^M}{\Longrightarrow} M^{{\rm exit}}.
\end{equation*}
\end{prop}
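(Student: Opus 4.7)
The plan is to take the $h$-set $\tilde M$ produced by Lemma \ref{lem-Poincare-hset} and verify the entry covering $N \overset{\varphi_\epsilon(T,\cdot)}{\Longrightarrow} \tilde M$ in addition to the exit covering $\tilde M \overset{P_\epsilon^M}{\Longrightarrow} M^{\rm exit}$ that the lemma already yields. Assume the slow direction number is $q = +1$; the case $q = -1$ is analogous after time reversal of the slow variable. By (CE5), the quantity $\alpha := \sup \pi_y(\varphi_\epsilon(T,N) \cap M)$ satisfies $\alpha < y^-$, so I would pick $\hat y \in (\alpha, y^-)$ and apply Lemma \ref{lem-Poincare-hset} at this $\hat y$ to obtain the tube $\tilde M = \bigcup_{y \in [0, \hat y]} \tilde M_y \subset M$, where each slice $\tilde M_y$ is bounded in $a$ by the graphs $\psi^\pm(b,y)$ constructed in the lemma.

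Since $u(N) = u(\tilde M) = 1$, I would verify $N \overset{\varphi_\epsilon(T,\cdot)}{\Longrightarrow} \tilde M$ via Proposition \ref{prop-CR-u1}. Equip $\tilde M$ with the $h$-set structure $c_{\tilde M}$ obtained by sending each $(b,y)$-slice $[\psi^+(b,y),\psi^-(b,y)]$ affinely onto $[-1,1]$ in $a$, so that $\tilde M^L$ corresponds to the graph $\{a = \psi^+\}$ and $\tilde M^R$ to $\{a = \psi^-\}$. The three conditions to check are (i) $\varphi_\epsilon(T,N) \cap \tilde M^+ = \emptyset$, (ii) $\varphi_\epsilon(T,N^L) \subset S(\tilde M)^L$ and $\varphi_\epsilon(T,N^R) \subset S(\tilde M)^R$ (or the swapped version), and (iii) the image of some central slice $c_N([-1,1] \times \{q_0\})$ lies in $\mathrm{Int}(S(\tilde M)^L \cup \tilde M \cup S(\tilde M)^R)$.

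For (i), $\tilde M^+$ decomposes into the $b$-boundary $(\partial B_s)\cap \tilde M$ and the two $y$-endcaps $\tilde M \cap \{y=0\}$ and $\tilde M \cap \{y=\hat y\}$; the first two lie inside $M^+$ and are avoided by the original covering in (CE4), while the third is avoided by the choice $\hat y > \alpha$. For (ii), the original covering forces $\varphi_\epsilon(T,N^L)$ and $\varphi_\epsilon(T,N^R)$ onto opposite sides of $M$ in the $a$-direction, i.e., into $\{a<-1\}$ and $\{a>1\}$ in $c_M$-coordinates; since $\tilde M$ sits strictly inside $M$ with $a$-extent $[\psi^+,\psi^-] \subset (-1,1)$, these images belong to $S(\tilde M)^L$ and $S(\tilde M)^R$ in some order. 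Condition (iii) then follows from (i), (ii), and continuity of the image.

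The main technical subtlety is the non-rectilinear nature of $\tilde M$, whose $a$-boundaries are graphs $a = \psi^\pm(b,y)$ rather than constant-$a$ hyperplanes; but this is absorbed into the homeomorphism $c_{\tilde M}$ and does not affect the topological content of the covering relation, since what matters is that the image of $N$ crosses $\tilde M$ transversally in the $a$-direction, which is inherited from $N \overset{\varphi_\epsilon(T,\cdot)}{\Longrightarrow} M$ together with the placement of $\tilde M$ strictly inside $M$.
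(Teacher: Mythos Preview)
Your argument is essentially the paper's own: invoke Lemma~\ref{lem-Poincare-hset} to produce $\tilde M$ together with the exit covering $\tilde M\overset{P_\epsilon^M}{\Longrightarrow}M^{\rm exit}$, then deduce the entry covering $N\overset{\varphi_\epsilon(T,\cdot)}{\Longrightarrow}\tilde M$ from (CE4) and the $y$-cutoff supplied by (CE5). The only cosmetic difference is that the paper sets $\tilde M=\bigcup_{y\in[\hat y_-,\hat y_+]}\tilde M_y$ with \emph{both} endpoints bracketing $\pi_y(\varphi_\epsilon(T,N)\cap M)$, whereas you take $y\in[0,\hat y]$; your choice works just as well because the face $\{y=0\}$ already lies in $M^+$ and is therefore avoided by the original covering.

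One caution on your step~(ii): the covering relation $N\Rightarrow M$ in the sense of Definition~\ref{dfn-covrel} does not by itself force $\varphi_\epsilon(T,N^{L})$ and $\varphi_\epsilon(T,N^{R})$ into $\{a<-1\}$ and $\{a>1\}$ respectively --- the image of $N^-$ could in principle leave $M_c$ in the $(b,y)$-direction rather than the $a$-direction, so appealing to Proposition~\ref{prop-CR-u1} at this point is not fully justified. The paper is no more explicit here (it asserts $N\Rightarrow\tilde M$ in a single sentence), so your write-up is at the same level of rigor; a fully careful treatment would transport the homotopy furnished by $N\Rightarrow M$ rather than re-verify the hypotheses of Proposition~\ref{prop-CR-u1}, using that $\tilde M\subset M_c$ gives $h([0,1],N_c^-)\cap\tilde M=\emptyset$ for free and that $h_1=(A,0)$ already has $A(\pm1)$ on opposite sides of $[\psi^+,\psi^-]\subset(-1,1)$.
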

\begin{proof}
Without the loss of generality, we may assume that  $M = \overline{B_1}\times \overline{B_s} \times [0,1]$ and $M^{{\rm exit}} = \{1\}\times \overline{B_s} \times [y^-,y^+]$ via a homomorphism $c_M$, 
where $[y^-,y^+]\subset (0,1)$. Also, let $q=+1$.

By Lemma \ref{lem-Poincare-hset}, we can construct an $h$-set depending on $\hat y\in [0,y^-)$ which $P^M_\epsilon$-covers $M^\exit$. Let $\tilde M_{\hat y}$ be such an $h$-set. 
Note that such an $h$-set can be constructed for all $\hat y\in [0,y^-)$. 
Now choose $\hat y_\pm \in [0,y^-)$ so that
\begin{equation*}
\hat y_- < \inf \pi_y( \varphi(T,N)\cap M) <  \sup \pi_y( \varphi(T,N)\cap M) < \hat y_+,
\end{equation*}
and define a set $\tilde M$ by $\tilde M := \bigcup_{y\in [\hat y_-, \hat y_+]}\tilde M_y$. 
The monotonicity of $g$ and homeomorphic  correspondence of the flow imply that $\tilde M$ is actually an $h$-set with $\tilde M^\pm = \bigcup_{y\in [\hat y_-, \hat y_+]} \tilde M_y^\pm$, where $\tilde M_y^\pm$ are given by (\ref{new-hset-bd}).
The covering relation $\tilde M  \overset{P_\epsilon^M}{\Longrightarrow} M^\exit$ immediately holds from the proof of Lemma \ref{lem-Poincare-hset}. 
Moreover, $N \overset{\varphi_\epsilon(T,\cdot)}{\Longrightarrow} \tilde M$ also holds by the construction of $\tilde M$ and the assumption $N \overset{\varphi_\epsilon(T,\cdot)}{\Longrightarrow} M$.
\end{proof}
Proposition \ref{prop-CE} shows that the covering-exchange property enables us to track solution orbits near slow manifolds. This is just a consequence of Proposition \ref{ZG-periodic}.

\begin{rem}\rm
Our implementations of covering-exchange property automatically solve the matching problem in singular perturbation problems. 
Indeed, let $(N,M)$ be a covering-exchange pair with a fast-exit face $M^{{\rm exit}}$. 
In most cases, $N$ is a family of tracking invariant manifold whose behavior is mainly dominated by  fast dynamics. 
On the other hand, the behavior of points in $\tilde M$ obtained in Proposition \ref{prop-CE} is mainly dominated by the slow dynamics until it leaves $M$. 
If such a point is also on $\varphi_\epsilon(T,N)$, then we can prove the existence of a point $q_\epsilon \in N$ 
which goes to $M$ along the fast dynamics in the first stage, 
stays $M$ dominated by the slow dynamics until it leaves $M$ through $M^{{\rm exit}}$ in the second stage 
and goes outside $M$ along the fast dynamics again in the third stage. 
The trajectory through $q_\epsilon$ is actually dominated by the full system (\ref{fast-slow})$_\epsilon$.
Such a behavior is thus considered as the match of dynamics in different time scales.

One of other key points of the covering-exchange is that the choice of $u$-dimensional variables is changed during time evolutions around slow manifolds. 
Indeed, the covering relation $N \overset{\varphi_\epsilon(T,\cdot)}{\Longrightarrow} \tilde M$ typically corresponds to validation of codimension $p$ connecting orbits between saddle-type equilibria in the limit system.
In such a case, the $u=u(N)=p$-dimensional variable are chosen from $y$-variables, while the $u$-dimensional variables of $\tilde M$ are chosen from $a$-variables: fast-unstable directions, which is natural from the viewpoint of codimension $p$ connecting orbits.
Finally, during the covering relation $\tilde M  \overset{P_\epsilon^M}{\Longrightarrow} M^{{\rm exit}}$, the $u$-dimensional variables return to the part of $y$-variables, since $M^{{\rm exit}}$ is regarded as the initial data of other codimension $p$ connecting orbits in the limit system.
Consequently, we observe the \lq\lq exchange" of the choice of $u$-dimensional variables during the covering relation $\tilde M  \overset{P_\epsilon^M}{\Longrightarrow} M^{{\rm exit}}$ in Proposition \ref{prop-CE}.
In this sense, we call this phenomenon the covering-\lq\lq {\em exchange}".
Needless to say, one can describe such matching property without solving any differential equations in $M$.
\end{rem}

\bigskip
%
%
\subsection{Expanding and contracting rate of disks}
\label{section-rate}

Assumptions of the covering-exchange in the previous subsection require that each branch of slow manifolds should be validated by one fast-saddle-type block. 
However, slow manifolds may have nontrivial curvature in general.
It is not thus realistic to validate slow manifolds by one block with computer assistance without any knowledges of vector fields. 
To overcome such computational difficulties, we provide several techniques for generalizing conditions in the covering-exchange.

As a preliminary to next subsections, we consider the expansion and contraction rate of disks in a fast-saddle-type block.
Consider the system (\ref{abstract-form}). Note that $A$ in (\ref{abstract-form}) is the $u\times u$-diagonal matrix such that all eigenvalues of $A$ have positive real part, 
and that $B$ in (\ref{abstract-form}) is the $s\times s$-diagonal matrix such that all eigenvalues of $B$ have negative real part.
Let $N\subset \mathbb{R}^{n+1}$ be a fast-saddle-type block satisfying stable and unstable cone conditions for fixed $\epsilon > 0$. 
Without the loss of generality, via homomorphism $c_N$, we may assume that
\begin{equation*}
N = \overline{B_u}\times \overline{B_s}\times [0,1].
\end{equation*}
Here we also assume that $g(x,y,\epsilon) > 0$ holds in $N$. 
Two cross sections $\overline{B_u}\times \overline{B_s}\times \{0,1\}$ then become the slow entrance $N^{s,+}$ and  the slow exit $N^{s,-}$ of $\varphi_\epsilon$, respectively. 
\begin{dfn}\rm
An {\em $\alpha$-Lipschitz unstable disk} is the graph of a Lipschitz function $\psi : \overline{B_u} \to \overline{B_s}\times [0,1]$ with ${\rm Lip}(\psi) \leq \alpha$. Similarly, an {\em $\alpha$-Lipschitz stable disk} is the graph of a Lipschitz function $\psi : \overline{B_s} \to \overline{B_u}\times [0,1]$ with ${\rm Lip}(\psi) \leq \alpha$. 
\end{dfn}

Next we consider the expanding and contracting rate of stable and unstable disks. 

\begin{lem}
\label{lem-ratio}
Let $N\subset \mathbb{R}^{n+1}$ be a fast-saddle type block satisfying stable and unstable cone conditions. 
Let $D^u(r)$ and $D^s(r)$ be a $1$-Lipschitz unstable disk and a stable disk of the diameter $r$, respectively, contained in $N$. 
Let $T > 0$ be fixed.
\begin{enumerate}
\item Assume that $\varphi_\epsilon([0,T],D^u(r))\subset N$. 
Then for all $z_1, z_2\in D^u(r)$ the following inequality holds: 
\begin{equation*}
|\pi_a(z_1(t)-z_2(t))| \geq e^{\lambda_{\min} t}|\pi_a(z_1-z_2)|\quad \text{ for }t\in [0,T],
\end{equation*}
where $\lambda_{\min} =\lambda_A - (\sup \sigma_{\mathbb{A}_1^u} + \sup \sigma_{\mathbb{A}_2^u}) > 0$ is given by (\ref{ineq-graph-unstable}). Here $\lambda_A$ denotes a positive number satisfying (\ref{bound-unst-ev}).
\item Assume that $\varphi_\epsilon([-T,0],D^s(r))\subset N$. Then for all $z_1, z_2\in D^s(r)$ the following inequality holds: 
\begin{equation*}
|\pi_b(z_1(-t)-z_2(-t))| \geq e^{-\mu_{\min} t}|\pi_b(z_1-z_2)|\quad \text{ for }t\in [0,T],
\end{equation*}
where $\mu_{\min} = \mu_B + (\sup \sigma_{\mathbb{B}_1^s} + \sup \sigma_{\mathbb{B}_2^s}) < 0$ is given by (\ref{ineq-graph-stable}). Here $\mu_B$ denotes a negative number satisfying (\ref{bound-st-ev}).
\end{enumerate}
\end{lem}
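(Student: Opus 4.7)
The plan is to deduce both bounds from the differential inequalities already set up in the proof of Proposition \ref{prop-cone}, combined with cone preservation and a one-dimensional Gronwall comparison. Since $\epsilon$ is fixed throughout the lemma, any two trajectories under consideration share the same value of $\eta$, so $\Delta\eta \equiv 0$; this only simplifies the matrix estimates from Assumptions \ref{ass-cone-unstable} and \ref{ass-cone-stable} and the constants $\lambda_{\min}$, $\mu_{\min}$ remain valid upper/lower bounds on the growth rates.

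For part (1), I pick $z_1(t), z_2(t)$ with $z_1(0), z_2(0) \in D^u(r)$ and set $\Delta a = \pi_a(z_1-z_2)$, $\Delta \zeta = \pi_{b,y}(z_1-z_2)$. The $1$-Lipschitz hypothesis on the unstable disk gives $|\Delta a(0)| \geq |\Delta \zeta(0)|$, i.e.\ $M(0) \geq 0$ in the notation of Proposition \ref{prop-cone}. By Proposition \ref{prop-cone}, at any zero $t^*$ of $M$ one has $M'(t^*) > 0$ (unless $\Delta a(t^*) = 0$, which forces $\Delta \zeta(t^*) = 0$ and makes the conclusion trivial at that instant); a standard continuity argument then rules out $M$ ever becoming negative while the trajectories remain in $N$, so the unstable cone inclusion $|\Delta a(t)| \geq |\Delta \zeta(t)|$ persists on $[0,T]$. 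Substituting this into the lower bound (\ref{a-estimate}) yields
\begin{equation*}
\tfrac{1}{2}\tfrac{d}{dt}|\Delta a(t)|^2 = \langle \Delta a'(t), \Delta a(t)\rangle \geq \bigl(\lambda_A - \sup \sigma_{\mathbb{A}_1^u} - \sup \sigma_{\mathbb{A}_2^u}\bigr)|\Delta a(t)|^2 = \lambda_{\min}|\Delta a(t)|^2,
\end{equation*}
and Gronwall's inequality delivers $|\Delta a(t)| \geq e^{\lambda_{\min} t}|\Delta a(0)|$, as claimed.

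Part (2) is formally dual and is carried out in backward time. Setting $\Delta b = \pi_b(z_1-z_2)$ and $\Delta \zeta' = \pi_{a,y}(z_1-z_2)$, the $1$-Lipschitz stable disk hypothesis gives $|\Delta b(0)| \geq |\Delta \zeta'(0)|$. A backward-time analogue of Proposition \ref{prop-cone}, obtained by interchanging the roles of $(a, A, \lambda_A, \mathbb{A}_i^u)$ and $(b, -B, |\mu_B|, \mathbb{B}_i^s)$ using Assumption \ref{ass-cone-stable}, propagates the stable cone inclusion $|\Delta b(t)| \geq |\Delta \zeta'(t)|$ along $[-T, 0]$. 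The same quadratic-form bookkeeping as in Proposition \ref{prop-cone}, but applied to (\ref{eq-b}) rather than (\ref{eq-a}), then produces
\begin{equation*}
\tfrac{1}{2}\tfrac{d}{dt}|\Delta b(t)|^2 \leq \bigl(\mu_B + \sup \sigma_{\mathbb{B}_1^s} + \sup \sigma_{\mathbb{B}_2^s}\bigr)|\Delta b(t)|^2 = \mu_{\min}|\Delta b(t)|^2,
\end{equation*}
with $\mu_{\min} < 0$. The function $|\Delta b(t)|^2 e^{-2\mu_{\min} t}$ is therefore nonincreasing on $[-T, 0]$, which evaluated at $t = -T$ and $t=0$ gives $|\Delta b(-t)|^2 \geq e^{-2\mu_{\min} t}|\Delta b(0)|^2$ for $t \in [0,T]$, i.e.\ (2).

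The one point that needs genuine care, rather than being a direct citation, is the backward-time cone preservation used in part (2): Proposition \ref{prop-cone} is stated only for the unstable cone under forward flow. Verifying that Assumption \ref{ass-cone-stable} yields the correct sign of $\tfrac{d}{dt}(|\Delta b|^2 - |\Delta \zeta'|^2)$ on the stable cone boundary, and that this sign is the right one to prevent the cone from being violated when time is reversed, is a routine repetition of the singular-value estimates in Proposition \ref{prop-cone} with stable and unstable roles swapped, but it must be spelled out so every sign is accounted for.
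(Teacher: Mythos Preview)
Your proof is correct and follows essentially the same approach as the paper: invoke the cone preservation from Proposition~\ref{prop-cone} (or its stable/backward-time analogue) to maintain $|\Delta a|\ge|\Delta\zeta|$ (resp.\ $|\Delta b|\ge|\Delta\zeta'|$), substitute into the variational estimate~(\ref{a-estimate}) (resp.\ its $b$-counterpart), and integrate the resulting linear differential inequality. Your explicit remark that $\Delta\eta\equiv 0$ and your flagging of the backward-time stable-cone argument as requiring separate verification are both slightly more careful than the paper, which simply asserts the latter.
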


The above inequalities imply that the expanding rate of $D^u(r)$ is uniformly bounded below by $\lambda_A$ and that the contracting rate of $D^s(r)$ is uniformly bounded above by $\mu_B$.

\begin{proof}
{\em 1.} Assume first that $z_1, z_2\in D^u(r)$. 
The unstable cone condition with our assumption implies that $z_2 \in C^u(z_1)$ and $z_2(t) \in C^u(z_1(t))$ hold for all $t\in [0,T]$.
In particular, $|\pi_a(z_1(t)-z_2(t))| \geq |\pi_{b,y}(z_1(t)-z_2(t))|$ holds for all $t\in [0,T]$.
Let $\Delta a(t) := \pi_a(z_1(t)-z_2(t))$, $\Delta b(t) := \pi_b(z_1(t)-z_2(t))$ and $\Delta y(t) := \pi_y(z_1(t)-z_2(t))$.
The first variation equation in $a$-direction yields
\begin{equation*}
\langle \Delta a', \Delta a\rangle \geq \lambda_A |\Delta a|^2 - (\sup \sigma_{\mathbb{A}_1^u} |\Delta a|^2 + \sup \sigma_{\mathbb{A}_2^u} |\Delta a|^2),
\end{equation*}
which is obtained by arguments in the proof of Proposition \ref{prop-cone}.
Here we used the fact that $|\Delta a(t)|\geq (|\Delta b(t)|^2 + |\Delta y(t)|)^{1/2}$ holds in $C^u(z_1(t))$ for all $t\in [0,T]$.
This inequality leads to $(|\Delta a|^2)' \geq 2\lambda_{\min}|\Delta a|^2$, and hence $|\Delta a(t)| \geq \exp(\lambda_{\min}t)|\Delta a(0)|$.

\bigskip
{\em 2.}  Next assume that $z_1, z_2\in D^s(r)$. The stable cone condition with our assumption implies that $z_2 \in C^s(z_1)$ and $z_2(-t) \in C^s(z_1(-t))$ hold for all $t\in [0,T]$, where 
\begin{equation*}
C^s(z) = \{(a,b,y)\mid |b - \pi_b(z)|^2 \geq  |a - \pi_a(z)|^2 +  |y - \pi_y(z)|^2\}. 
\end{equation*}
In particular, $|\pi_b(z_1(-t)-z_2(-t))| \geq |\pi_{a,y}(z_1(-t)-z_2(-t))|$ holds for all $t\in [0,T]$.

The backward first variation equation in $b$-direction yields
\begin{align*}
\frac{1}{2}\frac{d}{d\tilde t}|\Delta b|^2 \geq |\mu_B| |\Delta b|^2 - (\sup \sigma_{\mathbb{B}_1^s} |\Delta b|^2 + \sup \sigma_{\mathbb{B}_2^s} |\Delta b|^2),\quad \text{ where }\tilde t = -t.
\end{align*}
Here we used the fact that $|\Delta b(-t)|\geq (|\Delta a(-t)|^2 + |\Delta y(-t)|)^{1/2}$ holds in $C^s(z_1(-t))$ for all $t\in [0,T]$.
This inequality leads to $(|\Delta b|^2)' \geq 2|\mu_{\min}| |\Delta b|^2$, and hence $|\Delta b(-t)| \geq \exp(-\mu_{\min}t)|\Delta b(0)|$.
\end{proof}

Note that Lemma \ref{lem-ratio} holds even when $u\not = 1$.

%
%
\subsection{Slow shadowing, drop and jump}
\label{section-show-shadowing}
Under suitable assumptions, slow manifolds as well as limiting critical manifolds are represented by graphs of nonlinear functions. 
In such cases, it is not easy to validate slow manifolds by {\em one} block of fast-saddle-type. 
It is thus natural to construct enclosure of such manifolds by the finite sequence of fast-saddle-type blocks, 
which will be easier than validation by one $h$-set. 
However, the union of finite fast-saddle-type blocks is not generally convex and hence 
we cannot apply arguments of the covering-exchange directly. 
In particular, we have to consider the effect of slow exits and slow entrances of the union. 
Nevertheless, the behavior in $y$-direction is much slower than the behavior in $(a,b)$-direction for sufficiently small $\epsilon$. 
It is thus natural to consider that slow exits and entrances cause no effect on validation of trajectories sufficiently close to slow manifolds for (\ref{fast-slow}).
Here we construct a sufficient condition to explicitly guarantee such expectation. 
The key feature consists of two parts: (i) the comparison of expansion and contraction rate with the speed of slow vector field, and (ii) abstract construction of covering relations around slow manifolds. 
Our proposing concept named {\em slow shadowing} expresses both features.
Slow shadowing can incorporate with the covering relations in Proposition \ref{prop-CE}; the {\em drop} corresponding to
$N \overset{ \varphi_\epsilon(T,\cdot) }{\Longrightarrow} \tilde M$ and the {\em jump} corresponding to $\tilde M  \overset{P_\epsilon^M}{\Longrightarrow} M^{{\rm exit}}$.
As a consequence, the concept of covering-exchange is generalized to the finite union of fast-saddle-type blocks.

The center of our considerations is a sequence of fast-saddle-type blocks $\{N_j\}$ satisfying all assumptions in Theorem \ref{thm-inv-mfd-rigorous}.
In this case, Theorem \ref{thm-inv-mfd-rigorous} indicates that the stable manifold $W^s(S_\epsilon)_j$ is given by the graph of a Lipschitz function $h_s^j$ in each $N_j$.
Cone conditions also yield that these manifolds are patched globally in $\bigcup N_j$.

\begin{lem}
\label{lem-slow-mfd-global}
Let $\{N_j\}_{j=0}^{m_j}$ be a sequence of fast-saddle-type blocks satisfying all assumptions in Theorem \ref{thm-inv-mfd-rigorous}.
Assume that, for all $j$, $N_j\cap N_{j+1}\not = \emptyset$ and that each section $(N_j\cap N_{j+1})_{\bar y}$ contains a unique point of the critical manifold $S_0$.
Then, for all $\epsilon \in [0,\epsilon_0]$, validated stable manifolds $W^s(S_\epsilon)_j$ and $W^s(S_\epsilon)_{j+1}$ in blocks $N_j$ and $N_{j+1}$, respectively, coincide with each other in the intersection $N_j\cap N_{j+1}$ for $j=0,\cdots, m_j-1$.
The similar result holds for $W^u(S_\epsilon)$.
\end{lem}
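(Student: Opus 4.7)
My plan is to prove coincidence of $W^s(S_\epsilon)_j$ and $W^s(S_\epsilon)_{j+1}$ on the overlap (the statement for $W^u$ follows by time reversal) through a cone-based uniqueness argument anchored in Proposition \ref{prop-cone} and the Lipschitz graph representation of Theorem \ref{thm-inv-mfd-rigorous}. The argument splits into two parts: first a direct graph-level comparison at $\epsilon=0$ using the hypothesis on $S_0$, and then, for $\epsilon>0$, a contradiction argument in which two disagreeing graph representations at a common $(b_0,y_0)$ would place the corresponding points at the vertex of a nontrivial unstable cone and force exponential separation in the $a$-direction, contradicting the boundedness both stable manifolds enjoy inside $\overline{B_u}$.

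\noindent\textbf{Step 1 (case $\epsilon=0$).} By Theorem \ref{thm-inv-mfd-rigorous}(1), the critical manifold pieces $S_0\cap N_j$ and $S_0\cap N_{j+1}$ are graphs $x=h_j^0(y)$ and $x=h_{j+1}^0(y)$ over $\pi_yN_j$ and $\pi_yN_{j+1}$ respectively. For every $\bar y\in\pi_y(N_j\cap N_{j+1})$, the hypothesis guarantees that $(N_j\cap N_{j+1})_{\bar y}$ contains exactly one point of $S_0$, which forces $h_j^0(\bar y)=h_{j+1}^0(\bar y)$ and yields coincidence of the two graphs on the common $y$-domain.

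\noindent\textbf{Step 2 (case $\epsilon\in(0,\epsilon_0]$).} Suppose for contradiction there exist $\epsilon\in(0,\epsilon_0]$ and $(b_0,y_0)$ in the $(b,y)$-projection of $N_j\cap N_{j+1}$ with $h_s^j(b_0,y_0,\epsilon)\neq h_s^{j+1}(b_0,y_0,\epsilon)$. Set $p_1:=(h_s^j(b_0,y_0,\epsilon),b_0,y_0)\in W^s(S_\epsilon)_j$ and $p_2:=(h_s^{j+1}(b_0,y_0,\epsilon),b_0,y_0)\in W^s(S_\epsilon)_{j+1}$, so that $\Delta a(0)\neq 0$ and $\Delta\zeta(0)=0$; equivalently $p_2$ sits at the vertex of the unstable cone $C^u(p_1)$ with $M(0)>0$. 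By construction, $\tilde\varphi_\epsilon(t,p_1)\in\hat N_j$ and $\tilde\varphi_\epsilon(t,p_2)\in\hat N_{j+1}$ for all $t\geq 0$ under the respective modified flows of the proof of Theorem \ref{thm-inv-mfd-rigorous}, and in the common interior $y$-region both modifications vanish, so both trajectories follow the original system \eqref{abstract-form} in $\hat N_j\cap\hat N_{j+1}$. Proposition \ref{prop-cone} then keeps $M(t)>0$, and the quadratic-form bound in Part (iii) of the proof of Theorem \ref{thm-inv-mfd-rigorous} gives
\begin{equation*}
|\Delta a(t)|\geq e^{ct}|\Delta a(0)|,\qquad c:=\lambda_A-\sup\sigma_{\mathbb{A}_1^u}-\sup\sigma_{\mathbb{A}_2^u}>0,
\end{equation*}
with positivity of $c$ supplied by Assumption \ref{ass-cone-unstable}. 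Since $\pi_a$ takes values in $\overline{B_u}$, this exponential growth is incompatible with both trajectories remaining bounded, and the contradiction forces $h_s^j=h_s^{j+1}$ throughout the overlap.

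\noindent\textbf{Main obstacle.} The delicate point is that the two modified flows used to build $W^s(S_\epsilon)_j$ and $W^s(S_\epsilon)_{j+1}$ disagree near their respective $y$-boundaries, so one cannot naively propagate $p_1$ and $p_2$ together under a single flow for arbitrarily large $t$, and one must ensure the sojourn time in the common cone-satisfying region is long enough to exhaust $\overline{B_u}$. I would remedy this by constructing an auxiliary fast-saddle-type block $\hat N\subset N_j\cap N_{j+1}$ (slightly enlarged in $y$ using any signed monotonicity of $g$ available from the covering-exchange setup) on which both cone conditions are inherited from $N_j$ and $N_{j+1}$, and then applying Theorem \ref{thm-inv-mfd-rigorous} directly inside $\hat N$. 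The resulting Lipschitz graph $W^s(S_\epsilon)_{\hat N}$ is the unique such graph in $\hat N$; hence both $W^s(S_\epsilon)_j\cap\hat N$ and $W^s(S_\epsilon)_{j+1}\cap\hat N$ must equal $W^s(S_\epsilon)_{\hat N}$, giving the desired coincidence on $N_j\cap N_{j+1}$. The unstable manifold case then follows by reversing time.
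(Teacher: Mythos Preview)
Your core argument in Step 2 is exactly the paper's approach: take two points with the same $(b,y)$-coordinate on the two stable manifolds and invoke the cone-based uniqueness argument from Part (iii) of the proof of Theorem \ref{thm-inv-mfd-rigorous}. The paper's proof is a single sentence pointing to precisely this argument.

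Two remarks. First, your Step 1 is unnecessary and, as written, incomplete: you show only that the critical manifolds $S_0\cap N_j$ and $S_0\cap N_{j+1}$ coincide on the overlap, not that the stable manifolds $W^s(S_0)_j$ and $W^s(S_0)_{j+1}$ do. There is no need to treat $\epsilon=0$ separately, since Proposition \ref{prop-cone} and the exponential-growth estimate \eqref{ineq-graph-unstable} hold uniformly for $\epsilon\in[0,\epsilon_0]$; your Step 2 already covers $\epsilon=0$ once you drop the restriction $\epsilon>0$. Second, the obstacle you flag about the two different modified flows is a genuine subtlety that the paper's one-line proof suppresses; your auxiliary-block workaround is a reasonable way to close it, and your care here exceeds what the paper itself provides.
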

\begin{proof}
The same arguments as Theorem \ref{thm-inv-mfd-rigorous} for $(a_1, b, y)\in N_j\cap N_{j+1}\cap W^s(S_\epsilon)_j$ and $(a_2, b, y)\in N_j \cap N_{j+1}\cap W^s(S_\epsilon)_{j+1}$ with a fixed $(b,y)$ yield the result.
\end{proof}

\bigskip
Consider two fast-saddle-type blocks $N_1$ and $N_2$.
Assume that each $N_i$ is constructed in the local coordinate $((a_i, b_i), y_i)$ whose origin corresponds to $(\bar x_i, \bar y_i)$ such that (\ref{fast-slow}) locally forms (\ref{abstract-form}).
Two coordinate systems $\{((a_i, b_i), y_i) \mid i=1,2\}$ are related to each other by the following commutative diagram:
\begin{equation}
\label{change-of-coordinate}
\begin{CD}
((a_1,b_1),y_1) @> (T_{12})_c >> ((a_2,b_2),y_2)\\
@V {P_1\times I_1=c_{N_1}^{-1}} VV @VV {P_2\times I_1=c_{N_2}^{-1}} V \\
(x-\bar x_1, \bar y_1) @> \text{translation} >>  (x-\bar x_2, \bar y_2)
\end{CD}
\end{equation}
where $P_1$ and $P_2 : \mathbb{R}^n \to \mathbb{R}^n$ be nonsingular matrices determining the approximate diagonal form (\ref{abstract-form}) around $(\bar x_1, \bar y_1)$ and $(\bar x_2, \bar y_2)$, respectively,
and $I_m : \mathbb{R}^m\to \mathbb{R}^m$ denotes the identity map on $\mathbb{R}^m$. 
The map $T_{12}\equiv c_{N_2}^{-1}\circ (T_{12})_c\circ c_{N_1}$ denotes the composition map given by
\begin{align*}
((a_2,b_2),\bar y_2) &= (T_{12})_c((a_1,b_1),\bar y_1) \equiv ((T_{x,12})_c(a_1,b_1),(T_{y,12})_c\bar y_1) \\
	&:= (P_2\times I_1)^{-1} \{(P_1(a_1,b_1),\bar y_1) + (\bar x_1-\bar x_2, \bar y_1 - \bar y_2)\}.
\end{align*}

We assume
\begin{description}
\item[(SS1)] $u(N_1) = u(N_2) = u$.
\item[(SS2)] $N_1\cap N_2 \not = \emptyset$. For given sequences of positive numbers $\{\eta_{i,j_i}^{k, \pm}\}_{j_1= 1, j_2 = 1,\cdots, s}^{i=1,2}$, $k=1,2$, $(N_k)_c$ is constructed by (\ref{fast-block-2}) with the sequence $\{\eta_{i,j_i}^{k, \pm}\}_{j_1=1,\cdots, u, j_2 = 1,\cdots, s}^{i=1,2}$.
The $y$-component of $N_k$ is given by $\pi_y(N_k) = [y_k^-,y_k^+]$ with 
\begin{align*}
&y_k^- < y_k^+\ (k=1,2),\quad y_2^- \in (y_1^-, y_1^+],\quad y_1^+ \in [y_2^-, y_2^+),
\end{align*}
where $\pi_y$ is the projection onto the slow variable component.
\item[(SS3)] All assumptions in Theorem \ref{thm-inv-mfd-rigorous} are satisfied in $N_1$ and $N_2$. Moreover, $q\cdot \epsilon(x,y,\epsilon) > 0$ holds with the slow direction number $q \in \{\pm 1\}$ in $N_1\cup N_2$.
\item[(SS4)] For $k=1,2$, let $A_k$ and $B_k$ be diagonal matrices representing (\ref{ode-ab-coord}) in $N_k$, and $\alpha_k$ and $\beta_k$ be real numbers satisfying (\ref{bound-unst-ev}) for $A_k$ and (\ref{bound-st-ev}) for $B_k$, respectively.
Also, let
\begin{align}
\notag
r_{a,k} &:= \min_{j=1,\cdots,u}{\eta_{1,j}^{k,\pm}},\quad r_a := \min \{r_{a,1},r_{a,2}\},\quad \bar r_{a,k} := \diam(\pi_a(N_k)),\\
\notag
r_{b,k} &:= \min_{j=1,\cdots,s}{\eta_{2,j}^{k,\pm}},\quad r_b := \min \{r_{b,1},r_{b,2}\},\quad \bar r_{b,k} := \diam(\pi_b(N_k)),\\
\notag
\bar h &\in \left(0, \min_{k=1,2}\{y_k^+ - y_k^-\} \right),\\
\label{setting-shadow}
\epsilon^{up}_k &:= \sup_{N_k\times [0,\epsilon_0]}\epsilon g(x,y,\epsilon),\quad \epsilon^{low}_k := \inf_{N_k\times [0,\epsilon_0]}\epsilon g(x,y,\epsilon),\quad \bar \epsilon_k := \max\{|\epsilon^{up}_k|, |\epsilon^{low}_k|\}\\
\notag
\lambda_k &:= \alpha_k - \left( \sup_{N_k\times [0,\epsilon_0]} \sigma_{\mathbb{A}_1^u} + \sup_{N_k\times [0,\epsilon_0]} 
\sigma_{\mathbb{A}_2^u} \right),\quad
\mu_k := \beta_k +  \left( \sup_{N_k\times [0,\epsilon_0]} \sigma_{\mathbb{B}_1^s} + \sup_{N_k\times [0,\epsilon_0]} \sigma_{\mathbb{B}_2^s} \right).
\end{align}
See Assumptions \ref{ass-cone-unstable} and \ref{ass-cone-stable} for the definition of $\sigma_{\mathbb{A}_1^u}, \sigma_{\mathbb{A}_2^u}, \sigma_{\mathbb{B}_1^s}$ and $\sigma_{\mathbb{B}_2^s}$.
\item[(SS5)] 
Fix positive numbers $d_a, d_b\in (0,1)$ arbitrarily.
Let $D_1^u\subset N_1$ and $D_2^s\subset N_2$ be families of disks given by
\begin{equation*}
(D_1^u)_c = \left( \bigcup_{q\in W^u(S_\epsilon)_c}\overline{B_s(q,d_b r_b)} \right)\cap (N_1)_c,\quad (D_2^s)_c = \left( \bigcup_{q\in W^s(S_\epsilon)_c} \overline{B_u(q,d_a r_a)} \right)\cap (N_2)_c
\end{equation*}
via $c_{N_1}$ and $c_{N_2}$, respectively.

Then
\begin{equation}
\label{ass-cov-shadow}
(D_1^u)_{y} \overset{T_{x,12}}{\Longrightarrow} (D_2^s)_{y}
\end{equation}
holds for all $y\in \pi_y(N_1)\cap \pi_y(N_2)$.
\end{description}

Since both $P_1$ and $P_2$ are linear maps, the covering relation (\ref{ass-cov-shadow}) is just a transversality of rectangular domains relative to an affine map.

\begin{dfn}[Slow shadowing pair]\rm
\label{dfn-shadow-1}
Let $\chi \in (0,1]$ be a fixed number.
Assume (SS1) - (SS5).
We say the pair $\{N_1, N_2\}$ satisfies the {\em slow shadowing condition (with the ratio $\chi$ and the slow direction number $q$)} if the following inequality holds: 
\begin{equation}
\label{shadow}
\max\left\{\frac{1}{\lambda_k}\log \left(\frac{\bar r_{a,k} - r_a}{d_a r_a}\right), \frac{1}{|\mu_k|}\log \left(\frac{\bar r_{b,k}-r_b}{d_b r_b}\right)\right\} < \chi \frac{\bar h}{\bar \epsilon_k},\quad k=1,2.
\end{equation}
We shall say the pair $\{N_1, N_2\}$ {\em the slow shadowing pair} ({\em with $\chi$ and the slow direction number $q$}) if $\{N_1, N_2\}$ satisfies the slow shadowing condition (with $q$).
We call the number $\chi$ {\em the slow shadowing ratio}.
\end{dfn}
In practical computations, we set sequences of positive numbers $\{\eta_{1,j}^{k,\pm}\}_{j=1}^u$ and $\{\eta_{2,j}^{k,\pm}\}_{j=1}^s$, $k=1,2$, as identical positive numbers, which make settings in practical computations simple.
The assumption (SS5) admits a sufficient condition for validations in terms of cones. 
We revisit the condition in the end of Section \ref{section-m-cones}.

\bigskip
The slow shadowing condition can be generalized to a sequence of fast-saddle-type blocks as follows.
\begin{dfn}[Slow shadowing sequence]\rm
\label{dfn-shadow-2}
Consider a finite sequence of fast-saddle type blocks $\{N_i\}_{i=0}^m$. We shall say the sequence $\{N_i, \chi_i\}_{i=0}^m$ of blocks and positive numbers satisfies the {\em slow shadowing condition (with $\{\chi_i\}_{i=0}^m$ and $q\in \{\pm 1\}$)} if, for $i=0,\cdots, m-1$ and $\bar h_i$ satisfying (SS4), each pair $\{N_i, N_{i+1}\}$ is a slow shadowing pair with an identical slow direction number $q$ in Definition \ref{dfn-shadow-1}, where $\chi_i$ is the slow shadowing ratio for $\{N_i, N_{i+1}\}$. 
We shall say such a sequence $\{N_i\}_{i=0}^m$ {\em the slow shadowing sequence (with $q$)}.
\end{dfn}
The slow shadowing ratio $\chi$ gives us a benefit in practical computations.
We mention this point concretely in Section \ref{section-demo-shadow}.

\bigskip
The core of the slow shadowing is that all disks transversal to stable and unstable manifolds of slow manifolds rapidly expand and contract, respectively, so that covering relation on a crossing section can be derived.
In what follows we only consider the case $q=+1$. The corresponding results to $q=-1$ can be shown in the similar manner.

\begin{prop}[Slow shadowing]
\label{prop-CE-2-1}
Let $\{N_1, N_2\}$ be a slow shadowing pair with the ratio $\chi$. 
Then there exist $h$-sets $M_1\subset (N_1)_{\bar y} = N_1\cap \{y = \bar y\}$ with $\bar y \in [y_1^-, y_1^+ - \bar h]\cap [y_2^- - \chi \bar h, y_2^+ - \bar h]$ and $M_2\subset (N_2)_{\bar y + \chi\bar h} = N_2\cap \{y = \bar y + \chi\bar h\}$ such that
\begin{equation*}
M_1 \overset{P_\epsilon^{(N_1)_{\leq \bar y + \chi\bar h}}}{\Longrightarrow} M_2.
\end{equation*}
\end{prop}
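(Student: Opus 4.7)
The strategy is to take $M_1$ as the $a$-tube of radius $d_a r_a$ around $W^s(S_\epsilon)\cap(N_1)_{\bar y}$ in $N_1$'s coordinates, and $M_2$ as the analogous tube around $W^s(S_\epsilon)\cap(N_2)_{\bar y+\chi\bar h}$ in $N_2$'s coordinates. Each is regarded as an $h$-set with $u(M_i)=u$, the unstable coordinate being the signed $a$-deviation from $W^s$ and the stable coordinate being $b$. The range prescribed for $\bar y$ guarantees that $\bar y\in\pi_y N_1$ and $\bar y+\chi\bar h\in\pi_y N_1\cap\pi_y N_2$, so both $M_1$ and $M_2$ are well-defined and Lemma~\ref{lem-slow-mfd-global} provides a single coherent $W^s(S_\epsilon)$ on the overlap. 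Because $q=+1$ and $g>0$ on $N_1\cup N_2$ by (SS3), along any trajectory in $N_1$ the slow variable is monotone increasing and the time $\tau$ needed to go from $y=\bar y$ to $y=\bar y+\chi\bar h$ satisfies $\tau\geq \chi\bar h/\bar\epsilon_1$.

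The first nontrivial step is to show that $P$ sends $M_1^-$ outside the top face, hence outside $M_2$. Fix $z_0\in M_1^-$, so $|\pi_a z_0-h_s^1(\pi_b z_0,\bar y,\epsilon)|=d_a r_a$, and let $z_0^*$ be the corresponding point on $W^s(S_\epsilon)$ with the same $(b,y)$-coordinates. Because the initial separation lies in the unstable cone, the cone condition (Assumption~\ref{ass-cone-unstable}) propagates it in forward time, and Lemma~\ref{lem-ratio}(1) gives $|\pi_a(\varphi_\epsilon(t,z_0)-\varphi_\epsilon(t,z_0^*))|\geq e^{\lambda_1 t}d_a r_a$ as long as both trajectories remain in $N_1$. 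Since $\pi_a\varphi_\epsilon(t,z_0^*)$ stays at distance at least $r_a$ from $\partial\pi_a N_1$ by Corollary~\ref{cor-disk} applied to $N_1$, the point $z_0$ escapes through $\{|a|=1\}$ no later than $t=\lambda_1^{-1}\log\bigl((\bar r_{a,1}-r_a)/(d_a r_a)\bigr)$, which by the slow-shadowing inequality~(\ref{shadow}) is strictly less than $\chi\bar h/\bar\epsilon_1\leq\tau$. Hence $P(z_0)\in N_1^{f,-}$ at some $y<\bar y+\chi\bar h$, so $P(M_1^-)\cap\{y=\bar y+\chi\bar h\}=\emptyset$ and therefore $P(M_1^-)\cap M_2=\emptyset$.

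Next I control the image of interior trajectories that do reach the top. For any $z_0\in M_1$ whose trajectory stays in $N_1$ until $y=\bar y+\chi\bar h$, comparing with a point in $W^u(S_\epsilon)$ having the same $(a,y)$-coordinates and invoking Lemma~\ref{lem-ratio}(2) in forward time via the stable cone gives $|\pi_b\varphi_\epsilon(\tau,z_0)-h_u^1(\cdots)|\leq e^{\mu_1\tau}\cdot 1$. The second branch of (\ref{shadow}) rewrites as $e^{-|\mu_1|\chi\bar h/\bar\epsilon_1}<d_b r_b/(\bar r_{b,1}-r_b)\leq d_b r_b$, so the resulting point lies in $(D_1^u)_{\bar y+\chi\bar h}$ in $N_1$'s coordinates. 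This also shows $P(M_1)\cap\{|b|=1\}=\emptyset$, which delivers the condition $P(M_1)\cap M_2^+=\emptyset$ once $b$ is re-expressed in $N_2$'s coordinates via $T_{x,12}$, since the coordinate change moves $h_u^1$ only slightly on the overlap and the $d_b r_b$-estimate absorbs this perturbation.

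Finally I combine the dynamical and geometric parts. The preceding two steps say that, in $N_1$'s coordinates, the Poincar\'e image of $M_1$ on the top face is a set containing $W^s(S_\epsilon)\cap\{y=\bar y+\chi\bar h\}$ and spanning across $\pi_a(N_1)$ while being confined to $(D_1^u)_{\bar y+\chi\bar h}$ in $b$. This is exactly the $h$-set structure needed for a covering onto $(D_1^u)_{\bar y+\chi\bar h}$, with the degree on the unstable component being $\pm 1$ from the standard one-dimensional intermediate value argument and the homotopy being a straight-line interpolation between the true Poincar\'e image and the "fully stretched" linear model; this homotopy stays inside $(D_1^u)_{\bar y+\chi\bar h}$ by the contraction estimate and avoids its top/bottom $a$-faces by the expansion estimate, so the conditions of Definition~\ref{dfn-covrel} (in the form of Proposition~\ref{prop-find-CR} or Proposition~\ref{prop-CR-u1}) are met. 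Composing this covering with (SS5), which asserts $(D_1^u)_{\bar y+\chi\bar h}\overset{T_{x,12}}{\Longrightarrow}(D_2^s)_{\bar y+\chi\bar h}=M_2$, yields $M_1\overset{P_\epsilon^{(N_1)_{\leq\bar y+\chi\bar h}}}{\Longrightarrow}M_2$, as required. The main obstacle is the bookkeeping in this last paragraph: one must check that the Poincar\'e image can legitimately be treated as an intermediate $h$-set and that the composition of the flow-induced covering with the affine covering from (SS5) produces a valid covering in the sense of Definition~\ref{dfn-covrel}, in particular matching homotopies and orientations on the unstable factor across the change of coordinates.
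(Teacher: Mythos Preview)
Your ingredients are the right ones—expansion in $a$ via Lemma~\ref{lem-ratio}(1), contraction in $b$ via Lemma~\ref{lem-ratio}(2), and the affine covering (SS5) to bridge the two coordinate systems—but your choice of $M_1$ as the fixed $d_a r_a$-tube around $W^s(S_\epsilon)$ creates a genuine obstruction in the final step. The Poincar\'e map $P=P_\epsilon^{(N_1)_{\leq\bar y+\chi\bar h}}$ sends your $M_1^-$ through the fast exit at heights $y<\bar y+\chi\bar h$, so $P$ does \emph{not} carry your $M_1$ into the $n$-dimensional section $\{y=\bar y+\chi\bar h\}$ where $M_2$ lives. Your disjointness argument $P(M_1^-)\cap M_2=\emptyset$ relies entirely on the $y$-coordinate, but the covering-relation framework (Definition~\ref{dfn-covrel}, Proposition~\ref{prop-find-CR}) needs a continuous map into $\mathbb{R}^{\dim M_2}=\mathbb{R}^n$; once you project out $y$, that separation evaporates. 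Worse, the points on $P(M_1^-)$ sit at the $a$-boundary of $N_1$ but have escaped \emph{early}, before the $b$-contraction has acted for time $\chi\bar h/\bar\epsilon_1$, so you cannot place them inside $(D_1^u)_{\bar y+\chi\bar h}$ and then invoke (SS5) to keep them away from $M_2$. The claimed intermediate covering onto $(D_1^u)_{\bar y+\chi\bar h}$ thus fails at the exit-boundary condition, and the composition with (SS5) never gets off the ground.

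The paper's repair is to take $M_1$ not as the tube but as the \emph{preimage} of the top slice under $P$: the set of points in $(N_1)_{\bar y}$ whose forward orbit reaches $\{y=\bar y+\chi\bar h\}$ without first leaving through $N_1^{f,-}$. Your expansion estimate then shows this preimage is contained in the $d_a r_a$-tube; a separate lemma establishes it is an $h$-set, with $M_1^-$ consisting exactly of the points whose image lands on the corner $N_1^{f,-}\cap\{y=\bar y+\chi\bar h\}$. Now $P:M_1\to(N_1)_{\bar y+\chi\bar h}$ is an honest section-to-section map, $P(M_1^-)$ lies on the full $a$-boundary of the top slice, and every point of $P(M_1)$ has experienced the full-time contraction in $b$, so $P(M_1)\subset (D_1^u)_{\bar y+\chi\bar h}$. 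With $M_2$ built by the same preimage construction in $N_2$ (so that $M_2\subset (D_2^s)_{\bar y+\chi\bar h}$), assumption (SS5) then gives $P(M_1^-)\cap M_2=\emptyset$ and $P(M_1)\cap M_2^+=\emptyset$ directly, and Proposition~\ref{prop-find-CR} finishes the job.
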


\begin{proof}
For the simplicity, we assume $\chi = 1$.
All arguments below are valid for general $\chi$.
\par
Let $S_\epsilon$ be the slow manifold validated in $N_1\cup N_2$. 
For simplicity, we write $\bar N_1 \equiv (N_1)_{\bar y + \bar h}$, $\bar N_{1,\leq} \equiv (N_1)_{\leq \bar y + \bar h}$ and $\bar P_\epsilon^1 \equiv P_\epsilon^{(N_1)_{\leq \bar y + \bar h}}$.

First note that $\bar \epsilon_k$ is an upper bound of the absolute speed in $y$-direction. 
This implies that any point in $(N_1)_{\bar y}$ which arrives at $\bar N_1$ through the orbit in $N_1$ takes at least time $\bar h/\bar \epsilon_1$. 
Also note that, $W^s(S_\epsilon)$ and $W^u(S_\epsilon)$ can be represented by families of $1$-Lipschitz stable and unstable disks, respectively, which follows from cone conditions.

Let $M_{\bar y}(\delta_1)\subset (N_1)_{\bar y}$ be the $\delta_1$-neighborhood of $W^s(S_\epsilon)\cap\{y=\bar y\}$ in $(N_1)_{\bar y}$, namely,
\begin{equation*}
M_{\bar y}(\delta_1) = \{z = (a,b,\bar y)\in N_1\mid {\rm dist}(z, W^s(S_\epsilon)) < \delta_1\}.
\end{equation*}
Also, let $M_{\bar y;\hat b}(\delta_1)$ be a section of $M_{\bar y}(\delta_1)$ at $(b,y) = (\hat b, \bar y)$, namely, 
\begin{equation*}
M_{\bar y;\hat b}(\delta_1) = \{z = (a,\hat b,\bar y)\in M_{\bar y}(\delta)\}.
\end{equation*}
Obviously, all points in $M_{\bar y;\hat b}$ is contained in the unstable cone $C^u(z_0(\hat b,\bar y))$, where $z_0(\hat b,\bar y)$ is the unique point in $W^s(S_\epsilon)\cap\{(b,y)=(\hat b,\bar y)\}$.
The local positive invariance of the unstable cone implies that $z_1(t)\in C^u(z_0(t;\hat b,\bar y))$, where $\{z_0(t;\hat b,\bar y)\}$ is the solution orbit with $z_0(0;\hat b,\bar y) = z_0(\hat b,\bar y)$ and $z_1(t)$ is the solution orbit with $z_1(0) \in M_{\bar y;\hat b}(\delta_1)$. This relationship holds for all $t\geq 0$ until $z_1(t)$ arrives at $\partial N_1$.
Lemma \ref{lem-ratio}-(1) then yields
\begin{equation*}
|\pi_a(z_1(t) - z_0(t;\hat b,\bar y)) | \geq e^{\lambda_1 t} |\pi_a(z_1(0) - z_0(0;\hat b,\bar y))|,\quad t\geq 0.
\end{equation*}
The slow shadowing condition implies that the unstable disk $M_{\bar y;\hat b}(\delta_1)$ expands through the flow, and all points on the boundary arrive at $N_1^{f,-}$ before the time $\bar T = \bar h / \bar \epsilon_1$ if $\delta_1 \geq d_a r_a$. 
In this case, $P_\epsilon^{N_1}(M_{\bar y}(\delta_1))$ has an intersection with $N_1^{f,-}\cap \{a = \hat a\}$ for all $\hat a \in \partial B_u$. 
This observation holds for arbitrary $\hat b\in \overline{B_s}$. 

The preimage $M_1\equiv (\bar P_\epsilon^1)^{-1}(\bar P_\epsilon^1 (M_{\bar y}(\delta_1))\cap \bar N_{1,\leq}^{s,-})\cap (N_1)_{\bar y}$ thus satisfies
\begin{equation*}
\sup_{z\in M_1} \dist (\pi_a(z),\pi_a W^s(S_\epsilon))< d_a r_a.
\end{equation*}
Note that $\bar N_{1,\leq}^{s,-}$ is equal to $\bar N_1 \equiv (N_1)_{\bar y + \bar h}$.

\begin{lem}
\label{lem-SS-h-set}
The set $M_1$ is an $h$-set.
\end{lem}
\begin{proof}
We may assume that $N_1 = \overline{B_u}\times \overline{B_s}\times [0,1]$ via a homeomorphism $c_{N_1}$. 
$M_1$ is contained in $M_{\bar y}(\delta)$ for some $\delta_1\in (0,d_a r_a)$. 
For each $\hat b \in \overline{B_s}$, consider the section $M_{\bar y;\hat b}(\delta_1) = \{(a,\hat b, \bar y)\mid a\in\overline{B_u}\}$. 
Let $a_{\hat b,\bar y}$ be such that $(a_{\hat b,\bar y}, \hat b, \bar y) = z_0(\hat b,\bar y)$, which is uniquely determined for each $\hat b \in \overline{B_s}$. 
Note that, for each $\hat b \in \overline{B_s}$,  $(a_{\hat b,\bar y}, \hat b, \bar y) = z_0(\hat b,\bar y)$ is contained in $M_1$.
Since $\bar P_\epsilon^1\mid_{(N_1)_{\bar y}} : (N_1)_{\bar y}\to \bar P_\epsilon^1((N_1)_{\bar y}) \subset \partial \bar N_{1,\leq}$ is homeomorphic, $g(x,y,\epsilon)>0$ holds in $N_1$ and
\begin{align*}
&\bar P_\epsilon^1(\hat a,\hat b, \bar y) = (\hat a,\hat b, \bar y)\in \bar N_{1,\leq}^{f,-}\quad \text{ for }\hat a\in \partial B_u,\\
&\bar P_\epsilon^1(a_{\hat b,\bar y},\hat b, \bar y) \in \bar N_1,
\end{align*}
then for all $\theta\in \partial B_u \cong S^{u-1}$, there exists $a_{\hat b,\bar y}(\theta)\in B_u$ 
such that $\bar P_\epsilon^1(a_{\hat b,\bar y}(\theta), \hat b, \bar y)\in \bar N_1\cap \bar N_{1,\leq}^{f,-}\cap \{a=\theta\}$, which is uniquely determined by the property of flows. 
Notice that $|\pi_a(a_{\hat b,\bar y}(\theta), \hat b, \bar y) - \pi_a(a_{\hat b,\bar y}, \hat b, \bar y)|$ monotonically increases along the flow for all $\theta \in \partial B_u$.
Since $\bar P_\epsilon^1$ is continuous, then $a_{\hat b,\bar y}(\theta)$ depends continuously on $\hat b \in \overline{B_s}$ and $\theta\in S^{u-1}$. 
As a result, we have a continuous graph $\psi_{1,\bar y} : \overline{B_s}\times S^{u-1}\to \overline{B_u}$ given by $\psi_{1,\bar y}(b,\theta) = a_{b,\bar y}(\theta)$. 
Then $M_1$ is given by
\begin{equation}
\label{M1-explicit}
M_1 = \{(a,b,\bar y)\mid a = (1-\lambda) a_{b,\bar y} + \lambda \psi_{1,\bar y}(b,\theta),\ b\in \overline{B_s},\ \theta\in S^{u-1}, \lambda\in [0,1]\},
\end{equation}
which is an $h$-set.
\end{proof}

We go back to the proof of Proposition \ref{prop-CE-2-1}.
The next interest is 
\begin{equation*}
\delta_2 = \sup_{z\in \bar P_\epsilon^1(M_1)}\dist(\pi_b(z), \pi_b(W^u(S_\epsilon))).
\end{equation*}
Consider the behavior of sections $\bar M_{1;\hat a} := \{z = (\hat a, b, \bar y + \bar h)\in \bar N_1\mid b\in \overline{B_s} \}\cap \bar P_\epsilon^1(M_1)$ in backward flow. 
Since each $\bar M_{1;\hat a}$ is a stable disk with Lipschitz constant less than $1$, then each point $z_1\in \bar M_{1;\hat a}$ is contained in $C^s(z_0(\hat a, \bar y + \bar h))$, where $z_0(\hat a, \bar y + \bar h)$ is the unique point in $W^u(S_\epsilon)\cap \bar M_{1;\hat a}$. 
Lemma \ref{lem-ratio}-(2) yields
\begin{equation*}
|\pi_b(z_1(t) - z_0(t;\hat a,\bar y + \bar h))| \geq e^{\mu_1 t}|\pi_b(z_1(0) - z_0(0;\hat a,\bar y + \bar h))|\quad \text{ for }t\leq 0.
\end{equation*}
The slow shadowing condition implies that the stable disk $\bar M_{1;\hat a}$ expands in $b$-direction through the backward flow, and its boundary arrives at $N_1^{f,+}$ before the time $\bar T = \bar h / \bar \epsilon_1$ if $\delta_2 \geq dr_b$. 
Note that $M_1\subset (N_1)_{\bar y}$ is an $h$-set given by (\ref{M1-explicit}), which implies that there exists some $t \in [-\bar T,0)$ such that the image $\varphi_\epsilon(t, \bar M_{1,\hat a})$ must have intersections with $N_1\cap \{b= \bar b\}$ for all $\bar b\in \partial B_s$.
This observation holds for arbitrary $\hat a$ with $(\hat a,b,\bar y + \bar h)\in N_1$. 
Therefore, $\delta_2$ should be less than $d_b r_b$.

\bigskip
The same arguments enable us to construct an $h$-set $M_2\subset (N_2)_{\bar y+\bar h}$ in the same way as $M_1$ under assumptions of (\ref{shadow}). 
In this case, our assumptions and the definition of slow shadowing imply $\sup_{z\in M_2}\dist(\pi_a(z), \pi_a W^s(S_\epsilon)) < d_a r_a$.

Boundaries of $M_1$ and $M_2$ are given by
\begin{align*}
M_1 &= \{(a,b,\bar y)\mid a = (1-\lambda) a_{b,\bar y} + \lambda \psi_{1,\bar y}(b,\theta),\ b\in \overline{B_s},\ \theta\in S^{u-1}, \lambda\in [0,1]\},\\
M_2 &= \{(a,b,\bar y + \bar h)\mid a = (1-\lambda) a_{b,\bar y + \bar h} + \lambda \psi_{2,\bar y + \bar h}(b,\theta),\ b\in \overline{B_s},\ \theta\in S^{u-1}, \lambda\in [0,1]\},\\
M_1^- &= \{(a,b,\bar y)\mid a = \psi_{1,\bar y}(b,\theta),\ b\in \overline{B_s}\},\quad M_1^+ = M_1\cap N_1^{f,+},\\
M_2^- &= \{(a,b,\bar y + \bar h)\mid a = \psi_{2,\bar y + \bar h}(b),\ b\in \overline{B_s}\},\quad M_2^+ = M_2\cap N_2^{f,+},
\end{align*}
where $\psi_{2,\bar y + \bar h} : \overline{B_s}\times S^{u-1}\to \overline{B_u}$ is the map associated with $M_2$, which is constructed in the same way as $\psi_{1,\bar y}$.
\par
\bigskip
Now we check if all assumptions in Proposition \ref{prop-find-CR} hold with $f=\bar P_\epsilon^1$.
\par
The estimate $\sup_{z\in M_2}\dist (\pi_a(z),\pi_aW^s(S_\epsilon)) < d_a r_a$ implies 
$M_2\subset D_2^s\cap (N_2)_{\bar y + \bar h}$. 
Similarly, the estimate $\sup_{z\in \bar P_\epsilon^1(M_1)}\dist (\pi_b(z),\pi_bW^u(S_\epsilon)) < d_b r_b$ implies
$\bar P_\epsilon^1(M_1) \subset (T_{x,12}\times I_1)D_1^u\cap (N_1)_{\bar y + \bar h}$. 
From our constructions of $M_1$ and $M_2$ as well as Lemma \ref{lem-SS-h-set}, $\bar P_\epsilon^1(M_1)$ and $M_2$ can be regarded as families of horizontal and vertical disks in $D_1^u$ and $D_2^s$, respectively.
In particular, from (SS5), $M_2 \cap (D_1^u)^{f,-}=\emptyset$, where $(D_1^u)^{f,-} = D_1^u \cap N_1^{f,-}$.
This disjointness yields $\bar P_\epsilon^1(M_1^-)\cap M_2 = \emptyset$.
Similarly, $\bar P_\epsilon^1(M_1) \cap (D_2^s)^{f,+}=\emptyset$, which yields $\bar P_\epsilon^1(M_1)\cap M_2^+ = \emptyset$, where $(D_2^s)^{f,+} = D_2^s \cap N_2^{f,+}$. 
The rest of assumptions obviously holds if we choose $q_0 \in \overline{B_s}$ so that $(a,q_0,\bar y)\in S_\epsilon$ for some $a\in \overline{B_u}$. The property of degree obviously holds since $\bar P_\epsilon : (N_1)_{\bar y} \to (N_1)_{\bar y + \bar h}$ is just a composite of uniformly contracting and expanding maps in corresponding directions.
\end{proof}

Proposition \ref{prop-CE-2-1} can be generalized to a slow shadowing sequence $\{N_i\}_{i=1}^m$, which is straightforward. 

\bigskip
The same arguments as Proposition \ref{prop-CE} yield the following result: the {\em covering-exchange : drop}.
\begin{prop}[Covering-Exchange : Drop]
\label{prop-CE-2-2}
Let $\{N_1, N_2\}$ be a slow shadowing pair with the ratio $\chi$. 
Assume that there is an $h$-set $N$ such that $N \overset{\varphi_\epsilon(T,\cdot)}{\Longrightarrow} (N_1)_{\leq \bar y}$
holds for some $T>0$, where $\bar y \in (y_1^-, y_1^+ - \bar h]\cap [y_2^- - \chi \bar h, y_2^+ - \bar h]$.
Then there are $h$-sets $\tilde M\subset (N_1)_{\leq \bar y}$ and $M_2\subset (N_2)_{\bar y + \chi \bar h}$ such that
\begin{equation*}
N \overset{\varphi_\epsilon(T,\cdot)}{\Longrightarrow} \tilde M \overset{P_\epsilon^{(N_1)_{\leq \bar y + \chi \bar h}}}{\Longrightarrow}  M_2.
\end{equation*}
\end{prop}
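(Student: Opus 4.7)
The plan is to combine Proposition \ref{prop-CE-2-1} with the $h$-set construction in the proof of Proposition \ref{prop-CE}, using the former as the building block for the slow-manifold traversal and the latter as the template for thickening a boundary slice into a tube.

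First I would apply Proposition \ref{prop-CE-2-1} to the slow shadowing pair $\{N_1, N_2\}$ at level $\bar y$, which is permitted because the hypothesis places $\bar y$ in the range required there. This yields $h$-sets $M_1 \subset (N_1)_{\bar y}$ and $M_2 \subset (N_2)_{\bar y + \chi\bar h}$ together with the covering relation $M_1 \overset{P_\epsilon^{(N_1)_{\leq \bar y + \chi\bar h}}}{\Longrightarrow} M_2$. The explicit graph description of $M_1$ from formula (\ref{M1-explicit}) in Lemma \ref{lem-SS-h-set} will be the scaffold for building $\tilde M$.

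Next I would build $\tilde M$ as a tube of slices in $(N_1)_{\leq \bar y}$, mimicking the union construction used in the proof of Proposition \ref{prop-CE}. Because $g(x,y,\epsilon) > 0$ throughout $N_1$ and $N_1$ is of fast-saddle type, the backward flow in $(N_1)_{\leq \bar y}$ induces a homeomorphism from $M_1$ onto each slice $\tilde M_{\hat y} \subset (N_1)_{\hat y}$, namely $\tilde M_{\hat y} := \{z \in (N_1)_{\hat y} \mid \varphi_\epsilon(\tau(z), z) \in M_1 \text{ for some } \tau(z) > 0 \text{ with } \varphi_\epsilon([0,\tau(z)],z) \subset (N_1)_{\leq \bar y}\}$. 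The graph functions $\psi_{1,\bar y}$ defining $M_1$ lift continuously in $\hat y$ to graph functions $\psi_{1,\hat y}$ defining $\tilde M_{\hat y}$. Setting $\tilde M := \bigcup_{\hat y \in [y_-, \bar y]} \tilde M_{\hat y}$, where $[y_-, \bar y] \subset (y_1^-, \bar y]$ is chosen so that $\pi_y(\varphi_\epsilon(T,N) \cap N_1)$ is compactly contained in $(y_-, \bar y)$, delivers the required $h$-set; its $u$-direction is inherited from $M_1$ (matching the fast-unstable direction in the blocks), and its exit consists of the $a$-boundary together with the slow cap at $\{y = y_-\}$, while its entrance consists of the $b$-boundary and the slow cap at $\{y = \bar y\}$. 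The covering $\tilde M \overset{P_\epsilon^{(N_1)_{\leq \bar y + \chi\bar h}}}{\Longrightarrow} M_2$ then follows by factoring the Poincar\'e map through the intermediate slice $(N_1)_{\bar y}$ and invoking Step 1.

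The remaining and most delicate step is to promote the hypothesis $N \overset{\varphi_\epsilon(T,\cdot)}{\Longrightarrow} (N_1)_{\leq \bar y}$ to $N \overset{\varphi_\epsilon(T,\cdot)}{\Longrightarrow} \tilde M$. Here the strict containment $\pi_y(\varphi_\epsilon(T,N) \cap N_1) \subset (y_-, \bar y)$ rules out contact of $\varphi_\epsilon(T, N)$ with the slow faces of $\tilde M$, so one only needs to check disjointness from the $a$-exit faces and $b$-entrance faces of $\tilde M$; but these are precisely the sets avoided by the homotopy witnessing the original covering into $(N_1)_{\leq \bar y}$, since $\tilde M$ inherits these faces from $(N_1)_{\leq \bar y}$ via the graph construction. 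A straightforward linear interpolation between the original homotopy and the $\tilde M$-valued homotopy (staying in the convex neighborhood of $\tilde M$ inside $(N_1)_{\leq \bar y}$) discharges the homotopy condition, and the degree condition transfers because the $u$-dimensional projection of the boundary map is unchanged. The hardest part is verifying that this interpolation stays inside the slab while avoiding the $(b,y)$-boundaries of $\tilde M$; this is where the tube must be built wide enough in the $y$-direction, which is exactly what the choice of $[y_-,\bar y]$ is designed to guarantee.
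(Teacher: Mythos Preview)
Your approach is correct in spirit and follows essentially the same route as the paper, but you have made it considerably more elaborate than necessary. The paper's proof is three lines: set $\tilde M := \bigl(P_\epsilon^{(N_1)_{\leq \bar y}}\bigr)^{-1}(M_1)$, i.e.\ the \emph{full} preimage inside $(N_1)_{\leq \bar y}$ (so $\tilde M$ extends all the way down to $y_1^-$), and observe that both covering relations are then immediate. The point you are missing is that, with this choice, the $(b,y)$-boundary of $\tilde M$ sits inside $((N_1)_{\leq \bar y})^+$: the $b$-faces lie on $N_1^{f,+}$, and the slow caps of $\tilde M$ lie on the slices $\{y=y_1^-\}$ and $\{y=\bar y\}$, which are exactly the $y$-faces of $(N_1)_{\leq \bar y}$. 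Hence $\tilde M^+\subset ((N_1)_{\leq \bar y})^+$, and the assumed covering $N\overset{\varphi_\epsilon(T,\cdot)}{\Longrightarrow}(N_1)_{\leq \bar y}$ transfers to $N\overset{\varphi_\epsilon(T,\cdot)}{\Longrightarrow}\tilde M$ with no homotopy interpolation at all. The second covering $\tilde M\overset{P_\epsilon}{\Longrightarrow}M_2$ factors through $M_1$ by construction. Your restriction to a subinterval $[y_-,\bar y]$, the choice of $y_-$, and the ``hardest part'' interpolation are all avoidable once you use the full preimage.

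One small slip: your assignment of the $h$-set boundary of $\tilde M$ is inconsistent with $u(\tilde M)=1$. You place one slow cap in the exit and the other in the entrance, which would give $\tilde M^-$ three components rather than the two required by $\partial B_1\times\overline{B_{s}}$. The correct structure (used implicitly in the paper and in Lemma~\ref{lem-SS-h-set}) has $\tilde M^-$ equal to the $a$-boundary alone and $\tilde M^+$ equal to the $b$-boundary together with \emph{both} slow caps. With that correction your argument goes through, but again the simplification above removes the need to track these caps separately.
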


\begin{proof}
As in the proof of Proposition \ref{prop-CE-2-1}, we may assume $\chi =1$. 
\par
Let $M_1\subset (N_1)_{\bar y}$ and $M_2\subset (N_2)_{\bar y + \bar h}$ be as in Proposition \ref{prop-CE-2-1} and $\tilde M := (P_\epsilon^{ (N_1)_{\leq \bar y} })^{-1}(M_1)$.
Obviously $N \overset{\varphi_\epsilon(T,\cdot)}{\Longrightarrow} \tilde M$ holds since  $N \overset{\varphi_\epsilon(T,\cdot)}{\Longrightarrow} (N_1)_{\leq \bar y}$. By the construction of $\tilde M$, the covering relation $\tilde M \overset{P_\epsilon^{(N_1)_{\leq \bar y + \bar h}}}{\Longrightarrow}  M_2$ also holds.
\end{proof}

\bigskip
We provide the slow shadowing when a fast-saddle type block admits a fast-exit face, which corresponds to the covering relation $\tilde M  \overset{P_\epsilon^M}{\Longrightarrow} M^{{\rm exit}}$ in Proposition \ref{prop-CE}: the {\em covering-exchange : jump}.
We restrict the unstable dimension $u$ to $1$ in the current considerations.

\begin{prop}[Covering-Exchange : Jump]
\label{prop-CE-2-3}
Let $\{N_1, N_2\}$ be a slow shadowing pair with $u=1$. 
Also, let $N_2^{\exit}\subset N_2^{f,-}$ be the fast-exit face of $N_2$ and $\bar y \in [y_1^-, y_1^+ - \bar h]\cap [y_2^- - \chi \bar h, y_2^+ - \bar h]$.
Assume that $\dist(N_2^{\exit}, \{y= \bar y+\chi \bar h\}) \geq \chi \bar h$.
Then there are $h$-sets $M_1\subset (N_1)_{\bar y}$ and $M_2\subset (N_2)_{\bar y + \chi \bar h}$ such that
\begin{equation*}
M_1 \overset{P_\epsilon^{(N_1)_{\leq \bar y + \chi \bar h}}}{\Longrightarrow} M_2 \overset{P_\epsilon^{N_2}}{\Longrightarrow}N_2^{\exit}.
\end{equation*}
\end{prop}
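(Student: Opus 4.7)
The strategy is a two-stage construction: first build $M_2 \subset (N_2)_{\bar y + \chi \bar h}$ so that $M_2 \overset{P_\epsilon^{N_2}}{\Longrightarrow} N_2^{\exit}$ using the fast-exit geometry of $N_2$, then build $M_1 \subset (N_1)_{\bar y}$ by a slow-shadowing construction tailored to cover this particular $M_2$. As in the proof of Proposition \ref{prop-CE-2-1}, I reduce to the case $\chi = 1$ by rescaling $\bar h$.

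Stage 1 (construction of $M_2$). I adapt the argument of Lemma \ref{lem-Poincare-hset} to the Poincar\'e map $P_\epsilon^{(N_2)_{\geq \bar y + \bar h}}$. By Theorem \ref{thm-inv-mfd-rigorous}, $W^s(S_\epsilon) \cap N_2$ is a Lipschitz graph $a = h_s(b,y,\epsilon)$ separating $N_2$ into two components, one of which contains $N_2^{\exit}$. For each $\hat b \in \overline{B_s}$, write $a_{\hat b} := h_s(\hat b, \bar y + \bar h, \epsilon)$, and seek on the $N_2^{\exit}$-side of $a_{\hat b}$ two values $\psi^\pm(\hat b)$ such that the trajectory through $(\psi^\pm(\hat b), \hat b, \bar y + \bar h)$ first exits $N_2$ at a point whose $y$-coordinate equals the lower, resp.\ upper, endpoint of $\pi_y(N_2^{\exit})$. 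The distance hypothesis $\dist(N_2^{\exit}, \{y = \bar y + \bar h\}) \geq \bar h$ together with the upper bound $\bar\epsilon_2$ on $|y'|$ forces any trajectory reaching $N_2^{\exit}$ to spend at least time $\bar h/\bar\epsilon_2$ in $N_2$, during which Lemma \ref{lem-ratio}-(1) and the slow-shadowing inequality (\ref{shadow}) force $a$-expansion by a factor exceeding $(\bar r_{a,2} - r_a)/(d_a r_a)$. Hence $\psi^\pm$ exist, lie within distance $d_a r_a$ of $a_{\hat b}$, and depend continuously on $\hat b$. The set
\[
M_2 := \{(a,b,\bar y + \bar h) \mid a \text{ between } \psi^-(b) \text{ and } \psi^+(b),\ b \in \overline{B_s}\}
\]
is then an $h$-set. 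The covering $M_2 \overset{P_\epsilon^{N_2}}{\Longrightarrow} N_2^{\exit}$ follows via Proposition \ref{prop-CR-u1} since $u = 1$: the two branches $a = \psi^\pm(b)$ map into the two $y$-exteriors of $N_2^{\exit}$ by definition of $\psi^\pm$, while $P_\epsilon^{N_2}(M_2) \cap (N_2^{\exit})^+ = \emptyset$ because orbits through $M_2$ cannot reach $N_2^{f,+}$ by the stable cone condition.

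Stage 2 (construction of $M_1$). By construction $M_2$ sits within a $d_a r_a$-neighborhood of $W^s(S_\epsilon) \cap \{y = \bar y + \bar h\}$ in the $a$-direction, and hence $M_2 \subset D_2^s \cap (N_2)_{\bar y + \bar h}$. Repeating the slow-shadowing construction in the proof of Proposition \ref{prop-CE-2-1} inside $(N_1)_{\leq \bar y + \bar h}$ produces an $h$-set $M_1 \subset (N_1)_{\bar y}$, itself lying in a $d_b r_b$-tube around $W^s(S_\epsilon) \cap \{y = \bar y\}$ in the $b$-direction, whose image under $P_\epsilon^{(N_1)_{\leq \bar y + \bar h}}$ at the common section $\{y = \bar y + \bar h\}$ forms a family of horizontal disks in $D_1^u$ (cf.\ Lemma \ref{lem-SS-h-set}). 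Composing with the affine coordinate change $T_{x,12}$ and invoking the (SS5) hypothesis $(D_1^u)_{\bar y + \bar h} \overset{T_{x,12}}{\Longrightarrow} (D_2^s)_{\bar y + \bar h}$ gives the covering $M_1 \overset{P_\epsilon^{(N_1)_{\leq \bar y + \bar h}}}{\Longrightarrow} M_2$ via Proposition \ref{prop-find-CR}.

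The main technical obstacle will be Stage 1, specifically verifying the degree/orientation hypothesis in Proposition \ref{prop-CR-u1} for the jump. One must show that the two $\psi^\pm$-branches, both sitting on the $N_2^{\exit}$-side of $a_{\hat b}$, map respectively to the opposite $y$-ends of $N_2^{\exit}$, producing a topological crossing of degree $\pm 1$ consistent with the slow-direction sign $q$. This requires careful bookkeeping of how the monotone $y$-advance interacts with the exponential $a$-expansion along individual orbits, and a separate case analysis for $q = +1$ and $q = -1$ in the spirit of the last paragraph of the proof of Lemma \ref{lem-Poincare-hset}.
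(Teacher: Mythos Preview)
Your proposal is correct and follows essentially the same route as the paper: the paper's proof also reduces to $\chi=1$, invokes Lemma~\ref{lem-Poincare-hset} on the slice $\{y=\bar y+\bar h\}$ to build $M_2$ with $M_2\overset{P_\epsilon^{N_2}}{\Longrightarrow}N_2^{\exit}$, observes that the distance hypothesis together with the slow-shadowing inequality forces $M_2$ into the $d_a r_a$-tube about $W^s(S_\epsilon)$, and then quotes the argument of Proposition~\ref{prop-CE-2-1} verbatim for $M_1\Longrightarrow M_2$. The ``technical obstacle'' you flag in your last paragraph is not a new difficulty: the monotonicity of the exit-$y$-coordinate with respect to the $a$-distance from $W^s(S_\epsilon)$ is exactly what the proof of Lemma~\ref{lem-Poincare-hset} establishes via Proposition~\ref{prop-cone}, so the degree-$\pm1$ crossing comes for free once you cite that lemma. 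One small slip in your Stage~2: $M_1$ itself lies in the $d_a r_a$-tube about $W^s(S_\epsilon)$ in the $a$-direction (not a $d_b r_b$-tube in the $b$-direction); it is the \emph{image} $P_\epsilon^{(N_1)_{\le\bar y+\bar h}}(M_1)$ that lies in the $d_b r_b$-tube about $W^u(S_\epsilon)$.
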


\begin{proof}
As in the proof of Proposition \ref{prop-CE-2-1}, we may assume $\chi = 1$.
The proof consists of two parts : (i) $M_2 \overset{P_\epsilon^{N_2}}{\Longrightarrow}N_2^{\exit}$ and (ii) $M_1 \overset{P_\epsilon^{(N_1)_{\leq \bar y + \bar h}}}{\Longrightarrow} M_2$. 
Part (i) is a consequence of Lemma \ref{lem-Poincare-hset} with additional property of $M_2\subset (N_2)_{\bar y + \bar h}$. The assumption $\dist(N_2^{\exit}, \{y= \bar y+\bar h\}) > \bar h$ and slow shadowing condition allow $M_2$ to satisfy
\begin{equation*}
\sup_{x\in M_2} \dist(z, W^s(S_\epsilon)\cap \{y = \bar y+\bar h\}) < d_a r_a.
\end{equation*}
Therefore, the same arguments as Proposition \ref{prop-CE-2-1} yield $M_1 \overset{P_\epsilon^{(N_1)_{\leq \bar y + \bar h}}}{\Longrightarrow} M_2$, the statement (ii). Note that $W^s(S_\epsilon) \cap M_2 = \emptyset$. See also Fig. \ref{fig-shadow}-(e).
\end{proof}

\begin{figure}[htbp]\em
\begin{minipage}{0.32\hsize}
\centering
\includegraphics[width=4.0cm]{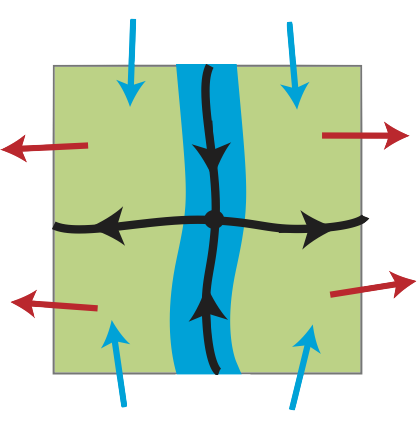}
(a)
\end{minipage}
\begin{minipage}{0.32\hsize}
\centering
\includegraphics[width=4.0cm]{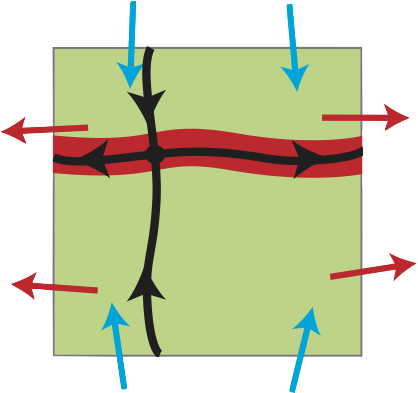}
(b)
\end{minipage}
\begin{minipage}{0.32\hsize}
\centering
\includegraphics[width=4.0cm]{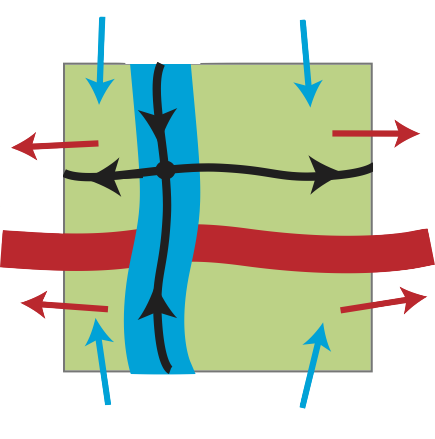}
(c)
\end{minipage}

\bigskip
\begin{minipage}{0.5\hsize}
\centering
\includegraphics[width=6.0cm]{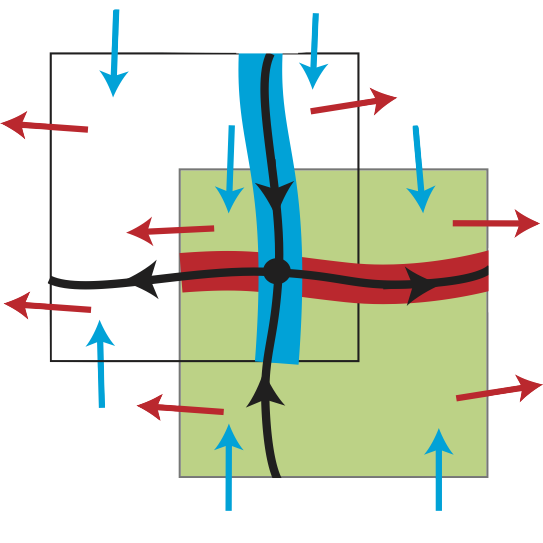}
(d)
\end{minipage}
\begin{minipage}{0.5\hsize}
\centering
\includegraphics[width=6.0cm]{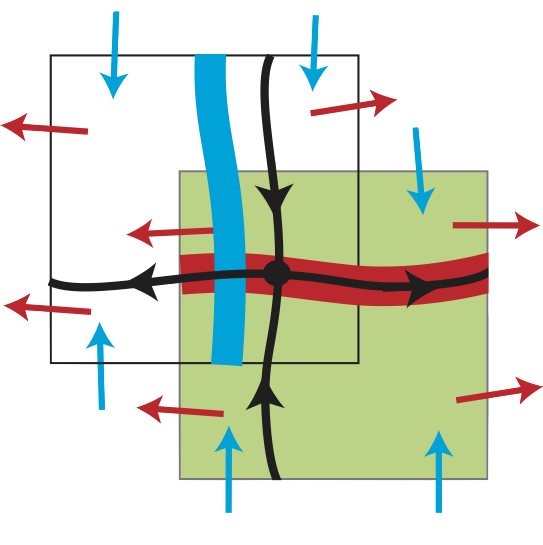}
(e)
\end{minipage}
\caption{Slow shadowing, drop and jump : schematic illustrations.}
\label{fig-shadow}
Note that all figures here show the projection onto the $(a,b)$-space.
\par
{\bf Slow shadowing} : Covering relation $M_1 \overset{P_\epsilon^{(N_1)_{\leq \bar y + \bar h}}}{\Longrightarrow} M_2$ in Proposition \ref{prop-CE-2-1} is described by the process \lq\lq (a)$\to$(b)$\to$(d)", where $P_\epsilon^{N} : N \to \partial N$ is the Poincar\'{e} map in $N$. 
A set colored by blue in (a) denotes $M_1$. 
The Poincar\'{e} map $P_\epsilon^{(N_1)_{\leq \bar y + \bar h}}$ maps $M_1$ to $P_\epsilon^{(N_1)_{\leq \bar y + \bar h}}(M_1)\subset (N_1)_{\bar y + \bar h}$ described by the red set in (b) and (d).  
The slow shadowing condition admits the choice of $M_2$ in $(N_2)_{\bar y + \bar h}$ drawn by the blue set in (d).
\par
{\bf Covering-Exchange : Drop} : Covering relation $N \overset{\varphi_\epsilon(T,\cdot)}{\Longrightarrow} \tilde M \overset{P_\epsilon^{(N_1)_{\leq \bar y + \bar h}}}{\Longrightarrow}  M_2$ in Proposition \ref{prop-CE-2-2} is described by the process \lq\lq (c)$\to$(b)$\to$(d)". 
In (c), the set colored by red denotes $\varphi_\epsilon(T,N)$ and the blue one denotes $\tilde M$. 
The Poincar\'{e} map $P_\epsilon^{(N_1)_{\leq \bar y + \bar h}}$ maps $\tilde M$ to $P_\epsilon^{(N_1)_{\leq \bar y + \bar h}}(\tilde M)\subset (N_1)_{\bar y + \bar h}$ described by the red set in (b) and (d).  
The slow shadowing condition admits the choice of $M_2$ in $(N_2)_{\bar y + \bar h}$ drawn by the blue set in (d).
\par
{\bf Covering-Exchange : Jump} : Covering relation $M_1 \overset{P_\epsilon^{(N_1)_{\leq \bar y + \bar h}}}{\Longrightarrow} M_2 \overset{P_\epsilon^{N_2}}{\Longrightarrow}N_2^{\exit}$ in Proposition \ref{prop-CE-2-3} is described by the process \lq\lq (a)$\to$(b)$\to$(e)".
A set colored by blue in (a) denotes $M_1$. 
The Poincar\'{e} map $P_\epsilon^{(N_1)_{\leq \bar y + \bar h}}$ maps $M_1$ to $P_\epsilon^{(N_1)_{\leq \bar y + \bar h}}(M_1)\subset (N_1)_{\bar y + \bar h}$ described by the red set in (b) and (e).  
The slow shadowing condition admits the choice of $M_2$ in $(N_2)_{\bar y + \bar h}$ drawn by the blue set in (e). The $h$-set $M_2$ $P_\epsilon^{N_2}$-covers $N_2^{\exit}\subset N_2^{f,-}$. The fast exit $N_2^{f,-}$ is drawn by the edge of white rectangle which admits horizontal red arrows.
\end{figure}

For the convenience and correspondence to Definition \ref{dfn-CE}, we introduce the following notion.

\begin{dfn}[Covering-exchange sequence]\rm
\label{dfn-CE-seq}
Let $N\subset \mathbb{R}^{n+1}$ be an $h$-set and $\{N_\epsilon^j\}_{j=0}^{j_M}$ be a sequence of fast-saddle-type blocks. Assume that
\begin{description}
\item[(Slow shadowing)] $\{N_\epsilon^j\}_{j=0}^{j_M-1}$ is a slow shadowing sequence with $u=1$.
\item[(Drop)] $N \overset{\varphi_\epsilon(T,\cdot)}{\Longrightarrow} (N_\epsilon^0)_{\leq \bar y}$ holds for some $T>0$, where $\bar y$ is given in Proposition \ref{prop-CE-2-2}.
\item[(Jump)] $\{N_\epsilon^{j_N-1}, N_\epsilon^{j_N}\}$ is a slow shadowing pair with a fast-exit face $N_\epsilon^{\rm exit}$ of $N_\epsilon^{j_N}$ satisfying assumptions in Proposition \ref{prop-CE-2-3}.
\end{description}
Then we call the triple $(N, \{N_\epsilon^j\}_{j=0}^{j_N}, N_\epsilon^{\rm exit})$ {\em the covering-exchange sequence}.
\end{dfn}
Obviously, the case $j_N = 1$ is nothing but the notion of covering-exchange pair.
Remark that, in the current setting, covering-exchange sequences are always assumed to be defined with $u=1$.

\bigskip
Throughout the rest of this paper, the bold-style phrases {\bf Drop} and {\bf Jump} denote the corresponding descriptions stated in Definition \ref{dfn-CE-seq}.

%
%
\subsection{$m$-cones}
\label{section-m-cones}
We have discussed  in Section \ref{section-inv-mfd} that it is systematically possible to construct fast-saddle-type blocks as well as cone conditions. 
However, such blocks are generally too small compared with validation enclosures of trajectories, if we try to validate covering-exchange sequences. 
Moreover, when we solve differential equations with a fast-exit face as initial data in this situation, 
solution orbits will hardly move in the early stage because the vector field is close to zero. 
This phenomenon causes accumulation of computation errors (e.g. wrapping effect) and extra computation costs (e.g. memory or time). 
In particular, there is little hope to validate covering-exchange sequences.
Such difficulties can be avoided if we find a large fast-saddle type block directly. 
A direct approach would be finding crossing sections which form boundaries of a large fast-saddle type block. 
It is not realistic to find such sections via interval arithmetics because vector fields are nonlinear and we have to consider the effect of slow dynamics. 
In many cases, direct search of blocks would be based on trial and error, which is not systematic. 
Our aim in this subsection is to provide an appropriate method to overcome difficulties with respect to solving differential equations.

\bigskip
In Section \ref{section-inv-mfd}, we have constructed cones of the form $C^u = \{|a-a_0| > |\zeta - \zeta_0|\}$ and $C^s = \{|b-b_0| > |\nu - \nu_0|\}$ with a vertex $(a, b, y) = (a_0, b_0, y_0)$, where $\zeta = (b, y)$ and $\nu = (a,y)$. 
See Fig. \ref{fig-cone}-(a) for the illustration of cones and the unstable manifold of a saddle fixed point. 
One expects that, in a neighborhood of $C^u$, for example, $C^u$ can be extended to $\tilde C$ like Fig. \ref{fig-cone}-(b).
 
On the other hand, if the unstable manifold is sufficiently regular, one also expects that such an extended cone can be sharper. 
More precisely, $C^u$ can be extended to the union of $C^u$ and a collection of {\em $m$-cones} $C_m^u = \{|a-a_0| > m|\zeta - \zeta_0|\}$ for some $m > 1$ keeping isolation, which is shown in Fig. \ref{fig-cone}-(c). The same expectation is valid for $C^s$.

Proposition \ref{prop-cone} means that the difference of two solutions in a fast-saddle-type block $\hat N$ along dynamics 
is restricted by the moving cone $C^u = \{(a,b,y,\eta)\mid  M(t) > 0\}$. 
The analogue of this argument in the case of sharper cones is derived below.

\begin{ass}
\label{ass-m-cone-unstable}
Consider (\ref{abstract-form}). 
Let $N\subset \mathbb{R}^{n+1}$ be an $h$-set, 
$z = (x,y,\epsilon)$ and fix $m > 1$. 

Define $\sigma_{\mathbb{A}_1^u}^m = \sigma_{\mathbb{A}_1^u}^m(z)$, $\sigma_{\mathbb{A}_2^u}^m = \sigma_{\mathbb{A}_2^u}^m(z)$, $\sigma_{\mathbb{B}_1^u}^m = \sigma_{\mathbb{B}_1^u}^m(z)$, $\sigma_{\mathbb{B}_2^u}^m = \sigma_{\mathbb{B}_2^u}^m(z)$, $\sigma_{g_1^u}^m = \sigma_{g_1^u}^m(z)$ and $\sigma_{g_2^u}^m = \sigma_{g_2^u}^m(z)$ be maximal singular values of the following matrices at $z$, respectively: 
\begin{align*}
\sigma_{\mathbb{A}_1^u}^m: \ & \mathbb{A}_1(z) = \left(  \frac{\partial F_1}{\partial a}(z) \right) \quad \text{: $u\times u$-matrix},\\
\sigma_{\mathbb{A}_2^u}^m: \ & \mathbb{A}_2(z) =  m^{-1}\left( \frac{\partial F_1}{\partial b}(z) \quad \frac{\partial F_1}{\partial y}(z) \quad \frac{\partial F_1}{\partial \eta}(z)\right)\quad \text{: $u\times (s+1+1)$-matrix},\\
\sigma_{\mathbb{B}_1^u}^m: \ & \mathbb{B}_1(z) = m \left( \frac{\partial F_2}{\partial a}(z)\right) \quad \text{: $s\times u$-matrix},\\
\sigma_{\mathbb{B}_2^u}^m: \ & \mathbb{B}_2(z) =  \left( \frac{\partial F_2}{\partial b}(z) \quad \frac{\partial F_2}{\partial y}(z) \quad \frac{\partial F_2}{\partial \eta}(z)\right)\quad \text{: $s\times (s+1+1)$-matrix},\\
\sigma_{g_1^u}^m: \ & g_1(z) = m \left(  \frac{\partial g}{\partial a}(z)\right) \quad \text{: $1\times u$-matrix},\\
\sigma_{g_2^u}^m: \ & g_2(z) = \left(\frac{\partial g}{\partial b}(z) \quad \frac{\partial g}{\partial y}(z) \quad \frac{\partial g}{\partial \eta}(z)\right)\quad \text{: $1\times (s+1+1)$-matrix}.
\end{align*}
Assume that the following inequalities hold:
\begin{align}
\label{ineq-m-graph-unstable}
&\lambda_A - \left( \sup \sigma_{\mathbb{A}_1^u}^m + \sup \sigma_{\mathbb{A}_2^u}^m \right) > 0,\\
\label{ineq-m-cone-unstable}
&\lambda_A + |\mu_B| -  \left\{ \sup \sigma_{\mathbb{A}_1^u}^m + \sup \sigma_{\mathbb{A}_2^u}^m + \sup \sigma_{\mathbb{B}_1^u}^m + \sup \sigma_{\mathbb{B}_2^u}^m + \sigma \left( \sup \sigma_{g_1^u}^m + \sup \sigma_{g_2^u}^m\right) \right\} >0,
\end{align}
where $\lambda_A$ and $\mu_B$ are real numbers satisfying (\ref{bound-unst-ev}) and (\ref{bound-st-ev}), respectively,
and the notation \lq\lq $\ \sup$" means the supremum on $N\times [0,\epsilon_0]$.
\end{ass}

\begin{prop}[Unstable $m$-cone]
\label{prop-unst-m-cone}
Consider (\ref{abstract-form}). 
Let $N\subset \mathbb{R}^{n+1}$ be an $h$-set 
 and fix $m > 1$. 
Assume that inequalities (\ref{ineq-m-graph-unstable}) and (\ref{ineq-m-cone-unstable}) in Assumption \ref{ass-m-cone-unstable} hold. 
 Then, letting a function $M^{u,m}(t):= |\Delta a(t)|^2 - m^2|\Delta \zeta(t)|^2$, ${M^{u,m}}'(t) > 0$ holds for all points in $N$ satisfying $M^{u,m}(t) = 0$ unless $\Delta a = 0$, where $\zeta = (b,y,\eta)$.
\end{prop}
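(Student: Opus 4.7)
The plan is to mimic the proof of Proposition~\ref{prop-cone} (the standard unstable cone invariance) with the weighted quadratic form $M^{u,m}(t) = |\Delta a(t)|^2 - m^2|\Delta \zeta(t)|^2$ in place of $M(t) = |\Delta a|^2 - |\Delta \zeta|^2$. The constraint $M^{u,m}(t) = 0$ becomes $|\Delta a| = m|\Delta \zeta|$, equivalently $|\Delta \zeta| = |\Delta a|/m$, and this single substitution is what reshuffles the $m$-weights onto precisely the matrices rescaled in Assumption~\ref{ass-m-cone-unstable}.

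First I would write down the variation equations for $\Delta a$, $\Delta b$, $\Delta y$ (with $\Delta \eta' \equiv 0$) exactly as in (\ref{differential-difference}), apply the Mean Value Theorem to each nonlinear term $\Delta F_1$, $\Delta F_2$, $\Delta g$ on the convex set $N$, and take inner products to obtain estimates of $\langle \Delta a',\Delta a\rangle$, $\langle \Delta b',\Delta b\rangle$, $\langle \Delta y',\Delta y\rangle$ analogous to (\ref{a-estimate}). At this stage no substitution has been made and the singular values appearing are the ordinary ones $\sigma_{\mathbb{A}_i}$, $\sigma_{\mathbb{B}_i}$, $\sigma_{g_i}$ evaluated at intermediate points of $N$.

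Next I would specialize to the locus $M^{u,m}(t)=0$. In the $a$-estimate, the cross term $\sup \sigma_{\mathbb{A}_2}\,|\Delta a||\Delta \zeta|$ becomes $(\sup\sigma_{\mathbb{A}_2}/m)|\Delta a|^2$, and by definition $(\sup\sigma_{\mathbb{A}_2}/m) = \sup\sigma_{\mathbb{A}_2^u}^m$. For the $b$- and $y$-terms, which enter ${M^{u,m}}'$ multiplied by $-2m^2$, I use $|\Delta b|\le|\Delta \zeta|=|\Delta a|/m$ and $|\Delta y|\le|\Delta a|/m$ to estimate, e.g., $m^2 \sup\sigma_{\mathbb{B}_1}|\Delta b||\Delta a| \le m\sup\sigma_{\mathbb{B}_1}|\Delta a|^2 = \sup\sigma_{\mathbb{B}_1^u}^m|\Delta a|^2$, and similarly $m^2\sup\sigma_{\mathbb{B}_2}|\Delta b||\Delta \zeta| \le \sup\sigma_{\mathbb{B}_2^u}^m|\Delta a|^2$; the same bookkeeping produces $\sigma(\sup\sigma_{g_1^u}^m+\sup\sigma_{g_2^u}^m)|\Delta a|^2$ from the $y$-term. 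The leading dissipative contribution $-m^2\mu_B|\Delta b|^2 = m^2|\mu_B||\Delta b|^2$ is nonnegative and may be dropped (as in Proposition~\ref{prop-cone}, where the $|\mu_B|$ serves only as slack), while $\lambda_A|\Delta a|^2$ from the $a$-estimate is retained directly.

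Summing these contributions yields
\begin{equation*}
\tfrac{1}{2}{M^{u,m}}'(t) \geq \left[\lambda_A - \sup\sigma_{\mathbb{A}_1^u}^m - \sup\sigma_{\mathbb{A}_2^u}^m - \sup\sigma_{\mathbb{B}_1^u}^m - \sup\sigma_{\mathbb{B}_2^u}^m - \sigma(\sup\sigma_{g_1^u}^m + \sup\sigma_{g_2^u}^m)\right]|\Delta a|^2,
\end{equation*}
and the bracketed quantity is strictly positive by (\ref{ineq-m-cone-unstable}) (with the weaker (\ref{ineq-m-graph-unstable}) playing the same auxiliary role as (\ref{ineq-graph-unstable}) did for the original cone). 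Hence ${M^{u,m}}'(t)>0$ whenever $\Delta a \neq 0$. The main obstacle I anticipate is purely notational rather than conceptual: keeping track of which copy of $m$ cancels against which factor of $|\Delta \zeta| = |\Delta a|/m$ so that the final bound lines up exactly with the singular values of the scaled matrices $\mathbb{A}_2^u$ (multiplied by $m^{-1}$), $\mathbb{B}_1^u$, $g_1^u$ (multiplied by $m$) prescribed in Assumption~\ref{ass-m-cone-unstable}; this is where an off-by-$m$ error would silently invalidate the conclusion.
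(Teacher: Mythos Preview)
Your approach is exactly the paper's one-line proof: rerun the argument of Proposition~\ref{prop-cone} with the weighted function $M^{u,m}$, which amounts to the rescaling $\zeta \mapsto m\zeta$ and produces precisely the $m$-scaled singular values of Assumption~\ref{ass-m-cone-unstable}. Your $m$-bookkeeping for the $\mathbb{A}$-, $\mathbb{B}$-, and $g$-terms is carried out correctly.

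There is, however, a genuine gap at the last step. You discard $m^{2}|\mu_B|\,|\Delta b|^{2}$ as nonnegative ``slack'' and arrive at a lower bound whose bracket is $\lambda_A - \bigl(\sup\sigma_{\mathbb{A}_1^u}^m + \sup\sigma_{\mathbb{A}_2^u}^m + \sup\sigma_{\mathbb{B}_1^u}^m + \sup\sigma_{\mathbb{B}_2^u}^m + \sigma(\sup\sigma_{g_1^u}^m + \sup\sigma_{g_2^u}^m)\bigr)$. You then invoke (\ref{ineq-m-cone-unstable}) to declare this bracket positive, but (\ref{ineq-m-cone-unstable}) only asserts $\lambda_A + |\mu_B| - (\cdots) > 0$; it does \emph{not} imply $\lambda_A - (\cdots) > 0$. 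Your parenthetical that ``in Proposition~\ref{prop-cone} the $|\mu_B|$ serves only as slack'' is a misreading of that proof: there the $|\mu_B|$ is retained in the final displayed lower bound for $M'(t)$ and is exactly what makes the bracket match hypothesis~(\ref{ineq-cone-unstable}). To reproduce the paper's argument you must keep the $\mu_B|\Delta b|^2$ contribution and handle it exactly as in Proposition~\ref{prop-cone}, so that the final bracket reads $\lambda_A + |\mu_B| - (\cdots)$ and (\ref{ineq-m-cone-unstable}) applies directly.
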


\begin{proof}
Do the same arguments as the proof of Proposition \ref{prop-cone}, replacing $M(t)$ by $M^{u,m}(t)$.
\end{proof}

\begin{ass}
\label{ass-m-cone-stable}
Consider (\ref{abstract-form}). Let $N\subset \mathbb{R}^{n+1}$ be an $h$-set, 
$z = (x,y,\epsilon)$ and fix $m > 1$. 

Define $\sigma_{\mathbb{A}_1^s}^m = \sigma_{\mathbb{A}_1^s}^m(z)$, $\sigma_{\mathbb{A}_2^s}^m = \sigma_{\mathbb{A}_2^s}^m(z)$, $\sigma_{\mathbb{B}_1^s}^m = \sigma_{\mathbb{B}_1^s}^m(z)$, $\sigma_{\mathbb{B}_2^s}^m = \sigma_{\mathbb{B}_2^s}^m(z)$, $\sigma_{g_1^s}^m = \sigma_{g_1^s}^m(z)$ and $\sigma_{g_1^s}^m = \sigma_{g_1^s}^m(z)$ be maximal singular values of the following matrices at $z$, respectively: 
\begin{align*}
\sigma_{\mathbb{A}_1^s}^m: \ & \mathbb{A}_1(z) = m\left(  \frac{\partial F_1}{\partial b}(z) \right) \quad \text{: $u\times s$-matrix},\\
\sigma_{\mathbb{A}_2^s}^m: \ & \mathbb{A}_2(z) =  \left( \frac{\partial F_1}{\partial a}(z) \quad \frac{\partial F_1}{\partial y}(z) \quad \frac{\partial F_1}{\partial \eta}(z)\right)\quad \text{: $u\times (u+1+1)$-matrix},\\
\sigma_{\mathbb{B}_1^s}^m: \ & \mathbb{B}_1(z) = \left( \frac{\partial F_2}{\partial b}(z)\right) \quad \text{: $s\times s$-matrix},\\
\sigma_{\mathbb{B}_2^s}^m: \ & \mathbb{B}_2(z) =  m^{-1}\left( \frac{\partial F_2}{\partial a}(z) \quad \frac{\partial F_2}{\partial y}(z) \quad \frac{\partial F_2}{\partial \eta}(z)\right)\quad \text{: $s\times (u+1+1)$-matrix},\\
\sigma_{g_1^s}^m: \ & g_1(z) = m\left(  \frac{\partial g}{\partial b}(z)\right) \quad \text{: $1\times s$-matrix},\\
\sigma_{g_2^s}^m: \ & g_2(z) = \left(\frac{\partial g}{\partial a}(z) \quad \frac{\partial g}{\partial y}(z) \quad \frac{\partial g}{\partial \eta}(z)\right)\quad \text{: $1\times (u+1+1)$-matrix}.
\end{align*}
Assume that the following inequalities hold:
\begin{align}
\label{ineq-m-graph-stable}
&|\mu_B| - \left( \sup \sigma_{\mathbb{B}_1^s}^m + \sup \sigma_{\mathbb{B}_2^s}^m \right) > 0,\\
\label{ineq-m-cone-stable}
&\lambda_A + |\mu_B| -  \left( \sup \sigma_{\mathbb{A}_1^s}^m + \sup \sigma_{\mathbb{A}_2^s}^m + \sup \sigma_{\mathbb{B}_1^s}^m + \sup \sigma_{\mathbb{B}_2^s}^m + \sup \sigma_{g_1^s}^m + \sup \sigma_{g_2^s}^m \right) >0,
\end{align}
where $\lambda_A$ and $\mu_B$ are real numbers satisfying (\ref{bound-unst-ev}) and (\ref{bound-st-ev}), respectively,
and the notation \lq\lq $\ \sup$" means the supremum on $N\times [0,\epsilon_0]$.
\end{ass}

\begin{prop}[Stable $m$-cone]
\label{prop-st-m-cone}
Consider (\ref{abstract-form}). Let $N\in \mathbb{R}^{n+1}$ be an $h$-set 
  and fix $m > 1$.  Assume that inequalities (\ref{ineq-m-graph-stable}) and (\ref{ineq-m-cone-stable}) in Assumption \ref{ass-m-cone-unstable} hold. Then, defining a function $M^{s,m}(t):= |\Delta b(t)|^2 - m^2|\Delta \nu(t)|^2$, ${M^{s,m}}'(t) < 0$ holds for all points on $N$ satisfying $M^{s,m}(t) = 0$ unless $\Delta b = 0$, where $\nu = (a,y,\eta)$.
\end{prop}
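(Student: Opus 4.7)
My plan is to mirror the argument of Proposition \ref{prop-unst-m-cone} (itself a reprise of Proposition \ref{prop-cone}), but now tracking the stable component $\Delta b$ rather than the unstable component $\Delta a$, and employing the $m$-scaled Jacobian blocks introduced in Assumption \ref{ass-m-cone-stable}. First I would differentiate
\[
\tfrac{1}{2}\frac{dM^{s,m}}{dt} = \langle \Delta b', \Delta b \rangle - m^{2}\bigl[\langle \Delta a', \Delta a\rangle + \langle \Delta y', \Delta y \rangle\bigr],
\]
using $\Delta \eta' = 0$, substitute the variation equations derived from (\ref{abstract-form}), and apply the Mean Value Theorem termwise exactly as in (\ref{eq-a})--(\ref{eq-y}). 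Each inner product is then bounded using the maximal singular values of the $m$-scaled blocks of Assumption \ref{ass-m-cone-stable} together with the spectral bounds $\Delta b^{T} B \Delta b \leq \mu_{B}|\Delta b|^{2}$ and $\Delta a^{T} A \Delta a \geq \lambda_{A}|\Delta a|^{2}$; the scaling factors $m$ and $m^{-1}$ attached to $\partial F_{1}/\partial b$, $\partial F_{2}/\partial (a,y,\eta)$, and $\partial g/\partial b$ reappear in cross terms involving $|\Delta a||\Delta b|$, $|\Delta b||\Delta \nu|$, and $|\Delta y||\Delta b|$.

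The critical simplification occurs on the cone boundary $M^{s,m}(t) = 0$, where $|\Delta b| = m|\Delta \nu|$. Together with the trivial componentwise bounds $|\Delta a|, |\Delta y| \leq |\Delta \nu| = m^{-1}|\Delta b|$, the $m$-weights systematically collapse each mixed term into a bounded multiple of $|\Delta b|^{2}$. After discarding the nonpositive contribution $-m^{2}\lambda_{A}|\Delta a|^{2}$, the remaining coefficient of $|\Delta b|^{2}$ matches exactly the negative of the left-hand side of (\ref{ineq-m-cone-stable}); that inequality then forces ${M^{s,m}}'(t) < 0$ unless $\Delta b = 0$.

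The main obstacle is purely bookkeeping: one must verify that the $m$-scaling imposed in Assumption \ref{ass-m-cone-stable} is precisely calibrated so that, after substituting $|\Delta b| = m|\Delta \nu|$ and $|\Delta a|, |\Delta y| \leq m^{-1}|\Delta b|$ into every cross term, the aggregate of singular-value coefficients aligns exactly with the left-hand side of (\ref{ineq-m-cone-stable}). Beyond this, the proof is structurally identical to that of Proposition \ref{prop-cone}, with the roles of the unstable and stable directions interchanged and the direction of time reversed, so no new analytical ingredient is required.
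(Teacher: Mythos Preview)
Your approach is exactly the paper's: its proof is the single sentence ``do the same arguments as the proof of Proposition~\ref{prop-cone}, replacing $M(t)$ by $M^{s,m}(t)$,'' and that is precisely what you outline. One bookkeeping slip to flag: once you discard the nonpositive term $-m^{2}\lambda_{A}|\Delta a|^{2}$ you cannot also claim that the remaining coefficient of $|\Delta b|^{2}$ reproduces the full left-hand side of~(\ref{ineq-m-cone-stable}) --- on the cone boundary $m|\Delta a|\le |\Delta b|$, so $-m^{2}\lambda_{A}|\Delta a|^{2}\ge -\lambda_{A}|\Delta b|^{2}$, which goes the wrong way for an upper bound and the $\lambda_{A}$ contribution is lost; the same looseness already appears in the paper's own estimate within the proof of Proposition~\ref{prop-cone}, so your mirror of that argument simply inherits it.
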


\begin{proof}
Do the same arguments as the proof of Proposition \ref{prop-cone}, replacing $M(t)$ by $M^{s,m}(t)$.
\end{proof}

\begin{dfn}[$m$-cone conditions]\rm
We shall call inequalities (\ref{ineq-m-graph-unstable}) and (\ref{ineq-m-cone-unstable}) in Assumption \ref{ass-m-cone-unstable} the {\em unstable $m$-cone condition in $N$}.
Similarly, we shall call inequalities (\ref{ineq-m-graph-stable}) and (\ref{ineq-m-cone-stable}) in Assumption \ref{ass-m-cone-stable} the {\em stable $m$-cone condition in $N$}. 
When these conditions are satisfied, {\em the unstable $m$-cone and the stable $m$-cone with the vertex $z=(a_0, b_0, y_0)$} (in the $(a,b,y)$-coordinate) are given as follows, respectively:
\begin{align*}
C_m^u(z) := \{(a,b,y)\mid |a - a_0|^2 \geq  m^2(|b - b_0|^2 +  |y - y_0|^2)\},\\
C_m^s(z) := \{(a,b,y)\mid |b - b_0|^2 \geq  m^2(|a - a_0|^2 +  |y - y_0|^2)\}. 
\end{align*}
\end{dfn}

Validations of $m$-cones themselves are in fact independent of the construction of fast-saddle-type blocks discussed in Section \ref{section-isolatingblock}. Moreover, the choice of $m$ can be arbitrary as long as corresponding cone conditions hold.

\bigskip
An implementation of $m$-cones in unstable direction is the following.
\begin{enumerate}
\item Prepare a fast-saddle-type block $N$ such that $N_c$ is given by (\ref{fast-block-2}) satisfying the unstable cone condition. 
Via a homeomorphism $c_N$ we may assume that $N$ is represented by
\begin{equation*}
N = \prod_{j=1}^u [a_j^-, a_j^+]\times \prod_{j=1}^s [b_j^-, b_j^+] \times [0,1].
\end{equation*}

\item Choose a fast-exit face $N^\exit$. For example, set 
\begin{equation*}
N^\exit = \prod_{j=1}^{j_0-1} [a_j^-, a_j^+]\times \{a_{j_0}^\ast\}\times \prod_{j=j_0-1}^u [a_j^-, a_j^+]\times \prod_{j=1}^s [b_j^-, b_j^+] \times [y^-, y^+],\quad \ast \in \{\pm\},
\end{equation*}
where $[y^-, y^+]\subset [0,1]$. Let $\ell > 0$ is a given number and $V_m^{u,j_0}$ be an $h$-set given by
\begin{align}
\label{set-verify-unstable-m-cone}
V_m^{u,j_0} &= \prod_{j=1}^{u} [\tilde a_j^-, \tilde a_j^+] \times \prod_{j=1}^s \left[b_j^- - \frac{\ell}{m}, b_j^+ + \frac{\ell}{m} \right] \times \left[y^- - \frac{\ell}{m}, y^+ + \frac{\ell}{m} \right],\\
\notag
[\tilde a_j^-, \tilde a_j^+] &= \begin{cases}
[a_j^-, a_j^+ + \ell]  & \text{ if $j=j_0$ and $\ast = +$,}\\
[a_j^- - \ell, a_j^+] & \text{ if $j=j_0$ and $\ast = -$,}\\
[a_j^-, a_j^+] & \text{ otherwize.}
\end{cases}
\end{align}
See Fig. \ref{fig-cone}-(c).
\item Verify the unstable $m$-cone condition, (\ref{ineq-m-graph-unstable}) and (\ref{ineq-m-cone-unstable}), in $V_m^{u,j_0}$.
\end{enumerate}

The following lemma is a consequence of discussions in Lemma \ref{prop-unst-m-cone} and arguments in Theorem \ref{thm-inv-mfd-rigorous}.
\begin{lem}
\label{lem-exit-cone}
Let $N$ be a fast-saddle type block satisfying the stable cone condition, and $V_m^{u,j_0}$ be given by (\ref{set-verify-unstable-m-cone}). Assume that the $m$-unstable cone condition is satisfied in $V_m^{u,j_0}$. 
Then any points on $N^\exit$ leaves the set
\begin{equation*}
N\cup C_m^{u,j_0},\quad C_m^{u,j_0}:= \{(a,\zeta)=(a,b,y) \in V_m^{u,j_0} \mid |a - a_0| \geq m|\zeta - \zeta_0|, (a_0, \zeta_0)\in N^\exit\}
\end{equation*}
under the flow through
$(C_m^{u,j_0})^\ast:= C_m^{u,j_0}\cap \{a= \tilde a_{j_0}^\ast\},\ \ast\in \{\pm\}$. 
The sign $\ast$ is exactly the location of $N^\exit$.
\end{lem}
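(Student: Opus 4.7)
The plan is to combine the immediate fast-exit behavior at $N^{\exit}$ with the forward invariance of the unstable $m$-cone given by Proposition \ref{prop-unst-m-cone}, which by hypothesis holds throughout $V_m^{u,j_0}$. I treat the case $\ast = +$; the case $\ast = -$ follows by an analogous argument with reversed orientation in the $a_{j_0}$-direction.

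First I would verify the immediate exit: a trajectory $p(t)$ with $p(0) = p_0 = (a_0, \zeta_0) \in N^{\exit}$ leaves $N$ at once in the $+a_{j_0}$-direction, because $N^{\exit} \subset N^{f,-}$ lies on the face $\{a_{j_0} = a_{j_0}^+\}$ and the fast-saddle-block construction of Section \ref{section-isolatingblock} gives $\dot a_{j_0} > 0$ uniformly on this face. Consequently, for small $t > 0$, $p(t) \in V_m^{u,j_0} \setminus N$ with $a_{j_0}(t) > a_{j_0}^+$, and it suffices to analyse $p(t)$ while it remains in $V_m^{u,j_0}$.

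The central step is to show that, while in $V_m^{u,j_0}$, the trajectory remains in $C_m^{u,j_0}$. Here I would exploit Proposition \ref{prop-unst-m-cone} by pairing $p(t)$ with the trajectory $\hat p(t)$ starting at the unique point $\hat p_0 = (h_s(\zeta_0,\epsilon), \zeta_0) \in W^s(S_\epsilon) \subset N$ sharing the $\zeta$-coordinate of $p_0$; such $\hat p_0$ exists by the stable cone condition on $N$ together with Theorem \ref{thm-inv-mfd-rigorous}, and $\hat p(t)$ stays inside $N$ by local invariance of $W^s(S_\epsilon)$. Initially $\Delta \zeta(0) = 0$ while $\Delta a(0) = a_0 - h_s(\zeta_0,\epsilon)$ has a strictly positive $j_0$-component (since $a_{j_0}^+$ is outside the stable-manifold values at $\zeta_0$), so $M^{u,m}(0) > 0$; Proposition \ref{prop-unst-m-cone} then propagates the cone relation, yielding $|a(t) - \hat a(t)| \geq m|\zeta(t) - \hat\zeta(t)|$ throughout the joint residence in $V_m^{u,j_0}$. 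Converting this to $p(t) \in C_m^u((a_*(t), \zeta_*(t)))$ for some $(a_*(t), \zeta_*(t)) \in N^{\exit}$ requires a moving vertex: the natural choice is obtained by taking $\zeta_*(t) = \hat\zeta(t)$ and replacing the $j_0$-th coordinate of $\hat a(t)$ with $a_{j_0}^+$, while keeping the other $a$-coordinates equal to those of $\hat p(t)$. One must then check that this vertex actually lies in $N^{\exit}$ (in particular, that $\hat\zeta(t)$ stays in the $y$-window $[y^-,y^+]$ and the $b$-window of $N^{\exit}$) and that the pairwise inequality survives the $j_0$-direction adjustment, which is controllable because the $y$-drift of $\hat p(t)$ is of slow order $O(\epsilon)$ and the $j_0$-adjustment is outward of the trajectory.

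The cone inclusion then forces the correct exit. By the definition of $V_m^{u,j_0}$, the $b$- and $y$-faces lie at distance $\ell/m$ beyond those of $N$, while the outer $a_{j_0}$-face lies at distance $\ell$. The cone inequality yields $|\zeta(t) - \zeta_*(t)| \leq |a(t) - a_*(t)|/m \leq \ell/m$ as long as $|a(t) - a_*(t)| \leq \ell$, ruling out escape through the $b$- or $y$-faces of $V_m^{u,j_0}$ before $a_{j_0}(t)$ reaches $a_{j_0}^+ + \ell = \tilde a_{j_0}^+$; the cone structure also keeps $p(t)$ within the full $a_j$-range ($j \neq j_0$) already covered by $N^{\exit}$. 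Hence $p(t)$ must first meet $\{a_{j_0} = \tilde a_{j_0}^+\}$, exiting $N \cup C_m^{u,j_0}$ through $(C_m^{u,j_0})^+$. The principal obstacle is the middle step: Proposition \ref{prop-unst-m-cone} is intrinsically a pairwise statement, whereas the lemma's conclusion pins the vertex to $N^{\exit}$, so one needs bookkeeping of the moving vertex $(a_*(t),\zeta_*(t))$, uniform control on the slow drift of $\hat p(t)$, and a clean transfer from the pairwise inequality $|a(t) - \hat a(t)| \geq m|\zeta(t) - \hat\zeta(t)|$ to $|a(t) - a_*(t)| \geq m|\zeta(t) - \zeta_*(t)|$.
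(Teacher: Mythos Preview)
Your approach is the same as the paper's: the paper gives no detailed argument for this lemma and simply declares it ``a consequence of discussions in [Proposition~\ref{prop-unst-m-cone}] and arguments in Theorem~\ref{thm-inv-mfd-rigorous}'', i.e.\ exactly the pairing of the trajectory through $N^{\exit}$ with a reference trajectory on $W^s(S_\epsilon)$ together with the forward invariance of the unstable $m$-cone that you reconstruct.

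The subtlety you isolate---that Proposition~\ref{prop-unst-m-cone} is a pairwise statement giving $p(t)\in C_m^u(\hat p(t))$ with a \emph{moving} vertex $\hat p(t)\in N$, whereas $C_m^{u,j_0}$ is defined with vertices pinned to $N^{\exit}$---is a genuine bookkeeping point that the paper glosses over here. In fact the paper effectively defers this to Proposition~\ref{prop-cov-unstable}, whose proof carries out precisely the pairing you describe, invokes the exponential growth $(|\Delta a|^2)'\geq 2\lambda_{\min}|\Delta a|^2$ to bound the transit time by $T_{\dep}$, and then uses the slow bound $|\dot y|\leq \bar\epsilon$ to control the $y$-drift explicitly. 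So your worry about the moving vertex is resolved not in the lemma itself but one step downstream, via the departure-time estimate; for the purposes of the lemma the paper is content with the looser reading that $p(t)$ remains in $\bigcup_{q\in N}C_m^u(q)\cap V_m^{u,j_0}$ (which already forces exit through $\{a_{j_0}=\tilde a_{j_0}^\ast\}$ by comparing the $\ell$ versus $\ell/m$ margins), and the sharper $N^{\exit}$-vertex statement is only needed, and only justified, when the $y$-window is tracked in Proposition~\ref{prop-cov-unstable}.
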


\bigskip
$m$-cones in stable direction is constructed in the similar manner.
\begin{enumerate}
\item Prepare a fast-saddle-type block $N$ such that $N_c$ is given by (\ref{fast-block-2}) satisfying the stable cone condition. Via a homeomorphism $c_N$ we may assume that $N$ is represented by
\begin{equation*}
N = \prod_{j=1}^u [a_j^-, a_j^+]\times \prod_{j=1}^s [b_j^-, b_j^+] \times [0,1].
\end{equation*}

\item Choose a face of the fast-entrance $N^\ent$. As an example, set 
\begin{equation*}
N^\ent =  \prod_{j=1}^u [a_j^-, a_j^+] \times \prod_{j=1}^{j_0-1} [b_j^-, b_j^+]\times \{b_{j_0}^\ast\}\times \prod_{j=j_0-1}^s [b_j^-, b_j^+]\times [y^-, y^+],\quad \ast\in \{\pm\},
\end{equation*}
where $[y^-,y^+]\subset [0,1]$.
Let $\ell > 0$ is a given number and $V_m^{s,j_0}$ be an $h$-set given by
\begin{align}
\label{set-verify-stable-m-cone}
V_m^{s,j_0} &= \prod_{j=1}^u \left[a_j^- - \frac{\ell}{m}, a_j^+ + \frac{\ell}{m} \right] \times \prod_{j=1}^{s} [\tilde b_j^-, \tilde b_j^+]\times \left[y^- - \frac{\ell}{m}, y^+ + \frac{\ell}{m} \right],\\
\notag
[\tilde b_j^-, \tilde b_j^+] &= \begin{cases}
[b_j^-, b_j^+ + \ell] & \text{ if $j=j_0$ and $\ast = +$,}\\
[b_j^- - \ell, b_j^+] & \text{ if $j=j_0$ and $\ast = -$,}\\
[b_j^-, b_j^+] & \text{ otherwize.}
\end{cases}
\end{align}
\item Verify the stable $m$-cone condition, (\ref{ineq-m-graph-stable}) and (\ref{ineq-m-cone-stable}), in $V_m^{s,j_0}$.
\end{enumerate}

The following lemma is a consequence of similar arguments to Lemma \ref{lem-exit-cone}.
\begin{lem}
\label{lem-ent-cone}
Let $N$ be a fast-saddle type block satisfying the stable cone condition, and $V_m^{s,j_0}$ be given by (\ref{set-verify-stable-m-cone}). Assume that the $m$-stable cone condition is satisfied in $N\cup V_m^{s,j_0}$. 
Then any points on $N^\ent$ leaves
\begin{equation*}
N\cup C_m^{s,j_0},\quad C_m^{s,j_0}:= \{(b,\nu) \in V_m^{s,j_0} \mid  \nu = (a,y), |b - b_0| \geq m|\nu - \nu_0|, (b_0, \nu_0)\in N^\ent\}
\end{equation*}
under the backward flow through
$(C_m^{s,j_0})^\ast:= C_m^{s,j_0}\cap \{b=\tilde b_{j_0}^\ast\},\ \ast \in \{\pm\}$.
The sign $\ast$ is exactly the location of $N^\ent$.
\end{lem}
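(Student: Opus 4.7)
Lemma \ref{lem-ent-cone} is the stable-direction analogue of Lemma \ref{lem-exit-cone}, obtained by reversing time and swapping the roles of the unstable and stable $m$-cones. I would prove it by running the same template as in Lemma \ref{lem-exit-cone} but in backward time, with Proposition \ref{prop-st-m-cone} playing the role that Proposition \ref{prop-unst-m-cone} played there.

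First, I would rephrase $N^\ent$ in backward time. By construction $N^\ent \subset N^{f,+}$ is a fast-entrance face: the forward vector field transversally enters $N$ in the $b_{j_0}$-direction with the sign $\ast$. Reversing time to $\tau = -t$ flips the sign of the vector field, so $N^\ent$ becomes a fast-\emph{exit} face for the backward flow. Consequently any $p_0 = (a_0, b_0, y_0) \in N^\ent$ leaves $N$ instantly under the backward flow, crossing into $V_m^{s,j_0}$ through the side $\{b_{j_0} = \tilde b_{j_0}^\ast\}$.

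Second, I would show that the backward trajectory from $p_0$ is confined to the cone $C_m^{s,j_0}$ as long as it lies in $N \cup V_m^{s,j_0}$. Proposition \ref{prop-st-m-cone} asserts that for any two solutions in the region where the stable $m$-cone condition holds, the quantity $M^{s,m}(t) = |\Delta b(t)|^2 - m^2 |\Delta \nu(t)|^2$ is strictly decreasing on its zero set (unless $\Delta b \equiv 0$). Passing to $\tau = -t$, this becomes strict increase of $M^{s,m}$ on its zero set along pairs of backward trajectories, which is precisely positive invariance in backward time of the stable $m$-cone $C_m^s(q)$ with any fixed vertex $q$. Taking $q$ to range over $N^\ent$ and pairing the backward trajectory of $p_0$ with a reference trajectory starting at a nearby point of $N^\ent$ (so that the initial $b_{j_0}$-separation is supplied by the non-vanishing backward velocity at the entrance face) keeps the orbit of $p_0$ inside $\bigcup_{q\in N^\ent} C_m^s(q) = C_m^{s,j_0}$ until it leaves $V_m^{s,j_0}$.

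Third, I would show that the exit from $V_m^{s,j_0}$ occurs through the face $(C_m^{s,j_0})^\ast$. The expansion estimate Lemma \ref{lem-ratio}(2), applied to the stable disk through $p_0$, gives separation in $b$ at rate at least $|\mu_{\min}|$ under the backward flow, so the orbit cannot remain in the bounded set $V_m^{s,j_0}$ indefinitely. The $\ell/m$-thickening of $V_m^{s,j_0}$ in the $a$- and $y$-directions prescribed by (\ref{set-verify-stable-m-cone}) is chosen exactly so that a point of $C_m^{s,j_0}$ attains the extremal $b_{j_0}$-face before any $a$- or $y$-face: if $|b_{j_0} - b_{0,j_0}| = \ell$ then $|\nu - \nu_0| \leq \ell/m$, which is the $(a,y)$-thickness of $V_m^{s,j_0}$. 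Hence the only admissible exit face is $\{b_{j_0} = \tilde b_{j_0}^\ast\}$, namely $(C_m^{s,j_0})^\ast$, and its sign $\ast$ is inherited from the side of $\partial N$ on which $N^\ent$ sits.

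The main obstacle I anticipate is the pairing argument in Step 2: Proposition \ref{prop-st-m-cone} is naturally a statement about pairs of trajectories, so one must be careful in arranging a reference trajectory (inside or just off $N^\ent$) that both makes the initial difference lie in the stable $m$-cone and stays in the region where the $m$-cone condition is validated. This is a standard maneuver and is the same step that underlies the unwritten proof of Lemma \ref{lem-exit-cone}, but it is the place where the analogy has to be spelled out most carefully.
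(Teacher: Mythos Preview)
Your proposal is correct and follows essentially the same approach as the paper: the paper's proof simply states that the lemma ``is a consequence of similar arguments to Lemma \ref{lem-exit-cone}'', which in turn is proved by invoking Proposition \ref{prop-unst-m-cone} and the arguments in Theorem \ref{thm-inv-mfd-rigorous}. Your time-reversal plus Proposition \ref{prop-st-m-cone} is exactly this dualization, and you have in fact supplied more detail than the paper does; your own caveat about the pairing/reference-trajectory step is well placed, since that is the one point left implicit in the paper's treatment of both lemmas.
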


\begin{dfn}[$m$-cones]\rm
We call the set of the form $C_m^{u,j_0}$ an {\em unstable $m$-cone of $N$ with the length $\ell$}. 
Similarly, we call the set of the form $C_m^{s,j_0}$ a {\em stable $m$-cone of $N$ with the length $\ell$}.
\end{dfn}

\begin{figure}[htbp]\em
\begin{minipage}{0.5\hsize}
\centering
\includegraphics[width=5.0cm]{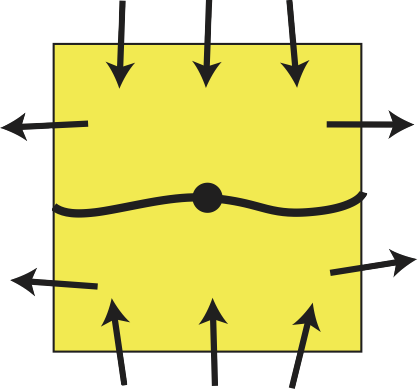}
(a)
\end{minipage}
\begin{minipage}{0.5\hsize}
\centering
\includegraphics[width=5.0cm]{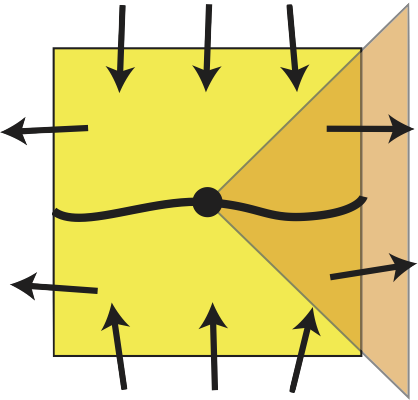}
(b)
\end{minipage}
\begin{minipage}{0.5\hsize}
\centering
\includegraphics[width=7.0cm]{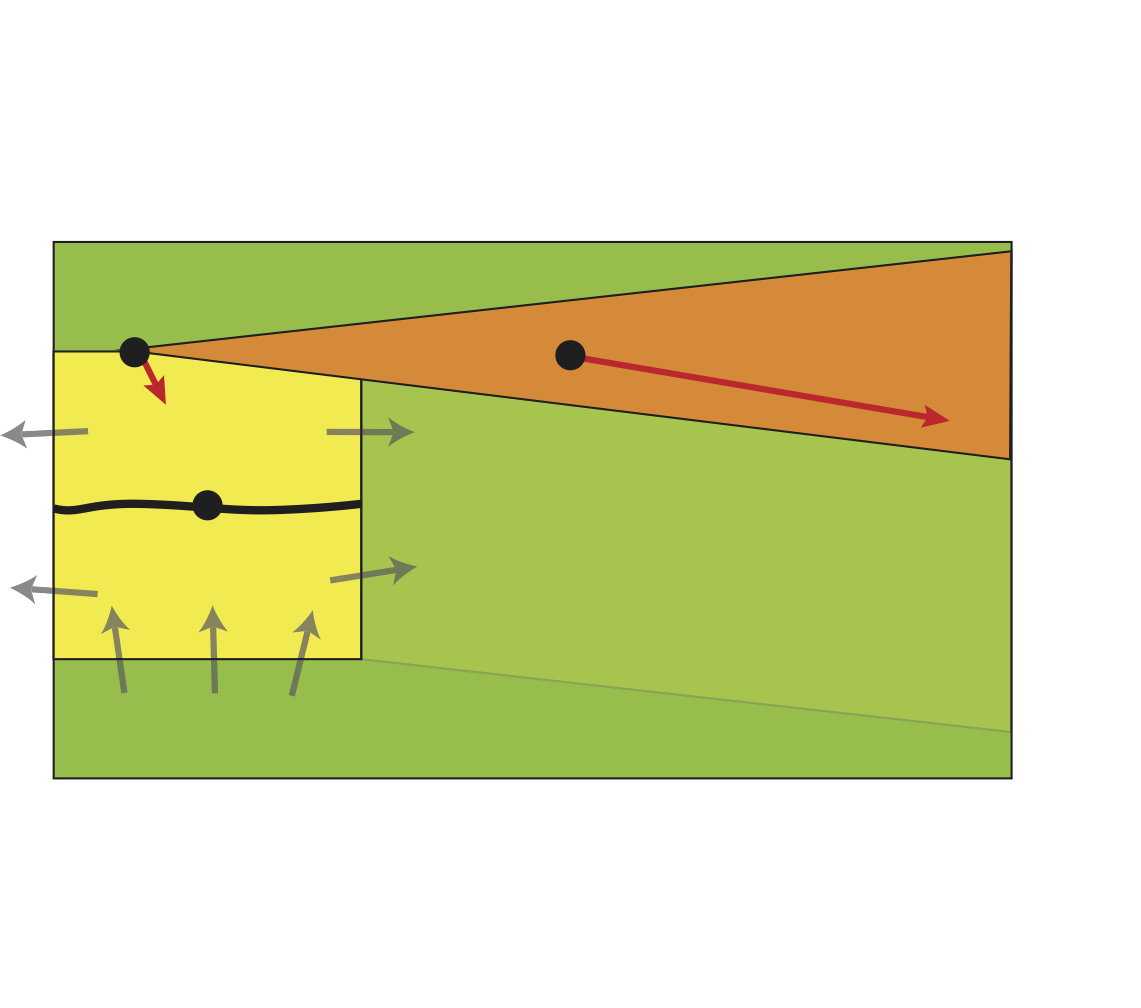}
(c)
\end{minipage}
\begin{minipage}{0.5\hsize}
\centering
\includegraphics[width=7.0cm]{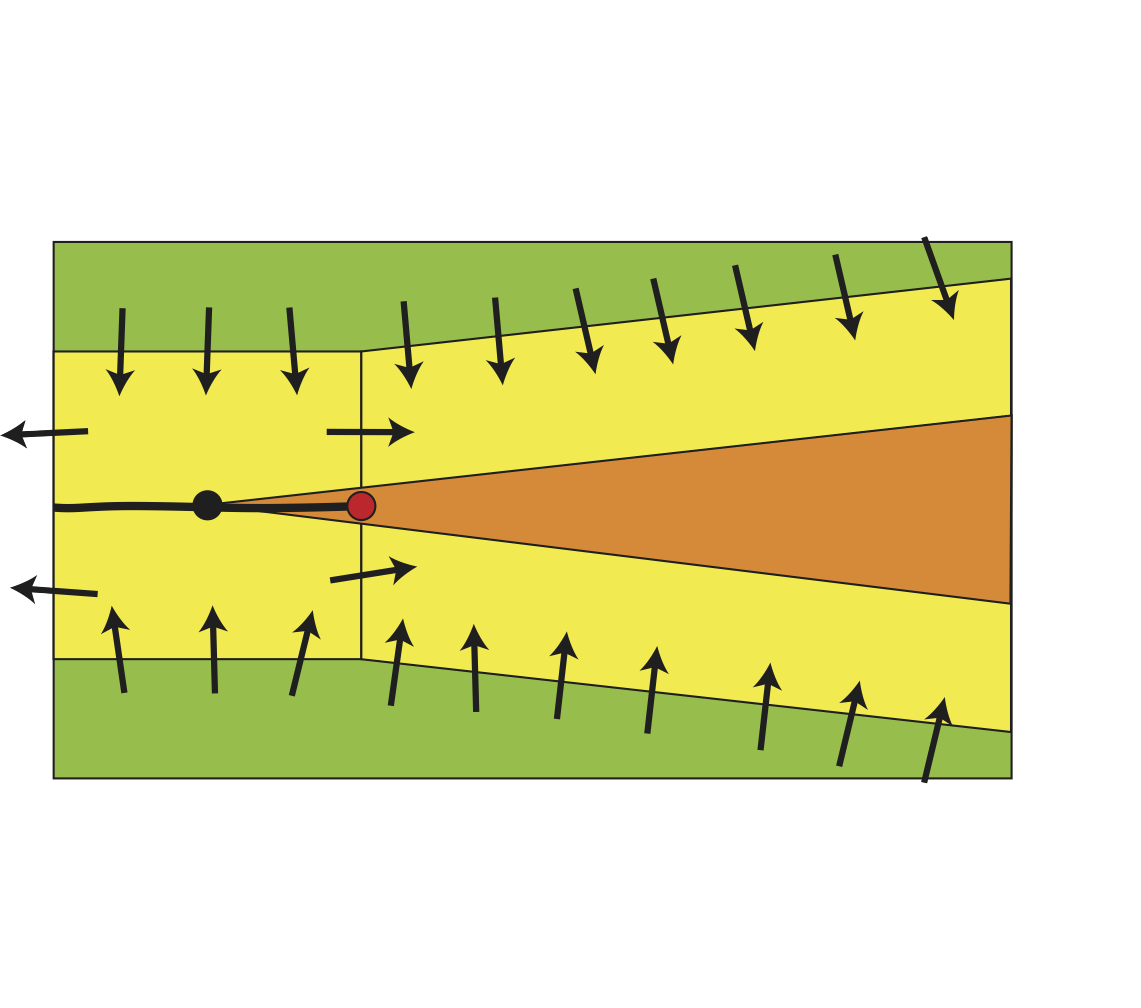}
(d)
\end{minipage}
\caption{Unstable $m$-cones.}
\label{fig-cone}
(a). A fast-saddle-type block $N$ containing the slow manifold $S_\epsilon$ validated in Theorem \ref{thm-inv-mfd-rigorous}. 
In this figure, a black ball corresponds to slow manifold $S_\epsilon$ and a black curve corresponds to its unstable manifold $W^u(S_\epsilon)$. 
The accuracy of the stable (resp. unstable) manifold $W^s(S_\epsilon)$ (resp. $W^u(S_\epsilon$) is measured by the size of the fast-entrance $N^{f,+}$ (resp. the fast-exit $N^{f,-}$). 
In general, blocks are small and the flow stay near those blocks for small $t$, which cause the accumulation of various computational errors. 

\bigskip
(b). A candidate of extended cones. One expect that our validated cones stated in Theorem \ref{thm-inv-mfd-rigorous}. can be locally extended. 
However, one can imagine that it is quite too large for the enclosure of $W^u(S_\epsilon)$ if $W^u(S_\epsilon)$ is sufficiently smooth. 
The extension of cones drawn here is thus quite coarse for smooth manifolds. 

\bigskip
(c). Validation of $m$-cone condition. 
It is sufficient to verify $m$-cone condition on a rectangular domain $V_m^u$ given by (\ref{set-verify-unstable-m-cone}). The set $V_m^u$ is colored by green. 
The orange region is the $m$-cone with a vertex in $N$. 
$m$-cone conditions imply that all points in an $m$-cone stay the $m$-cone until they leave $V_m^\ast$.

\bigskip
(d). An unstable $m$-cone $C_m^u$ of $N$, which is the union of $m$-cones with vertices on $N^\exit$. 
The union $N\cup C_m^u$ is colored by yellow.
All trajectories through $N^\exit$ leaves $V_m^u$ through $C_m^u\cap \partial V_m^u$. 
In general, $C_m^u\cap \partial V_m^u$ is far from slow manifolds. 
This fact helps us with validations with reasonable computation steps and accuracy. 
See also Section \ref{section-demo-cones}.
\end{figure}

Using $m$-cones, we can extend an $h$-set after construction of a small fast-saddle type block. 
Such an $h$-set keeps isolation in radial direction, thanks to $m$-cone conditions.
Moreover, we can obtain a priori estimates of trajectories {\em far from equilibria}, 
keeping their accuracy as well as possible by adjustments of $m$ and $\ell$. 
Of course, we do not need to solve differential equations to obtain such estimates.
This technique reduces extra computation costs mentioned at the beginning of this subsection.
Furthermore, in layer problems (\ref{layer}), $m$-cones immediately yield continuation of stable and unstable manifolds of critical invariant manifolds. Indeed, the unions $N\cup C_m^{u,j_0}$ and $N\cup C_m^{s,j_0}$ are $h$-sets satisfying $m$-(un)stable cone conditions.  The unstable (resp. stable) manifold is hence represented by a horizontal (resp. vertical) disk in $N\cup C_m^{u,j_0}$ (resp. $N\cup C_m^{s,j_0}$). See \cite{ZCov} for details. 

\begin{rem}\rm
Arguments involving Propositions \ref{prop-unst-m-cone} and \ref{prop-st-m-cone} still holds replacing $y\in \mathbb{R}$ and $\mathbb{R}$-valued function $g$ by $y\in \mathbb{R}^l$ and $\mathbb{R}^l$-valued function $g$, respectively.
\end{rem}

\bigskip
In what follows we show applications of $m$-cones to fast-slow systems. 
In the case of fast-slow systems, we have to care about movements of trajectories in slow direction. 
With the help of slow shadowing condition, we can construct a sequence of covering relations for $m$-cones.
For technical reasons we restrict $u$ and $s$ to $1$, in particular, $n=3$. 
This is exactly the case in our verification examples, Section \ref{section-examples}. 
Consider (\ref{abstract-form}) again.

\begin{dfn}[Depature time, Arrival time]\rm
Let $N$ be a fast-saddle-type block such $N_c$ is given by (\ref{fast-block-2}) with $u=s=1$.
Also, let $V_m^{u}$ be a set of the form (\ref{set-verify-unstable-m-cone}) with $u=s=1$ satisfying the unstable $m$-cone condition. 
Define the {\em departure time} $T_{\dep}$ in $V_m^u$ by
\begin{equation}
\label{departure}
T_{\dep} =T_{\dep}(V_m^{u}) := \frac{1}{2} \int_{(d_a r_a)^2}^{(\bar r_a+\ell)^2} \frac{1}{\lambda_{\min}(a)} \frac{d(a^2)}{a^2} + \delta,
\end{equation}
where $d_a$ is a given number in (SS5), $\bar r_a = \diam(\pi_a(N))$, $r_a$ is given in (\ref{setting-shadow}), $\delta > 0$ is a sufficiently small number,
\begin{equation*}
\lambda_{\min}(\tilde a) := \lambda_A - \left( \sup_{z\in V_m^u\cap \{a\leq \tilde a\}} \sigma_{\mathbb{A}_1^u}^m(z) + \sup_{z\in V_m^u\cap \{a\leq \tilde a\}} \sigma_{\mathbb{A}_2^u}^m(z) \right)
\end{equation*}
and $\lambda_A> 0$ is a real number satisfying (\ref{bound-unst-ev}).
Note that the unstable $m$-cone condition implies that $\lambda_{\min}(\tilde a) > 0$ for $\tilde a\in (d_a r_a, \bar r_a + \ell)$.
\par
Similarly, let $V^s_m$ be a set of the form (\ref{set-verify-stable-m-cone}) with $u=s=1$ satisfying the stable $m$-cone condition. Define the {\em arrival time} $T_{\arr}$ in $V_m^s$ by
\begin{equation}
\label{arrival}
T_{\arr} =T_{\arr}(V_m^s) := -\frac{1}{2}\int_{(d_b r_b)^2}^{(\bar t_b +\ell)^2} \frac{1}{\mu_{\min}(b)} \frac{d(b^2)}{b^2} + \delta,
\end{equation}
where $d_b$ is a given number in (SS5), $\bar r_b = \diam(\pi_b(N))$, $r_b$ is given in (\ref{setting-shadow}), $\delta > 0$ is a sufficiently small number,
\begin{equation*}
\mu_{\min}(\tilde b) := \mu_B + \left( \sup_{z\in V_m^s\cap \{b\leq \tilde b\}} \sigma_{\mathbb{B}_1^s}^m(z) + \sup_{z\in V_m^s\cap \{b\leq \tilde b\}} \sigma_{\mathbb{B}_2^s}^m(z) \right)
\end{equation*}
and $\mu_B < 0$ is a real number satisfying (\ref{bound-st-ev}). 
Note that the stable $m$-cone condition implies that $\mu_{\min}(\tilde b) < 0$ for $\tilde b\in (d_b r_b, \bar r_b + \ell)$.
\end{dfn}

\begin{rem}\rm
In practical computations, we use the following upper bounds of departure and arrival times:
\begin{align*}
\frac{1}{2} \int_{(d_a r_a)^2}^{(\bar r_a+\ell)^2} \frac{1}{\lambda_{\min}(a)} \frac{d(a^2)}{a^2} &\leq \frac{1}{2} \sum_{j=0}^{T-1} \lambda_{\min}\left(d_a r_a + \frac{(j+1) L_a}{T}\right)^{-1} \int_{(d_a r_a + \frac{j L_a}{T})^2}^{(d_a r_a + \frac{(j+1) L_a}{T})^2}  \frac{d(a^2)}{a^2} \\
	&= \sum_{j=0}^{T-1} \lambda_{\min}\left(d_a r_a + \frac{(j+1) L_a}{T}\right)^{-1}  \log \left( \frac{Td_a r_a + (j+1) L_a}{Td_a r_a + j L_a}\right) ,\\
-\frac{1}{2}\int_{(d_b r_b)^2}^{(\bar r_b+\ell)^2} \frac{1}{\mu_{\min}(b)} \frac{d(b^2)}{b^2} &\leq  -\frac{1}{2} \sum_{j=0}^{T-1} \mu_{\min}\left(d_b r_b + \frac{(j+1) L_b}{T}\right)^{-1} \int_{(d_b r_b + \frac{j L_b}{T})^2}^{(d_b r_b + \frac{(j+1) L_b}{T})^2}  \frac{d(b^2)}{b^2} \\
	&= -\sum_{j=0}^{T-1} \mu_{\min}\left(d_b r_b + \frac{(j+1) L_b}{T}\right)^{-1}  \log \left( \frac{Td_b r_b + (j+1) L_b}{Td_b r_b + j L_b}\right),
\end{align*}
where $L_a = \bar r_a + \ell - d_a r_a$ and $L_b = \bar r_b + \ell - d_b r_b$. Usually, the bigger the number of partitions $T$ is, the smaller right-hand sides of above inequalities are. We may set the difference between summations with different $T$'s as $\delta$ in (\ref{departure}) and (\ref{arrival}).
\end{rem}

The following results show that $m$-cones generate additional covering relations to validate global orbits for (\ref{fast-slow})$_\epsilon$.

\begin{prop}
\label{prop-cov-unstable}
Let $N$ be a fast-saddle-type block for (\ref{abstract-form}) in $\mathbb{R}^3$ with $u=s=1$ which forms
\begin{equation*}
N = [a^-, a^+] \times [b^-, b^+]\times [y_0^-, y_0^+]
\end{equation*}
and actually given by (\ref{fast-block-2}).
Also, let $V_m^u$ be a set of the form (\ref{set-verify-unstable-m-cone}) with $u=s=1$ which contains $N$. 
Assume that the unstable $m$-cone condition is satisfied in $V_m^u$. 
Let $T_{\dep}=T_{\dep}(V_m^u)$ be the departure time in $V_m^u$. Define 
\begin{align}
\notag
\epsilon^+ &:= \sup_{V_m^u\times [0,\epsilon_0]} \epsilon g(x,y,\epsilon),\quad \epsilon^- := \inf_{V_m^u\times [0,\epsilon_0]} \epsilon g(x,y,\epsilon)\\
\notag
N^{\exit} &:= \{a^\ast\}\times [b^-, b^+] \times [y^-, y^+]\quad \text{ with }[y^-,y^+]\cup [y^- + \epsilon^- T_{\dep}, y^+ + \epsilon^+ T_{\dep}]\subset [y_0^-,y_0^+],\\
\label{exit-cone}
(C_m^u)^\exit &:= C_m^u \cap \left( \{\tilde a^\ast\}  \times \left[b^- - \frac{\ell}{m}, b^+ + \frac{\ell}{m} \right] \times \left[y^- +\epsilon^+ T_{\dep}, y^+ + \epsilon^- T_{\dep} \right] \right),
\end{align}
where $\ast \in \{\pm\}$ is chosen so that either $\tilde a^- = a^- - \ell$ or $\tilde a^+ = a^+ + \ell$ holds.
Then $N^{\exit}  \overset{P_\epsilon^{C_m^u}}{\Longrightarrow} (C_m^u)^\exit$ holds. 
\end{prop}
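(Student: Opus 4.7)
My plan is to combine Lemma \ref{lem-exit-cone} with the departure-time bound and invoke Proposition \ref{prop-CR-u1}, treating both $N^{\exit}$ and $(C_m^u)^{\exit}$ as two-dimensional $h$-sets whose $1$-dimensional unstable direction is $y$ and whose stable direction is $b$. The strategy is entirely geometric: trap each trajectory in its $m$-cone, bound the transit time uniformly by $T_{\dep}$, and then read off the image of the $(b,y)$-face from the slow-equation integrated over that bounded time.

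First I would apply Lemma \ref{lem-exit-cone}: any trajectory starting at $z_0 = (a^\ast, b_0, y_0) \in N^{\exit}$ is trapped in the unstable $m$-cone with vertex $z_0$ and exits $C_m^u$ through $\{a = \tilde a^\ast\}$ at some finite time $T = T(z_0)$. The unstable $m$-cone condition gives the differential inequality $\tfrac{d}{dt}|a - a^\ast|^2 \geq 2\lambda_{\min}(|a - a^\ast|)\,|a - a^\ast|^2$ along the trajectory, so separating variables and comparing with \eqref{departure} yields $T < T_{\dep}$ strictly, the strictness coming from the buffer $\delta > 0$ built into the definition of $T_{\dep}$. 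The cone inclusion $z(t)\in C_m^u(z_0)$ also furnishes $|b(T) - b_0| < \ell/m$, so $b(T)$ lies strictly in $(b^- - \ell/m,\, b^+ + \ell/m)$ since $b_0 \in [b^-, b^+]$.

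Next I would estimate the slow coordinate at exit by direct integration: $y(T) - y_0 = \int_0^T \epsilon g\,ds \in [T\epsilon^-,\, T\epsilon^+]$. Because $\epsilon = 0$ lies in the range of sup/inf, the definitions automatically force $\epsilon^- \leq 0 \leq \epsilon^+$, so $T < T_{\dep}$ gives the sign-consistent bounds $T\epsilon^+ < T_{\dep}\epsilon^+$ and $T\epsilon^- > T_{\dep}\epsilon^-$. From these I would deduce the strict boundary estimates
\begin{equation*}
y(T) < y^- + \epsilon^+ T_{\dep} \quad \text{if } y_0 = y^-, \qquad y(T) > y^+ + \epsilon^- T_{\dep} \quad \text{if } y_0 = y^+,
\end{equation*}
which place the images of the left/right faces of $N^{\exit}$ strictly on the corresponding sides of the $y$-interval of $(C_m^u)^{\exit}$.

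To close the argument I would verify the hypotheses of Proposition \ref{prop-CR-u1} with $f = P_\epsilon^{C_m^u}$: the two $y$-face estimates above supply the conditions $f(N^{\exit,L}) \subset S((C_m^u)^{\exit})^L$ and $f(N^{\exit,R}) \subset S((C_m^u)^{\exit})^R$; the $b$-bound supplies $f(N^{\exit}) \cap ((C_m^u)^{\exit})^+ = \emptyset$; and choosing $q_0$ at the $b$-center (e.g.\ the trace of the slow manifold on $N^{\exit}$), the image of the middle slice $\{b = b^\ast\} \times [y^-, y^+]$ is a continuous curve that crosses $(C_m^u)^{\exit}$ in the unstable $y$-direction, so the associated one-dimensional map $A_{q_0}$ has degree $\pm 1$. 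The main obstacle I expect is bookkeeping the sign conventions for $\epsilon^\pm$, which forces careful treatment of which of $T\epsilon^\pm$ vs.\ $T_{\dep}\epsilon^\pm$ dominates on each face, and extracting the strict inequalities at the $y$-boundary faces from the $\delta$-buffer; continuity/uniqueness for the middle-slice Poincaré map follows from the standard isolating-block arguments already used in Lemma \ref{lem-Poincare-hset}.
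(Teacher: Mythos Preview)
There is a genuine gap in your growth estimate. The inequality $\tfrac{d}{dt}|a - a^\ast|^2 \geq 2\lambda_{\min}\,|a - a^\ast|^2$ does not follow from the unstable $m$-cone condition: Proposition~\ref{prop-unst-m-cone} controls the function $M^{u,m}$ built from the difference of \emph{two solutions} of (\ref{abstract-form}), whereas you are comparing the trajectory $z(t)$ with the \emph{fixed} initial point $z_0=(a^\ast,b_0,y_0)$. Since $z_0$ lies on the fast-exit face, the constant $t\mapsto z_0$ is not a solution, and the cone estimate does not apply to the pair $(z(t),z_0)$. Even granting the inequality formally, your initial value is $|a(0)-a^\ast|=0$, so separating variables produces a divergent integral at the lower limit; note that $T_{\dep}$ in (\ref{departure}) integrates from $(d_a r_a)^2>0$, which already presupposes a strictly positive initial separation that your setup cannot supply. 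Lemma~\ref{lem-exit-cone} only asserts that the trajectory exits through $\{a=\tilde a^\ast\}$; it does not assert trapping in $C_m^u(z_0)$ with a fixed vertex, nor does it give a time bound, so your $b$-estimate $|b(T)-b_0|<\ell/m$ is likewise unjustified.

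The paper's remedy is to anchor the comparison at a point on the stable manifold of the slow manifold: pick $z_1\in W^s(S_\epsilon)$ with $\pi_{b,y}(z_1)=\pi_{b,y}(z_0)$. Then $z_1(t)$ is a genuine solution that remains in $N$, and Proposition~\ref{prop-unst-m-cone} legitimately yields $(|\Delta a|^2)'\geq 2\lambda_{\min}(a_2)|\Delta a|^2$ for $\Delta a=\pi_a(z_2-z_1)$. The initial separation $|\Delta a(0)|$ is at least $r_a>d_a r_a$ because $W^s(S_\epsilon)$ sits inside the inner block (\ref{fast-block}), and this matches the lower limit of (\ref{departure}), giving the bound $T<T_{\dep}$. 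The disjointness $P_\epsilon(N^{\exit})\cap(C_m^u)^{\exit,+}=\emptyset$ then also comes from Proposition~\ref{prop-unst-m-cone} via this moving reference, not from a static cone at $z_0$. With this correction in place, your $y$-face estimates and the verification of the covering relation (the paper invokes Proposition~\ref{prop-find-CR} with $q_0\in N^{\exit}\cap W^u(S_\epsilon)$, though your use of Proposition~\ref{prop-CR-u1} would work equally well here) go through essentially as written.
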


\begin{proof}
Let $z_1=(a_1,\zeta_1) \in W^s(S_\epsilon) \subset N$ and $z_2 = (a_2,\zeta_2)\in N^{\exit}$ be such that $\zeta_1 = \zeta_2$ and that $\pi_y(z_1) = \pi_y(z_2) \in [y^-,y^+]$. 
The unstable $m$-cone condition and Lemma \ref{prop-unst-m-cone} imply that $z_2(t) \in C_m^u(z_1(t))$ for all $t\geq 0$ until $z_2(t)$ arrives at $(C_m^u)^\ast$, where $(C_m^u)^\ast$ is given in Lemma \ref{lem-exit-cone}. 
The difference $\Delta a \equiv a_2(t) - a_1(t) = \pi_a(z_2(t)) - \pi_a(z_1(t))$ satisfies
\begin{equation*}
(|\Delta a|^2)' \geq 2\lambda_{\min}(a_2)|\Delta a|^2,
\end{equation*}
which follows from the same argument as the proof of Lemma \ref{lem-ratio}. 
This implies that the solution orbit $z_2(t)$ with $z_2(0) = z_2$ arrives at $(C_m^u)^\ast$ at time $t < T_{\dep}$ and that 
\begin{align*}
\pi_y(P_\epsilon (N^{\exit}\cap \{y=y^+\})) &\subsetneq (y^+ + \epsilon^- T_{\dep}, y^+ + \epsilon^+ T_{\dep}),\\
\pi_y(P_\epsilon (N^{\exit}\cap \{y=y^-\})) &\subsetneq (y^- + \epsilon^- T_{\dep}, y^- + \epsilon^+ T_{\dep}). 
\end{align*}
Thus $P_\epsilon (N^{\exit}\cap \{y=y^\pm\}) \cap (C_m^u)^\exit = \emptyset$ holds. 
Note that $\partial (C_m^u)^\exit$ consists of
\begin{align*}
(C_m^u)^{\exit,-} &= C_m^u \cap  \left( \{\tilde a^\ast\}  \times \left[b^- - \frac{\ell}{m}, b^+ + \frac{\ell}{m} \right] \times \left\{ y^\pm +\epsilon^\mp T_{\dep}\right \} \right),\\
 (C_m^u)^{\exit,+} &= \overline{\partial C_m^u \setminus \{a=a^\ast, \tilde a^\ast\} } \cap \left( \{\tilde a^\ast\}  \times \left[b^- - \frac{\ell}{m}, b^+ + \frac{\ell}{m} \right] \times \left[y^- +\epsilon^+ T_{\dep}, y^+ + \epsilon^- T_{\dep} \right] \right).
\end{align*}
Moreover,
$(C_m^u)^{\exit,+} \cap P_\epsilon (N^{\exit}) = \emptyset$ holds by Proposition \ref{prop-unst-m-cone}. 
Then $N^{\exit}  \overset{P_\epsilon^{C^u}}{\Longrightarrow} (C_m^u)^\exit$ holds by Proposition \ref{prop-find-CR} with $q_0 \in N^{\exit}\cap W^u(S_\epsilon)$.
\end{proof}

\begin{dfn}\rm
We shall call $(C_m^u)^\exit$ given in (\ref{exit-cone}) {\em the fast-exit face of $C_m^u$}.
\end{dfn}
The following statements also hold from the same arguments under the backward flow.

\begin{prop}
\label{prop-cov-stable}
Let $N_1$ and $N_2$ be fast-saddle-type blocks for (\ref{abstract-form}) in $\mathbb{R}^3$ with $u=s=1$ which form
\begin{equation*}
N_1 = [a_1^-, a_1^+] \times [b_1^-, b_1^+]\times [y_1^-, y_1^+],\quad N_2 = [a_2^-, a_2^+] \times [b_2^-, b_2^+]\times [y_2^-, y_2^+]
\end{equation*}
such that $\{N_1, N_2\}$ is a slow shadowing pair with the slow direction number $q=+1$ and $\bar h$ given below. 
Also, let $V_m^s$ be a set of the form (\ref{set-verify-stable-m-cone}) with $u=s=1$ containing $N_1$. 
Assume that the stable $m$-cone condition is satisfied in $V_m^s$.
Let $T_{\arr}=T_{\arr}(V_m^s)$ be the arrival time in $V_m^s$. Define 
\begin{align*}
\epsilon^+ &:= \sup_{V_m^s\times [0,\epsilon_0]} \epsilon g(x,y,\epsilon),\quad \epsilon^- := \inf_{V_m^u\times [0,\epsilon_0]} \epsilon g(x,y,\epsilon),\\
N_1^{\ent} &:= [a_1^-,a_1^+]\times  \{b_1^+\} \times [y^-, \bar y].\quad  \text{ with }[y^-,\bar y]\subset [y_1^-, y_1^+].
\end{align*}
Assume that there is an $h$-set $N_0$ and $T_0>0$ such that $N_0 \overset{\varphi_\epsilon(T_0,\cdot)}{\Longrightarrow} C_m^s \cap \{y\in [y^-,\bar y]\}$ 
and that
\begin{equation}
\label{ineq-y-cov}
y^- < \inf \pi_y(\varphi_\epsilon(T_0,N)) + \epsilon^- T_{\arr},\quad \sup \pi_y(\varphi_\epsilon(T_0,N)) + \epsilon^+ T_{\arr} < \bar y.
\end{equation}
Let  $\bar h > 0$ be such that $\bar y + \bar h < y_1^+$.

Then there exist $h$-sets $\tilde M_1 \subset (N_1\cup C_m^s)_{\leq \bar y}$ and $M_2\subset (N_2)_{\bar y + \bar h}$ such that 
\begin{equation*}
N_0 \overset{\varphi_\epsilon(T_0,\cdot)}{\Longrightarrow} \tilde M_1\overset{P_\epsilon^{(N_1\cup C_m^s)_{\bar y + \bar h}}}{\Longrightarrow}  M_2.
\end{equation*}
\end{prop}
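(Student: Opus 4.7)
The plan is to reduce the proposition to the slow-shadowing drop (Proposition \ref{prop-CE-2-2}) by first absorbing the $m$-cone transport into a suitable intermediate $h$-set inside $N_1$ at some level $y = \hat y \leq \bar y$. First I would pick $\hat y$ so that
\[
\sup \pi_y(\varphi_\epsilon(T_0, N_0)) + \epsilon^+ T_{\arr} < \hat y < \bar y,
\]
which is possible by the second half of (\ref{ineq-y-cov}); a similar lower bound uses the first half. By assumption $N_0 \overset{\varphi_\epsilon(T_0,\cdot)}{\Longrightarrow} C_m^s \cap \{y\in[y^-,\bar y]\}$, so every point in the image has a nonempty trajectory arriving through the stable $m$-cone. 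The stable $m$-cone condition together with Proposition \ref{prop-st-m-cone} and the $t\to -t$ analogue of Lemma \ref{lem-ratio} guarantee that any stable disk in $C_m^s$ of diameter at least $d_b r_b$ is contracted into $N_1$ in forward time bounded above by $T_{\arr}$. Combining this contraction estimate with the $y$-speed bounds $\epsilon^{\pm}$, trajectories of $\varphi_\epsilon(T_0, N_0)$ reach the section $\{y = \hat y\}$ inside $N_1$ with fast-stable coordinate satisfying $\dist(z, W^u(S_\epsilon)) < d_b r_b$.

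Next, at the level $y = \hat y$ I would construct an intermediate $h$-set $M_1^\ast\subset (N_1)_{\hat y}$ of exactly the type appearing in the proof of Proposition \ref{prop-CE-2-1}, namely the preimage under the Poincar\'e map $P_\epsilon^{(N_1)_{\leq \bar y + \bar h}}$ of an $h$-set $M_2 \subset (N_2)_{\bar y + \bar h}$ obtained by the slow-shadowing mechanism. The slow-shadowing hypothesis between $N_1$ and $N_2$ then yields
\[
M_1^\ast \overset{P_\epsilon^{(N_1\cup C_m^s)_{\leq \bar y + \bar h}}}{\Longrightarrow} M_2,
\]
exactly as in Proposition \ref{prop-CE-2-1}; note that the Poincar\'e map on $(N_1\cup C_m^s)_{\leq \bar y + \bar h}$ agrees with that on $(N_1)_{\leq \bar y + \bar h}$ once the orbit has entered $N_1$, which is precisely what the arrival-time estimate of the previous paragraph secures.

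Finally I would define
\[
\tilde M_1 := \left(P_\epsilon^{(N_1\cup C_m^s)_{\leq \hat y}}\right)^{-1}(M_1^\ast) \cap (N_1\cup C_m^s)_{\leq \bar y},
\]
that is, the backward trajectory of $M_1^\ast$ through the $m$-cone region. Since $N_0 \overset{\varphi_\epsilon(T_0,\cdot)}{\Longrightarrow} C_m^s\cap\{y\in[y^-,\bar y]\}$ and the forward Poincar\'e map on $(N_1\cup C_m^s)_{\leq \bar y}$ is continuous (standard isolating-block theory applied to the block $N_1\cup C_m^s$, which is repelling in $a$ and attracting in $b$ by the $m$-cone conditions and the slow-direction assumption), taking preimages preserves covering relations, so $N_0 \overset{\varphi_\epsilon(T_0,\cdot)}{\Longrightarrow} \tilde M_1$ follows. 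Composing yields the desired chain.

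The main obstacle will be checking that $N_1\cup C_m^s$ behaves like a genuine fast-saddle-type block for the Poincar\'e map, so that the preimage $\tilde M_1$ is indeed an $h$-set and the argument of Lemma \ref{lem-SS-h-set} can be reapplied verbatim. In particular one must verify that the exit/entrance of $N_1\cup C_m^s$ in the fast direction is still given by flat faces (after homeomorphism) and that trajectories entering through the $m$-cone side of $\partial C_m^s$ do not re-exit before reaching $N_1$; this is where Proposition \ref{prop-st-m-cone} together with the strict inequalities in (\ref{ineq-y-cov}) and the choice of $\hat y$ must be invoked carefully to rule out $y$-drift pushing points out of the slow entrance before the $b$-contraction completes.
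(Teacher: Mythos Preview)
Your approach is essentially the same as the paper's—extend the slow-shadowing $h$-set backward through the stable $m$-cone and use the arrival-time bound to control the $y$-drift—but the organization differs. The paper does \emph{not} introduce an intermediate level $\hat y$. It builds $M_1\subset (N_1)_{\bar y}$ and $M_2\subset (N_2)_{\bar y+\bar h}$ directly via Proposition~\ref{prop-CE-2-1}, sets $\tilde M:=\bigl(P_\epsilon^{(N_1)_{\leq\bar y}}\bigr)^{-1}(M_1)$ (a tube over $\{y\le\bar y\}$), and then defines $\tilde M_1:=\tilde M\cup\bigl(P_\epsilon^{C_m^s}\bigr)^{-1}(\tilde M\cap N_1^{\ent})$, i.e.\ the union of the $N_1$-part with its backward extension into the cone. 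The $h$-set structure of $\tilde M_1$ is specified explicitly: $\tilde M_1^+$ consists of the $b$-faces together with the $y$-faces at $\bar y$ and $y^-$ (and their cone preimages), and $\tilde M_1^-$ is the rest. The first covering $N_0\Longrightarrow\tilde M_1$ is then checked directly: $\varphi_\epsilon(T_0,N_0^-)\cap\tilde M_1=\emptyset$ comes from the assumed covering into $C_m^s$, and $\varphi_\epsilon(T_0,N_0)\cap\tilde M_1^+=\emptyset$ uses the backward arrival-time estimate $\pi_y\bigl((P_\epsilon^{C_m^s})^{-1}(\tilde M^{\ent}\cap\{y=y^\pm\})\bigr)\subset(y^\pm-\epsilon^+T_{\arr},\,y^\pm-\epsilon^-T_{\arr})$ together with (\ref{ineq-y-cov}).

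Your insertion of $\hat y<\bar y$ is harmless in principle (going from $\hat y$ to $\bar y+\bar h$ only sharpens the expansion/contraction estimates), but it creates a slight mismatch with the statement, which asks for $M_2\subset(N_2)_{\bar y+\bar h}$: Proposition~\ref{prop-CE-2-1} as written produces a covering over one step of height $\chi\bar h$, not $\bar y-\hat y+\bar h$, so you would need to note explicitly that the slow-shadowing bounds are monotone in the travel distance. Your final paragraph correctly flags the real technical point; the paper resolves it not by showing $N_1\cup C_m^s$ is a block but by writing down $\tilde M_1^\pm$ by hand and verifying the two disjointness conditions for a covering relation directly.
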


\begin{proof}
First let $S_\epsilon$ is the slow manifold validated in $N_1\cup N_2$.
Also, let 
$M_1\subset (N_1)_{\bar y}$ and $M_2\subset (N_2)_{\bar y + \bar h}$ be $h$-sets such that $M_1\overset{P_\epsilon^{(N_1)_{\leq \bar y + \bar h}}}{\Longrightarrow} M_2$, which are constructed in Proposition \ref{prop-CE-2-1}. 
Note that $M_1$ contains $W^s(S_\epsilon)\cap (N_1)_{\bar y}$. 
Let $\tilde M := (P_\epsilon^{(N_1)_{\leq \bar y }})^{-1}(M_1) \subset  N_1$ and $\tilde M^\ent := \tilde M_1\cap N_1^\ent$. Clearly the set $\tilde M_1$ contains $W^s(S_\epsilon)\cap (N_1)_{\leq \bar y}$.

\bigskip
Next, let $w_1=(b_1,\nu_1) \in \tilde M^\ent$ and $w_2\in  W^u(S_\epsilon)$ be such that $w_2 \in C_m^s(w_1)$. 
The stable $m$-cone condition and Lemma \ref{lem-ent-cone} imply that $w_2(-t) \in C_m^s(w_1(-t))$ for all $t\geq 0$ until $w_2(-t)$ arrives at $\partial C_m^s$ under the backward flow. 
The difference $\Delta b \equiv b_2(-t) - b_1(-t) = \pi_b(w_2(-t)) - \pi_b(w_1(-t))$ satisfies
\begin{equation*}
\frac{d}{d\tilde t}(|\Delta b|^2) \geq 2\mu_{\min}(b_2)|\Delta b|^2
\end{equation*}
by the same argument as the proof of Lemma \ref{lem-ratio}, where $\tilde t = -t$. 
This inequality implies that the backward solution orbit $w_2(-t)$ with $w_2(0) = w_2$ arrives at $\partial C_m^s$ at time $t < T_{\arr}$ and that 
\begin{align*}
\pi_y((P_\epsilon^{C_m^s})^{-1} (\tilde M^{\ent}\cap \{y=\bar y\})) &\subsetneq (\bar y - \epsilon^+ T_{\arr}, \bar y - \epsilon^- T_{\arr}),\\
\pi_y((P_\epsilon^{C_m^s})^{-1} (\tilde M^{\ent}\cap \{y=y^-\})) &\subsetneq (y^- - \epsilon^+ T_{\arr}, y^- - \epsilon^- T_{\arr}). 
\end{align*}
Define an $h$-set $\tilde M_1 := \tilde M \cup (P_\epsilon^{C_m^s})^{-1} (\tilde M^\ent)$, where
\begin{align*}
\tilde M_1^+ &= (\tilde M_1\cap \{b=\tilde b^\pm\}) \cup (\tilde M \cap \{y=\bar y, y^-\}) \cup (P_\epsilon^{C_m^s})^{-1} (\tilde M^{\ent}\cap \{y=\bar y, y^-\}),\\
\tilde M_1^- &= \overline{\partial \tilde M_1\setminus \tilde M_1^+}.
\end{align*}
Note that $\varphi_\epsilon(T_0,N_0)\cap \tilde M_1^+ = \emptyset$ follows from (\ref{ineq-y-cov}) and $N_0 \overset{\varphi_\epsilon(T_0,\cdot)}{\Longrightarrow} C_m^s \cap \{y\in [y^-,\bar y]\}$. 
The relationship $\varphi_\epsilon(T_0,N_0^-)\cap \tilde M_1 = \emptyset$ immediately follows from $N_0 \overset{\varphi_\epsilon(T_0,\cdot)}{\Longrightarrow} C_m^s \cap \{y\in [y^-,\bar y]\}$.
Then the covering relation $N_0 \overset{\varphi_\epsilon(T_0,\cdot)}{\Longrightarrow} \tilde M_1$ also follows from $N_0 \overset{\varphi_\epsilon(T_0,\cdot)}{\Longrightarrow} C_m^s \cap \{y\in [y^-,\bar y]\}$.

\end{proof}

Slow shadowing sequence with extended $m$-cones determine the generalized sequence of covering-exchange sequence in Definition \ref{dfn-CE-seq}.

\begin{dfn}[Covering-exchange sequence with extended cones]\rm
\label{dfn-CE-seq-cone}
Consider (\ref{fast-slow})$_\epsilon$ in $\mathbb{R}^3$.
Let $N\subset \mathbb{R}^3$ be an $h$-set with $u(N)=1$ and $\{N_\epsilon^j\}_{j=0}^{j_N}$ be a sequence such that 
\begin{itemize}
\item $\{N_\epsilon^j\}_{j=0}^{j_N}$ is a slow shadowing sequence for (\ref{fast-slow})$_\epsilon$ with $u=s=1$;
\item $\{N_\epsilon^{j_N-1}, N_\epsilon^{j_N}\}$ is a slow shadowing pair for (\ref{fast-slow})$_\epsilon$ with the exit $N_\epsilon^{\rm exit} \subset (N_\epsilon^{j_M})^{f,-}$.
\end{itemize}

Also, let $C_{m^u}^u$ be the unstable $m^u$-cone of $N_\epsilon^{j_N}$ and $C_{m^s}^s$ be the stable $m^s$-cone of $N_\epsilon^0$. Assume that
\begin{itemize}
\item all assumptions in Proposition \ref{prop-cov-unstable} are satisfied with $N_\epsilon^\exit $ and $C_m^u = C_{m^u}^u$;
\item all assumptions in Proposition \ref{prop-cov-stable} are satisfied with $N_0 = N$, $N_i = N_\epsilon^{i-1}$ and $C_m^s = C_{m^s}^s$.
\end{itemize}
Then we call the collection $(N, \{N_\epsilon^j\}_{j=1}^{j_M}, N_\epsilon^{\rm exit}, C_{m^u}^u, C_{m^s}^s)$ {\em the covering-exchange sequence with extended cones} $C_{m^u}^u, C_{m^s}^s$.
\end{dfn}

Another benefit of $m$-cones is that we can validate the assumption (SS5) in slow shadowing in terms of $m$-cones.
In the following proposition, we do not any restrictions on $u$ and $s$.

\begin{prop}[A sufficient condition of (SS5)]
\label{prop-SS5}
Consider two fast-saddle-type blocks $N_1$ and $N_2$ constructed in the local coordinate $((a_i,b_i),y_i)$, where (\ref{fast-slow}) locally forms (\ref{abstract-form}) in each coordinate with the commutative diagram (\ref{change-of-coordinate}).
Assume that both $N_1$ and $N_2$ satisfy the unstable $m_u$-cone condition and the stable $m_s$-cone condition for $m_u, m_s \geq 1$,  and that (SS1)-(SS4) are satisfied.
\par
Independently, consider two $h$-sets $\tilde D_1^u$ and $\tilde D_2^s$ given by
\begin{align*}
(\tilde D_1^u)_c &:= \overline{B_u(0,r_a)} \times \overline{B_s\left(0,d_b r_b + \frac{r_a}{m_u}\right)},\quad 
(\tilde D_2^s)_c := \overline{B_u\left(0,d_a r_a + \frac{r_b}{m_s}\right)} \times  \overline{B_s(0,r_b)}
\end{align*}
via $c_{N_1}$ and $c_{N_2}$, respectively,
where $r_a, r_b, d_a, d_b$ denote positive numbers given in (SS4)-(SS5).
Write the nonsingular matrix $P\equiv P_2^{-1}P_1$ in (\ref{change-of-coordinate}) into the block form
\begin{equation*}
P = \begin{pmatrix}
p_{11} & p_{12}\\
p_{21} & p_{22}
\end{pmatrix},
\end{equation*}
where the first row $(p_{11}\ p_{12})$ acts on the $u$-dimensional vector $a_1$ and the second row $(p_{21}\  p_{22})$ acts on the $s$-dimensional vector $b_1$.
Finally assume that 
\begin{equation*}
\tilde D_1^u \overset{T_{x,12}}{\Longrightarrow} \tilde D_2^s,
\end{equation*}
and that 
\begin{equation}
\label{ineq-cone-SS5}
m_s^{-2} < \frac{m_u^2\cdot \min\left(1, \underline{\mathfrak{p}_2}^2-\sigma_{21} \overline{\mathfrak{p}_2} - \frac{2\sigma_{12}^2}{m_u^2} \right) }{2 \overline{\mathfrak{p}_1}^2+m_u^2 \sigma_{21} \overline{\mathfrak{p}_2}},
\end{equation}
where $\overline{\mathfrak{p_i}}$ and $\underline{\mathfrak{p_i}}$ are the maximal and the minimal eigenvalue of the matrix $P_{ii}$, respectively, and $\sigma_{ij}$ denotes the maximal singular value of $P_{ij}$.
Then (\ref{ass-cov-shadow}) holds.
\end{prop}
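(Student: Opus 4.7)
The plan is to verify the covering relation $(D_1^u)_y \overset{T_{x,12}}{\Longrightarrow} (D_2^s)_y$ by checking the hypotheses of Proposition \ref{prop-find-CR} directly, with the assumed rectangular covering $\tilde D_1^u \overset{T_{x,12}}{\Longrightarrow} \tilde D_2^s$ serving as a scaffold that supplies the degree condition.

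First I would use the $m_u$-unstable cone condition in $N_1$ together with Theorem \ref{thm-inv-mfd-rigorous} and Proposition \ref{prop-unst-m-cone} to represent $W^u(S_\epsilon)\cap (N_1)_y$ as a graph $b = h_u^{(1)}(a)$ Lipschitz in $a$ with constant $1/m_u$; the $m_s$-stable cone condition in $N_2$ similarly yields $W^s(S_\epsilon)\cap (N_2)_y$ as a graph $a = h_s^{(2)}(b)$ with Lipschitz constant $1/m_s$. This fixes natural $h$-set coordinates: $(D_1^u)_y$ is parametrized by $(a,\beta)\in \pi_a(N_1) \times \overline{B_s(0,d_b r_b)}$ through $(a,\beta)\mapsto (a, h_u^{(1)}(a)+\beta)$, with exit face $(D_1^u)_y^- = \{a\in \partial \pi_a(N_1)\}$ and entrance face $(D_1^u)_y^+ = \{|\beta|=d_b r_b\}$; analogously, $(D_2^s)_y$ is parametrized by $(\alpha,b)$ with exit $\{|\alpha|=d_a r_a\}$ and entrance $\{b\in \partial \pi_b(N_2)\}$.

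Next I would build the required homotopy as a composition of two standard deformations. The first flattens the graph term: $h_\lambda(a,\beta) := T_{x,12}(a, (1-\lambda) h_u^{(1)}(a)+\beta)$ connects $T_{x,12}\mid_{(D_1^u)_y}$ at $\lambda=0$ to the restriction of the affine map $T_{x,12}$ to a flat box at $\lambda=1$. The second uses the covering $\tilde D_1^u \overset{T_{x,12}}{\Longrightarrow} \tilde D_2^s$ to homotope this affine restriction into the model map $(p,q)\mapsto (A(p),0)$ of Definition \ref{dfn-covrel}, inheriting the degree condition (\ref{cov-degree}) from the target block. The bulk of the work is then verifying that along the flattening homotopy the avoidance conditions $h_\lambda((D_1^u)_y^-)\cap (D_2^s)_y = \emptyset$ and $h_\lambda((D_1^u)_y) \cap (D_2^s)_y^+ = \emptyset$ are preserved.

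These two avoidance conditions are precisely what condense into inequality (\ref{ineq-cone-SS5}). For the exit condition, a point $(a,\beta)$ with $a \in \partial \pi_a(N_1)$ has image whose $a$-component (in $N_2$-coordinates) is at least $\underline{\mathfrak{p}_2}|a| - \sigma_{12}(|h_u^{(1)}(a)|+|\beta|)$, and this must exceed $d_a r_a + |b_{\text{new}}|/m_s$ where $|b_{\text{new}}|$ is upper-bounded via $\sigma_{21}|a| + \overline{\mathfrak{p}_2}(|h_u^{(1)}(a)|+|\beta|)$; this requires $|a|\ge r_a$ (which explains the role of $\tilde D_1^u$) and invokes the Lipschitz bound $|h_u^{(1)}(a)| \leq |a|/m_u$. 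For the entrance condition, the $b$-component of the image is bounded by the same expression involving $\sigma_{21}$ and $\overline{\mathfrak{p}_2}$, and this must not touch $\partial \pi_b(N_2)$; containment of the image in the $b$-admissible region will again follow from the rectangular covering of $\tilde D_1^u$ into $\tilde D_2^s$. Combining both bounds, squaring where needed, and isolating $m_s^{-2}$ yields precisely the form of (\ref{ineq-cone-SS5}): the factor $m_u^2$ in the numerator arises from Lipschitz estimates squared, while the factor $2$ in the denominator reflects splitting the competing exit and entrance bounds via the triangle inequality. The main obstacle will be this careful bookkeeping, and in particular ensuring that the flattening homotopy preserves both avoidance conditions \emph{uniformly} in $\lambda\in [0,1]$; it is exactly the worst $\lambda$ that saturates (\ref{ineq-cone-SS5}).
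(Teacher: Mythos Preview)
Your flattening-homotopy strategy is a reasonable way to attack (SS5), but it is \emph{not} the route the paper takes, and as sketched it will not deliver inequality (\ref{ineq-cone-SS5}) in the stated form.

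The paper's argument is geometric rather than homotopic. It encloses the nonlinear tubes $D_1^u$ and $D_2^s$ in larger sets $\hat D_1^u$ and $\hat D_2^s$, each obtained by gluing $m_u$- (resp.\ $m_s$-) cone extensions onto the rectangular $\tilde D_1^u$ (resp.\ $\tilde D_2^s$); the cone conditions give $D_1^u\subset \hat D_1^u$ and $D_2^s\subset \hat D_2^s$. The whole computation then reduces to a single question about \emph{cones under a linear map}: one shows that the affine map $T_{x,12}$ carries a $C_{m_u}^u$-cone in coordinate~1 into a $C_{\bar m}^u$-cone in coordinate~2, and (\ref{ineq-cone-SS5}) is exactly the statement $\bar m\, m_s>1$, so that the transformed unstable cones are disjoint from the stable cones defining $\hat D_2^s\setminus \tilde D_2^s$. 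Every constant in (\ref{ineq-cone-SS5}) --- the $\underline{\mathfrak p_2}$, the $\overline{\mathfrak p_1}$, the factor $2$ from the triangle inequality on $|p_{11}\Delta a+p_{12}\Delta b|^2$, the $\min(1,\cdot)$ --- is an artifact of this specific cone-transformation estimate.

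Your approach, by contrast, would generate bounds of the form $|a_2|\geq \underline{\mathfrak p_1}\,|a_1|-\sigma_{12}|b_1|$ (note: $\underline{\mathfrak p_1}$, not $\underline{\mathfrak p_2}$ as you wrote --- the $a_2$-component involves $p_{11}$) and $|b_2|\leq \sigma_{21}|a_1|+\overline{\mathfrak p_2}|b_1|$, and then a comparison against $d_a r_a+|b_2|/m_s$. This yields a perfectly usable sufficient condition, but \emph{not} (\ref{ineq-cone-SS5}): in particular $\underline{\mathfrak p_1}$ will appear and $\underline{\mathfrak p_2}$ will not, and the quadratic structure is different. So the claim that your bookkeeping ``yields precisely the form of (\ref{ineq-cone-SS5})'' is not correct; at best you would obtain an alternative hypothesis implying (SS5).

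There is also a genuine omission in your homotopy construction: you flatten the unstable graph $h_u^{(1)}$ on the source side, but the target $(D_2^s)_y$ is a tube around the nonlinear stable graph $h_s^{(2)}$ in the $N_2$-chart, and the assumed covering $\tilde D_1^u\overset{T_{x,12}}{\Longrightarrow}\tilde D_2^s$ is between \emph{rectangles}. To borrow the degree from that rectangular covering you must also straighten the $h_s^{(2)}$-dependence in the target chart $c_{(D_2^s)_y}$, i.e.\ insert a second homotopy on the target side, and check that your box sizes ($\pi_a(N_1)$ versus $\overline{B_u(0,r_a)}$, $\pi_b(N_2)$ versus $\overline{B_s(0,r_b)}$) are compatible with the monotonicity of covering relations. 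This is fixable, but it is not the one-line step you present.
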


\begin{proof}
First we set
\begin{align*}
(\tilde D_1^u)_c^- &= \partial B_u(0,r_a) \times \overline{B_s\left(0,d_b r_b + \frac{r_a}{m_u}\right)},\quad 
(\tilde D_1^u)_c^+ = \overline{B_u(0,r_a)} \times \partial B_s\left(0,d_b r_b + \frac{r_a}{m_u}\right)\\
(\tilde D_2^s)_c^- &:= \partial B_u\left(0,d_a r_a + \frac{r_b}{m_s}\right) \times \overline{B_s(0,r_b)},\quad 
(\tilde D_2^s)_c^+ := \overline{B_u\left(0,d_a r_a + \frac{r_b}{m_s}\right)} \times \partial B_s(0,r_b)
\end{align*}
corresponding to the $h$-set structure.

We embed $T_{x,12}(\tilde D_1^u)$ and $\tilde D_2^s$ on $(N_1)_{\bar y}$ and $(N_2)_{\bar y}$, respectively, so that the origin of both $T_{x,12}(\tilde D_1^u)$ and $\tilde D_2^s$ is the point $q\in S_\epsilon \cap (N_1\cap N_2)_{\bar y}$. 
We write the embedded sets as $T_{x,12}(\tilde D_1^u)$ and $\tilde D_2^s$ again, respectively.
Without the loss of generality, we may assume that the representation of $q$ in $(a_2,b_2,y_2)$-coordinate is given by $q = (0,0,\bar y)$.
Note that, in this case, $D_1^u\cap \tilde D_1^u$ is a family of horizontal disks in $\tilde D_1^u$ and $D_2^s\cap \tilde D_2^s$ is a family of vertical disks in $\tilde D_2^s$.

Define two sets $\hat D_1^u$ and $\hat D_2^s$ as follows. See also Fig. \ref{fig-SS5}:
\begin{align*}
(\hat D_1^u)_c &:= (\tilde D_1^u)_c \cup \bigcup_{z\in (\tilde D_1^u)_c^- } (C_{m_u}^u(z)\cap \{z=(a_1,b_1,y_1)\mid |a_1|\geq r_a\}),\\
(\hat D_2^s)_c &:= (\tilde D_2^s)_c \cup \bigcup_{z\in (\tilde D_2^s)_c^+ } (C_{m_s}^s(z)\cap \{z=(a_2,b_2,y_2)\mid |b_2|\geq r_b\}). 
\end{align*}
\begin{figure}[htbp]\em
\begin{minipage}{0.5\hsize}
\centering
\includegraphics[width=5.0cm]{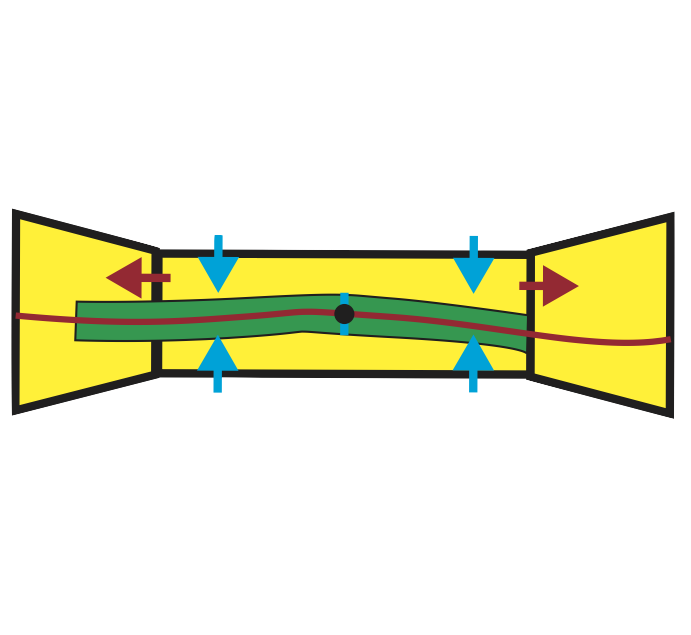}
(a)
\end{minipage}
\begin{minipage}{0.5\hsize}
\centering
\includegraphics[width=5.0cm]{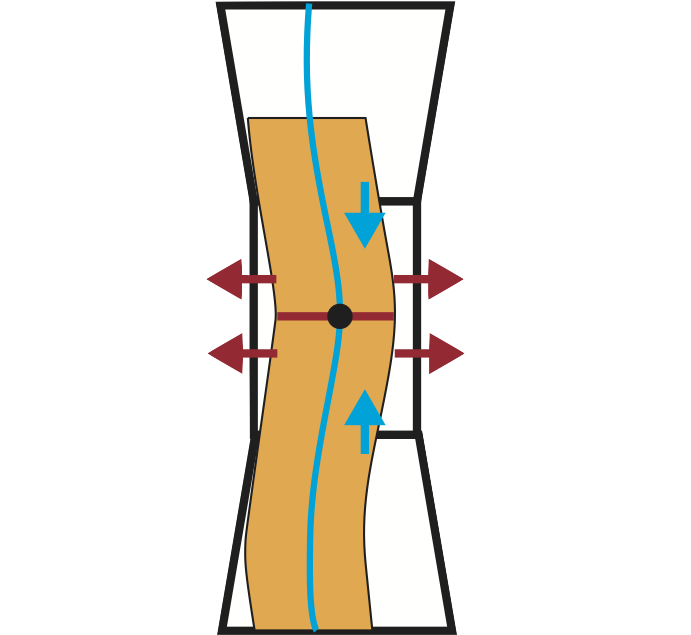}
(b)
\end{minipage}\par
\begin{minipage}{0.5\hsize}
\centering
\includegraphics[width=5.0cm]{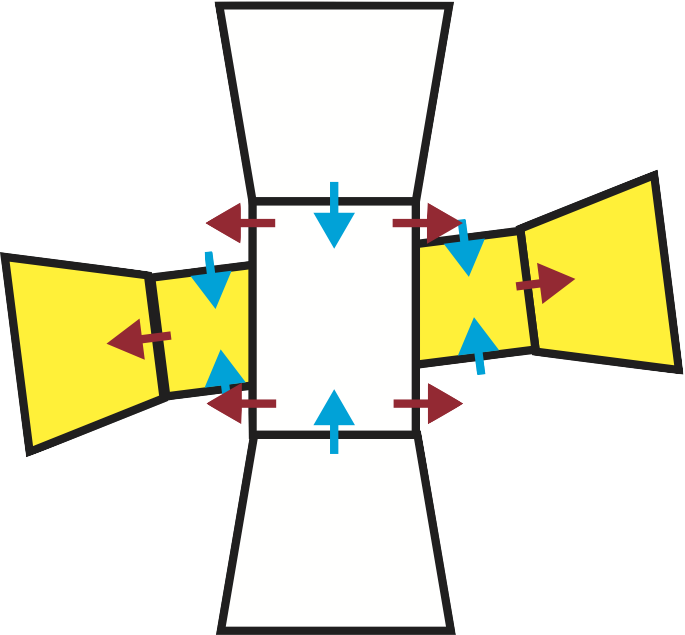}
(c)
\end{minipage}
\begin{minipage}{0.5\hsize}
\centering
\includegraphics[width=5.0cm]{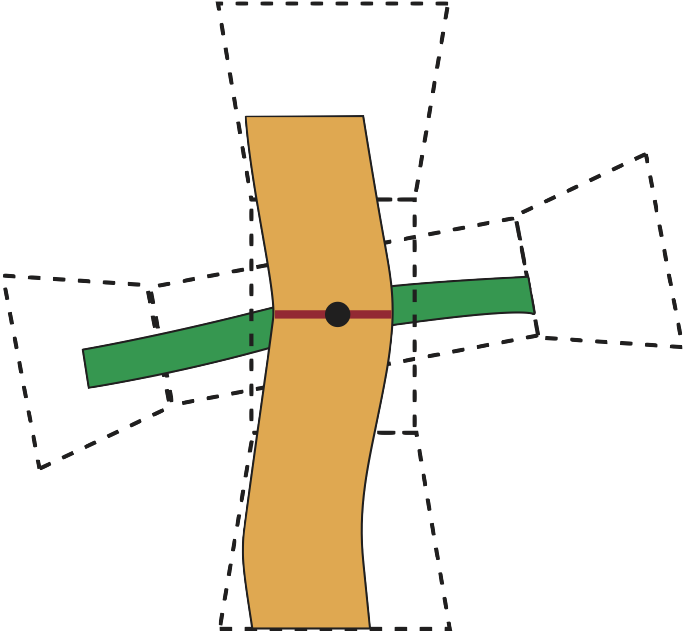}
(d)
\end{minipage}
\caption{Illustration of Proposition \ref{prop-SS5}.}
\label{fig-SS5}
(a) : The set $\hat D_1^u\cap \Gamma$ with $u=s=1$, where $\Gamma = \{y = \bar y\}$. 
The red curve represents the slice of the unstable manifold $W^u(S_\epsilon)$ by $\Gamma$.
The green tube represents the set $D_1^u\cap \Gamma$.
Definition of $\hat D_1^u$ and the unstable $m_u$-cone conditions indicate the inclusion $D_1^u\cap \Gamma \subset \hat D_1^u$.
Note that $D_1^u\cap \tilde D_1^u$ consists of a collection of horizontal disks.
\par
(b) : The set $\hat D_2^s\cap \Gamma$ with $u=s=1$, where $\Gamma = \{y = \bar y\}$. 
The blue curve represents the slice of the stable manifold $W^s(S_\epsilon)$ by $\Gamma$.
The orange tube represents the set $D_2^s\cap \Gamma$.
Definition of $\hat D_2^s$ and the stable $m_s$-cone conditions indicate the inclusion $D_2^s\cap \Gamma \subset \hat D_2^s$.
Note that $D_2^s\cap \tilde D_2^s$ consists of a collection of vertical disks.
\par
(c) : Covering relation $\tilde D_1^u \overset{T_{x,12}}{\Longrightarrow} \tilde D_2^s$ and (\ref{ineq-cone-SS5}).
The inequality (\ref{ineq-cone-SS5}) indicates that two cones outside $\tilde D_1^u$ and $\tilde D_2^s$ are sufficiently sharp satisfying  $ (\pi_{a_2,b_2}\circ (T_{x,12}\times I_1))\overline{(\hat D_1^u\setminus \tilde D_1^u)} \cap \pi_{a_2,b_2}(\overline{\hat D_2^s\setminus \tilde D_2^s}) = \emptyset$.
\par
(d) : Resulting covering relation (\ref{ass-cov-shadow}).
\end{figure}
The unstable $m_u$-cone and the stable $m_s$-cone conditions imply that $D_1^u\cap \Gamma \subset \hat D_1^u$ and $D_2^s\cap \Gamma \subset \hat D_2^s$, where $D_1^u$ and $D_2^s$ are given in (SS5) and $\Gamma = \{y=\bar y\}$.
Indeed, $D_1^u$ is given by a tube $B_s(W^u(S_\epsilon)_c,d_b r_b)$ in $(N_1)_c$. 
The unstable manifold $W^u(q)$ is contained in the unstable $m_u$-cone $C_{m_u}^u(q)$. 
If $W^u(S_\epsilon)_c$ runs on the range $\{|a|\leq r_a\}$ in $a$-direction, the enclosure of $W^u(S_\epsilon)_c$ in $b$-direction is bounded by $\{|b|\leq r_a/m_u\}$. 
The similar arguments yield that the enclosure of the tube $B_s(W^u(S_\epsilon)_c,d_b r_b)\cap \{|a|\leq r_a\}$ in $b$-direction is bounded by $\{|b|\leq d_b r_b + r_a/m_u\}$.
Relations between unstable manifolds through $q\in S_\epsilon$ and $C_{m_u}^u(q)$ thus yield that $D_1^u\cap \Gamma \subset \hat D_1^u$.
The similar arguments to $D_2^s$ yield the second.
In particular, the covering relation $\pi_{a_1,b_1} \hat D_1^u \overset{T_{x,12}}{\Longrightarrow} \pi_{a_2,b_2} \hat D_2^s$ implies (SS5).
Note that this argument is independent of $q\in S_\epsilon\cap N_1\cap N_2$, in particular, $\pi_y(q)$.
\par
\bigskip
By the construction of $\hat D_1^u$ and $\hat D_2^s$, it is sufficient to prove the following two conditions to verify $\pi_{a_1,b_1} \hat D_1^u \overset{T_{x,12}}{\Longrightarrow} \pi_{a_2,b_2} \hat D_2^s$:
\begin{itemize}
\item $\tilde D_1^u \overset{T_{x,12}}{\Longrightarrow} \tilde D_2^s$.
\item $ (\pi_{a_2,b_2}\circ (T_{x,12}\times I_1))\overline{(\hat D_1^u\setminus \tilde D_1^u)} \cap \pi_{a_2,b_2}\overline{(\hat D_2^s\setminus \tilde D_2^s)} = \emptyset$.
\end{itemize}
The former is just one of our assumptions. 
The latter concerns with intersections between subsets of cones. 
Since an $m$-cone $C_m^u(z)$ is a collection of lines inside $C_m^u(z)$ through $z$, it is sufficient to consider the location of lines through base sets.

Now we consider the image of the unstable $m_u$-cone $\mathcal{C}_c \equiv \bigcup_{z\in (\tilde D_1^u)_c^-}C_{m_u}^u(z)$ under $\tilde T_c\equiv (T_{x,12})_c\times I_1$. 
Since $\tilde T_c$ is the nonsingular affine map, any lines are mapped into lines via $\tilde T_c$.
\par
Choose a point $(a_{12}, b_{12}, y_2)\in (\tilde D_1^u)_c^-$.
The boundary $\partial \mathcal{C}_c$ is a subset of $\bigcup_{(a_{12}, b_{12}, y_2)\in (\tilde D_1^u)_c^-} \{|a_{11}-a_{12}|^2 = m_u^2(|b_{11}-b_{12}|^2 + |y_1-y_2|^2)\}$.
The transformation $\tilde T_c : (a_{1i}, b_{1i}, y_i)\mapsto  (a_{2i}, b_{2i}, y_i)$ yields
\begin{align*}
|a_{21} - a_{22}|^2 &= |p_{11}(a_{11}-a_{12}) + p_{12}(b_{11}-b_{12})|^2 \\
	&= m_u^2(|p_{21}(a_{11}-a_{12}) + p_{22}(b_{11}-b_{12})|^2 + |y_1-y_2|^2).
\end{align*}
The triangular inequality yields
\begin{align*}
&|p_{11}(a_{11}-a_{12}) + p_{12}(b_{11}-b_{12})|^2\\
	&\quad\leq |p_{11}(a_{11}-a_{12})|^2 + 2|p_{11}(a_{11}-a_{12})| |p_{12}(b_{11}-b_{12})| + |p_{12}(b_{11}-b_{12})|^2\\
	&\quad\leq 2(|p_{11}(a_{11}-a_{12})|^2 + |p_{12}(b_{11}-b_{12})|^2)\\
	&\quad\leq 2(  \overline{\mathfrak{p}_1}^2 |a_{11}-a_{12}|^2 + \sigma_{12}^2|b_{11}-b_{12}|^2),\\
&|p_{21}(a_{11}-a_{12}) + p_{22}(b_{11}-b_{12})|^2 + |y_1-y_2|^2\\
	&\quad\geq (|p_{21}(a_{11}-a_{12})| - |p_{22}(b_{11}-b_{12})|)^2 + |y_1-y_2|^2\\
	&\quad= |p_{21}(a_{11}-a_{12})|^2 -2|p_{21}(a_{11}-a_{12})| |p_{22}(b_{11}-b_{12})| + |p_{22}(b_{11}-b_{12})|^2 + |y_1-y_2|^2\\
	&\quad\geq |p_{21}(a_{11}-a_{12})|^2 -2\sigma_{21} \overline{\mathfrak{p}_2} |a_{11}-a_{12}| |b_{11}-b_{12}| + \underline{\mathfrak{p}_2}^2 |b_{11}-b_{12}|^2 + |y_1-y_2|^2\\
	&\quad\geq  -\sigma_{21} \overline{\mathfrak{p}_2} (|a_{11}-a_{12}|^2 + |b_{11}-b_{12}|^2) + \underline{\mathfrak{p}_2}^2 |b_{11}-b_{12}|^2 + |y_1-y_2|^2.
\end{align*}
Thus
\begin{align*}
&2(  \overline{\mathfrak{p}_1}^2 |a_{11}-a_{12}|^2 + \sigma_{12}^2|b_{11}-b_{12}|^2) \\
 &\quad \geq m_u^2\left\{-\sigma_{21} \overline{\mathfrak{p}_2} (|a_{11}-a_{12}|^2 + |b_{11}-b_{12}|^2) + \underline{\mathfrak{p}_2}^2 |b_{11}-b_{12}|^2 + |y_1-y_2|^2\right\},
\end{align*}
further,
\begin{align*}
& (2 \overline{\mathfrak{p}_1}^2+m_u \sigma_{21} \overline{\mathfrak{p}_2} )    \\
 &\quad \geq  m_u^2\left\{  \left(\underline{\mathfrak{p}_2}^2-\sigma_{21} \overline{\mathfrak{p}_2} - \frac{2\sigma_{12}^2}{m_u^2} \right) |b_{11}-b_{12}|^2 + |y_1-y_2|^2\right\}\\
 &\quad \geq m_u^2\cdot \min  \left(1, \underline{\mathfrak{p}_2}^2-\sigma_{21} \overline{\mathfrak{p}_2} - \frac{2\sigma_{12}^2}{m_u^2} \right) \left\{ |b_{11}-b_{12}|^2 + |y_1-y_2|^2\right\},
\end{align*}
equivalently, 
\begin{equation*}
|a_{11}-a_{12}|^2\geq \frac{m_u^2\cdot \min\left(1, \underline{\mathfrak{p}_2}^2-\sigma_{21} \overline{\mathfrak{p}_2} - \frac{2\sigma_{12}^2}{m_u^2} \right) }{2 \overline{\mathfrak{p}_1}^2+m_u^2 \sigma_{21} \overline{\mathfrak{p}_2}} \left\{ |b_{11}-b_{12}|^2 + |y_1-y_2|^2\right\}.
\end{equation*}
Let 
\begin{equation*}
\bar m^2 :=  \frac{m_u^2\cdot \min\left(1, \underline{\mathfrak{p}_2}^2-\sigma_{21} \overline{\mathfrak{p}_2} - \frac{2\sigma_{12}^2}{m_u^2} \right) }{2 \overline{\mathfrak{p}_1}^2+m_u^2 \sigma_{21} \overline{\mathfrak{p}_2}}.
\end{equation*}
The above inequality indicates that any line in $\hat  D_1^u$ is mapped into another one included in $C_{\bar m}^u(z)$ for some $z$.
By our construction, we know that any (half) lines in $\tilde T\overline{\hat  D_1^u\setminus \tilde D_1^u}$ lie on ones with vertices in $T_{x,12}\tilde D_1^u \cap (\tilde D_2^s\setminus (\tilde D_2^s)^+)$. 
Such vertices can be also chosen as points on $T_{x,12}(\tilde D_1^u)^-$, which are disjoint from $\tilde D_2^s$.
Remark that $T_{x,12}\tilde D_1^u\cap (\tilde D_2^s)^+ = \emptyset$ follows from the covering relation $\tilde D_1^u \overset{T_{x,12}}{\Longrightarrow} \tilde D_2^s$.
\par
For any point $z\in T_{x,12}\tilde D_1^u \cap \tilde D_2^s$, we easily know that $C_{\bar m}^u(z)\cap C_{m_s}^s(z)=\{z\}$ if $(\bar m m_s)^2 > 1$.
This implies that any lines in $\tilde T\overline{\hat  D_1^u\setminus \tilde D_1^u}$ are disjoint from $C_{m_s}^s(z)$ with $z\in T_{x,12}\tilde D_1^u \cap \tilde D_2^s$.
This fact holds for any $z\in T_{x,12}\tilde D_1^u \cap \tilde D_2^s$.

On the other hand, for any point $w\in (\tilde D_2^s)^+$, there is a point $z\in T_{x,12}\tilde D_1^u \cap \tilde D_2^s$ such that $C_{m_s}^s(w)\subset C_{m_s}^s(z)$,
which follows from the property of cones and the structure of $T_{x,12}\tilde D_1^u \cap \tilde D_2^s$ from the covering relation $\tilde D_1^u \overset{T_{x,12}}{\Longrightarrow} \tilde D_2^s$.
Consequently, we know that the set $\overline{\hat D_2^s\setminus \tilde D_2^s}$ is contained in the union of $C_{m_s}^s(z)$ with $z\in T_{x,12}\tilde D_1^u \cap \tilde D_2^s$. 
\par
Combining these observations, we obtain $\tilde T(\overline{\hat D_1^u\setminus \tilde D_1^u}) \cap \overline{\hat D_2^s\setminus \tilde D_2^s} = \emptyset$.
\end{proof}

%
%
\subsection{Invariant sets on slow manifolds}
\label{section-inv-set-on-mfd}
Next we consider invariant sets on slow manifolds. When $\epsilon > 0$, the dynamics on slow manifolds can generally exhibit non-trivial dynamics. In other words, slow manifolds can have nontrivial invariant sets, such as equilibria, periodic orbits and so on, for slow dynamics. Such sets play a key role for describing nontrivial global solutions such as singularly perturbed homoclinic or heteroclinic orbits.

\bigskip
Consider the slow manifold $S_\epsilon$ in a fast-saddle type block $M$ with stable and unstable cone conditions.
Theorem \ref{thm-inv-mfd-rigorous} implies that $S_\epsilon$ is represented by the graph of a function $x = h^\epsilon(y)$ smoothly depending on $y$ and $\epsilon$ including $\epsilon = 0$. Substituting $h^\epsilon(y)$ into (\ref{fast-slow}), one sees that the $y$-equation will decouple from the $x$-equation. Since $S_\epsilon$ is parameterized by $y$, the resulting decoupled equation
\begin{equation*}
y' = \epsilon g(h^\epsilon(y), y,\epsilon)
\end{equation*}
describes the dynamics on $S_\epsilon$. After time rescaling $\tau = t/\epsilon$ we obtain
\begin{equation}
\label{eq-slow-mfd}
\dot y = g(h^\epsilon(y), y,\epsilon),\quad \dot {} = \frac{d}{d\tau},
\end{equation}
to show that the dynamics on slow manifolds is reduced to the {\em regular perturbation problem}. Taking the limit $\epsilon \to 0$ in (\ref{eq-slow-mfd}) we obtain the dynamics on the critical manifold $S_0 \subset \{f(x,y,0) = 0\}$: 
\begin{equation}
\label{eq-critical-mfd}
\dot y = g(h^0(y), y,0).
\end{equation}
When we want to study dynamics on slow manifolds, such as the existence of fixed points and their stability, (\ref{eq-slow-mfd}) as well as (\ref{eq-critical-mfd}) are in the center of our considerations.

\bigskip
In general, what we know about $S_\epsilon$ is just the fact that it is the graph of a function $x = h^\epsilon(y)$, and its concrete description is quite difficult to obtain. 
As for the normally hyperbolic critical manifold $S_0 = \{x = h^0(y)\}$, we know that it is the subset of the nullcline $\{f(x,y,0) = 0\}$. 
From such a fact, we can study the detail of $S_0$ such as the differential of $h^0$ as a $y$-function via implicit function differential equation $f_x(h^0(y),y,0)(\partial h^0/\partial y) + f_y(h^0(y), y, 0) = 0$. 
On the contrary, the function $h^\epsilon(y)$ does not possess such simple and useful properties. For example, $f(h^\epsilon(y),y,\epsilon) \equiv 0$ does not necessarily hold. 
In general, $h^\epsilon(y)$ is the solution of the following nonlinear partial differential equation: 
\begin{equation}
\label{rigorous-slow-mfd}
\epsilon \frac{\partial h^\epsilon}{\partial y}(y) g(h^\epsilon(y),y,\epsilon) = f(h^\epsilon(y),y,\epsilon).
\end{equation}
In order to calculate the differential $\partial h^\epsilon/\partial y$ for studying the stability of fixed points on slow manifolds, for example, we have to solve the equation (\ref{rigorous-slow-mfd}) rigorously. Nevertheless, from the viewpoint of rigorous numerics, it will be more reasonable regarding $h^\epsilon$ as $h^0$ with small errors (in $C^r$-sense) than solving (\ref{rigorous-slow-mfd}) directly. With this in mind, we consider (\ref{eq-slow-mfd}) as the differential inclusion
\begin{equation*}
\dot y \in \{g(h^\epsilon(y),y,\epsilon) \mid \epsilon\in [0,\epsilon_0], (h^\epsilon(y),y)\in M\},
\end{equation*}
where $M$ is a fast-saddle type block containing $S_\epsilon$.
The right-hand side possesses $h^0(y)$ as the representative of the enclosure.

\bigskip
As an example of dynamics on $S_\epsilon$ via rigorous numerics, we consider the validation of fixed points for (\ref{eq-slow-mfd}) on $S_\epsilon$ as well as their stability for $\epsilon \in (0,\epsilon_0]$. 
Thanks to regular perturbation relationship between (\ref{eq-slow-mfd}) and (\ref{eq-critical-mfd}), it is natural to consider that the fixed point of (\ref{eq-slow-mfd}) on $S_\epsilon$ will be close to that of (\ref{eq-critical-mfd}) on $S_0$.
We then rewrite (\ref{eq-slow-mfd}) as
\begin{equation*}
\dot y =  g(h^0(y),y,0) + \left\{  g(h^\epsilon(y),y,\epsilon) -  g(h^0(y),y,0) \right\}. 
\end{equation*}
Using the Mean Value Theorem, the error term possesses the following expression: 
\begin{align*}
g(h^\epsilon(y),y,\epsilon) -  g(h^0(y),y,0) &= g(h^\epsilon(y),y,\epsilon) -  g(h^0(y),y,\epsilon) + g(h^0(y),y,\epsilon) -  g(h^0(y),y,0),\\
g(h^\epsilon(y),y,\epsilon) -  g(h^0(y),y,\epsilon) &= \frac{\partial g}{\partial x}(\tilde x, y, \epsilon)(h^\epsilon(y)- h^0(y))\quad \text{ for some }\tilde x \in M,\\
g(h^0(y),y,\epsilon) -  g(h^0(y),y,0) &= \frac{\partial g}{\partial \epsilon}(h^0(y), y, \tilde \epsilon)\epsilon\quad \text{ for some }\tilde \epsilon \in [0,\epsilon_0].
\end{align*}
The difference between $h^\epsilon(y)$ and $h^0(y)$ can be estimated by $|(h^\epsilon(y) - h^0(y))_{x_i}| \leq \beta_{x_i}$.
Here $\beta_{x_i}$ corresponds to the size of the $i$-th component in $x$-coordinate of a fast-saddle-type block $M$ obtained by rigorous numerics. 
It is precisely computable. 
Such an estimate can be done because both $h^\epsilon(y)$ and $h^0(y)$ belong to an identical $h$-set.
Finally we obtain the following estimate: 
\begin{equation}
\label{estimate-error-slow}
\left\{  g(h^\epsilon(y),y,\epsilon) -  g(h^0(y),y,0) \right\} \in  \frac{\partial g}{\partial x}(\tilde x, y, \epsilon) \beta_{x_i}[-1,1] + \frac{\partial g}{\partial \epsilon}(h^0(y), y, \tilde \epsilon)[0,\epsilon_0].
\end{equation}

Let $y_0$ be the numerical zero of $g(h^0(y),y,0) = 0$. The principal part $g(h^0(y),y,0)$ can be then expanded by
\begin{equation*}
g(h^0(y),y,0) = g(h^0(y_0),y_0,0) + \left(g_x h^0_y + g_y \right)_{y=y_0}(y-y_0) + h.o.t.
\end{equation*}
The Jacobian matrix $J_0 \equiv g_x h^0_y + g_y$ at $\epsilon = 0$ can be explicitly calculated via the Implicit Function Theorem to obtain
\begin{equation*}
g_x h^0_y + g_y = - g_x(f_x)^{-1}f_y + g_y.
\end{equation*}
One can explicitly calculate eigenvalues of $J_0$, which will be close to those of the Jacobian matrix of $J_\epsilon \equiv \partial g / \partial y$ for $\epsilon > 0$. 
On the other hand, the chain rule yields
\begin{equation*}
J_\epsilon = \frac{\partial g}{\partial x}(h^\epsilon(y),y,\epsilon) \frac{\partial h^\epsilon}{\partial y}(y) + \frac{\partial g}{\partial y}(h^\epsilon(y),y,\epsilon)
\end{equation*}
and it contains $\partial h^\epsilon / \partial y$ arisen in (\ref{rigorous-slow-mfd}). 
We additionally need to the Fr\'{e}chet differential of error terms of the form $g(h^\epsilon(y),y,\epsilon) - (\partial h^\epsilon /\partial y)\cdot (y-y_0)$,
when we try to validate the unique existence or hyperbolicity of equilibria via cone conditions \cite{ZCov} or Lyapunov conditions \cite{Mat}, for example.
Instead, we apply a topological tool with $J_0$ to validation of invariant sets on slow manifolds.

\bigskip
Here we construct an isolating block on slow manifolds.
Using enclosures like (\ref{estimate-error-slow}) and eigenvalues of $J_0$, we can compute the {\em rigorous} enclosure of vector fields on crossing sections on slow manifolds. 

Consider the case $l=1$, which is our original setting. 
Assume that we have an $\epsilon$-parameter family of slow manifolds $\{S_\epsilon\}_{\epsilon \in (0,\epsilon_0]}$ as well as the critical manifold $S_0$, which can be validated by the method in Section \ref{section-inv-mfd}. 
Then one would have obtained a fast-saddle-type block $M$ surrounding $\{S_\epsilon\}_{\epsilon \in [0,\epsilon_0]}$. 
The schematic illustration of such a procedure is shown in Fig. \ref{fig-slow-dynamics}. 
Let $y_0 \in S_0$ be a numerical zero of $g(x,y,0)$. 
One expects that there exists an equilibrium $y_\epsilon$ on $S_\epsilon$ for sufficiently small $\epsilon > 0$. 
We calculate the enclosure of $g(h^\epsilon(y),y,\epsilon)$ on $\{y = y_0\pm \delta\}$ for small $\delta$ in such a situation. 
It can be done by
\begin{align*}
g(h^\epsilon(y),y,\epsilon)\mid_{y=y_0\pm \delta} &= g(h^0(y),y,0) + \left\{  g(h^\epsilon(y),y,\epsilon) -  g(h^0(y),y,0) \right\}\\
&= g(h^0(y_0),y_0,0) + \left(g_x h^0_y + g_y \right)_{y=y_0}(y-y_0) + h.o.t.\\
&\in \{g(h^0(y_0),y_0,0) \pm J_0 \delta  + h.o.t.\} + \frac{\partial g}{\partial x}(\tilde x, y, \epsilon) \beta_{x}[-1,1] + \frac{\partial g}{\partial \epsilon}(h^0(y), y, \tilde \epsilon)\epsilon,
\end{align*}
where $\beta_x > 0$ can be determined by the size of $M$. 
Remark that the enclosure of all terms in the right-hand side are rigorously computable 
and that they make sense for all $\epsilon \in [0,\epsilon_0]$. 
The principal term is $\pm J_0 \delta$. $g(h^0(y_0),y_0,0)$ is very close to $0$ since $y_0$ is assumed to be the numerical zero of $g(h^0(y_0),y_0,0)$. 
If
\begin{equation}
\label{att-slow}
g(h^\epsilon(y),y,\epsilon)\mid_{y=y_0- \delta}\ > 0\quad \text{ and }\quad g(h^\epsilon(y),y,\epsilon)\mid_{y=y_0+ \delta}\ < 0
\end{equation}
are validated on $M \times [0,\epsilon_0]$, then the set $B_\epsilon:= \{(h^\epsilon(y), y, \epsilon)\in M\times [0,\epsilon_0] \mid y\in [y_0-\delta, y_0+\delta]\}$ is an isolating block on $S_\epsilon$ for $\dot y = g(h^\epsilon(y), y, \epsilon)$ with the attracting boundary. 
Similarly, if
\begin{equation}
\label{rep-slow}
g(h^\epsilon(y),y,\epsilon)\mid_{y=y_0- \delta}\ < 0\quad \text{ and }\quad g(h^\epsilon(y),y,\epsilon)\mid_{y=y_0+ \delta}\ > 0
\end{equation}
are validated on $M \times [0,\epsilon_0]$, then the set $B_\epsilon = \{(h^\epsilon(y), y, \epsilon)\in M\times [0,\epsilon_0] \mid y\in [y_0-\delta, y_0+\delta]\}$ is an isolating block on $S_\epsilon$ for $\dot y = g(h^\epsilon(y), y, \epsilon)$ with the repelling boundary. 
In both cases, the general Conley index theory \cite{Con} yields that $\Inv(B_\epsilon)\not = \emptyset$ for all $\epsilon \in (0,\epsilon]$. 
$\Inv(B_\epsilon)$ is actually the $\epsilon$-parameter family of invariant sets on slow manifolds. 
The direction of vector fields corresponds to the stability of $\Inv(B_\epsilon)$. 
Moreover, Proposition \ref{prop-existence-fixpt} shows the existence of an equilibria in $\Inv(B_\epsilon)$ for (\ref{eq-slow-mfd}), which leads to an equilibrium in the full system (\ref{fast-slow}).
Note that isolating blocks can be constructed by the same implementations as Section \ref{section-isolatingblock}. 

\begin{rem}\rm
In terms of the Conley index theory, constructed sets $\{B_\epsilon\}_{\epsilon\in [0,\epsilon_0]}$ are {\em singular isolating neighborhoods}. 
Indeed, for $\epsilon = 0$, $N_0$ consists of the curve of hyperbolic equilibria and hence it is not an isolating neighborhood. On the other hand, for $\epsilon \in (0,\epsilon]$ it is an isolating neighborhood for $\dot y = g(h^\epsilon(y), y, \epsilon)$.

The general theory of the Conley index can be referred in e.g. \cite{Con, Mis, Smo}. In particular, \cite{Mis} gives the explanation of singular perturbation version of the Conley index. More advanced topics for singular perturbation problems are shown in \cite{GGKKMO, GKMOR, GKMO} for example.
\end{rem}

\begin{rem}\rm
All validation of vector fields such as (\ref{att-slow}) and (\ref{rep-slow}) make sense {\em only on slow manifolds} $\{S_\epsilon\}_{\epsilon \in [0,\epsilon_0]}$, since the decoupled vector field (\ref{eq-slow-mfd}) makes sense only on slow manifolds. 
In other words, estimates of vector fields discussed here does not always give reasonable information of vector fields off slow manifolds. 
Calculations of vector fields discussed here and Section \ref{section-exchange} should be thus considered independently.

A simpler way to study the dynamics on slow manifolds will be to focus on the dynamics (\ref{eq-critical-mfd}), namely, neglect the error term $g(h^\epsilon(y),y,\epsilon) -  g(h^0(y),y,0)$.
We then study the dynamics (\ref{eq-slow-mfd}) on slow manifolds via the regular perturbation theory. 
The essence of this idea is the same as ours, but such results make sense only for sufficiently small $\epsilon > 0$. 
One of our main purpose here is to study the slow dynamics for all $\epsilon \in (0,\epsilon_0]$ and hence such a simpler idea is not sufficient to our study. 
\end{rem}

\begin{figure}[htbp]\em
\begin{minipage}{0.33\hsize}
\centering
\includegraphics[width=4.0cm]{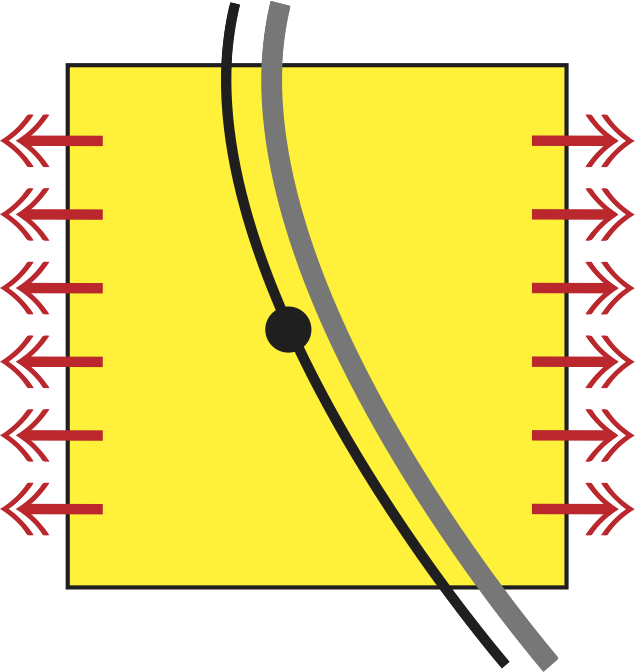}
(a)
\end{minipage}
\begin{minipage}{0.33\hsize}
\centering
\includegraphics[width=4.0cm]{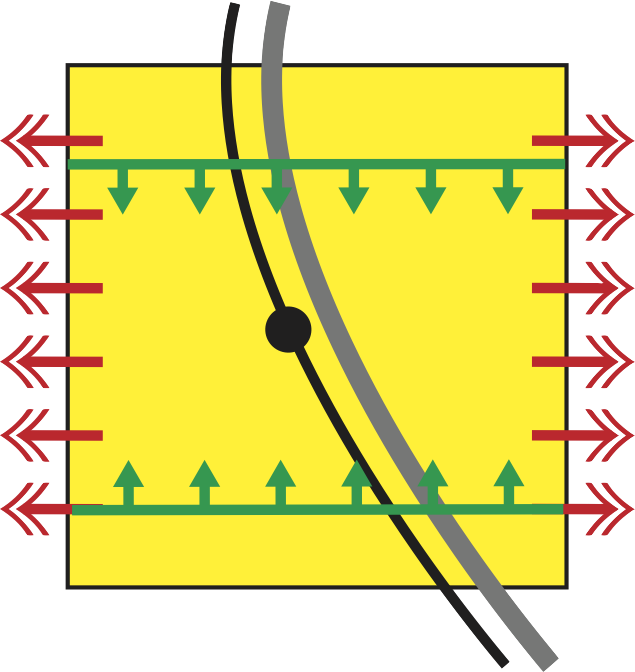}
(b)
\end{minipage}
\begin{minipage}{0.33\hsize}
\centering
\includegraphics[width=4.0cm]{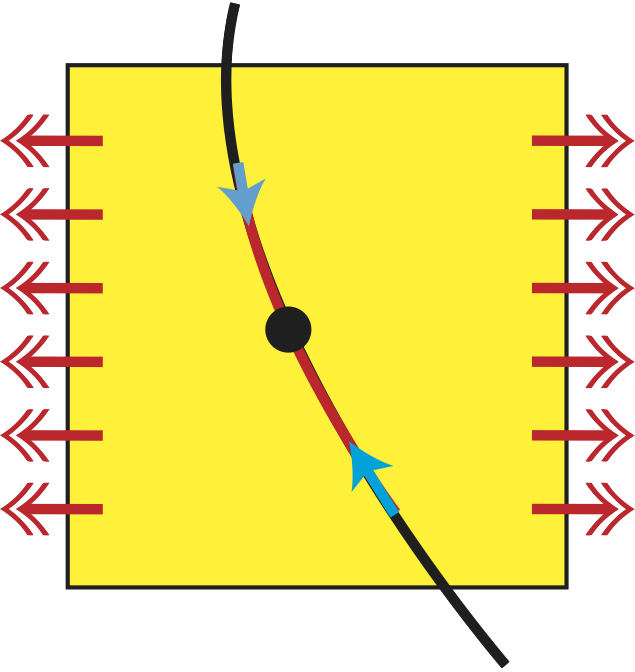}
(c)
\end{minipage}
\caption{Invariant sets on slow manifolds.}
\label{fig-slow-dynamics}
(a). Critical manifold $S_0$ ($\epsilon = 0$, colored by ash), and perturbed slow manifold $S_\epsilon$ ($\epsilon \in (0,\epsilon_0]$, colored by black) via Theorem \ref{thm-inv-mfd-rigorous}. The yellow rectangle with red arrows describes a fast-saddle-type block with the fast-exit. The critical manifold $S_0$ is a subset of nullcline $f(x,y,0)=0$. 
In general, $S_\epsilon$ exhibits a nontrivial structure such as equilibria (drawn by a black ball).

\bigskip
(b). Validation of vector fields on $S_\epsilon\ (\epsilon \in [0,\epsilon_0])$. 
One cannot detect the position of the slow manifold $S_\epsilon$ in advance. In order to detect vector field on such invariant manifolds, one put a section $S$ (colored green) having an intersection with $S_\epsilon$ and validate the vector field on $S$ via rigorous numerics. 
This contains information of the vector field on $S\cap S_\epsilon$.

\bigskip
(c). An illustration of isolating block containing an equilibrium on $S_\epsilon$. Analysis in (b) contains the information of rigorous vector field on $S_\epsilon$, which gives an isolating neighborhood colored by red. General topological arguments such as the mapping degree or the Conley index guarantee the existence of nontrivial invariant sets. In the case of illustration described here, there exists a fixed point on $S_\epsilon$, namely, an equilibrium of the full system (\ref{fast-slow}).
\end{figure}

%
%
\subsection{Unstable manifolds of invariant sets on slow manifolds}
\label{section-fiber-unstable}

We discuss the unstable manifold of invariant sets in the full system. 
Combining covering-exchange property with graph representations of locally invariant manifolds, the standard consequence of covering relations, Proposition \ref{WZ-heteroclinic}, yields the existence of connecting orbits between slow manifolds for (\ref{fast-slow})$_\epsilon$. 
However, it does not always mean the existence of connecting orbits between invariant sets for (\ref{fast-slow})$_\epsilon$. 
This difference comes from the fact that the meaning of slow variable $y$ changes between $\epsilon = 0$ and $\epsilon > 0$. 
When we apply covering relations to connecting orbits between equilibria for (\ref{fast-slow})$_0$, namely (\ref{layer}), a natural approach is to prove the existence of orbits connecting points on the stable and unstable manifolds of equilibria at a certain $\bar y$. 
This immediately yields validation of a connecting orbit at $y = \bar y$.
On the other hand, if $\epsilon > 0$, a parameter $y$ becomes a time dependent variable and slow manifolds do not always consist of a family of equilibria or general invariant sets.
In fact, if unstable manifolds of normally hyperbolic invariant manifolds are perturbed for $\epsilon > 0$, then the negatively invariant manifolds (e.g. unstable manifolds) of {\em invariant sets} are drastically collapsed in general. 
In particular, dimensions of $h$-sets corresponding to unstable manifolds of invariant sets change. 
The standard arguments of covering relation is not thus sufficient to validate connecting orbits in (\ref{fast-slow})$_\epsilon$.

To overcome this difficulty, we apply further structure of slow manifolds, called Fenichel fibering, to validate (un)stable manifolds of invariant sets on slow manifolds.

A direct consequence of discussions in the previous subsection is stated as follows. 
This result can be generalized in arbitrary dimensions, but we omit the detail because we do not make such generalized arguments in this paper.

\begin{lem}
\label{lem-adm-cov}
Let $N$ be a fast-saddle type block for (\ref{fast-slow})$_{\epsilon}$ satisfying stable and unstable cone conditions,
and $I_\epsilon \subset \Int M$ be a subset of $1$-dimensional slow manifold in $N$. 
Assume that $I_\epsilon$ is an isolating block on the slow manifold whose boundaries are repelling (cf. (\ref{rep-slow})). 
Then a trivial collection $\{S_{\epsilon}\}$ is admissible with respect to $\varphi_{\epsilon}(t,\cdot)$ for all sufficiently small $t>0$ (cf. Definition \ref{dfn-cov-admissibility}). 

Similarly, assume that $I_\epsilon$ is an isolating block on the slow manifold whose boundaries are attracting (cf. (\ref{att-slow})). 
Then a trivial collection $\{S_{\epsilon}\}$ is back-admissible with respect to $\varphi_{\epsilon}(t,\cdot)$ for all sufficiently small $t>0$.

Here we regard the flow $\varphi_\epsilon$ in the full system as $\varphi_\epsilon$ restricted to $I_\epsilon$ in both cases. This restriction makes sense since slow manifolds are locally invariant.
\end{lem}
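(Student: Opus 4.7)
The plan is to unpack Definition \ref{dfn-covrel} directly for the pair $(I_\epsilon, I_\epsilon)$ under the time-$t$ map $\varphi_\epsilon(t,\cdot)$ restricted to the slow manifold. Since $\dim I_\epsilon = 1$ and $I_\epsilon$ sits inside the $1$-dimensional locally invariant manifold $S_\epsilon$, the whole verification reduces to checking the sign of the scalar slow vector field (\ref{eq-slow-mfd}) at two endpoints.

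First I would equip $I_\epsilon$ with $h$-set structure: choose any homeomorphism $c_{I_\epsilon}: \mathbb{R} \to \mathbb{R}$ sending $I_\epsilon$ onto $[-1,1]$ via its slow parameter. In the repelling case set $u(I_\epsilon) = 1$, $s(I_\epsilon) = 0$, so that $I_\epsilon^-$ consists of the two endpoints and $I_\epsilon^+ = \emptyset$; in the attracting case set $u(I_\epsilon) = 0$, $s(I_\epsilon) = 1$, swapping the roles. Note that Definition \ref{dfn-hset} permits $\dim N < m$, so a $1$-dimensional $h$-set in $\mathbb{R}^{n+1}$ is legitimate.

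For the repelling case, local invariance of $S_\epsilon$ (Theorem \ref{thm-inv-mfd-rigorous}) lets us treat $\varphi_\epsilon(t,\cdot)|_{I_\epsilon}$ as a map into $S_\epsilon$, and composing with $c_{I_\epsilon}$ yields a continuous scalar map $f_c : [-1,1] \to \mathbb{R}$. The repelling boundary condition (\ref{rep-slow}) forces $g < 0$ at the left endpoint and $g > 0$ at the right endpoint; combined with the estimate (\ref{estimate-error-slow}) these signs persist for every $\epsilon \in (0,\epsilon_0]$. Hence for all sufficiently small $t>0$,
\begin{equation*}
f_c(-1) < -1 \quad \text{and} \quad f_c(+1) > +1,
\end{equation*}
so $f_c(I_\epsilon^-) \cap [-1,1] = \emptyset$, which is the first condition of Definition \ref{dfn-covrel}. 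The condition on $M_c^+$ is vacuous because $s(I_\epsilon) = 0$. The required homotopy is the straight-line deformation from $f_c$ to $A(p) := \alpha p$ with a fixed $\alpha > 1$; during the deformation neither endpoint enters $[-1,1]$ (by convexity of the endpoint inequalities above), and $\deg(A, \overline{B_1}, 0) = +1 \neq 0$. This gives $I_\epsilon \overset{\varphi_\epsilon(t,\cdot)}{\Longrightarrow} I_\epsilon$ and hence admissibility of the trivial collection. The attracting case is obtained by time reversal: (\ref{att-slow}) makes the boundaries repelling under $\varphi_\epsilon(-t,\cdot)$, so applying the above argument to $I_\epsilon^T$ yields $I_\epsilon^T \overset{\varphi_\epsilon(-t,\cdot)}{\Longrightarrow} I_\epsilon^T$, which by definition is the back-covering $I_\epsilon \overset{\varphi_\epsilon(t,\cdot)}{\Longleftarrow} I_\epsilon$.

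The only technical subtlety I anticipate is the nonuniqueness of the slow manifold $S_\epsilon$ (cf. the remark after Theorem \ref{thm-inv-mfd-rigorous}): the ambient flow $\varphi_\epsilon$ acts on $\mathbb{R}^{n+1}$, not on $I_\epsilon$, so one has to justify that restricting to \emph{some} representative slow manifold produces a well-defined induced map on $[-1,1]$. This is handled by the hypothesis $I_\epsilon \subset \Int N$, which ensures that for $t>0$ small the orbit through any point of $I_\epsilon$ remains on the chosen $S_\epsilon$; the resulting covering relation depends only on the sign of $g$ at the endpoints, which is insensitive to the choice of representative.
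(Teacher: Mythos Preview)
Your proof is correct and aligns with the paper's treatment. The paper does not give a formal proof of this lemma; it simply remarks immediately afterward that, since $I_\epsilon$ is $1$-dimensional, the covering relation $I_\epsilon\overset{\varphi_\epsilon(t,\cdot)}{\Longrightarrow} I_\epsilon$ in the repelling case is nothing but the inclusion $I_\epsilon \subset \varphi_\epsilon(t,I_\epsilon)$ (and dually $I_\epsilon \subset \varphi_\epsilon(-t,I_\epsilon)$ in the attracting case). Your argument is precisely a detailed unpacking of that observation: assigning $u(I_\epsilon)=1$, $s(I_\epsilon)=0$, reading off the endpoint inequalities from (\ref{rep-slow}), and exhibiting the linear homotopy to $\alpha p$ with $\deg = 1$. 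The final paragraph on local invariance and non-uniqueness of $S_\epsilon$ is a reasonable clarification that the paper leaves implicit.
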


Since $I_\epsilon$ is $1$-dimensional, if $I_\epsilon$ is repelling, the covering relation $I_\epsilon\overset{\varphi_\epsilon(t,\cdot)}{\Longrightarrow} I_\epsilon$ is nothing but $I_\epsilon \subset \varphi_\epsilon(t,I_\epsilon)$.
Similarly, if $I_\epsilon$ is attracting, the back-covering relation $I_\epsilon\overset{\varphi_\epsilon(t,\cdot)}{\Longleftarrow} I_\epsilon$ is nothing but $I_\epsilon \subset \varphi_\epsilon(-t,I_\epsilon)$.
For simplicity, we write $k$-covering relations $N\overset{f}{\Longrightarrow} N$ for an $h$-set $N$ and a continuous map $f$ by $\left\{N\overset{f}{\Longrightarrow} N \right\}_k$.
Let $\left\{N\overset{f}{\Longleftarrow} N \right\}_k$ be defined by the similar manner.

Lemma \ref{lem-adm-cov} gives the other proof of existence of an invariant set in $I_\epsilon$. 
Note that this invariant set is actually invariant for the full system (\ref{fast-slow})$_\epsilon$.

\bigskip
A key essence of Fenichel fibering is cone conditions, which is the same as invariant manifold theorem. 
On the other hand, our statements in Theorem \ref{thm-inv-mfd-rigorous} involve cone condition which are computable via rigorous numerics. 
Theorem \ref{thm-inv-mfd-rigorous} thus yields the following fiber validations.

\begin{cor}
\label{cor-fiber-rigorous}
Consider (\ref{abstract-form}). Let $N\subset \mathbb{R}^{n+l}$ be a fast-saddle-type block such that the coordinate representation $N_c$ is actually given by (\ref{fast-block}) with $\pi_y N = K\subset \mathbb{R}^l$. 
Assume that stable and unstable cone conditions are satisfied in $N$ and let $S_\epsilon$ be the validated slow manifold.
Then for each $v\in \Int S_\epsilon$, the following statements hold.
\begin{enumerate}
\item There exists a Lipschitz function $(a,y) = h_s(b, \epsilon)$ defined in a neighborhood of $\pi_b(v)$ in $B_s$ and $\epsilon\in  [0,\epsilon_0]$ such that the graph
\begin{equation*}
W^s(v):= \{(a,b,y,\epsilon)\mid (a,y) = h_s(b,\epsilon)\}
\end{equation*}
is locally invariant with respect to (\ref{abstract-form}). 
\item There exists a Lipschitz function $(b,y) = h_u(a,\epsilon)$ defined in a neighborhood of $\pi_a(v)$ in $B_u$ and $\epsilon\in  [0,\epsilon_0]$ such that the graph
\begin{equation*}
W^u(v):= \{(a,b,y,\epsilon)\mid (b,y) = h_u(a,\epsilon)\}
\end{equation*}
is locally invariant with respect to (\ref{abstract-form}). 
\end{enumerate}
These functions are well-defined in a neighborhood of each $v$, which means that, for example, $W^u(v)$ may have an intersection with $\partial N\setminus N^{f,-}$. In other words, $h_u$ is not always a horizontal disk in $N$ in the sense of Definition \ref{dfn-disks}.
\end{cor}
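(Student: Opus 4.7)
The plan is to mirror the three-step proof of Theorem \ref{thm-inv-mfd-rigorous}, with the moving reference orbit $\gamma_v(t) := \varphi_\epsilon(t, v)$ in place of an equilibrium, and with the cone functions measuring deviation from $\gamma_v$ rather than between two arbitrary test trajectories. Since $v \in \Int S_\epsilon$ and $S_\epsilon$ is locally invariant, I would fix a time $T_0 > 0$ with $\gamma_v([-T_0, T_0]) \subset \Int N$, work inside a sub-block $N_v \subset N$ containing $\gamma_v([-T_0, T_0])$, and if necessary apply the $\delta \rho(y) n_y$ modification of part (ii) of Theorem \ref{thm-inv-mfd-rigorous} so that the resulting Wazewski set has the correct entrance/exit structure. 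The bilinear Lipschitz bounds underpinning Assumptions \ref{ass-cone-unstable} and \ref{ass-cone-stable} are uniform on $N \times [0, \epsilon_0]$, so the estimates in the proof of Proposition \ref{prop-cone} carry over verbatim when the two compared trajectories are a test trajectory $w(t)$ and $\gamma_v(t)$.

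For the stable fiber $W^s(v)$, let $\Delta a, \Delta b, \Delta y$ denote differences with $\gamma_v$ (with $\Delta \eta \equiv 0$ since $\epsilon$ is fixed) and consider
\begin{equation*}
M^s(t) := |\Delta b(t)|^2 - |\Delta a(t)|^2 - |\Delta y(t)|^2.
\end{equation*}
The stable-cone analog of Proposition \ref{prop-cone}, obtained by swapping the roles of $(a, y)$ and $b$ and invoking Assumption \ref{ass-cone-stable} in place of \ref{ass-cone-unstable}, gives $(M^s)'(t) < 0$ whenever $M^s(t) = 0$ and $\Delta b \ne 0$. For each $b_0$ in a small ball $\overline{B_s(\pi_b(v), \delta)}$, I would then apply the Wazewski / No-Retract argument of part (ii) of Theorem \ref{thm-inv-mfd-rigorous} to the transverse $(u+l)$-dimensional slice $\{(a, b_0, y) \in N_v\}$: test points not trapped in the stable cone of $\gamma_v$ would exit through the part of $\partial N_v$ corresponding to $\partial B_{u+l}$, yielding a continuous retraction that is forbidden. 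This produces at least one forward-trapped $(a_0, y_0)$ per $b_0$, hence a function $(a, y) = h_s(b, \epsilon)$.

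Uniqueness of the graph and its Lipschitz continuity with constant $1$ then follow from the unstable cone condition (Assumption \ref{ass-cone-unstable}) exactly as in part (iii) of Theorem \ref{thm-inv-mfd-rigorous}: two candidate fibers with identical $b_0$ would give $|\Delta b(0)| = 0 \le |(\Delta a(0), \Delta y(0))|$, so the deviation starts in the unstable cone of $\gamma_v$ and Proposition \ref{prop-cone} forces $|\Delta a(t)|$ to grow exponentially, contradicting that both orbits remain in $N_v$. The unstable fiber $W^u(v)$ is handled by the time-reversed symmetric argument on a transverse $(s+l)$-disk, using Assumption \ref{ass-cone-unstable} directly. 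Smoothness in $v$ and $\epsilon$ follows from the same bootstrap as in \cite{Jones}.

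The main obstacle is dimensional: the fibers have respective dimensions $s$ and $u$, so the $l$ slow directions must be absorbed into the ``wrong'' side of the cone function $M^s$ and of its unstable counterpart. This is precisely why Assumptions \ref{ass-cone-unstable} and \ref{ass-cone-stable} bundle $\partial F_i / \partial y$ and $\partial g / \partial y$ into the matrices $\mathbb{A}_2, \mathbb{B}_2, g_2$ — the very terms that would otherwise prevent the cone function from being strictly monotone — and the sign of the resulting inequalities is exactly what makes the cone argument close. A secondary subtlety is that $h_s$ and $h_u$ are only local: the forward orbit of a fiber point may exit $N$ through the slow or fast faces rather than returning to $S_\epsilon$, which is why the statement warns that $h_u$ need not produce a horizontal disk in $N$ in the sense of Definition \ref{dfn-disks}. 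This local character is automatically accommodated by shrinking the transverse radius $\delta$ together with $T_0$.
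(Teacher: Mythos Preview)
The paper itself gives essentially no proof: the corollary is stated immediately after the remark that ``a key essence of Fenichel fibering is cone conditions'' and that ``Theorem~\ref{thm-inv-mfd-rigorous} thus yields the following fiber validations.'' In other words, the paper defers to the classical fiber construction (Proposition~\ref{prop-Fen3}, Theorems~6--7 of \cite{Jones}) and merely observes that the explicit inequalities of Assumptions~\ref{ass-cone-unstable}--\ref{ass-cone-stable} can replace the usual ``$\epsilon$ sufficiently small'' hypothesis there.

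Your direct argument has a real gap in the uniqueness step. Two forward-trapped candidates with the same $b_0$ have $\Delta b(0)=0$, which places their difference in the \emph{complement of the stable cone} $\{|\Delta b|\le |(\Delta a,\Delta y)|\}$; Assumption~\ref{ass-cone-stable} does make that region forward-invariant. But this region is not the unstable cone of Proposition~\ref{prop-cone}, whose defining inequality is $|\Delta a|\ge |(\Delta b,\Delta y)|$: with $\Delta b(0)=0$ that would require $|\Delta a(0)|\ge|\Delta y(0)|$. In fact both candidates already lie on $W^s(S_\epsilon)$, where $a=h_s(b,y)$ is $1$-Lipschitz by Theorem~\ref{thm-inv-mfd-rigorous}, so $|\Delta a(0)|\le|\Delta y(0)|$ and you are \emph{guaranteed} to start outside the unstable cone. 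Forward invariance of the stable-cone complement yields only $|\Delta b(t)|\le|(\Delta a,\Delta y)(t)|$, not exponential growth of any component, so no contradiction with boundedness follows. The fiber construction in \cite{Jones} does not try to make the slow variable hyperbolic in this way; it instead exploits the rate gap between the normal contraction ($\sim|\mu_B|$) and the slow drift ($O(\epsilon)$), working on the already-established $W^s(S_\epsilon)$ parametrised by $(b,y)$ and running a graph transform (equivalently, a cone comparing $|\Delta y|$ against $|\Delta b|$ alone). Neither of the two cones you invoke --- with $y$ bundled with $(b,\eta)$ in one and with $(a,\eta)$ in the other --- supplies that comparison.
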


In the case of fast-slow systems, the property that $h_u$ becomes a horizontal disk in $N$ is nontrivial, which is due to dynamics in slow direction. 

From now on we relate the fiber bundle structure of slow manifolds to covering relations.
Fiber bundle structure of $W^u(I_\epsilon)$ for $I_\epsilon \subset S_\epsilon$ leads to construct a deformation retract $r_\lambda$ of unstable manifolds so that the fiber bundle $W^u(I_\epsilon)$ $r_\lambda$-covers a fast-exit face. 
Namely, the following lemma holds. 

\begin{lem}
\label{lem-fiber-cov}
Consider (\ref{abstract-form}). Let $N\subset \mathbb{R}^{n+1}$ be a fast-saddle-type block with $u=1$ such that the coordinate representation $N_c$ is actually given by $N_c = \overline{B_1}\times \overline{B_s} \times [0,1]$.
Also, let $N^\exit$ be a fast-exit face of $N$ with 
\begin{equation*}
\pi_a N^\exit = \{1\}\subset \partial B_1,\quad (N^\exit)^- = \{y^\pm\}\subset [0,1]\ \text{ with }0 < y_- < y_+ < 1
\end{equation*}
and $I_\epsilon \subset \Int N$ be a connected subset of $1$-dimensional slow manifold in $N$. 

Assume that, for any $z\in I_\epsilon$, the Lipschitz function $(b,y) = b_{u,\epsilon}^z(a)$ is a horizontal disk in $N$. 
We also assume that, at $z_\pm \in \partial I_\epsilon$ with $z_+ > z_-$, $\pi_y  b_{u,\epsilon}^{z_+}(1) > y_+$ and $\pi_y  b_{u,\epsilon}^{z_-}(1) < y_-$ hold. 

Then $I_\epsilon \overset{r_1\circ  b_{u,\epsilon}^\ast}{\Longrightarrow} N^\exit$, where $r_\lambda\circ  b_{u,\epsilon}^z$ is the deformation retract given by
\begin{equation}
\label{retract-plus}
(r_\lambda\circ  b_{u,\epsilon}^z)(a) := (\lambda + (1-\lambda)a, b_{u,\epsilon}^z(\lambda + (1-\lambda)a)),\quad \lambda\in [0,1],\ z\in I_\epsilon.
\end{equation}
If $\pi_a N^\exit = \{-1\}$, then the same statement holds by replacing $r_\lambda$ in (\ref{retract-plus}) by
\begin{equation}
\label{retract-minus}
(r_\lambda\circ  b_{u,\epsilon}^z)(a) := (-\lambda + (1-\lambda)a, b_{u,\epsilon}^z(-\lambda + (1-\lambda)a)),\quad \lambda\in [0,1],\ z\in I_\epsilon.
\end{equation}
\end{lem}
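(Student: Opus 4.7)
The plan is to verify the two requirements of Definition~\ref{dfn-covrel} directly for the pair $(I_\epsilon, N^\exit)$. First I set up local $h$-set coordinates: parametrize $I_\epsilon$ by a homeomorphism $c_{I_\epsilon}:I_\epsilon\to\overline{B_1}$ sending $z_\pm\mapsto\pm 1$, so that $(I_\epsilon)_c^-=\{\pm 1\}$ and $(I_\epsilon)_c^+=\emptyset$, and take $(N^\exit)_c=\overline{B_1}\times\overline{B_s}$ with the first (unstable) factor being an affine reparameterization of $[y_-,y_+]$. Because $r_1(b_{u,\epsilon}^z(a))=(1,b_{u,\epsilon}^z(1))$ is constant in $a$, the candidate covering map reduces to $f(z)=(1,b_{u,\epsilon}^z(1))$, which is continuous in $z$ by the Lipschitz dependence of Fenichel fibers recorded in Corollary~\ref{cor-fiber-rigorous}. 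In coordinates this gives $f_c(p)=(\tilde y(p),\tilde b(p))$, and the hypothesis on $z_\pm$ translates into $\tilde y(+1)>+1$ and $\tilde y(-1)<-1$.

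Next I would construct the explicit homotopy
\begin{equation*}
h(\lambda,p):=\bigl((1-\lambda)\tilde y(p)+2\lambda p,\ (1-\lambda)\tilde b(p)\bigr),
\end{equation*}
which satisfies $h_0=f_c$ and $h_1(p)=(2p,0)=(A(p),0)$ with $A(p):=2p$. Clearly $A(\partial B_1)=\{\pm 2\}\subset\mathbb{R}\setminus\overline{B_1}$ and $\deg(A,\overline{B_1},0)=1\neq 0$, which supplies the degree condition (\ref{cov-degree}). The condition $h([0,1],\partial B_1)\cap(N^\exit)_c=\emptyset$ is immediate: at $p=+1$ the first coordinate is a convex combination of $\tilde y(+1)>1$ and $2>1$, hence stays strictly above $1$; a symmetric argument at $p=-1$ keeps it strictly below $-1$. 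So the image at $\partial B_1$ lies outside $\overline{B_1}\times\overline{B_s}=(N^\exit)_c$.

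The main obstacle I anticipate is the remaining condition $h([0,1],\overline{B_1})\cap(N^\exit)_c^+=\emptyset$ with $(N^\exit)_c^+=\overline{B_1}\times\partial B_s$. For $\lambda\in(0,1]$ the second coordinate $(1-\lambda)\tilde b(p)$ lies strictly inside $B_s$, so only the slice $\lambda=0$ is delicate: it requires $\tilde b(p)\in\Int B_s$ whenever $\tilde y(p)\in\overline{B_1}$. I would settle this by combining the horizontal-disk property of $b_{u,\epsilon}^z$ with the unstable cone condition: fibers originate from points of $S_\epsilon$ that sit in the interior of the $b$-block, and the cone estimate controls the $b$-drift along a fiber strictly by its $a$-extent, so the endpoint $b_{u,\epsilon}^z(1)$ cannot touch $\partial B_s$ in the regime where $\tilde y(p)$ is still in $\overline{B_1}$; if a particular application requires a sharper bound, one may precompose $h$ with a small radial retraction in $b$ before running the interpolation. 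The case $\pi_a N^\exit=\{-1\}$ is identical after replacing (\ref{retract-plus}) by (\ref{retract-minus}) and $A(p)=2p$ by $A(p)=-2p$, still of degree $\pm 1$.
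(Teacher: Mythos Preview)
Your approach is correct and essentially parallel to the paper's, but you take the longer route of verifying Definition~\ref{dfn-covrel} directly, whereas the paper simply checks the hypotheses of Proposition~\ref{prop-find-CR} (or Proposition~\ref{prop-CR-u1}). The two arguments coincide in content: your conditions on $\tilde y(\pm 1)$ are exactly the paper's observation that $r_\lambda\circ b_{u,\epsilon}^{z_\pm}(\overline{B_1})$ misses $N^{\rm exit}$, and your $M_c^+$ condition is the paper's $r_\lambda\circ b_{u,\epsilon}^{z}(\overline{B_1})\cap (N^{\rm exit})^+=\emptyset$.

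The one place you hesitate unnecessarily is the $b$-component at $\lambda=0$. You do not need the cone estimate, nor any conditional clause ``whenever $\tilde y(p)\in\overline{B_1}$'', nor a radial retraction in $b$. The point $(1,b_{u,\epsilon}^z(1))$ lies on the Fenichel fiber $W^u(z)\subset W^u(S_\epsilon)$, and $W^u(S_\epsilon)$ is \emph{globally} disjoint from $N^{f,+}=\{b\in\partial B_s\}$: any point with $b\in\partial B_s$ leaves $N$ immediately under the backward flow (since $N^{f,+}$ is a fast entrance), so it cannot belong to the unstable manifold. Hence $\tilde b(p)\in\Int B_s$ for every $p\in[-1,1]$ unconditionally, and your homotopy already avoids $M_c^+$ for all $\lambda\in[0,1]$. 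This is precisely the implicit step in the paper's one-line justification ``graphs of $b_{u,\epsilon}^{z}$ lie on $W^u(I_\epsilon)$, which implies $r_\lambda\circ b_{u,\epsilon}^{z}(\overline{B_1})\cap (N^{\rm exit})^+=\emptyset$''. Once you state it cleanly, your proof is complete with no hedging required.
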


\begin{proof}
Our assumptions imply that $r_\lambda\circ b_{u,\epsilon}^{z_\pm}(\overline{B_1})\cap N^\exit = \emptyset$. 
Note that graphs of $b_{u,\epsilon}^{z_\pm}$ lie on the unstable manifold $W^u(I_\epsilon)$, which implies $r_\lambda\circ b_{u,\epsilon}^{z}(\overline{B_1})\cap (N^\exit)^+ = \emptyset$ for all $z\in I_\epsilon$. 
The rest of assumptions in Proposition \ref{prop-find-CR} or \ref{prop-CR-u1} is easy to check.
\end{proof}

If we additionally assume that $I_\epsilon$ is a singular isolating neighborhood, then we can describe unstable manifolds of invariant sets.
\begin{prop}
\label{prop-fiber}
In addition to assumptions in Lemma \ref{lem-fiber-cov}, we assume that $I_\epsilon$ is an isolating block on the slow manifold. Then there exists a point $z\in \Inv(I_\epsilon)$ such that $W^u(z)\cap N^\exit \not = \emptyset$. 
\end{prop}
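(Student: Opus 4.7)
The plan is to iterate the flow covering $I_\epsilon \overset{\varphi_\epsilon(t,\cdot)}{\Longrightarrow} I_\epsilon$ supplied by Lemma \ref{lem-adm-cov}, chain it with the fiber covering from Lemma \ref{lem-fiber-cov}, apply Proposition \ref{ZG-periodic}, and then pass to a compactness limit to locate the desired point inside $\Inv(I_\epsilon)$. Since $I_\epsilon$ is an isolating block on the slow manifold, Lemma \ref{lem-adm-cov} gives that $\{I_\epsilon\}$ is admissible under $\varphi_\epsilon(t,\cdot)$ in the repelling case and back-admissible in the attracting case. I focus on the repelling case; the attracting case is handled analogously by substituting back-coverings into Proposition \ref{ZG-periodic}. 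For each $k\geq 1$ the assembled chain
\begin{equation*}
I_\epsilon \overset{\varphi_\epsilon(t,\cdot)}{\Longrightarrow} I_\epsilon \overset{\varphi_\epsilon(t,\cdot)}{\Longrightarrow} \cdots \overset{\varphi_\epsilon(t,\cdot)}{\Longrightarrow} I_\epsilon \overset{r_1 \circ b_{u,\epsilon}^\ast}{\Longrightarrow} N^\exit
\end{equation*}
has length $k+1$, with the first $k$ arrows flow-covers and the last arrow the fiber cover.

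First I would invoke Proposition \ref{ZG-periodic} on this chain, producing a point $p_k\in \Int I_\epsilon$ such that $\varphi_\epsilon(jt,p_k)\in \Int I_\epsilon$ for $j=0,1,\ldots,k-1$ and such that the fiber image $r_1 \circ b_{u,\epsilon}^{q_k}$, with $q_k := \varphi_\epsilon((k-1)t,p_k)$, meets $\Int N^\exit$. Next, compactness of $I_\epsilon$ allows extraction of a subsequence along which both $p_k$ and $q_k$ converge to points $p^*,q^*\in I_\epsilon$. The time-$t$ forward orbit of $p^*$ is trapped in $I_\epsilon$, so $p^*\in \Inv^+(I_\epsilon)$; symmetrically $q^*\in \Inv^-(I_\epsilon)$. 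In the one-dimensional repelling case $\Inv^+(I_\epsilon)$ reduces to the unique equilibrium $v_0$, so $p^*=v_0\in \Inv(I_\epsilon)$, and since $\alpha(q^*)\subset \Inv(I_\epsilon)=\{v_0\}$ the point $q^*$ lies on the slow unstable manifold $W^u_{\mathrm{slow}}(v_0)$ running through $v_0$.

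Finally, continuity of the Fenichel fiber in its base point, provided by Proposition \ref{prop-Fen3} and Corollary \ref{cor-fiber-rigorous}, transports the intersection $W^u(q_k)\cap \Int N^\exit\ne \emptyset$ to $W^u(q^*)\cap N^\exit\ne \emptyset$ in the limit. Because $q^*\in W^u_{\mathrm{slow}}(v_0)$, the Fenichel fiber through $q^*$ is a subset of the full unstable manifold of $v_0$ in the full system, so $W^u(v_0)\cap N^\exit\ne \emptyset$, completing the proof with $z=v_0$.

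The main obstacle I anticipate is twofold. First, the fiber covering $I_\epsilon \overset{r_1 \circ b_{u,\epsilon}^\ast}{\Longrightarrow} N^\exit$ must be interpreted as a bona fide covering of $h$-sets that chains cleanly with the flow coverings inside Proposition \ref{ZG-periodic}; this forces one to verify that the fiber map depends continuously on its base point, so that the composite $(r_1\circ b_{u,\epsilon}^\ast)\circ \varphi_\epsilon((k-1)t,\cdot)$ is a legitimate continuous map on the $h$-set $I_\epsilon$. Second, the compactness limit must preserve the fiber intersection: at each finite $k$ the fiber image lands in $\Int N^\exit$, but in principle the limit could slip onto $\partial N^\exit$. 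This is remedied by selecting at each stage a slightly smaller closed sub-$h$-set inside $\Int N^\exit$ whose covering is still guaranteed by the openness of the covering conditions, so that a nondegenerate intersection survives the limit. The attracting case is conceptually tighter, since there $\Inv^-(I_\epsilon)=\{v_0\}$ forces $q^*=v_0$ directly, and one need not appeal to $W^u_{\mathrm{slow}}(v_0)$ at all.
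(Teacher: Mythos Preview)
Your overall strategy---chain the self-covering of $I_\epsilon$ from Lemma \ref{lem-adm-cov} with the fiber covering of Lemma \ref{lem-fiber-cov}, apply Proposition \ref{ZG-periodic}, and pass to a compactness limit---is exactly the paper's approach. The difference is a matter of which case is treated as primary: the paper works in the \emph{attracting} case, which is precisely the situation you flag at the end as ``conceptually tighter.'' There the point $z_k$ at the end of the chain has its backward orbit trapped for $k$ steps, so the limit $z_\infty$ lies in $\Inv^-(I_\epsilon)$; since attracting means $\Inv^+(I_\epsilon)=I_\epsilon$, one gets $z_\infty\in\Inv(I_\epsilon)$ directly, and continuity of $z\mapsto b_{u,\epsilon}^z$ gives $W^u(z_\infty)\cap N^\exit\neq\emptyset$. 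No detour through an equilibrium or a slow unstable manifold is needed.

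Your treatment of the repelling case, by contrast, runs into a genuine issue. The limit $q^*$ has its fiber meeting $N^\exit$, but in the repelling case $\Inv^-(I_\epsilon)=I_\epsilon$, so backward trapping of $q^*$ gives no information; you therefore pass to $v_0\in\alpha(q^*)$ and conclude that the \emph{full} unstable manifold of $v_0$ (the union of Fenichel fibers over the slow unstable curve through $v_0$) meets $N^\exit$. But the proposition asks for the Fenichel fiber $W^u(z)$ of some $z\in\Inv(I_\epsilon)$, and the fiber through $v_0$ is not the fiber through $q^*$. Separately, your claim that $\Inv^+(I_\epsilon)$ reduces to a unique equilibrium is not part of the hypotheses and can fail (e.g.\ three equilibria with the middle one attracting). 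The clean fix is simply to argue the attracting case, as the paper does; the repelling case then follows by time reversal of the flow on the slow manifold together with the corresponding reversal of the covering direction, rather than by the slow-unstable-manifold detour.
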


\begin{proof}
Since $I_\epsilon$ is isolated, Lemma \ref{lem-adm-cov} yields the covering relation $\left\{I_\epsilon \overset{\varphi_\epsilon(t,\cdot)}{\Longrightarrow} I_\epsilon \right\}_k$ for arbitrary $k\geq 1$ and all sufficiently small $t > 0$ if $I_\epsilon$ is repelling. 
Similarly, if $I_\epsilon$ is attracting, the covering relation $\left\{I_\epsilon \overset{\varphi_\epsilon(t,\cdot)}{\Longleftarrow} I_\epsilon \right\}_k$ holds for arbitrary $k\geq 1$.

We only deal with the attracting case.
By Proposition \ref{ZG-periodic} there exists a point $z_k \in I_\epsilon$ such that 
\begin{equation}
\label{inv-discrete}
\varphi_\epsilon(-k' t,z_k)\in I_\epsilon\text{ for all }k'=0,1,\cdots, k,\quad W^u(z_k) \cap N^\exit \not = \emptyset.
\end{equation}
Since $I_\epsilon$ is compact, then the sequence $\{z_k\}_{k\geq 1}$ contains a convergence subsequence. Let $z_\infty$ be the limit. 
We can choose a sufficiently small $t_\epsilon > 0$ such that $\varphi_\epsilon([-t_\epsilon,0),z)\cap I_\epsilon = \emptyset$ holds for all $z\in \partial I_\epsilon$, since $I_\epsilon$ is an isolating block.
Note that $\partial I_\epsilon$ means the boundary with respect to the slow manifold in $N$.
Obviously we can choose a limit point $z_\infty$ from (\ref{inv-discrete}) with $t=t_\epsilon$. 
The property of $t_\epsilon, z_\infty$ and (\ref{inv-discrete}) thus imply $\varphi_\epsilon( [-k t_\epsilon, 0], z_\infty) \subset I_\epsilon$ for all $k\in \mathbb{N}$, which yields $\varphi_\epsilon ((-\infty,0],z_\infty)\subset I_\epsilon$. 
The inclusion $\varphi_\epsilon ([0, \infty),z_\infty)\subset I_\epsilon$ is obvious since $I_\epsilon$ is attracting. 
As a consequence, $z_\infty \in \Inv(I_\epsilon)$. 
By continuity of the graph $b_{u,\epsilon}^z$ with respect to $z$, then $W^u(z_\infty)$ is well-defined and $W^u(z_\infty) \cap N^\exit \not = \emptyset$.
\end{proof}

The validation of $I_\epsilon$ being an isolating block on slow manifolds can be done by arguments in Section \ref{section-inv-set-on-mfd}. 
On the other hand, if we apply Lemma \ref{lem-fiber-cov}, we have to know the location of fibers $W^u(z)$ for all $z\in I_\epsilon$. 
The unstable ($m$-)cone condition indicates that, for $z\in I_\epsilon$, the fiber $W^u(z)$ is included in the unstable cone $C_m^u(z)$. 
We can apply this cone to validate assumptions in Lemma \ref{lem-fiber-cov}. 
Namely, the following lemma holds, which can be validated by rigorous numerics.

\begin{lem}
\label{lem-fiber-cone}
Consider (\ref{abstract-form}). Let $N\subset \mathbb{R}^{n+1}$ be a fast-saddle-type block with $u=1$ such that the coordinate representation $N_c$ is actually given by $N_c = \overline{B_1}\times \overline{B_s} \times [0,1]$.
Also, let $N^\exit$ be a fast-exit face of $N$ with 
\begin{equation*}
\pi_a N^\exit = \{1\},\quad (N^\exit)^- = \{y^\pm\}\ \text{ with }0 < y_- < y_+ < 1
\end{equation*}
and $I_\epsilon \subset \Int N$ be a connected subset of $1$-dimensional slow manifold in $N$. 
Assume that, for some $m\geq 1$, the unstable $m$-cone condition is satisfied in $N$ and that, at $z_\pm \in \partial I_\epsilon$, the unstable $m$-cones $C_m^u(z_{\pm})$ satisfy the following properties:
\begin{align}
\label{ass-fiber1}
&(C_m^u(z_{\pm})) \cap \partial N \subset N^{f,-},\\
\label{ass-fiber2}
&\inf (\pi_y  (C_m^u(z_+)) \cap \{a=1\}) > y_+,\quad \sup (\pi_y  (C_m^u(z_-)) \cap \{a=1\}) < y_-.
\end{align}
Then all assumptions in Lemma \ref{lem-fiber-cov} hold.
\end{lem}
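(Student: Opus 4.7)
The plan is to combine the Fenichel-type local graph structure provided by Corollary \ref{cor-fiber-rigorous} with the rigid containment guaranteed by the unstable $m$-cone, and then read off both Lipschitz/horizontal-disk properties and boundary positions from the cones at $z_\pm$. First, for each $z\in I_\epsilon$, Corollary \ref{cor-fiber-rigorous} furnishes the fiber $W^u(z)$ locally as a Lipschitz graph $(b,y)=h_u^z(a,\epsilon)$ in a neighborhood of $\pi_a(z)$; denote the corresponding parametrization by $b_{u,\epsilon}^z$. The remaining task is then purely geometric: to extend this local graph to $a\in[-1,1]$ and to control its $y$-values at $a=1$.

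Second, I would establish the pointwise inclusion $W^u(z)\subset C_m^u(z)$ for every $z\in I_\epsilon$. For $w\in W^u(z)$, both $\varphi_\epsilon(-t,w)$ and $\varphi_\epsilon(-t,z)$ remain in $N$ for all $t\ge 0$ with $|\pi_a(\varphi_\epsilon(-t,w))-\pi_a(\varphi_\epsilon(-t,z))|\to 0$ as $t\to\infty$ (since $z\in S_\epsilon$ and $W^u(z)$ collapses onto $z$ in backward time). Applying Proposition \ref{prop-unst-m-cone} to the quantity $M^{u,m}$ evaluated on this difference rules out a crossing from $M^{u,m}<0$ to $M^{u,m}>0$, so $M^{u,m}\le 0$ at $t=0$, which is exactly $w\in C_m^u(z)$.

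Third, I would extend $b_{u,\epsilon}^z$ into a full horizontal disk. The unstable cone condition together with Lemma \ref{lem-ratio}-(1) forces the forward-time trajectory of any point on the local unstable fiber through $z$ to expand exponentially in the $a$-direction and exit $N$ in finite time through $\partial N$. Because $W^u(z)\subset C_m^u(z)$, exit can only occur at $C_m^u(z)\cap\partial N$. Hypothesis (\ref{ass-fiber1}) ensures $C_m^u(z_\pm)\cap\partial N\subset N^{f,-}$. For intermediate $z\in I_\epsilon$, I would observe that $I_\epsilon\subset S_\epsilon\cap\operatorname{Int} N$ is close to the flat slow manifold in $(a,b)$-coordinates, so the family $\{C_m^u(z)\}_{z\in I_\epsilon}$ is essentially a $\pi_y$-translation of the cones at $z_\pm$; the $y$-spread of $C_m^u(z)\cap\{a=\pm1\}$ therefore lies in
\begin{equation*}
\bigl[\inf\pi_y(C_m^u(z_-)\cap\{a=\pm1\}),\ \sup\pi_y(C_m^u(z_+)\cap\{a=\pm1\})\bigr]\subset(0,1),
\end{equation*}
so that $C_m^u(z)\cap\partial N\subset N^{f,-}$ for every $z\in I_\epsilon$. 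This confirms that the fiber through $z$ reaches both $\{a=-1\}$ and $\{a=+1\}$ while its endpoints remain in $N^{f,-}$; combined with the Lipschitz graph property, $b_{u,\epsilon}^z$ therefore defines a horizontal disk in $N$ in the sense of Definition \ref{dfn-disks}, verifying the first assumption of Lemma \ref{lem-fiber-cov}.

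Finally, for the $y$-location at $a=1$, I use $b_{u,\epsilon}^{z_+}(1)\in W^u(z_+)\cap\{a=1\}\subset C_m^u(z_+)\cap\{a=1\}$, hence by (\ref{ass-fiber2})
\begin{equation*}
\pi_y b_{u,\epsilon}^{z_+}(1)\;\ge\;\inf\pi_y(C_m^u(z_+)\cap\{a=1\})\;>\;y_+,
\end{equation*}
and symmetrically $\pi_y b_{u,\epsilon}^{z_-}(1)<y_-$, yielding the second assumption of Lemma \ref{lem-fiber-cov}. The main obstacle I anticipate is the geometric step linking the two boundary cones $C_m^u(z_\pm)$ to the interior ones, i.e., justifying that the continuity-and-translation argument above is quantitative enough to exclude any $C_m^u(z)$ from touching the slow boundary of $N$; once this is handled, the remaining verifications follow from the unstable $m$-cone machinery already in place.
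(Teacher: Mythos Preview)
Your overall structure matches the paper's proof: (i) the fiber $W^u(z)$ lies inside the unstable $m$-cone $C_m^u(z)$; (ii) the cones control where the fiber meets $\partial N$, forcing horizontal-disk behavior; (iii) the boundary inequalities at $z_\pm$ are read off from (\ref{ass-fiber2}).

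There is, however, a genuine error in your step 2. You write that Proposition~\ref{prop-unst-m-cone} ``rules out a crossing from $M^{u,m}<0$ to $M^{u,m}>0$'' and that ``$M^{u,m}\le 0$ \dots\ is exactly $w\in C_m^u(z)$.'' Both signs are wrong: the proposition asserts $(M^{u,m})'>0$ at each zero, so in forward time $M^{u,m}$ can \emph{only} cross from negative to positive (it is the positive-to-negative crossing that is excluded), and membership in $C_m^u(z)$ means $|\Delta a|^2\ge m^2|\Delta\zeta|^2$, i.e.\ $M^{u,m}\ge 0$. Even after correcting both signs the argument does not close: assuming $M^{u,m}(0)<0$, backward invariance of $\{M^{u,m}<0\}$ together with $\Delta a(-t),\Delta\zeta(-t)\to 0$ for $w\in W^u(z)$ only yields $M^{u,m}(-t)\to 0^-$, which is no contradiction. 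The paper does not attempt this dynamical argument at all; it simply invokes the fact (standard in cone-condition theory and implicit in the $m$-cone analogue of the Lipschitz argument behind Corollary~\ref{cor-fiber-rigorous}) that the fiber $a\mapsto b_{u,\epsilon}^z(a)$ is a $1/m$-Lipschitz graph passing through $z$, from which $W^u(z)\subset C_m^u(z)$ is immediate.

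On your anticipated ``main obstacle'': the paper dispatches this in one line. The cones $C_m^u(z)$ all have identical shape, and $I_\epsilon$ is a one-dimensional connected arc with endpoints $z_\pm$; the paper observes that hypothesis (\ref{ass-fiber1}) at the two extreme vertices therefore forces $\{(a,b_{u,\epsilon}^z(a)):a\in\overline{B_1}\}\subset N$ for every intermediate $z\in I_\epsilon$. No further quantitative estimate is required beyond this.
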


\begin{proof}
The unstable cone condition indicates, as mentioned, that the graph of the Lipschitz function $b_{u,\epsilon}^z$ is contained in $C_m^u(z)$ for all $z\in I_\epsilon$.
Since cones have the identical shape for $z\in I_\epsilon$ and $I_\epsilon$ is $1$-dimensional, then the condition $\{(a,b^z_{u,\epsilon})\mid a\in \overline{B_1}\}\subset N$ for all $z\in I_\epsilon$ obviously follows from (\ref{ass-fiber1}). 
Inequalities $\pi_y  b_{u,\epsilon}^{z_+}(1) > y_+$ and $\pi_y  b_{u,\epsilon}^{z_-}(1) < y_-$ are direct consequences of (\ref{ass-fiber2}).
\end{proof}

%
%
\section{Topological existence theorems of singularly perturbed trajectories}
\label{section-existence}

Now we are ready to state our verification theorems. In this section we derive existence theorems for periodic and heteroclinic orbits which can be applied to (\ref{fast-slow}). 

%
%
\subsection{Existence of periodic orbits}
First we state the existence theorems for periodic orbits near singular orbits. 
\begin{thm}[Existence of periodic orbits.]
\label{thm-periodic-1}
Consider (\ref{fast-slow}). Assume that there exist $\rho \in \mathbb{N}$, $\epsilon_0 > 0$ such that (\ref{fast-slow}) admits the following $\epsilon\ (\in [0,\epsilon_0])$-parameter family of sets in $\mathbb{R}^{n+1}$ (see also Fig. \ref{fig-theorem-main}-(a)):
\begin{description}
\item[$\mathcal{S}_\epsilon^j$: ] 
$j = 0,\cdots, \rho$: a fast-saddle-type block satisfying the covering-exchange property with respect to $\mathcal{F}_\epsilon^{j-1}$ defined below and for $T^{j-1} > 0$.
\item[$\mathcal{F}_\epsilon^j$: ] $j = 0,\cdots, \rho$: a fast-exit face of $\mathcal{S}_\epsilon^j$. Identify $\mathcal{F}_\epsilon^{-1}$ with $\mathcal{F}_\epsilon^\rho$.
\end{description}

Then the following statements hold.
\begin{enumerate}
\item When $\epsilon \in (0,\epsilon_0]$, (\ref{fast-slow}) admits a periodic orbit $\{\Gamma_\epsilon(t)\mid t\in \mathbb{R}\}$ with a sequence of positive numbers
\begin{equation*}
0 < t^0_f < t^1_s < t^1_f < \cdots < t^\rho_f < t^0_s \equiv T_\epsilon < \infty
\end{equation*}
such that 
\begin{align*}
&\Gamma_\epsilon(T_\epsilon) = \Gamma_\epsilon(0) \in \mathcal{F}_\epsilon^0,\\
&\Gamma_\epsilon([t^j_f,t^{j+1}_s])\subset \mathcal{S}_\epsilon^{j+1},\quad \Gamma_\epsilon(t^{j+1}_s)\in \mathcal{F}_\epsilon^{j+1}\quad (j = 0,\cdots, \rho-1)\quad \text{ and }\\
&\Gamma_\epsilon([t^\rho_f,T_\epsilon])\subset \mathcal{S}_\epsilon^0.
\end{align*}
\item When $\epsilon = 0$, (\ref{fast-slow}) admits the collection of heteroclinic orbits $\{\Gamma^j = \{x_{y_j}(t)\}_{t\in \mathbb{R}}\}_{j=0}^\rho$ with
\begin{align*}
&\lim_{t\to -\infty} x_{y_j}(t) = p_j \in \mathcal{S}_0^j\text{ with }f(p_j, y_j, 0) = 0,\\
&\lim_{t\to +\infty} x_{y_j}(t) = q_j \in \mathcal{S}_0^{j+1} \text{ with }f(q_j, y_j, 0) = 0 \quad (j = 0,\cdots, \rho-1),\\
&\lim_{t\to -\infty} x_{y_\rho}(t) = p_\rho \in \mathcal{S}_0^\rho\text{ with }f(p_\rho, y_\rho, 0) = 0,\\
&\lim_{t\to +\infty} x_{y_\rho}(t) = q_\rho \in \mathcal{S}_0^0 \text{ with }f(q_\rho, y_\rho, 0) = 0 \quad \text{ and }\\
&\Gamma^j \cap \mathcal{F}_\epsilon^j \not = \emptyset \quad (j = 0,\cdots, \rho)
\end{align*}
bridging $1$-dimensional normally hyperbolic invariant manifolds $S_0^j\subset \mathcal{S}^j_0$ and $S_0^{j+1}\subset \mathcal{S}^{j+1}_0$ for $j=0,\cdots, \rho-1$. In case that $j=\rho$ such a heteroclinic orbit connects $S_0^\rho\subset \mathcal{S}^\rho_0$ and $S_0^0\subset \mathcal{S}^0_0$. 
\end{enumerate}
\end{thm}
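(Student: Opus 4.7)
The plan is to treat the two claims in sequence: first unfold the covering-exchange structure into a cyclic chain of covering relations and invoke Proposition \ref{ZG-periodic}, then exploit the degeneracy $y' \equiv 0$ at $\epsilon = 0$ together with graph representations from Theorem \ref{thm-inv-mfd-rigorous} to extract the singular heteroclinic chain.

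For part 1, fix $\epsilon \in (0, \epsilon_0]$. By the definition of the covering-exchange property, each $\mathcal{S}_\epsilon^j$ with $j = 0, \ldots, \rho$ plays the role of $M$ in Proposition \ref{prop-CE}, with $\mathcal{F}_\epsilon^{j-1}$ in the role of $N$ (via $\varphi_\epsilon(T^{j-1}, \cdot)$) and $\mathcal{F}_\epsilon^j$ in the role of the fast-exit face $M^{\rm exit}$. Proposition \ref{prop-CE} then furnishes an intermediate $h$-set $\tilde{\mathcal{S}}_\epsilon^j \subset \mathcal{S}_\epsilon^j$ satisfying
\begin{equation*}
\mathcal{F}_\epsilon^{j-1} \overset{\varphi_\epsilon(T^{j-1},\cdot)}{\Longrightarrow} \tilde{\mathcal{S}}_\epsilon^j \overset{P_\epsilon^{\mathcal{S}_\epsilon^j}}{\Longrightarrow} \mathcal{F}_\epsilon^j .
\end{equation*}
Concatenating these pairs cyclically for $j = 0, 1, \ldots, \rho$ using the identification $\mathcal{F}_\epsilon^{-1} = \mathcal{F}_\epsilon^\rho$ yields a closed chain of $2(\rho+1)$ covering relations on fast-exit faces. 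Proposition \ref{ZG-periodic} provides a point $p \in \Int \mathcal{F}_\epsilon^\rho$ whose forward orbit under the composed map is periodic and visits each $\tilde{\mathcal{S}}_\epsilon^j$ and each $\mathcal{F}_\epsilon^j$ in the prescribed order. Setting $\Gamma_\epsilon(0) = p$, the times $t^j_f$ arise as the fast transit times $T^{j-1}$, while $t^{j+1}_s$ arise as the exit times of the Poincar\'{e} maps $P_\epsilon^{\mathcal{S}_\epsilon^j}$; the period $T_\epsilon$ is the sum of all of these.

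For part 2, set $\epsilon = 0$. Since cone conditions are part of (CE2) and hold uniformly on $[0,\epsilon_0]$, Theorem \ref{thm-inv-mfd-rigorous} applies at the singular value $\epsilon = 0$ as well, yielding on each $\mathcal{S}_0^j$ the normally hyperbolic one-dimensional critical manifold $S_0^j$ as the graph of a smooth function of $y$, along with $W^s(S_0^j)$ and $W^u(S_0^j)$ as graphs of Lipschitz functions. Because $y' \equiv 0$ at $\epsilon = 0$, each level set $\{y = \hat y\}$ is invariant under the layer flow, so that for each point $p \in S_0^j$ at parameter value $\hat y$, the fiber $W^u(p) \subset W^u(S_0^j) \cap \{y = \hat y\}$ is a horizontal disk in $\mathcal{S}_0^j \cap \{y = \hat y\}$ in the sense of Definition \ref{dfn-disks} (by Corollary \ref{cor-disk}), and $W^s(p)$ is the analogous vertical disk. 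For each $j$, the covering relation (CE4) at $\epsilon = 0$ restricts to every slice $\{y = \hat y\}$ lying inside $\pi_y(\mathcal{F}_0^j)$; choosing $y_j$ to be any such value, Proposition \ref{WZ-heteroclinic} applied in the slice with the horizontal disk $W^u(p_j) \subset \mathcal{F}_0^j \cap \{y = y_j\}$ (after a Fenichel-type homotopy) and the vertical disk $W^s(q_j)$ in $\mathcal{S}_0^{j+1} \cap \{y = y_j\}$ produces the required heteroclinic orbit $\Gamma^j$ with the claimed limits. Iterating over $j$ and closing the cycle yields the heteroclinic chain.

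The main obstacle will be part 2: although the machinery above is in place, one must carefully check that at $\epsilon = 0$ the covering-exchange condition (CE4) actually propagates a horizontal disk through $\mathcal{S}_0^j$ to land on a vertical disk of $W^s(S_0^{j+1})$ inside $\mathcal{F}_0^j$, despite the degeneration of the monotonicity hypothesis (CE3) to the trivial statement $g(x,y,0) \neq 0$. The argument must use that the covering-exchange was assumed uniformly in $\epsilon \in [0,\epsilon_0]$, so that the intersection structure persists in the limit, and then restrict everything to a single fixed $y = y_j$ within the $y$-range guaranteed by (CE5), which is where the unique singular heteroclinic occurs.
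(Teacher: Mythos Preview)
Your proposal is correct and follows essentially the same route as the paper: for $\epsilon>0$ you unwind the covering-exchange pairs via Proposition~\ref{prop-CE} into a closed chain of covering relations and invoke Proposition~\ref{ZG-periodic}, and for $\epsilon=0$ you appeal to Proposition~\ref{WZ-heteroclinic} together with the horizontal/vertical disk structure supplied by Corollary~\ref{cor-disk}. The paper's own proof is terser---it dispatches the $\epsilon=0$ case in a single sentence citing Proposition~\ref{WZ-heteroclinic} and Corollary~\ref{cor-disk}---so your closing paragraph of concern about (CE3) degenerating and the need to restrict to slices $\{y=y_j\}$ is more elaboration than the paper deems necessary, but it is the right mechanism and not a gap; minor bookkeeping issues (your periodic point lands in $\mathcal{F}_\epsilon^\rho$ rather than $\mathcal{F}_\epsilon^0$, and the identification of $t_f^j$ with $T^{j-1}$ rather than accumulated entry times) are easily repaired by a time shift and relabeling.
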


\begin{proof}
First consider the case $\epsilon \in (0,\epsilon_0]$. 
By Proposition \ref{prop-CE} and our assumption, there exist $h$-sets $\tilde{\mathcal{S}}_\epsilon^j\subset \mathcal{S}_\epsilon^j$ such that we get the sequence of covering relations
\begin{equation*}
\mathcal{F}_\epsilon^\rho \overset{ \varphi_\epsilon(T_\rho,\cdot)}{\Longrightarrow} \tilde{\mathcal{S}}_\epsilon^0 \overset{ P^0_\epsilon}{\Longrightarrow} \mathcal{F}_\epsilon^0 \overset{ \varphi_\epsilon(T_0,\cdot) }{\Longrightarrow} \tilde{\mathcal{S}}_\epsilon^1 \overset{P^1_\epsilon}{\Longrightarrow} \cdots \overset{ \varphi_\epsilon(T_{\rho-1},\cdot) }{\Longrightarrow} \tilde{\mathcal{S}}_\epsilon^\rho  \overset{ P^\rho_\epsilon}{\Longrightarrow} \mathcal{F}_\epsilon^\rho, 
\end{equation*}
where 
$P^j_\epsilon : \mathcal{S}_\epsilon^j\to \partial \mathcal{S}_\epsilon^j$ is the Poincar\'{e} map of $\mathcal{S}_\epsilon^j$. 
Then our statement is just the consequence of Proposition \ref{ZG-periodic}.
The case $\epsilon = 0$ is just the consequence of Proposition \ref{WZ-heteroclinic}.
All arguments with respect to horizontal and vertical disks are valid from Corollary \ref{cor-disk}.
\end{proof}

\begin{rem}\rm
Standard Exchange Lemma says that the period when trajectories stays near slow manifolds in $O(\epsilon)$-distance is $O(1/\epsilon)$ (actually $O(\exp(-c/\epsilon))$ for some $c > 0$, discussed in \cite{JKK}).
The period $T_\epsilon$ is thus expected to be $O(1/\epsilon)$ for $\epsilon \in (0,\epsilon_0]$. 
\end{rem}

\begin{figure}[htbp]\em
\begin{minipage}{0.5\hsize}
\centering
\includegraphics[width=6.0cm]{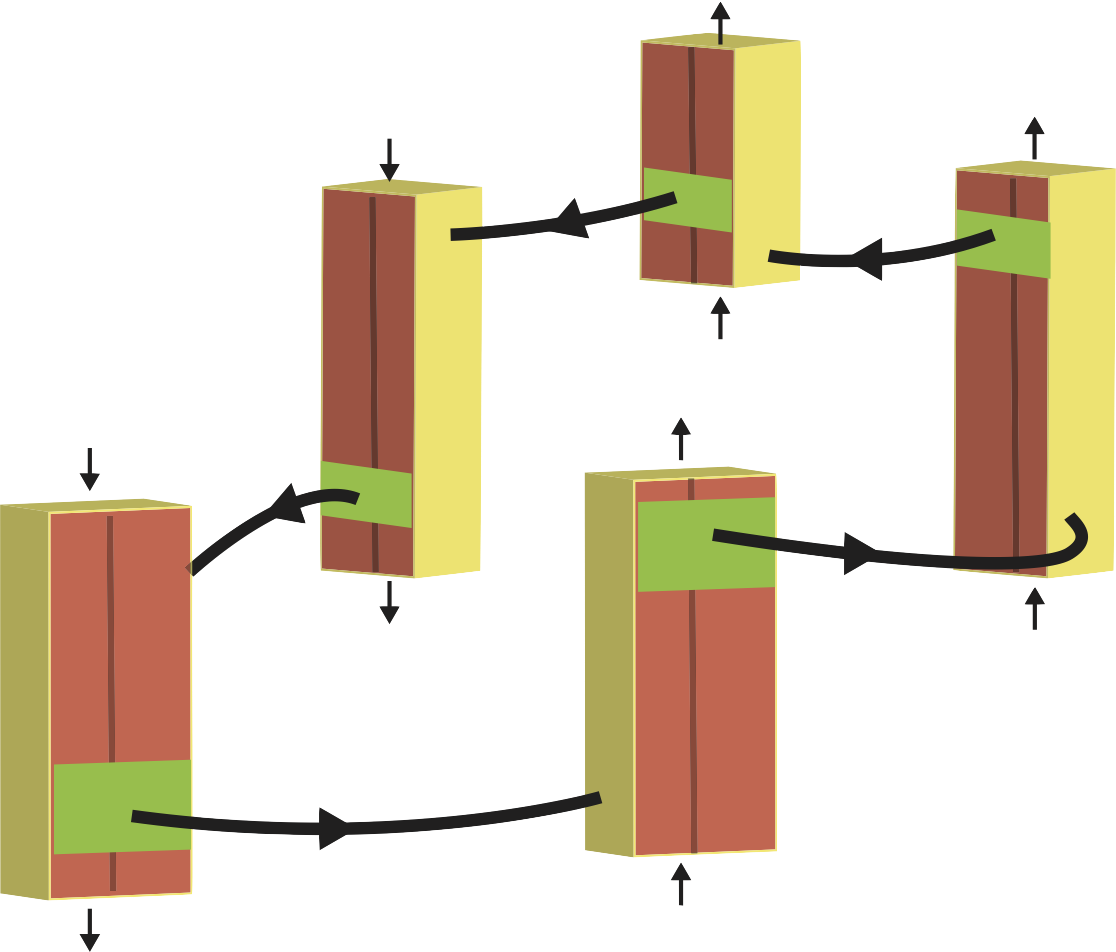}
(a)
\end{minipage}
\begin{minipage}{0.5\hsize}
\centering
\includegraphics[width=6.0cm]{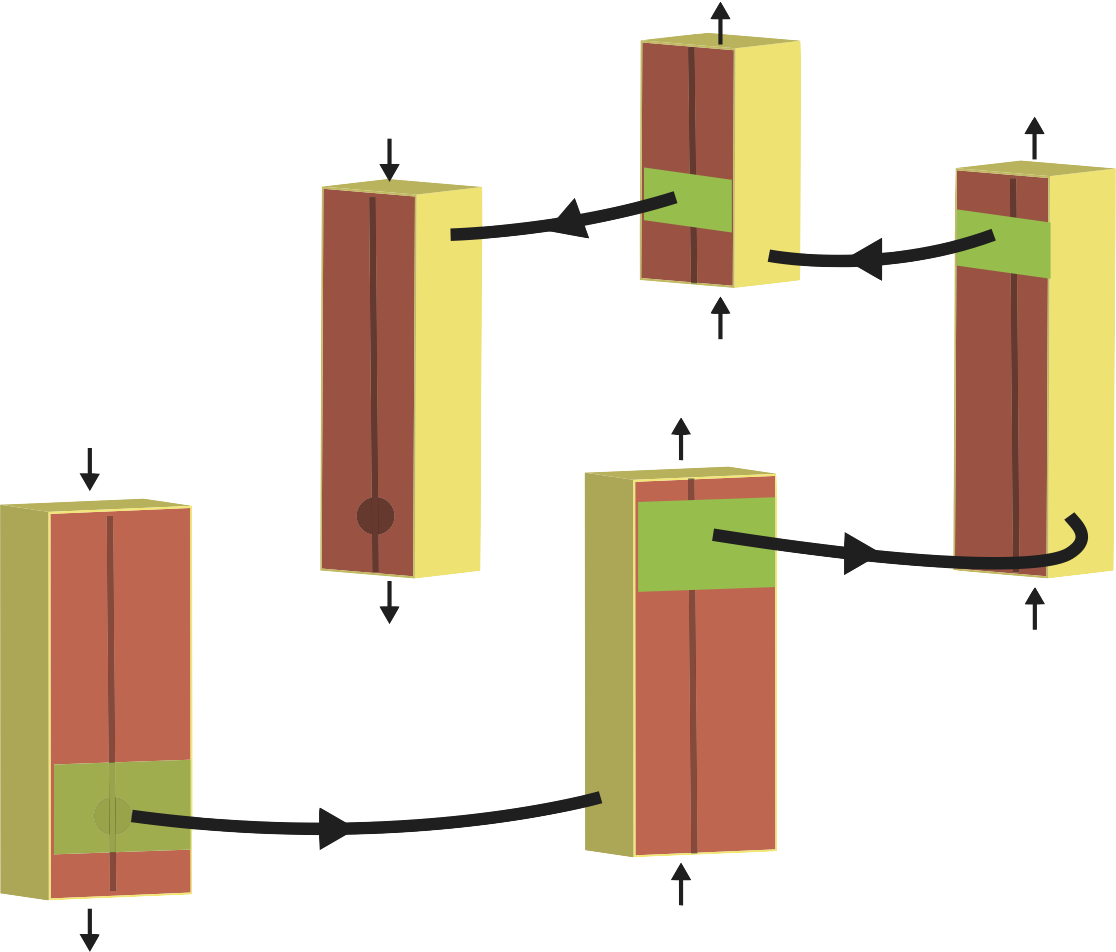}
(b)
\end{minipage}
\caption{Illustration of $\{\mathcal{S}^j_\epsilon, \mathcal{F}^j_\epsilon\}_{j=0}^\rho$ in Theorems \ref{thm-periodic-1} and \ref{thm-heteroclinic-1}, $\rho = 4$.}
\label{fig-theorem-main}
Rectangular parallelepipeds (colored red and yellow) are fast-saddle-type blocks with fast-exit faces (colored by green) satisfying the covering-exchange property. Vertical black arrows show the slow vector field. Bold black curves outside blocks describe heteroclinic orbits $\{\Gamma^j\}_{j=0}^\rho$. Black lines inside blocks correspond to limit critical manifolds. Their union generates a singular limit orbit $\Gamma_0$. Theorem \ref{thm-periodic-1} claims that there exists a family of periodic orbits $\{\Gamma_\epsilon\}_{\epsilon \in (0,\epsilon_0]}$ near $\Gamma_0$.
\end{figure}


Theorem \ref{thm-periodic-1} can be generalized as stated below, replacing covering-exchange pairs by covering-exchange sequences.

\begin{cor}[Validation of periodic orbits with slow shadowing]
\label{cor-periodic-2}
Consider (\ref{fast-slow}). Assume that there exist $\rho \in \mathbb{N}$, $\epsilon_0 > 0$ such that (\ref{fast-slow}) admits the following $\epsilon\ (\in [0,\epsilon_0])$-parameter family of sets in $\mathbb{R}^{n+1}$ (see also Fig. \ref{fig-thm-per-shadow}):
\begin{description}
\item[$\{N_\epsilon^{j,i}\}_{i=0,\cdots, m_j}^{j=0,\cdots, \rho}$: ] a sequence of $h$-sets which forms a covering-exchange sequence with $\mathcal{F}_\epsilon^{j-1}$ defined below.
\item[$\mathcal{F}_\epsilon^j$: ] $j = 0,\cdots, \rho$: a fast-exit face of $N_\epsilon^{j,m_j}$. Identify $\mathcal{F}_\epsilon^{-1}$ with $\mathcal{F}_\epsilon^\rho$.
\end{description}
Then all statements in Theorem \ref{thm-periodic-1} hold with $\mathcal{S}_\epsilon^j = \bigcup_{i=0}^{m_j}N_\epsilon^{j,i}$.
\end{cor}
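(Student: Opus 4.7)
The plan is to reduce the corollary to Theorem \ref{thm-periodic-1} by interpreting each covering-exchange sequence as a long chain of covering relations, and then invoking Proposition \ref{ZG-periodic} (for $\epsilon>0$) and Proposition \ref{WZ-heteroclinic} (for $\epsilon=0$) on the enlarged chain. The role of each single covering-exchange pair $(N,\mathcal{S}_\epsilon^j)$ in Theorem \ref{thm-periodic-1} is played here by the entire block sequence $\{N_\epsilon^{j,i}\}_{i=0}^{m_j}$, and the machinery of Section \ref{section-show-shadowing} is precisely what is needed to replace the single Poincar\'e-map covering by a composition of Poincar\'e-map coverings propagated along the curved slow manifold.

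First I would, for each fixed $\epsilon\in(0,\epsilon_0]$ and each $j\in\{0,\dots,\rho\}$, unpack the covering-exchange sequence assumption of Definition \ref{dfn-CE-seq}. Applying Proposition \ref{prop-CE-2-2} to the \textbf{Drop} at the entrance of $N_\epsilon^{j,0}$ yields an $h$-set $\tilde M_{j,0}\subset(N_\epsilon^{j,0})_{\leq \bar y_{j,0}}$ with
\begin{equation*}
\mathcal{F}_\epsilon^{j-1}\overset{\varphi_\epsilon(T_{j-1},\cdot)}{\Longrightarrow}\tilde M_{j,0}\overset{P_\epsilon^{(N_\epsilon^{j,0})_{\leq \bar y_{j,0}+\chi_{j,0}\bar h_{j,0}}}}{\Longrightarrow} M_{j,1}.
\end{equation*}
Then I would iterate Proposition \ref{prop-CE-2-1} along each consecutive slow-shadowing pair $\{N_\epsilon^{j,i},N_\epsilon^{j,i+1}\}$, producing $h$-sets $M_{j,i+1}\subset(N_\epsilon^{j,i+1})_{\bar y_{j,i}+\chi_{j,i}\bar h_{j,i}}$ with $M_{j,i}\overset{P_\epsilon}{\Longrightarrow} M_{j,i+1}$, for $i=1,\dots,m_j-1$. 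Finally, applying Proposition \ref{prop-CE-2-3} to the terminal pair $\{N_\epsilon^{j,m_j-1},N_\epsilon^{j,m_j}\}$ with its fast-exit face $\mathcal{F}_\epsilon^j$ closes the block with $M_{j,m_j-1}\overset{P_\epsilon}{\Longrightarrow} M_{j,m_j}\overset{P_\epsilon^{N_\epsilon^{j,m_j}}}{\Longrightarrow}\mathcal{F}_\epsilon^j$. Concatenating these chains over all $j=0,\dots,\rho$ cyclically (identifying $\mathcal{F}_\epsilon^{-1}$ with $\mathcal{F}_\epsilon^\rho$) produces a single long loop of covering relations, and Proposition \ref{ZG-periodic} then supplies a fixed point of the composed return map, which by construction corresponds to a periodic orbit $\Gamma_\epsilon$ whose time pattern is exactly the one stated in Theorem \ref{thm-periodic-1}.

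For the $\epsilon=0$ case I would argue separately, since the slow-shadowing mechanism degenerates when $y$ is frozen. Here each union $\mathcal{S}_0^j=\bigcup_i N_0^{j,i}$ is an isolating neighborhood of the normally hyperbolic critical manifold validated block-by-block via Theorem \ref{thm-inv-mfd-rigorous} and patched into a global graph by Lemma \ref{lem-slow-mfd-global}. Fixing $y=y_j$ at the value attained by the $\epsilon>0$ trajectory crossing $\mathcal{F}_\epsilon^j$, the covering-exchange sequence degenerates to the single covering $\mathcal{F}_0^{j-1}\overset{\varphi_0(T_{j-1},\cdot)}{\Longrightarrow}\tilde{\mathcal{S}}_0^j$ in the layer problem, and the stable/unstable manifolds of the corresponding equilibria serve as vertical/horizontal disks in the sense of Corollary \ref{cor-disk}. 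Proposition \ref{WZ-heteroclinic} then yields the chain of heteroclinic orbits $\{\Gamma^j\}_{j=0}^\rho$ between the equilibria $p_j,q_j$ described in the statement.

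The main obstacle I expect is bookkeeping: one has to choose, consistently across $j$, the section heights $\bar y_{j,i}$ and the ratios $\chi_{j,i}$ so that the \textbf{Drop} assumption $\mathcal{F}_\epsilon^{j-1}\overset{\varphi_\epsilon(T_{j-1},\cdot)}{\Longrightarrow}(N_\epsilon^{j,0})_{\leq\bar y_{j,0}}$ is actually compatible with the output of the previous \textbf{Jump}, i.e.\ the image of the preceding Poincar\'e chain must land inside the $y$-slab assumed by the next drop. This is a geometric compatibility condition hidden inside Definition \ref{dfn-CE-seq}, and verifying it cleanly requires checking that the $y$-projection of $\varphi_\epsilon(T_{j-1},\mathcal{F}_\epsilon^{j-1})$ is strictly contained in the admissible window $(y^-_{j,0},y^+_{j,0}-\bar h_{j,0}]\cap[y^-_{j,1}-\chi_{j,0}\bar h_{j,0},y^+_{j,1}-\bar h_{j,0}]$ for every $\epsilon\in(0,\epsilon_0]$. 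Once this uniformity is secured, the chain of coverings composes without obstruction and the rest of the proof is a mechanical application of the topological machinery already established.
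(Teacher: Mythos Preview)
Your proposal is correct and follows essentially the same approach as the paper: unpack each covering-exchange sequence via Propositions \ref{prop-CE-2-2} (Drop), \ref{prop-CE-2-1} (iterated slow shadowing), and \ref{prop-CE-2-3} (Jump) to obtain a cyclic chain of covering relations, then apply Proposition \ref{ZG-periodic}. The paper's own proof is terser (it treats only $\epsilon\in(0,\epsilon_0]$ explicitly and absorbs your bookkeeping concern into the phrase ``with appropriate choices of $\bar y=\bar y_{j,i}$ and $\bar h=\bar h_{j,i}$''), while you spell out the $\epsilon=0$ case and the section-height compatibility more carefully; note that the latter is not an obstacle to the proof but part of the hypotheses packaged in Definition \ref{dfn-CE-seq}.
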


\begin{proof}
We only consider the case $\epsilon \in (0,\epsilon_0]$.
By Propositions \ref{prop-CE-2-1}, \ref{prop-CE-2-2}, \ref{prop-CE-2-3} and our assumptions, there is a sequence of $h$-sets $\{M_\epsilon^{j,i}\}_{i=0,\cdots, m_j}^{j=0,\cdots, \rho}$ with $M_\epsilon^{j,i} \subset N_\epsilon^{j,i}$ which admits a sequence of covering relations
\begin{align*}
\mathcal{F}_\epsilon^\rho &\overset{\varphi_\epsilon(T_\rho,\cdot)}{\Longrightarrow}
 M_\epsilon^{0,0} \overset{ P^{0,0}_\epsilon}{\Longrightarrow}
 M_\epsilon^{0,1} \overset{ P^{0,1}_\epsilon}{\Longrightarrow}
 \cdots \overset{ P^{0,m_0-1}_\epsilon}{\Longrightarrow}
 M_\epsilon^{0,m_0}\\
 &\quad \overset{ P^{0,m_0}_\epsilon}{\Longrightarrow}
  \mathcal{F}_\epsilon^0 \overset{ \varphi_\epsilon(T_0,\cdot)}{\Longrightarrow}
 M_\epsilon^{1,0} \overset{ P^{1,0}_\epsilon}{\Longrightarrow}
    \cdots  \cdots \overset{ P^{\rho,m_\rho-1}_\epsilon}{\Longrightarrow}
 M_\epsilon^{\rho,m_\rho} \overset{ P^{\rho,m_\rho}_\epsilon}{\Longrightarrow}
  \mathcal{F}_\epsilon^\rho,
\end{align*}
where $P^{j,i}_\epsilon : (N^{j,i}_\epsilon)_{\leq \bar y_{j,i} + \bar h_{j,i}} \to \partial (N^{j,i}_\epsilon)_{\bar y_{j,i} + \bar h_{j,i}}$ is the Poincar\'{e} map in $(N^{j,i}_\epsilon)_{\leq \bar y_{j,i} + \bar h_{j,i}}$ with appropriate choices of $\bar y = \bar y_{j,i}$ and $\bar h = \bar h_{j,i}$ following Propositions \ref{prop-CE-2-1}, \ref{prop-CE-2-2} and  \ref{prop-CE-2-3}. 
Then our statement is just the consequence of Proposition \ref{ZG-periodic}.
\end{proof}

\begin{figure}[htbp]\em
\begin{minipage}{1\hsize}
\centering
\includegraphics[width=6.0cm]{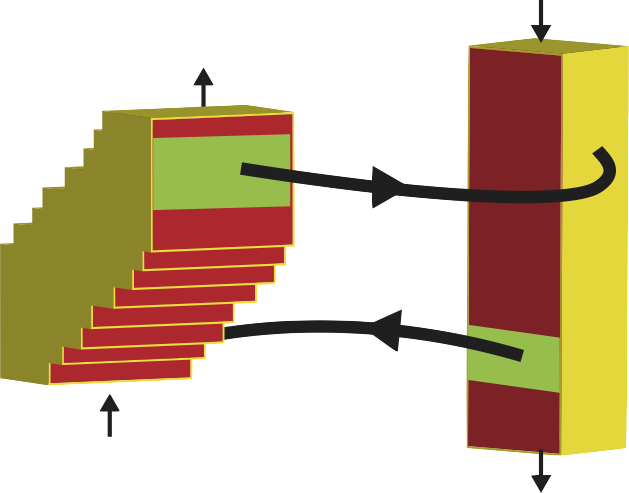}
\end{minipage}
\caption{Illustration of $\{\mathcal{S}^j_\epsilon, \mathcal{F}^j_\epsilon\}_{j=0}^\rho$ in Theorem \ref{cor-periodic-2}, $\rho = 2$.}
\label{fig-thm-per-shadow}
Rectangular parallelepipeds (colored red and yellow) are fast-saddle-type blocks with fast-exit faces (colored by green) satisfying the covering-exchange property and slow shadowing. 
Vertical black arrows show the slow vector field. Bold black curves outside blocks describe heteroclinic orbits $\{\Gamma^j\}_{j=0}^\rho$ and curves or surfaces inside blocks correspond to limit critical manifolds. Their union generates a singular limit orbit $\Gamma_0$. Theorem \ref{thm-periodic-1} claims that there exists a family of periodic orbits $\{\Gamma_\epsilon\}_{\epsilon \in (0,\epsilon_0]}$ near $\Gamma_0$.
\end{figure}


Note that points on the slow manifold $S_\epsilon^j$ are contained in all $h$-sets $\{M^{i,j}_\epsilon\}$ in the proof of Corollary \ref{cor-periodic-2}. This observation implies that true orbits in the full system (\ref{fast-slow}) {\em shadow} trajectories on $S_\epsilon^j$ via covering relations (cf. \cite{Rob}). This fact gives us a suggestion to describe true trajectories for fast-slow systems with multi-dimensional slow variables from the viewpoint of shadowing.

\par
Corollary \ref{cor-periodic-2} can be further generalized as stated below, at least in the case $u=s=1$, replacing covering-exchange sequences by covering-exchange sequences with extended cones.

\begin{cor}[Validation of periodic orbits with slow shadowing and $m$-cones]
\label{cor-periodic-3}
Consider (\ref{fast-slow}). Assume that there exist $\rho \in \mathbb{N}$, $\epsilon_0 > 0$ such that (\ref{fast-slow}) admits the following $\epsilon\ (\in [0,\epsilon_0])$-parameter family of sets in $\mathbb{R}^3$ (see also Fig. \ref{fig-theorem-main}):
\begin{description}
\item[$\{N_\epsilon^{j,i}\}_{i=0,\cdots, m_j}^{j=0,\cdots, \rho}$: ] a sequence of fast-saddle type blocks which admits a sequence of faxt-exit faces $\{N_\epsilon^{j, \exit}\}^{j=0,\cdots, \rho}$ with $N_\epsilon^{j, \exit}\subset N_\epsilon^{j, m_j}$. 

The collection $(\mathcal{F}_\epsilon^{j-1}, \{N_\epsilon^{j,i}\}_{i=0,\cdots, m_j}, N_\epsilon^{j, \exit}, C_{m^u_j}^u, C_{m^s_j}^s)$ forms a covering-exchange sequence with extended cones $C_{m^u_j}^u, C_{m^s_j}^s$.
Associating sets $C_{m^u_j}^u$, $C_{m^s_j}^s$ and $\mathcal{F}_\epsilon^j$ are defined below. 
\item[$C_{m^u_j}^u\ (j=0,\cdots, \rho)$:] the unstable $m^u_j$-cones of $N_\epsilon^{j,m_j}$. Identify $C_{m^u_{-1}}^u$ with $C_{m^u_\rho}^u$.
\item[$C_{m^s_j}^s\ (j=0,\cdots, \rho)$:] the stable $m^s_j$-cone of $N_\epsilon^{j,0}$.
\item[$\mathcal{F}_\epsilon^j\ (j = 0,\cdots, \rho)$:] a fast-exit face $(C_{m^u_j}^u)^\exit$ of $C_{m^u_j}^u$. Identify $\mathcal{F}_\epsilon^{-1}$ with $\mathcal{F}_\epsilon^\rho$.

\end{description}
Then all statements in Theorem \ref{thm-periodic-1} hold with $\mathcal{S}_\epsilon^j = C_{m^s_j}^s \cup \bigcup_{i=0}^{m_j}N_\epsilon^{j,i} \cup C_{m^u_j}^u$.
\end{cor}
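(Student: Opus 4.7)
The plan is to mirror the proof of Corollary \ref{cor-periodic-2}, building a cyclic sequence of covering relations, and then to invoke Proposition \ref{ZG-periodic} for the existence of the periodic orbit when $\epsilon \in (0,\epsilon_0]$, together with Proposition \ref{WZ-heteroclinic} plus Corollary \ref{cor-disk} for the singular heteroclinic chain at $\epsilon = 0$. The new ingredient relative to Corollary \ref{cor-periodic-2} is that each fast-exit and each drop near a slow manifold is now mediated by the extended cones $C^u_{m^u_j}$ and $C^s_{m^s_j}$, so we must splice in the additional covering relations furnished by Propositions \ref{prop-cov-unstable} and \ref{prop-cov-stable} at the two ends of each slow shadowing segment.

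Concretely, for each index $j \in \{0,\dots,\rho\}$ I would assemble a local chain of the form
\begin{equation*}
\mathcal{F}_\epsilon^{j-1} \overset{\varphi_\epsilon(T_j,\cdot)}{\Longrightarrow} \tilde M_\epsilon^{j,0} \overset{P^{j,0}_\epsilon}{\Longrightarrow} M_\epsilon^{j,1} \overset{P^{j,1}_\epsilon}{\Longrightarrow} \cdots \overset{P^{j,m_j-1}_\epsilon}{\Longrightarrow} M_\epsilon^{j,m_j} \overset{P^{j,m_j}_\epsilon}{\Longrightarrow} N_\epsilon^{j,\exit} \overset{P^{C^u_{m^u_j}}_\epsilon}{\Longrightarrow} \mathcal{F}_\epsilon^j,
\end{equation*}
where the first arrow, together with the appearance of $\tilde M^{j,0}_\epsilon \subset (N_\epsilon^{j,0} \cup C^s_{m^s_j})_{\leq \bar y}$, is provided by Proposition \ref{prop-cov-stable} applied with $N_0 = \mathcal{F}_\epsilon^{j-1}$ and $C^s_m = C^s_{m^s_j}$; the middle arrows come from the slow shadowing pairs $\{N_\epsilon^{j,i}, N_\epsilon^{j,i+1}\}$ via Propositions \ref{prop-CE-2-1} and \ref{prop-CE-2-2}; the penultimate arrow realizes the jump through the fast-exit face of $N_\epsilon^{j,m_j}$ by Proposition \ref{prop-CE-2-3}; and the final arrow, passing through the unstable $m^u_j$-cone to its fast-exit face $(C^u_{m^u_j})^\exit = \mathcal{F}_\epsilon^j$, is delivered by Proposition \ref{prop-cov-unstable}. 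Concatenating these local chains cyclically for $j = 0,\dots,\rho$ (using $\mathcal{F}_\epsilon^{-1} = \mathcal{F}_\epsilon^\rho$) produces a closed loop of covering relations whose sets all have $u = 1$, and then Proposition \ref{ZG-periodic} yields a periodic point for the compound Poincar\'e map, hence a periodic orbit $\Gamma_\epsilon$ for (\ref{fast-slow})$_\epsilon$ threading the prescribed sets.

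The main obstacle lies in checking the mutual compatibility of the $y$-windows appearing in the consecutive steps: Proposition \ref{prop-cov-stable} demands control of the $y$-interval on which $N_0 \overset{\varphi_\epsilon(T_0,\cdot)}{\Longrightarrow} C_m^s \cap \{y \in [y^-,\bar y]\}$ and its widening by $\epsilon^\pm T_\arr$, Propositions \ref{prop-CE-2-1}--\ref{prop-CE-2-3} require specific choices of $\bar y$ and $\bar h$ tied to $\pi_y(N_\epsilon^{j,i}) \cap \pi_y(N_\epsilon^{j,i+1})$ and to the condition $\dist(N_\epsilon^{j,\exit},\{y = \bar y + \chi \bar h\}) \geq \chi \bar h$, and Proposition \ref{prop-cov-unstable} imposes $[y^-,y^+] \cup [y^- + \epsilon^- T_\dep, y^+ + \epsilon^+ T_\dep] \subset [y_0^-,y_0^+]$. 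However, these are exactly the conditions encoded in Definition \ref{dfn-CE-seq-cone} of a covering-exchange sequence with extended cones, so once the hypothesis is granted the $y$-windows align automatically. For $\epsilon = 0$, the slow dynamics freezes and the chain degenerates into the sequence of fast heteroclinic connections; applying Proposition \ref{WZ-heteroclinic} with horizontal and vertical disks supplied by Corollary \ref{cor-disk} on each $\mathcal{S}_0^j$ gives the desired singular heteroclinic family $\{\Gamma^j\}_{j=0}^\rho$.
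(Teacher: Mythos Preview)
Your proposal is correct and follows essentially the same approach as the paper. The paper's own proof is a one-sentence instruction to take the chain from Corollary~\ref{cor-periodic-2} and replace each $\mathcal{F}_\epsilon^j$ by the two-step $N_\epsilon^{j,\exit}\overset{P_\epsilon^{m^u_j}}{\Longrightarrow}\mathcal{F}_\epsilon^j$ (via Proposition~\ref{prop-cov-unstable}) and each $M_\epsilon^{j,0}$ by the $\tilde M_\epsilon^{j,0}\subset N_\epsilon^{j,0}\cup C_{m^s_j}^s$ of Proposition~\ref{prop-cov-stable}; you have simply written this out in full, identified the supporting propositions at each link, and made explicit the $y$-window compatibility that the paper absorbs into Definition~\ref{dfn-CE-seq-cone}.
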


\begin{proof}
Replace $\mathcal{F}_\epsilon^j$ and $M_\epsilon^{j,0}$ in the proof of Corollary \ref{cor-periodic-2} by $N_\epsilon^{j, \exit} \overset{P_\epsilon^{m^u_j}}{\Longrightarrow} \mathcal{F}_\epsilon^j $ and $\tilde M_\epsilon^{j,0}$, respectively. 
 Here $P_\epsilon^{m^u_j} : C_{m^u_j}^u \to \partial C_{m^u_j}^u$ is the Poincar\'{e} map and $\tilde M^{j,0}_\epsilon \subset (N_\epsilon^{j,0}\cup C_{m^s_j}^s)$ is the $h$-set corresponding to $\tilde M_1$ in Proposition \ref{prop-cov-stable}.
\end{proof}

\begin{rem}\rm
The key essence of our validation near slow manifolds consists of the following three pieces:
\begin{itemize}
\item Construction of fast-saddle-type blocks;
\item Stable and unstable cone conditions;
\item Slow shadowing condition (if necessary).
\end{itemize}
As long as validations of the above procedures pass, we can extend slow manifolds {\em in an arbitrary range} keeping the existence of points near slow manifolds which exits their neighborhoods after time $T=O(\epsilon)$, as stated in the Exchange Lemma.
\end{rem}

%
%
\subsection{Existence of connecting orbits}
Similar settings and arguments to the previous subsection yield the existence of heteroclinic orbits near singular orbits. 

\begin{thm}[Existence of heteroclinic orbits]
\label{thm-heteroclinic-1}
Consider (\ref{fast-slow}). Assume that there exist $\rho \in \mathbb{N}$ and $\epsilon_0 > 0$ such that (\ref{fast-slow}) admits the following $\epsilon\ (\in [0,\epsilon_0])$-parameter family of sets in $\mathbb{R}^{n+1}$ (see also Fig. \ref{fig-theorem-main}-(b)):
\begin{description}
\item[$\mathcal{S}_\epsilon^j$: ] $j = 0$: a fast-saddle-type block satisfying stable and unstable cone conditions. Moreover, it contains the nonempty maximal invariant sets $S_{\epsilon,u}$.\\
$j = \rho$: a fast-saddle-type block satisfying the covering-exchange property with respect to $\mathcal{F}_\epsilon^{\rho-1}$ (defined below) except condition 3. 
Moreover, it contains the nonempty maximal invariant set $S_{\epsilon,s}$. 
The set $S_{\epsilon,s}$ is contained in an attracting isolating block on the slow manifold in $\mathcal{S}_\epsilon^\rho$. 
\par
$j = 1,\cdots, \rho-1$: a fast-saddle-type block satisfying the covering-exchange property with respect to $\mathcal{F}_\epsilon^{j-1}$ defined below and for $T^{j-1} > 0$.
\item[$\mathcal{F}_\epsilon^j$: ] $j = 0,\cdots, \rho-1$: a fast-exit face of $\mathcal{S}_\epsilon^j$.\par
If $j=0$, the invariant set $S_{\epsilon,u}$ admits an isolating block $B(S_{\epsilon,u})$ on the slow manifold in $\mathcal{S}_\epsilon^0$ such that all assumptions in Lemma \ref{lem-fiber-cone} holds.
\end{description}

Then the following statements hold.
\begin{enumerate}
\item When $\epsilon \in (0,\epsilon_0]$, (\ref{fast-slow}) admits a heteroclinic orbit $\{\Gamma_\epsilon(t)\mid t\in \mathbb{R}\}$ with a sequence of positive numbers
\begin{equation*}
0 < t^0_f < t^1_s < t^1_f < \cdots < t^{\rho-1}_s < t^{\rho-1}_f < \infty
\end{equation*}
such that 
\begin{align*}
&\dist(\Gamma_\epsilon(t),S_{\epsilon.u}) \to 0\quad \text{ as }t\to -\infty,\\
&\Gamma_\epsilon((-\infty, 0]) \subset W^u(S_{\epsilon,u}) \subset \mathcal{S}_\epsilon^0,\quad \Gamma_\epsilon(0)\in \mathcal{F}_\epsilon^0,\\
&\Gamma_\epsilon([t^{j-1}_f,t^{j}_s])\subset \mathcal{S}_\epsilon^{j},\quad \Gamma_\epsilon(t^{j}_s)\in \mathcal{F}_\epsilon^j\quad (j = 1,\cdots, \rho-1),\\
&\Gamma_\epsilon([t^{\rho-1}_f,\infty))\subset W^s(S_{\epsilon,s}) \subset \mathcal{S}_\epsilon^\rho\quad \text{ and }\\
&\dist(\Gamma_\epsilon(t),S_{\epsilon.s})\to 0\quad \text{ as }t\to +\infty.
\end{align*}
\item When $\epsilon = 0$, (\ref{fast-slow}) admits the collection of heteroclinic orbits $\{\Gamma^j = \{x_{y_j}(t)\}_{t\in \mathbb{R}}\}_{j=0}^\rho$ with
\begin{align*}
&\lim_{t\to -\infty} x_{y_j}(t) = p_j \in \mathcal{S}_0^j\text{ with }f(p_j, y_j, 0) = 0,\\
&\lim_{t\to +\infty} x_{y_j}(t) = q_j \in \mathcal{S}_0^{j+1} \text{ with }f(q_j, y_j, 0) = 0,\\
&\Gamma^j\cap \mathcal{F}_\epsilon^j\not = \emptyset \quad (j = 0,\cdots, \rho-1)
\end{align*}
bridging $1$-dimensional normally hyperbolic invariant manifolds $S_0^j\subset \mathcal{S}^j_0$ and $S_0^{j+1}\subset \mathcal{S}^{j+1}_0$ for $j=0,\cdots, \rho-1$. 
\end{enumerate}
\end{thm}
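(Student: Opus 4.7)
The plan is to assemble a chain of covering relations bridging the two invariant sets $S_{\epsilon,u}$ and $S_{\epsilon,s}$, and then to apply Proposition \ref{WZ-heteroclinic} (horizontal disk in, vertical disk out). First I would treat the case $\epsilon \in (0,\epsilon_0]$. On the entering side, the isolating block $B(S_{\epsilon,u})$ on the slow manifold inside $\mathcal{S}_\epsilon^0$ together with the assumption that the hypotheses of Lemma \ref{lem-fiber-cone} hold will, via Lemma \ref{lem-fiber-cov} and Proposition \ref{prop-fiber}, supply a point $z_\star \in \mathrm{Inv}(B(S_{\epsilon,u}))$ whose unstable fiber $W^u(z_\star)$ meets the fast-exit face $\mathcal{F}_\epsilon^0$; by Corollary \ref{cor-fiber-rigorous} this fiber is given by a Lipschitz graph over the fast-unstable $a$-variable and therefore parametrizes a horizontal disk $b_u : \overline{B_{u(\mathcal{F}_\epsilon^0)}} \to \mathcal{F}_\epsilon^0$ in the $h$-set $\mathcal{F}_\epsilon^0$ in the sense of Definition \ref{dfn-disks}. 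Note that $u(\mathcal{F}_\epsilon^0)=1$ is consistent with the horizontal-disk formulation.

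Next, for each intermediate index $j=1,\ldots,\rho-1$ the covering-exchange property of $\mathcal{S}_\epsilon^j$ relative to $\mathcal{F}_\epsilon^{j-1}$ yields, by Proposition \ref{prop-CE}, an $h$-set $\tilde{\mathcal{S}}_\epsilon^j \subset \mathcal{S}_\epsilon^j$ together with the pair of covering relations
\begin{equation*}
\mathcal{F}_\epsilon^{j-1} \overset{\varphi_\epsilon(T^{j-1},\cdot)}{\Longrightarrow} \tilde{\mathcal{S}}_\epsilon^j \overset{P_\epsilon^{\mathcal{S}_\epsilon^j}}{\Longrightarrow} \mathcal{F}_\epsilon^j.
\end{equation*}
On the terminal side, the block $\mathcal{S}_\epsilon^\rho$ satisfies all conditions of the covering-exchange except the exit-face condition (CE5); this still delivers the slow manifold $S_\epsilon^\rho$ and its stable manifold $W^s(S_\epsilon^\rho)$ as graphs via Theorem \ref{thm-inv-mfd-rigorous}, and the attracting isolating block on the slow manifold that contains $S_{\epsilon,s}$ produces (by Corollary \ref{cor-disk} and shrinkage in the $b,y$-directions along the slow flow) a vertical disk $v : \overline{B_{s(\mathcal{S}_\epsilon^\rho)}} \to \mathcal{S}_\epsilon^\rho$ whose image lies in the local strong-stable foliation of $S_{\epsilon,s}$. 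Chaining the covering relations above with the horizontal disk $b_u$ at the start and the vertical disk $v$ at the end and invoking Proposition \ref{WZ-heteroclinic} produces a point $\tau \in \overline{B_u}$ whose trajectory $\Gamma_\epsilon(t) := \varphi_\epsilon(t,b_u(\tau))$ satisfies $\Gamma_\epsilon([t_f^{j-1},t_s^j]) \subset \mathcal{S}_\epsilon^j$ for all intermediate $j$ and terminates in the strong-stable leaf of $S_{\epsilon,s}$; its backward extension lies in $W^u(z_\star) \subset W^u(S_{\epsilon,u})$. Asymptotic convergence to $S_{\epsilon,u}$ in backward time and to $S_{\epsilon,s}$ in forward time then follows from the graph-Lipschitz representations of $W^{u,s}$ together with the contracting/expanding rate estimates of Lemma \ref{lem-ratio} applied inside $\mathcal{S}_\epsilon^0$ and $\mathcal{S}_\epsilon^\rho$, respectively, and the (backward/forward) invariance of the slow-direction isolating blocks.

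For the singular case $\epsilon=0$ the argument simplifies because $y$ becomes a parameter and the layer problem (\ref{layer}) decouples. On each $\mathcal{S}_0^j$ the invariant-manifold part of Theorem \ref{thm-inv-mfd-rigorous} (inequality (\ref{Lyapunov-condition})) provides the one-dimensional normally hyperbolic critical manifold $S_0^j$ of saddle equilibria of the layer problem, while the covering-exchange assumption on each $\mathcal{S}_0^j$ ($j=1,\ldots,\rho-1$) fixes a specific fiber $y=y_j$ in the limit. Applying Proposition \ref{WZ-heteroclinic} fiberwise at each such $y=y_j$, with the horizontal disk taken from the layer unstable manifold of the equilibrium $p_j \in S_0^j$ and the vertical disk taken from the layer stable manifold of the equilibrium $q_j \in S_0^{j+1}$, produces the heteroclinic orbits $\Gamma^j$ intersecting $\mathcal{F}_0^j$ and converging to the corresponding equilibria on the normally hyperbolic critical manifolds.

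The main obstacle, as is usual in singular-perturbation verification, is the mismatch at the entering side $\mathcal{S}_\epsilon^0$: for $\epsilon>0$ the slow variable is dynamic, so the unstable manifold of the invariant set $S_{\epsilon,u}$ is not simply the $y$-parametrized union of layer unstable manifolds; one must use the Fenichel fibering to get a genuine horizontal disk in $\mathcal{F}_\epsilon^0$. This is precisely the role of the extra hypotheses invoking Lemma \ref{lem-fiber-cone} and the isolating block $B(S_{\epsilon,u})$. Verifying that the resulting fiber actually reaches $\mathcal{F}_\epsilon^0$ with the correct graph structure, and that the point $z_\star$ extracted from the compactness argument in Proposition \ref{prop-fiber} lies in the interior of $B(S_{\epsilon,u})$ so that its fiber is a bona fide horizontal disk, is the technically delicate step; the rest of the proof is a routine concatenation of covering relations along the lines of Corollaries \ref{cor-periodic-2} and \ref{cor-periodic-3}.
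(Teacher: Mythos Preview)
Your overall architecture is right (chain of covering relations, vertical disk at the terminal block, Proposition~\ref{WZ-heteroclinic}), but the entering side is handled incorrectly.

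The gap is the claim that the single fiber $W^u(z_\star)$ ``parametrizes a horizontal disk $b_u:\overline{B_{u(\mathcal{F}_\epsilon^0)}}\to\mathcal{F}_\epsilon^0$.'' By Definition~\ref{dfn-face}, $\mathcal{F}_\epsilon^0=c_M^{-1}(\{a^\ast\}\times\overline{B_s}\times[y^-,y^+])$ sits on a face of \emph{constant} fast-unstable coordinate $a=a^\ast$, and its $u$-direction as an $h$-set is the slow variable $y$, not $a$ (this is exactly the ``exchange'' in covering-exchange; see the remark after Proposition~\ref{prop-CE}). The fiber $W^u(z_\star)$ is a Lipschitz graph over $a$ by Corollary~\ref{cor-fiber-rigorous}, so $W^u(z_\star)\cap\mathcal{F}_\epsilon^0$ is a single point, not a disk over $y$. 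A single point cannot feed Proposition~\ref{WZ-heteroclinic}: you need a whole horizontal disk in the $y$-direction to survive the subsequent chain and intersect the vertical disk $W^s(S_{\epsilon,s})$.

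The paper resolves this by \emph{not} first isolating a point $z_\star$. Instead it puts $B(S_{\epsilon,u})$ itself at the head of the chain and uses Lemma~\ref{lem-fiber-cov} to obtain the covering relation $B(S_{\epsilon,u})\overset{r_1\circ b_{u,\epsilon}^\ast}{\Longrightarrow}\mathcal{F}_\epsilon^0$, where the map sends each base point $z$ to the endpoint of its unstable fiber on $\{a=a^\ast\}$; as $z$ ranges over the one-dimensional block $B(S_{\epsilon,u})$ this sweeps across $\mathcal{F}_\epsilon^0$ in the $y$-direction. Preceding this by the (back-)covering relations $\{B(S_{\epsilon,u})\Longleftarrow B(S_{\epsilon,u})\}_k$ from Lemma~\ref{lem-adm-cov} and following it by the intermediate covering-exchange steps and the vertical disk $W^s(S_{\epsilon,s})$, one applies Proposition~\ref{WZ-heteroclinic} to the whole chain for each $k$ and then passes to a limit exactly as in the proof of Proposition~\ref{prop-fiber}. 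In short: the compactness/limit argument of Proposition~\ref{prop-fiber} must be run \emph{simultaneously} with the full chain up to the terminal vertical disk, not before it.
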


\begin{proof}
Consider the case $\epsilon \in (0,\epsilon_0]$. 
We deal with the case that $B(S_{\epsilon,u})$ is attracting.
By Propositions \ref{prop-CE}, \ref{prop-fiber} and our assumption we get the sequence of covering relations
\begin{equation*}
\left\{B(S_{\epsilon,u}) \overset{\varphi_{\epsilon(t,\cdot)}}{\Longleftarrow} B(S_{\epsilon,u})  \right\}_k \overset{ r_1\circ b_{u,\epsilon}^\ast }{\Longrightarrow} \mathcal{F}^0_\epsilon \overset{ \varphi_\epsilon(T_0,\cdot) }{\Longrightarrow} \tilde{\mathcal{S}}_\epsilon^1 \overset{ P^1_\epsilon}{\Longrightarrow} \mathcal{F}_\epsilon^1 \overset{ \varphi_\epsilon(T_1,\cdot) }{\Longrightarrow} \tilde{\mathcal{S}}_\epsilon^2 \overset{P^2_\epsilon}{\Longrightarrow} \cdots \overset{ \varphi_\epsilon(T_{\rho-1},\cdot) }{\Longrightarrow} \mathcal{S}_\epsilon^\rho, 
\end{equation*}
for all sufficiently small $t>0$ and arbitrary $k\in \mathbb{N}$.
Here $r_\lambda$ is the deformation retract given by (\ref{retract-plus}) or (\ref{retract-minus}), $b_{u,\epsilon}^z$ is the horizontal disk at $z\in B(S_{\epsilon,u})$ whose graph is the unstable manifold $W^u(z)$ and $P^j_\epsilon : \mathcal{S}_\epsilon^j\to \partial \mathcal{S}_\epsilon^j$ is the Poincar\'{e} map of $\mathcal{S}_\epsilon^j$. 
The stable manifold $W^s(S_{\epsilon,s})$ is given by a vertical disk $b^s_\epsilon$ in $\mathcal{S}_\epsilon^\rho$ with $(b,y)$-coordinates as the $s(\mathcal{S}_\epsilon^\rho)$-dimensional direction of the $h$-set $\mathcal{S}_\epsilon^\rho$.
Our statement is just the consequence of Propositions \ref{prop-fiber} and \ref{WZ-heteroclinic}.
See Definition \ref{dfn-disks} about horizontal and vertical disks.
All arguments with respect to horizontal and vertical disks are valid from Corollaries \ref{cor-disk} and \ref{cor-fiber-rigorous}.
In the case that $B(S_{\epsilon,u})$ is repelling, the same arguments valid replacing the covering relation $\left\{B(S_{\epsilon,u}) \overset{\varphi_{\epsilon(t,\cdot)}}{\Longleftarrow} B(S_{\epsilon,u})  \right\}_k$ by $\left\{B(S_{\epsilon,u}) \overset{\varphi_{\epsilon(t,\cdot)}}{\Longrightarrow} B(S_{\epsilon,u})  \right\}_k$.
\par
The case $\epsilon = 0$ is the same as that of Theorem \ref{thm-periodic-1}.
Remark that we do not need the sequence of covering relations
\begin{equation*}
\left\{B(S_{\epsilon,u}) \overset{\varphi_{\epsilon(t,\cdot)}}{\Longleftarrow} B(S_{\epsilon,u})  \right\}_k \overset{ r_1\circ b_{u,\epsilon}^\ast }{\Longrightarrow} \mathcal{F}^0_\epsilon
\end{equation*}
to prove our statements in this case.
\end{proof}


Theorem \ref{thm-heteroclinic-1} can be generalized as stated below, replacing covering-exchange pairs by covering-exchange sequences.

\begin{cor}[Validation of heteroclinic orbits with slow shadowing]
\label{cor-heteroclinic-2}
Consider (\ref{fast-slow}). Assume that there exist $\rho \in \mathbb{N}$ and $\epsilon_0 > 0$ such that (\ref{fast-slow}) admits the following $\epsilon\ (\in [0,\epsilon_0])$-parameter family of sets in $\mathbb{R}^{n+1}$:
\begin{description}
\item[$\{N_\epsilon^{j,i}\}_{i=0,\cdots, m_j}^{j=0,\cdots, \rho}$ {\rm with }$m_0 = 0$.] $j=1,\cdots, \rho-1$ : a sequence of $h$-sets which forms a covering-exchange sequence with $\mathcal{F}_\epsilon^{j-1}$ defined below. 
\par
$j=\rho$ : a sequence of $h$-sets which forms a covering-exchange sequence with $\mathcal{F}_\epsilon^{\rho-1}$ defined below except the last assumption in Definition \ref{dfn-CE-seq}.
\par
Blocks $N_\epsilon^{0,0}$ and $N_\epsilon^{\rho,m_\rho}$ contain nonempty maximal invariant sets $S_{\epsilon,u}$ and $S_{\epsilon,s}$, respectively.
The invariant set $S_{\epsilon,s}$ is contained in an attracting isolating block on the slow manifold in $N_\epsilon^{\rho,m_\rho}$. 

\par
\item[$\mathcal{F}_\epsilon^j$: ] $j = 0,\cdots, \rho-1$: a fast-exit face of $N_\epsilon^{j,m_j}$. 

If $j=0$, the invariant set $S_{\epsilon,u}$ admits an isolating block $B(S_{\epsilon,u})$ on the slow manifold in $N_\epsilon^{0,0}$ such that all assumptions in Lemma \ref{lem-fiber-cone} holds.
\end{description}
Then all statements in Theorem \ref{thm-heteroclinic-1} hold with $\mathcal{S}_\epsilon^j = \bigcup_{i=0}^{m_j}N_\epsilon^{j,i}$.
\end{cor}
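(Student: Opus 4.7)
The plan is to combine two ingredients already present in the paper: the slow-shadowing chain of covering relations constructed in the proof of Corollary \ref{cor-periodic-2}, and the Fenichel-fiber entry plus vertical-disk exit used in Theorem \ref{thm-heteroclinic-1}. Concretely, for the interior stages $j=1,\dots,\rho-1$, I would apply Propositions \ref{prop-CE-2-1}, \ref{prop-CE-2-2} and \ref{prop-CE-2-3} block by block to produce $h$-sets $M_\epsilon^{j,i}\subset N_\epsilon^{j,i}$ and appropriate $\bar y_{j,i}, \bar h_{j,i}$ so that
\begin{equation*}
\mathcal{F}_\epsilon^{j-1}\overset{\varphi_\epsilon(T_{j-1},\cdot)}{\Longrightarrow} M_\epsilon^{j,0}\overset{P_\epsilon^{j,0}}{\Longrightarrow}\cdots\overset{P_\epsilon^{j,m_j-1}}{\Longrightarrow} M_\epsilon^{j,m_j}\overset{P_\epsilon^{j,m_j}}{\Longrightarrow}\mathcal{F}_\epsilon^{j}
\end{equation*}
holds, exactly as in the proof of Corollary \ref{cor-periodic-2}. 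These interior chains are then spliced together by identifying the terminal face of stage $j$ with the initial face of stage $j+1$.

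At the unstable end ($j=0$, $m_0=0$), I would follow Theorem \ref{thm-heteroclinic-1}: verify the hypotheses of Lemma \ref{lem-fiber-cov} via Lemma \ref{lem-fiber-cone} applied to $B(S_{\epsilon,u})\subset N_\epsilon^{0,0}$, and invoke Proposition \ref{prop-fiber} to realize the initial segment
\begin{equation*}
\left\{B(S_{\epsilon,u})\overset{\varphi_\epsilon(t,\cdot)}{\Longleftarrow}B(S_{\epsilon,u})\right\}_k\overset{r_1\circ b_{u,\epsilon}^\ast}{\Longrightarrow}\mathcal{F}_\epsilon^{0}
\end{equation*}
of the chain. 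At the stable end ($j=\rho$), I would modify the last stage: rather than finishing with a jump onto a fast-exit face, the final slow-shadowing step places the $h$-set $M_\epsilon^{\rho,m_\rho}$ inside $N_\epsilon^{\rho,m_\rho}$ at fast-distance less than $d_a r_a$ from $W^s(S_\epsilon)$ by Proposition \ref{prop-CE-2-1}. Corollary \ref{cor-disk} supplies a vertical disk representation of $W^s(S_\epsilon)$ in this block, and Corollary \ref{cor-fiber-rigorous} provides, at each $z\in S_{\epsilon,s}$, a vertical disk parametrizing $W^s(z)$. The attracting-isolating-block hypothesis on $S_{\epsilon,s}$ along the slow manifold then ensures that any orbit on $W^s(S_\epsilon)$ entering $N_\epsilon^{\rho,m_\rho}$ is actually trapped in $W^s(S_{\epsilon,s})$ and tends to $S_{\epsilon,s}$ as $t\to+\infty$.

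With the unstable, interior, and stable pieces assembled, Proposition \ref{WZ-heteroclinic} (or equivalently a straightforward iteration of Proposition \ref{ZG-periodic}) produces a single point in $B(S_{\epsilon,u})$ whose forward orbit threads all the interior $h$-sets and terminates on the vertical disk $W^s(S_{\epsilon,s})$, yielding the times $t_s^j, t_f^j$ and the asymptotic behaviour claimed in statement 1. The case $\epsilon=0$ then follows verbatim from the corresponding part of Theorem \ref{thm-heteroclinic-1}: the covering relations degenerate to the transverse crossings of heteroclinic orbits of the layer problem through fast-exit faces, and Proposition \ref{WZ-heteroclinic} produces the singular heteroclinic chain $\{\Gamma^j\}_{j=0}^{\rho-1}$.

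The main obstacle I anticipate is the careful bookkeeping at the stable end: slow shadowing only guarantees that $M_\epsilon^{\rho,m_\rho}$ lies near $W^s(S_\epsilon)$ in the fast direction, whereas what is required is that orbits eventually enter the attracting slow sub-block containing $S_{\epsilon,s}$. Handling this cleanly requires combining the $m$-cone / slow-shadowing estimates with the slow-direction monotonicity built into Definition \ref{dfn-shadow-1}, and with the singular-isolating-block structure of $S_{\epsilon,s}$ on the slow manifold discussed in Section \ref{section-inv-set-on-mfd}; the direction of the slow vector field in $N_\epsilon^{\rho,m_\rho}$ must be consistent with orbits draining into the attracting block. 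This consistency is genuinely a hypothesis, not a derivation, but once imposed the remainder of the argument is a routine concatenation of the established covering relations.
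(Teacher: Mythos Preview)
Your proposal is correct and follows essentially the same approach as the paper: assemble the back-covering/retract segment $\{B(S_{\epsilon,u})\Longleftarrow B(S_{\epsilon,u})\}_k\overset{r_1\circ b_{u,\epsilon}^\ast}{\Longrightarrow}\mathcal{F}_\epsilon^0$ from Proposition~\ref{prop-fiber}, splice in the slow-shadowing chains $M_\epsilon^{j,i}$ from Propositions~\ref{prop-CE-2-1}--\ref{prop-CE-2-3} exactly as in Corollary~\ref{cor-periodic-2}, terminate at $M_\epsilon^{\rho,m_\rho}$, and invoke Proposition~\ref{WZ-heteroclinic} with Corollaries~\ref{cor-disk} and~\ref{cor-fiber-rigorous} supplying the disk structures. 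Your anticipated obstacle at the stable end is handled in the paper more directly than you fear: one simply takes $W^s(S_{\epsilon,s})$ as the vertical disk in $N_\epsilon^{\rho,m_\rho}$ with $(b,y)$ together serving as the $s(N_\epsilon^{\rho,m_\rho})$-dimensional direction, so no separate slow-drainage argument is needed.
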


\begin{proof}
We only consider the case $\epsilon \in (0,\epsilon_0]$ and $B(S_{\epsilon,u})$ is attracting.
By Propositions \ref{prop-fiber}, \ref{prop-CE-2-1}, \ref{prop-CE-2-2}, \ref{prop-CE-2-3} and our assumptions, there is a sequence of $h$-sets $\{M_\epsilon^{j,i}\}_{i=0,\cdots, m_j}^{j=0,\cdots, \rho}$  with $M_\epsilon^{j,i} \subset N_\epsilon^{j,i}$ which admits a sequence of covering relations
\begin{align*}
\left\{B(S_{\epsilon,u}) \overset{\varphi_{\epsilon(t,\cdot)}}{\Longleftarrow} B(S_{\epsilon,u})  \right\}_k \overset{ r_1\circ b_{u,\epsilon}^\ast }{\Longrightarrow}  \mathcal{F}_\epsilon^0 &\overset{ \varphi_\epsilon(T_0,\cdot)}{\Longrightarrow}
 M_\epsilon^{1,0} \overset{ P^{1,0}_\epsilon}{\Longrightarrow}
 M_\epsilon^{1,1} \overset{ P^{1,1}_\epsilon}{\Longrightarrow}
 \cdots \overset{ P^{1,m_1-1}_\epsilon}{\Longrightarrow}
 M_\epsilon^{1,m_1}\\
 &\quad \overset{ P^{1,m_1}_\epsilon}{\Longrightarrow}
  \mathcal{F}_\epsilon^1 \overset{\varphi_\epsilon(T_1,\cdot)}{\Longrightarrow}
 M_\epsilon^{2,0} \overset{ P^{2,0}_\epsilon}{\Longrightarrow}
    \cdots  \cdots \overset{ P^{\rho,m_\rho-1}_\epsilon}{\Longrightarrow}
 M_\epsilon^{\rho,m_\rho}
\end{align*}
for arbitrary $k\in \mathbb{N}$ and sufficiently small $t>0$.
Here $r_\lambda$ is the deformation retract given by (\ref{retract-plus}) or (\ref{retract-minus}), $b_{u,\epsilon}^z$ is the horizontal disk at $z\in B(S_{\epsilon,u})$ whose graph is the unstable manifold $W^u(z)$.
Also, $P^{j,i}_\epsilon : (N^{j,i}_\epsilon)_{\leq \bar y_{j,i} + \bar h_{j,i}} \to \partial (N^{j,i}_\epsilon)_{\bar y_{j,i} + \bar h_{j,i}}$ is the Poincar\'{e} map in $(N^{j,i}_\epsilon)_{\leq \bar y_{j,i} + \bar h_{j,i}}$ with appropriate choices of $\bar y = \bar y_{j,i}$ and $\bar h = \bar h_{j,i}$ following Propositions \ref{prop-CE-2-1}, \ref{prop-CE-2-2} and  \ref{prop-CE-2-3}.
The stable manifold $W^s(S_{\epsilon,s})$ is given by a vertical disk $b^s_\epsilon$ in $N_\epsilon^{\rho,m_\rho}$ with $(b,y)$-coordinates as the $s(N_\epsilon^{\rho,m_\rho})$-dimensional direction of the $h$-set $N_\epsilon^{\rho,m_\rho}$.
Our statement is just the consequence of Propositions \ref{prop-fiber} and \ref{WZ-heteroclinic}.
All arguments with respect to horizontal and vertical disks are valid from Corollaries \ref{cor-disk} and \ref{cor-fiber-rigorous}.
\end{proof}


Corollary \ref{cor-heteroclinic-2} can be further generalized as stated below, at least in the case $u=s=1$, replacing covering-exchange sequences by covering-exchange sequences with extended cones.

\begin{cor}[Validation of heteroclinic orbits with slow shadowing and $m$-cones]
\label{cor-heteroclinic-3}
Consider (\ref{fast-slow}). Assume that there exist $\rho \in \mathbb{N}$ and $\epsilon_0 > 0$ such that (\ref{fast-slow}) admits the following $\epsilon\ (\in [0,\epsilon_0])$-parameter family of sets in $\mathbb{R}^3$:
\begin{description}
\item[$\{N_\epsilon^{j,i}\}_{i=0,\cdots, m_j}^{j=0,\cdots, \rho}$ \text{\rm with }$m_0 = 0$.] $j=1,\cdots, \rho-1$ : a sequence of fast-saddle type blocks which admits a sequence of faxt-exit faces $\{N_\epsilon^{j, \exit}\}^{j=0,\cdots, \rho}$ with $N_\epsilon^{j, \exit}\subset N_\epsilon^{j, m_j}$.
Moreover, the collection $(\mathcal{F}_\epsilon^{j-1}, \{N_\epsilon^{j,i}\}_{i=0,\cdots, m_j}, N_\epsilon^{j, {\rm exit}}, C_{m^u_j}^u, C_{m^s_j}^s)$ forms a covering-exchange sequence with extended cones $C_{m^u_j}^u, C_{m^s_j}^s$.
Associating sets $C_{m^u_j}^u$, $C_{m^s_j}^s$ and $\mathcal{F}_\epsilon^j$ are defined below. 
\par
$j=\rho$ : a sequence of $h$-sets which forms a covering-exchange sequence with cones with $\mathcal{F}_\epsilon^{\rho-1}$ and $C_{m^s_\rho}^s$ defined below, except the last assumption in Definition \ref{dfn-CE-seq}.
\par
Blocks $N_\epsilon^{0,0}$ and $N_\epsilon^{\rho,m_\rho}$ contain nonempty maximal invariant sets $S_{\epsilon,u}$ and $S_{\epsilon,s}$, respectively.
The invariant set $S_{\epsilon,s}$ is contained in an attracting isolating block on the slow manifold in $N_\epsilon^{\rho,m_\rho}$. 

\item[$C_{m^u_j}^u\ (j=1,\cdots, \rho-1)$:] the unstable $m^u_j$-cones of $N_\epsilon^{j,m_j}$. 
\item[$C_{m^s_j}^s\ (j=1,\cdots, \rho)$:] the stable $m^s_j$-cone of $N_\epsilon^{j,0}$.
\item[$\mathcal{F}_\epsilon^j\ (j = 0,\cdots, \rho-1)$:] a fast-exit face $(C_{m^u_j}^u)^\exit$ of $C_{m^u_j}^u$. 

If $j=0$, the invariant set $S_{\epsilon,u}$ admits an isolating block $B(S_{\epsilon,u})$ on the slow manifold in $N_\epsilon^{0,0}$ such that all assumptions in Lemma \ref{lem-fiber-cone} holds with the fast-exit face $N_\epsilon^{0, \exit}$ of $N_\epsilon^{0,0}$. 

\end{description}
Then all statements in Theorem \ref{thm-heteroclinic-1} hold with 
\begin{equation*}
\mathcal{S}_\epsilon^j = 
\begin{cases}
\bigcup_{i=0}^{m_j}N_\epsilon^{j,i} \cup C_{m^u_j}^u = N_\epsilon^{0,0} \cup C_{m^u_0}^u & \text{ if $j=0$,}\\
C_{m^s_j}^s \cup \bigcup_{i=0}^{m_j}N_\epsilon^{j,i} & \text{ if $j=\rho$,}\\
C_{m^s_j}^s \cup \bigcup_{i=0}^{m_j}N_\epsilon^{j,i} \cup C_{m^u_j}^u & \text{ otherwise.}
\end{cases}
\end{equation*}
\end{cor}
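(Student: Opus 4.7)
The plan is to mimic the strategy used for Corollary \ref{cor-periodic-3} applied to the heteroclinic setting: start from the chain of covering relations established in the proof of Corollary \ref{cor-heteroclinic-2}, and splice in additional covering relations coming from the $m$-cones $C^u_{m^u_j}$ and $C^s_{m^s_j}$ by invoking Propositions \ref{prop-cov-unstable} and \ref{prop-cov-stable}. As in the proofs of Corollaries \ref{cor-heteroclinic-2} and \ref{cor-periodic-3}, I treat the case $\epsilon\in(0,\epsilon_0]$ with $B(S_{\epsilon,u})$ attracting, since the other cases are identical mutatis mutandis, and the $\epsilon=0$ case is handled exactly as in Theorem \ref{thm-heteroclinic-1} (the $m$-cones and fast-exit faces are still valid data at the singular limit, and Proposition \ref{WZ-heteroclinic} applies).

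The key step is to write down the complete chain of covering relations. First, by Proposition \ref{prop-fiber} and Lemma \ref{lem-fiber-cone}, the isolating block $B(S_{\epsilon,u})$ on the slow manifold in $N_\epsilon^{0,0}$ admits the sequence $\{B(S_{\epsilon,u}) \overset{\varphi_\epsilon(t,\cdot)}{\Longleftarrow} B(S_{\epsilon,u})\}_k$ together with $B(S_{\epsilon,u}) \overset{r_1\circ b_{u,\epsilon}^\ast}{\Longrightarrow} N_\epsilon^{0,\exit}$ for the horizontal disk given by the Fenichel fiber (Corollary \ref{cor-fiber-rigorous}). Then Proposition \ref{prop-cov-unstable} applied with $N = N_\epsilon^{0,0}$, $N^\exit = N_\epsilon^{0,\exit}$, and $C_m^u = C_{m^u_0}^u$ yields $N_\epsilon^{0,\exit}\overset{P_\epsilon^{m^u_0}}{\Longrightarrow}\mathcal{F}_\epsilon^0$. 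For each $j=1,\dots,\rho-1$, the covering-exchange sequence with extended cones $(\mathcal{F}_\epsilon^{j-1},\{N_\epsilon^{j,i}\}_{i=0}^{m_j},N_\epsilon^{j,\exit},C_{m^u_j}^u,C_{m^s_j}^s)$ provides, by Propositions \ref{prop-cov-stable}, \ref{prop-CE-2-1}, \ref{prop-CE-2-2}, \ref{prop-CE-2-3}, \ref{prop-cov-unstable}, a chain
\begin{equation*}
\mathcal{F}_\epsilon^{j-1}\overset{\varphi_\epsilon(T_{j-1},\cdot)}{\Longrightarrow}\tilde M_\epsilon^{j,0}\overset{P_\epsilon^{j,0}}{\Longrightarrow}M_\epsilon^{j,1}\overset{P_\epsilon^{j,1}}{\Longrightarrow}\cdots\overset{P_\epsilon^{j,m_j-1}}{\Longrightarrow}M_\epsilon^{j,m_j}\overset{P_\epsilon^{j,m_j}}{\Longrightarrow}N_\epsilon^{j,\exit}\overset{P_\epsilon^{m^u_j}}{\Longrightarrow}\mathcal{F}_\epsilon^j,
\end{equation*}
where $\tilde M_\epsilon^{j,0}\subset N_\epsilon^{j,0}\cup C_{m^s_j}^s$ is the $h$-set produced by Proposition \ref{prop-cov-stable} and each $M_\epsilon^{j,i}$ is obtained by Propositions \ref{prop-CE-2-1}--\ref{prop-CE-2-3}. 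For the terminal index $j=\rho$, the same argument stops at $M_\epsilon^{\rho,m_\rho}\subset N_\epsilon^{\rho,m_\rho}$, since Definition \ref{dfn-CE-seq} fails only at its last item there; the stable manifold $W^s(S_{\epsilon,s})$, by Theorem \ref{thm-inv-mfd-rigorous} and Corollary \ref{cor-disk}, is a vertical disk in $N_\epsilon^{\rho,m_\rho}$ and hence in $M_\epsilon^{\rho,m_\rho}$.

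Concatenating everything gives a finite chain of covering relations to which Proposition \ref{WZ-heteroclinic} applies, with the horizontal disk at the left end being a Fenichel fiber (from Proposition \ref{prop-fiber} together with the retract $r_1\circ b_{u,\epsilon}^\ast$) and the vertical disk at the right end being $W^s(S_{\epsilon,s})$. This yields a point $z_\epsilon$ whose forward orbit $\Gamma_\epsilon$ visits each $\mathcal{F}_\epsilon^j$ in turn and is asymptotic to $S_{\epsilon,u}$ in backward time and to $S_{\epsilon,s}$ in forward time. Tracking the times at which $\Gamma_\epsilon$ enters each block $\mathcal{S}_\epsilon^j=C_{m^s_j}^s\cup\bigcup_i N_\epsilon^{j,i}\cup C_{m^u_j}^u$ (with the modifications at $j=0,\rho$ indicated in the statement) and each fast-exit face recovers the time sequence $0<t_f^0<t_s^1<\cdots<t_f^{\rho-1}$ claimed in Theorem \ref{thm-heteroclinic-1}.

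The main obstacle, as in Corollary \ref{cor-periodic-3}, is verifying that all the bookkeeping connects correctly at each junction: the fast-exit face $N_\epsilon^{j,\exit}$ of $N_\epsilon^{j,m_j}$ must satisfy the hypotheses of Proposition \ref{prop-cov-unstable} to cover $\mathcal{F}_\epsilon^j=(C_{m^u_j}^u)^\exit$, and this $\mathcal{F}_\epsilon^j$ in turn must serve as the initial $h$-set $N_0$ in Proposition \ref{prop-cov-stable} for the next slow shadowing pair $\{N_\epsilon^{j+1,0},N_\epsilon^{j+1,1}\}$ together with the stable cone $C_{m^s_{j+1}}^s$. The $y$-range condition (\ref{ineq-y-cov}) in Proposition \ref{prop-cov-stable} and the $y$-displacement estimate in (\ref{exit-cone}) must be compatible; this is precisely what Definition \ref{dfn-CE-seq-cone} encodes, so once one checks that the hypotheses of Corollary \ref{cor-heteroclinic-3} feed into those definitions consistently, the proof reduces to assembling the covering chain and invoking Proposition \ref{WZ-heteroclinic}.
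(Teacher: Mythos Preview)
Your proposal is correct and follows essentially the same approach as the paper: the paper's proof simply says to replace $\mathcal{F}_\epsilon^j$ and $M_\epsilon^{j,0}$ in the proof of Corollary~\ref{cor-heteroclinic-2} by $N_\epsilon^{j,\exit}\overset{P_\epsilon^{m^u_j}}{\Longrightarrow}\mathcal{F}_\epsilon^j$ and by the $h$-set $\tilde M_\epsilon^{j,0}\subset N_\epsilon^{j,0}\cup C_{m^s_j}^s$ from Proposition~\ref{prop-cov-stable}, respectively, which is exactly the splicing you carry out in detail. Your write-up is a faithful and more explicit unpacking of that terse instruction.
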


\begin{proof}
Replace $\mathcal{F}_\epsilon^j$ and $M_\epsilon^{j,0}$ in the proof of Theorem \ref{cor-heteroclinic-2} by $N_\epsilon^{j, \exit} \overset{P_\epsilon^{m^u_j}}{\Longrightarrow} \mathcal{F}_\epsilon^j$ and $\tilde M_\epsilon^{j,0}$, respectively. 
Here $P_\epsilon^{m^u_j} : C_{m^u_j}^u \to \partial C_{m^u_j}^u$ is the Poincar\'{e} map and $\tilde M^{j,0}_\epsilon \subset (N_\epsilon^{j,0}\cup C_{m^s_j}^s)$ is the $h$-set corresponding to $\tilde M_1$ in Proposition \ref{prop-cov-stable}.
\end{proof}

\begin{rem}\rm
Changing the choice of covering-exchange sequences and fast-saddle-type blocks containing nontrivial invariant sets, we can obtain various type of singularly perturbed global orbits near singular orbits. 
For example, in Theorem \ref{thm-heteroclinic-1}, further assuming $\mathcal{S}_\epsilon^0 = \mathcal{S}_\epsilon^\rho$ and $S_{\epsilon,u} = S_{\epsilon,s} = \{p_\epsilon\}$, an equilibrium, then there exists an $\epsilon$-family of homoclinic orbits $\{H_\epsilon\}_{\epsilon\in (0,\epsilon_0]}$ of $p_\epsilon$. 

We can replace Condition 4 in the covering-exchange property, $\mathcal{F}_\epsilon^j \overset{\varphi_\epsilon(T^j,\cdot )}{\Longrightarrow}\tilde{\mathcal{S}}_\epsilon^{j+1}$, by a sequence of covering relations
\begin{equation*}
\mathcal{F}_\epsilon^j \overset{\varphi_\epsilon(T^j,\cdot )}{\Longrightarrow} M^j_1 \overset{\varphi_\epsilon(T^j_1,\cdot )}{\Longrightarrow} M^j_2 \overset{\varphi_\epsilon(T^j_2,\cdot )}{\Longrightarrow}  \cdots  \overset{\varphi_\epsilon(T^j_{k-1},\cdot )}{\Longrightarrow} M^j_k \overset{\varphi_\epsilon(T^j_k,\cdot )}{\Longrightarrow} \tilde{\mathcal{S}}_\epsilon^{j+1}
\end{equation*}
for $h$-sets $\{M^j_i\}_{i=1}^k$ and positive numbers $\{T^j_i\}_{i=1}^k$ to prove the same statements as Theorem \ref{thm-periodic-1} and \ref{thm-heteroclinic-1}. 
These are applications to Proposition \ref{ZG-periodic} or \ref{WZ-heteroclinic} and such extensions are useful for validating trajectories with complex behavior.
\end{rem}

%
%
\section{Sample validation results for FitzHugh-Nagumo system}
\label{section-examples}

In this section we provide several examples of singularly perturbed orbits with computer assistance. 
Our sample system is the FitzHugh-Nagumo system given by
\begin{equation}
\label{FN}
\begin{cases}
u' = v & \text{}\\
v' = \delta^{-1}(cv - f(u) + w) & \\
w' = \epsilon c^{-1}(u-\gamma w),
\end{cases}
\end{equation}
where $a\in (0,1/2)$, $c,\gamma$ and $\delta$ are positive parameters and $f(u) = u(u-a)(1-u)$. (\ref{FN}) is well-known as the system of traveling wave solutions $(U,W) = (\psi_U(x-ct), \psi_W(x-ct))$ of the following PDE: 
\begin{equation}
\label{FN-PDE}
\begin{cases}
U_t = U_{xx} + f(U) - W &\\
W_t = \epsilon(U-\gamma W) &
\end{cases}.
\end{equation}

%
%
\subsection{Strategy and parameters}
\label{section-strategy}
By following arguments in previous sections, we validate global orbits. 
The following implementation is basically common in our computations.
In particular, we concentrate on construction of covering-exchange sequences associated with slow shadowing sequences.

\begin{description}
\item[Step 1.] Fix $\epsilon_0 > 0$ and several parameters.

\item[Step 2.] For constructing the $j$-th slow shadowing sequence $\{N_\epsilon^{j,i}\}_{i=0}^{m_j}$ ($j=0,\cdots, \rho > 0$) with $\pi_y(N_\epsilon^{j,i}) = [y_{j,i}^-, y_{j,i}^+]$, we set identical positive numbers $a_0, b_0$ and $\bar h$ in (SS4) in advance. 
Before constructing each $N_\epsilon^{j,i}$, we compute approximate equilibria $\{(u_{j,i}, 0, w_{j,i})\}_{i=0}^{m_j}$ for (\ref{FN})$_0$ (i.e., $\epsilon = 0$) along a (normally hyperbolic) branch of the nullcline. 
For simplicity, compute them with $|w_{j, i+1}-w_{j,i}|\equiv \bar h$ for all $i$ so that $\{w_{j,i}\}_{i=0}^{m_j}$ is monotonously increasing (resp. decreasing) in the case $q=+1$ (resp. $q=-1$).

Also, let $|y_{j,i}^+ - w_{j,i}| = |w_{j,i} - y_{j,i}^-|\equiv H/2$ for some $H>0$ for further simplicity.
Set  $\pi_y (N_\epsilon^{j,\exit}) = [y_{j,m_j}^- + \bar h, y_{j,m_j}^+ - \delta]$ in the case of $q=+1$
Similarly, set $\pi_y (N_\epsilon^\exit) = [y_{j,m_j}^- +\delta, y_{j,m_j}^+ - \bar h]$ in the case of $q=-1$.
Here $\delta > 0$ denotes an arbitrarily small number.
For validation of (SS5), we apply Proposition \ref{prop-SS5}.
Under these settings, verify the slow shadowing condition (\ref{shadow}).
\par
In the case of validations of heteroclinic orbits, we additionally need to validate $W^u(I_\epsilon)$ for an isolating block $I_\epsilon$ on slow manifolds. To this end, verify all assumptions in Lemma \ref{lem-fiber-cone}. 
We revisit this verification later in Lemma \ref{lem-validation-unstable}.

\item[Step 3.] If necessary, verify the unstable $m$-cone condition for an appropriate $m > 1$ in a block of the form (\ref{set-verify-unstable-m-cone}) (Section \ref{section-m-cones}). Also, 
construct the fast-exit face $(C_m^u)^\exit$ of unstable $m$-cones following (\ref{exit-cone}).
Similarly, verify the stable $m$-cone condition of at the fast-entrance of blocks for an appropriate $m > 1$ if necessary. 
When we apply the predictor-corrector method for constructing fast-saddle-type block (Section \ref{section-block-pred-corr}), vertices of $m$-cones are slid in general.
In this case, we cut the cone so that the fast-exit face is parallel to the $y$-axis. 
As a consequence, the length $\ell_u$ or $\ell_s$ of extended cones shorten at most $|f_x(\bar x, \bar y)^{-1}f_y(\bar x, \bar y)H|$, where $H$ is the height of the block in the $y$-direction.
See Fig. \ref{fig-cut-cones}.
\begin{figure}[htbp]\em
\begin{minipage}{0.5\hsize}
\centering
\includegraphics[width=5.0cm]{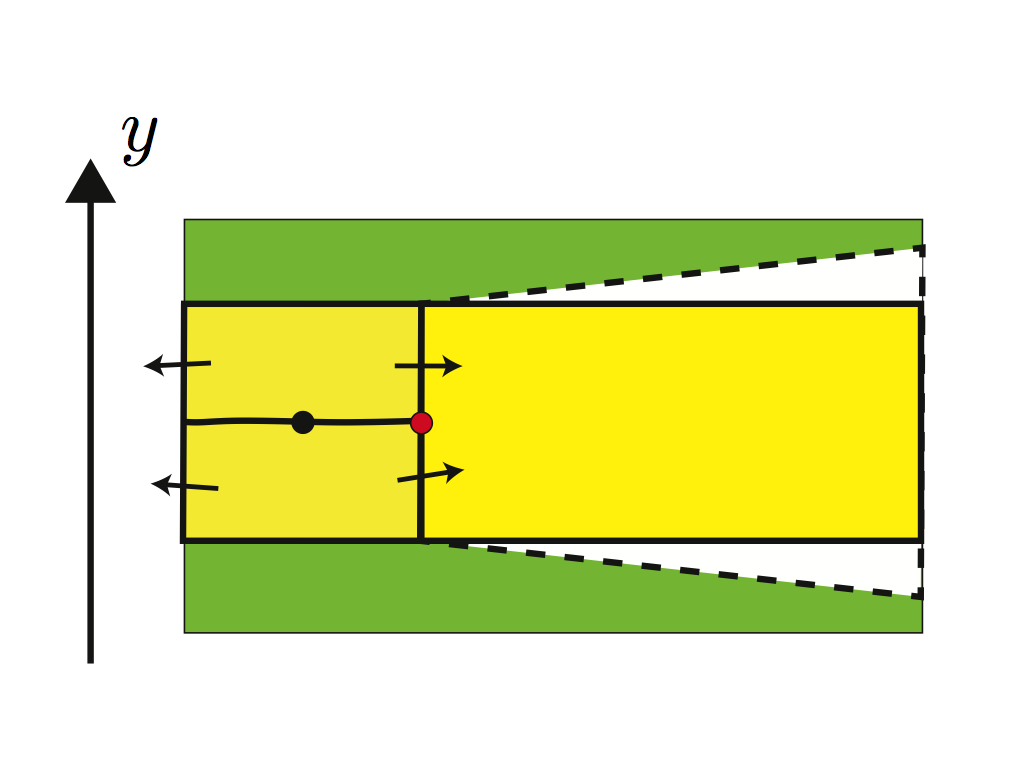}
(a)
\end{minipage}
\begin{minipage}{0.5\hsize}
\centering
\includegraphics[width=5.0cm]{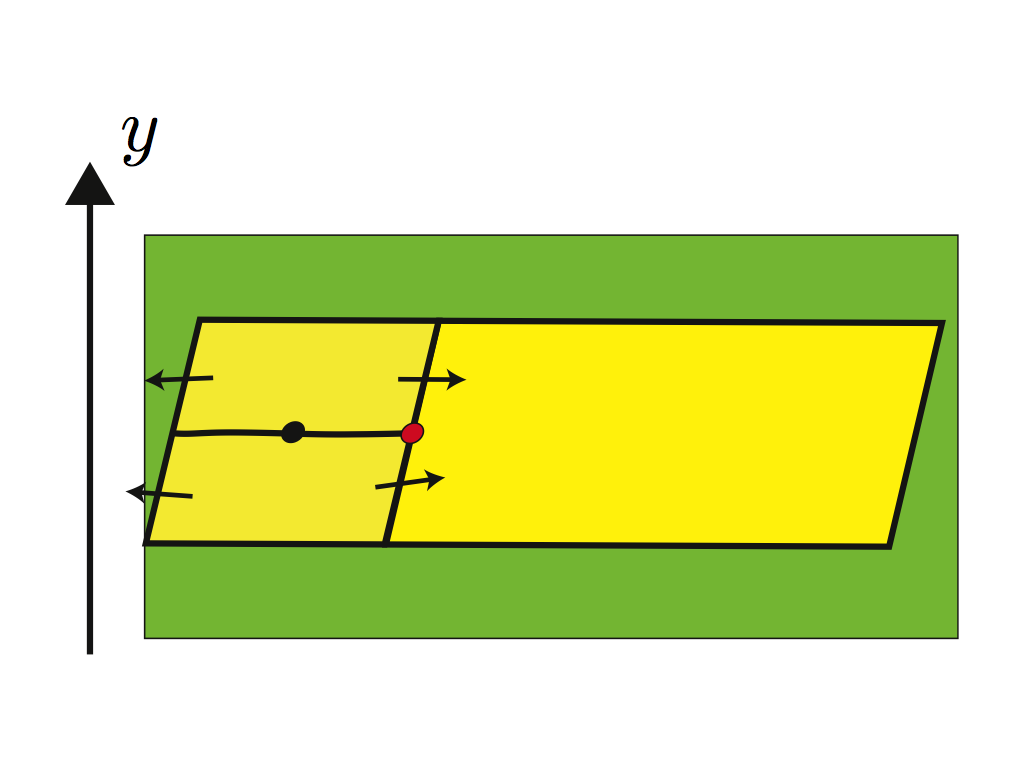}
(b)
\end{minipage}\par
\begin{minipage}{1\hsize}
\centering
\includegraphics[width=7.0cm]{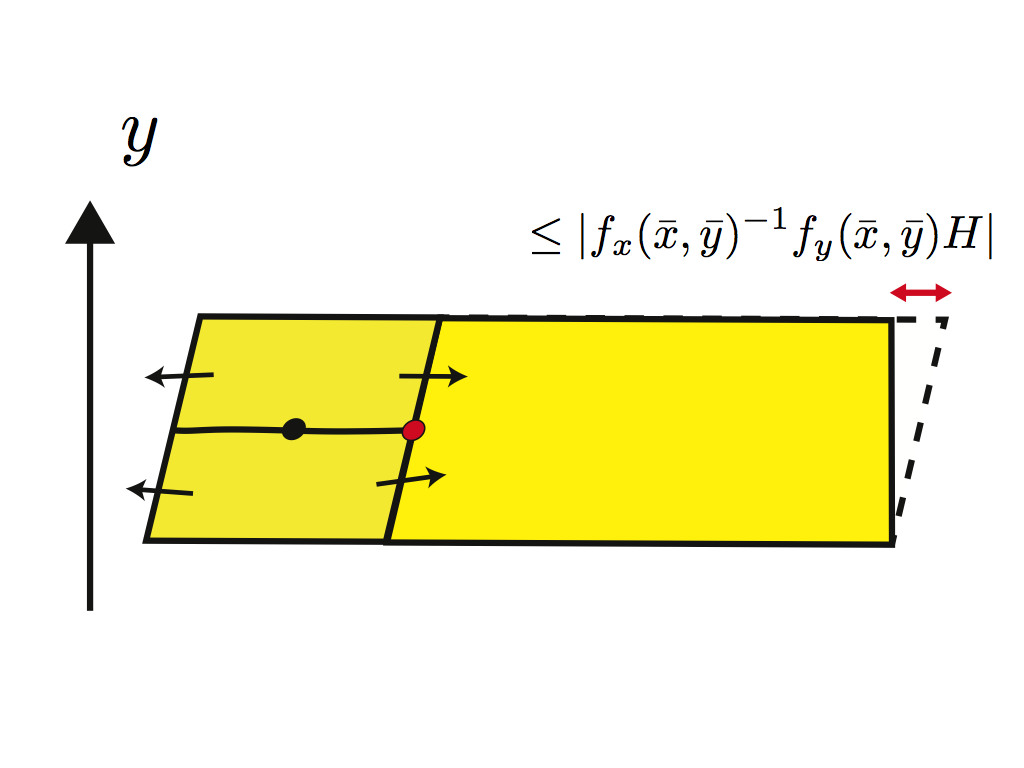}
(c)
\end{minipage}
\caption{Cutting $m$-cones}
\label{fig-cut-cones}
(a) : A fast-saddle-type block $N$ with the basic form (Section \ref{section-block-basic}) and associated unstable $m$-cone.
The union of yellow and white regions represents $N\cup C_m^u$. 
The yellow region represents $(N\cup C_m^u)\cap \{y\in \pi_y(N)\}$.
The $x$-components of block $N$ are identical for $y$.
\par
(b) : A fast-saddle-type block with the predictor-correctorc form (Section \ref{section-block-pred-corr}) and associated unstable $m$-cone.
The $x$-components of the block $N$ as well as the cone $C_m^u$ are slid, since the $x$-component of the center point also depends on $y$; $(x,y) = (\bar x + (dx/dy)(\bar y)\cdot (y-\bar y), y)$.
The fast-exit face thus has non-trivial angle to the $y$-axis.
\par
(c) : Cut the edge of unstable $m$-cones.
Resulting fast-exit face is parallel to the $y$-axis. The length $\ell_u$ of the extended cone shortens at most $|f_x(\bar x, \bar y)^{-1}f_y(\bar x, \bar y)H|$, where $H$ is the height of the block in the $y$-direction.
\end{figure}

\item[Step 4.] Solve initial value problems of ODEs for setting a fast-exit-face of each block as an initial data and verify (CE4) in Definition \ref{dfn-CE} or {\bf Drop} $N_\epsilon^{j-1,\exit} \overset{\varphi_\epsilon(T^j_k,\cdot )}{\Longrightarrow} (N_\epsilon^{j,0})_{\leq y_{j,0}^+-\bar h}$. 
This operation consists of direct applications of interval arithmetics and ODE solver libraries such as CAPD \cite{CAPD}. 
Although our computations here are operated in full systems, slow dynamics can be regarded as the small error since our interest in this step is mainly the behavior of fast dynamics.
\end{description}

\bigskip
Validation of assumptions in Lemma \ref{lem-fiber-cone} in Step 2 can be done as follows. 
\begin{lem}[Validation of assumptions in Lemma \ref{lem-fiber-cone}]
\label{lem-validation-unstable}
Let $N$ be a fast-saddle-type block with $\pi_y(N) = [y_N^-,y_N^+]$, and $I$ be an isolating block $I$ containing an equilibrium in the validated slow manifold $S_\epsilon$ with $0 < {\rm dist}(S_\epsilon, N^{f,-}) < {\rm diam}(\pi_a(N))-r_a$, where $r_a > 0$, and ${\rm dist}(S_\epsilon, N^{f,+}) \geq r_b > 0$.
Assume that $N$ satisfies the unstable $m_u$-cone condition with
\begin{equation}
\label{practical-stable}
\frac{{\rm diam}(\pi_a(N))-r_a}{m_u} < r_b.
\end{equation}
Also assume that $I$ can be chosen in 
\begin{equation}
\label{practical-block}
N\cap \left\{y_N^- + \frac{{\rm diam}(\pi_a(N))-r_a}{m_u}\leq y \leq y_N^+ - \frac{{\rm diam}(\pi_a(N))-r_a}{m_u} \right\}
\end{equation}
and that any subset $\tilde I\subset S_\epsilon$ containing $I$ is also an isolating block with the same isolating information as $I$. Then we can choose an isolating block $\bar I_\epsilon \subset S_\epsilon$ and the fast-exit face $N^{\rm exit}$ with $\pi_y(N^{\rm exit}) = [y^-,y^+]$ and 
\begin{equation}
\label{practical-exit}
y_N^- + 2\frac{{\rm diam}(\pi_a(N))-r_a}{m_u}\leq y^- < y^+  \leq y_N^+ - 2\frac{{\rm diam}(\pi_a(N))-r_a}{m_u}
\end{equation}
so that assumptions in Lemma \ref{lem-fiber-cone} holds. 
\end{lem}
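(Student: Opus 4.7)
The strategy is purely geometric: we translate the dimension counting of Lemma \ref{lem-fiber-cone} into explicit containment statements for unstable $m_u$-cones emanating from endpoints of a suitable extension $\bar I_\epsilon$ of $I$, and then read off the admissible ranges for $N^{\rm exit}$. The unstable $m_u$-cone condition itself is already part of the hypotheses, so only the two geometric conditions ${(C_{m_u}^u(z_\pm))\cap\partial N\subset N^{f,-}}$ and the strict $\pi_y$-separation between $C_{m_u}^u(z_\pm)\cap\{a=1\}$ and $[y^-,y^+]$ remain to be checked.

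First I would extract the key length bound. For any $z=(a_z,b_z,y_z)\in S_\epsilon\cap I$, the $a$-extent of $C_{m_u}^u(z)$ inside $N$ cannot exceed the $a$-distance from $S_\epsilon$ to $N^{f,-}$, which by hypothesis is bounded above by $\diam(\pi_a(N))-r_a$. Because the cone has slope $1/m_u$ in each of the $b$- and $y$-directions relative to $a$, the induced $b$- and $y$-extents are both at most $(\diam(\pi_a(N))-r_a)/m_u$. Condition (\ref{practical-stable}) then forces the cone to stay strictly inside the $b$-slab of $N$ since $\dist(S_\epsilon,N^{f,+})\geq r_b$, and condition (\ref{practical-block}) guarantees that at every $z\in I$ the same cone stays inside the $y$-slab $[y_N^-,y_N^+]$. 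In particular, the cone can only exit $N$ through $N^{f,-}$, giving the first assumption of Lemma \ref{lem-fiber-cone}.

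Next I would construct $\bar I_\epsilon$ by enlarging $I$ inside $S_\epsilon$. By hypothesis, every subset of $S_\epsilon$ containing $I$ is still an isolating block with the same isolating data, so I can take $\bar I_\epsilon$ to be any interval on $S_\epsilon$ with $y$-projection $[\bar y^-,\bar y^+]$, provided $\bar y^\pm$ respect (\ref{practical-block}). I would choose $\bar y^\pm$ essentially at the boundary of the admissible region, i.e.\ $\bar y^-$ slightly above $y_N^-+(\diam(\pi_a(N))-r_a)/m_u$ and $\bar y^+$ slightly below $y_N^+-(\diam(\pi_a(N))-r_a)/m_u$; the slight offset buys strict inequality later. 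Letting $z_\pm\in\partial\bar I_\epsilon$ denote the two endpoints (with $y$-coordinates $\bar y^\pm$), the cone bound from the previous paragraph gives
\[
\inf\pi_y\bigl(C_{m_u}^u(z_+)\cap\{a=1\}\bigr)\;\geq\;\bar y^+-\frac{\diam(\pi_a(N))-r_a}{m_u},\qquad \sup\pi_y\bigl(C_{m_u}^u(z_-)\cap\{a=1\}\bigr)\;\leq\;\bar y^-+\frac{\diam(\pi_a(N))-r_a}{m_u}.
\]
Any fast-exit face $N^{\rm exit}$ whose $y$-projection $[y^-,y^+]$ satisfies (\ref{practical-exit}) is then separated from both cone traces at $a=1$ by a positive amount; combined with the containment of the previous paragraph, this yields exactly the hypotheses (\ref{ass-fiber1})--(\ref{ass-fiber2}) of Lemma \ref{lem-fiber-cone} with $I_\epsilon=\bar I_\epsilon$.

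The main obstacle I foresee is the strictness of the inequalities: (\ref{practical-block}) and (\ref{practical-exit}) are written with $\leq$, while Lemma \ref{lem-fiber-cone} requires strict separation in the $y$-direction. I would address this by observing that the hypotheses already leave a nondegenerate interval of admissible $\bar y^\pm$, which allows $\bar y^\pm$ and $y^\pm$ to be chosen with genuine (rather than borderline) margin so that every inequality promoted to Lemma \ref{lem-fiber-cone} is strict; this is a matter of selecting representatives inside an open set of admissible choices, not of strengthening the hypotheses.
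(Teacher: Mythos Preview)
Your proposal is correct and follows essentially the same geometric approach as the paper's proof: bound the $b$- and $y$-extent of the unstable $m_u$-cone by $(\diam(\pi_a(N))-r_a)/m_u$, use (\ref{practical-stable}) and (\ref{practical-block}) to obtain (\ref{ass-fiber1}), and then choose $\bar I_\epsilon$ and $N^{\rm exit}$ so that (\ref{ass-fiber2}) follows from (\ref{practical-exit}). The paper's own proof is considerably terser---it records the cone bound and asserts the conclusions without spelling out the choice of $\bar I_\epsilon$ or the strictness argument---so your write-up is in fact more complete on those points.
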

\begin{proof}
For each $p\in I$, any point $q\in W^u(p)$ satisfies $|\pi_b(q)-\pi_b(p)| < ({\rm diam}(\pi_a(N))-r_a) / m_u$, which is the consequence of properties of unstable cones.
The same property holds for $y$-components.
Therefore assumptions concerning with (\ref{practical-stable}) and (\ref{practical-block}) implies (\ref{ass-fiber1}). 
Similarly, it immediately holds that we can choose $\bar I_\epsilon$ and $N^{\rm exit}$ with (\ref{practical-exit}) so that (\ref{ass-fiber2}) is satisfied.
\end{proof}

\bigskip
In practical computations, the most difficult part is Step 4.
In general, the larger both the fast-exit face $N_\epsilon^{j,\exit}$ and the target $h$-set in {\bf Drop}, i.e., $(N_\epsilon^{j,0})_{\leq y_{j,0}^+-\bar h}$, are, the easier validations of covering relations are.
However, if we validate covering relations described in Step 4, the resulting $h$-sets $M_\epsilon^{j,i}$ is very close to the slow exit, i.e., $(N_\epsilon^{j,i})_{y_{j,i}^+}$ (resp. $(N_\epsilon^{j,i})_{y_{j,i}^-}$) in case that $q=+1$ (resp. $q=-1$). 
In particular, the next fast-exit face $N_\epsilon^{j,\exit}$ becomes too thin to validate the next {\bf Drop}.
Procedures in Step 2 as well as Step 4 thus looks incompatible each other.
Nevertheless, an appropriate choice of the slow shadowing ratio $\chi$ avoids this inconsistency.

\begin{lem}[Validity of Step 2]
\label{lem-shadow-ratio}
For each $j=1,\cdots, \rho$, consider the $j$-th slow shadowing sequence $\{N_\epsilon^{j,i}\}_{i=0}^{m_j}$ in Step 2.
Assume that a covering relation $N_\epsilon^{j-1,\exit} \overset{\varphi_\epsilon(T_j,\cdot )}{\Longrightarrow} (N_\epsilon^{j,0})_{\leq y_{j,0}^+-\bar h}$ holds.

Let $\chi_j$ be the slow shadowing ratio satisfying
\begin{equation}
\label{choice-ratio}
\chi_j \leq 1- \frac{H}{\bar h}\left( \frac{|w_{j,m_j}-w_{j,0}|-H}{\bar h} \right)^{-1} = 1- \frac{H}{\bar h}\left( \frac{m_j \bar h-H}{\bar h} \right)^{-1}
\end{equation}
with $m_j \bar h > H$.
\par
Finally assume that $\{N_\epsilon^{j,i}\}_{i=0}^{m_j}$ is the slow shadowing sequence with the identical ratio $\chi_j$.
Then we can choose a fast-exit face $N_\epsilon^{j,\exit}$ so that 
$\pi_y (N_\epsilon^{j,\exit}) = [y_{j,m_j}^- + \bar h, y_{j,m_j}^+ - \delta]$ (resp. $\pi_y (N_\epsilon^\exit) = [y_{j,m_j}^- + \delta, y_{j,m_j}^+ - \bar h]$) in the case of $q=+1$ (resp. $q=-1$), where $\delta >0$ is an arbitrarily small number.
As a consequence, Step 2 and Step 4 are valid simultaneously.
\end{lem}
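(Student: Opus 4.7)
The plan is to chain Propositions~\ref{prop-CE-2-2}, \ref{prop-CE-2-1}, and \ref{prop-CE-2-3} into a single sequence of covering relations: a \textbf{Drop} from $N_\epsilon^{j-1,\exit}$ into $(N_\epsilon^{j,0})_{\leq \bar y_0}$ for some $\bar y_0$, followed by $m_j-1$ slow-shadowing Poincar\'{e} maps that each advance the $y$-coordinate by $\chi_j\bar h$ across the sequence $\{N_\epsilon^{j,i}\}$, and finally a \textbf{Jump} from $N_\epsilon^{j,m_j-1}$ to the prescribed fast-exit face in $N_\epsilon^{j,m_j}$. The task reduces to exhibiting a real number $\bar y_0$ such that every admissibility condition in these propositions is met; the bound (\ref{choice-ratio}) on $\chi_j$ will emerge as the compatibility condition for the resulting family of inequalities.

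First I would use the hypothesis $N_\epsilon^{j-1,\exit}\overset{\varphi_\epsilon(T_j,\cdot)}{\Longrightarrow}(N_\epsilon^{j,0})_{\leq y_{j,0}^+-\bar h}$ together with Proposition~\ref{prop-CE-2-2} to fix some $\bar y_0\leq y_{j,0}^+-\bar h$ and obtain $\tilde M\subset (N_\epsilon^{j,0})_{\leq \bar y_0}$. Setting $\bar y_i := \bar y_0+i\chi_j\bar h$ for $i=0,\ldots,m_j-1$, each application of Proposition~\ref{prop-CE-2-1} to the pair $\{N_\epsilon^{j,i},N_\epsilon^{j,i+1}\}$ demands $\bar y_i\in[y_{j,i}^-,y_{j,i}^+-\bar h]\cap[y_{j,i+1}^--\chi_j\bar h,y_{j,i+1}^+-\bar h]$. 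Substituting $y_{j,i}^\pm = w_{j,0}+i\bar h\pm H/2$ in the case $q=+1$ converts these into a system of linear inequalities in $\bar y_0$ whose slack at the $i$-th block is controlled by the accumulated drift $i(1-\chi_j)\bar h$. The terminal invocation of Proposition~\ref{prop-CE-2-3} with $\pi_y(N_\epsilon^{j,\exit})=[y_{j,m_j}^-+\bar h,y_{j,m_j}^+-\delta]$ imposes the distance condition $\dist(N_\epsilon^{j,\exit},\{y=\bar y_{m_j-1}+\chi_j\bar h\})\geq \chi_j\bar h$, giving an additional bound on $\bar y_0$ of the form $\bar y_0 \leq w_{j,m_j}+\bar h-H/2-(m_j+1)\chi_j\bar h$.

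The main obstacle is balancing three families of bounds simultaneously: the slow-shadowing admissibility constraints across all $m_j-1$ intermediate steps, the initial Drop bound $\bar y_0\leq y_{j,0}^+-\bar h$, and the terminal Jump bound. Extracting the binding pair, substituting $|w_{j,m_j}-w_{j,0}|=m_j\bar h$, and rearranging produces precisely the quantity $H/(m_j\bar h-H)$ appearing in (\ref{choice-ratio}); the hypothesis on $\chi_j$ is therefore exactly what makes the feasible set for $\bar y_0$ nonempty. With such $\bar y_0$ in hand, the Drop, the $m_j-1$ intermediate Poincar\'{e} maps, and the Jump concatenate into the covering chain $N_\epsilon^{j-1,\exit}\Longrightarrow\tilde M\Longrightarrow\cdots\Longrightarrow N_\epsilon^{j,\exit}$, establishing that Step~2 and Step~4 are simultaneously valid. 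The case $q=-1$ is symmetric: reflecting the $y$-coordinate exchanges the roles of $y_{j,i}^\pm$ and relocates the exit face to $[y_{j,m_j}^-+\delta,y_{j,m_j}^+-\bar h]$, but the algebraic structure is formally identical, so (\ref{choice-ratio}) again suffices.
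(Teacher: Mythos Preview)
Your proposal is correct and follows essentially the same route as the paper: track the $y$-position $\bar y_i=\bar y_0+i\chi_j\bar h$ against the block positions $y_{j,i}^\pm=w_{j,0}+i\bar h\pm H/2$, observe that the relative position drifts by $(1-\chi_j)\bar h$ per step, and show that (\ref{choice-ratio}) is exactly the condition making the Drop--shadow--Jump chain feasible.

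The one tactical difference worth noting is how the endgame is handled. You search for a single starting height $\bar y_0$ compatible with the fixed ratio $\chi_j$ at every step. The paper instead fixes $\bar y_0=y_{j,0}^+-\bar h$, computes that with uniform ratio $\chi_j$ the terminal height $I_{m_j}$ would actually fall \emph{below} $y_{j,m_j}^-$, and then invokes the observation that slow shadowing with ratio $\chi_j$ automatically holds for every $\chi'\in[\chi_j,1]$; by selectively increasing some of the step ratios toward $1$ one can land $M_\epsilon^{j,m_j}$ exactly on $(N_\epsilon^{j,m_j})_{y_{j,m_j}^-}$, after which Proposition~\ref{prop-CE-2-3} applies directly. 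This monotonicity-in-$\chi$ trick spares the paper your simultaneous feasibility analysis of the intermediate constraints $\bar y_i\in[y_{j,i}^-,y_{j,i}^+-\bar h]$, which you should still check explicitly (they are satisfied, but the binding lower constraint at $i=m_j-1$ is what pairs against the Drop upper bound to produce (\ref{choice-ratio})). Either bookkeeping yields the lemma.
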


\begin{proof}
We only prove the case $q=+1$. The case $q=-1$ is similar.
\par
By Proposition \ref{prop-CE-2-2} with $\chi_j$, we can construct a covering relation $M_\epsilon^{j,0} \overset{P_\epsilon^{j,0}}{\Longrightarrow} M_\epsilon^{j,1}$, where
\begin{equation*}
M_\epsilon^{j,0}\subset (N_\epsilon^{j,0})_{y_{j,0}^+-\bar h}\quad \text{ and }\quad M_\epsilon^{j,1}\subset (N_\epsilon^{j,1})_{y_{j,0}^+-(1-\chi_j)\bar h} = (N_\epsilon^{j,1})_{y_{j,1}^+-(2-\chi_j)\bar h}. 
\end{equation*}
The last equality follows from the choice of $y_{j,i}^\pm$ and $w_{j,i}$ in Step 2.
Repeating this argument, two $h$-sets describing the $i$-th covering relation $M_\epsilon^{j,i-1}\ \overset{P_\epsilon^{j,i-1}}{\Longrightarrow} M_\epsilon^{j,i}$ is located on $(N_\epsilon^{j,i-1})_{I_{i-1}}$ and $(N_\epsilon^{j,i})_{I_{i}}$, respectively, where
\begin{equation*}
I_i = y_{j,i}^+ - \{(i+1)-i\chi_j\}\bar h.
\end{equation*}
We thus obtain $I_{m_j} = y_{j,m_j}^+ - \{(m_j+1)-m_j\chi_j\}\bar h$.
If the ratio $\chi_j$ can be chosen satisfying (\ref{choice-ratio}), we obtain
\begin{align*}
\{(m_j+1)-m_j\chi_j\}\bar h &\leq \left\{(m_j+1)-m_j \left\{1- \frac{H}{\bar h}\left( \frac{|w_{j,m_j}-w_{j,0}|-H}{\bar h} \right)^{-1} \right\} \right\} \bar h\\
	&= \left\{1 + \frac{m_j H}{\bar h}\left( \frac{m_j \bar h - H}{\bar h}\right)^{-1} \right\}\bar h.
\end{align*}
Consequently,
\begin{align*}
y_{j,m_j}^+ - \{(m_j+1)-m_j\chi_j\}\bar h &\leq y_{j,m_j}^+ - \bar h - \frac{m_j H}{\bar h} \left( \frac{m_j \bar h-H}{\bar h}\right)^{-1} \bar h\\
	&\leq y_{j,m_j}^+ - \bar h -  \left( \frac{m_j\bar h H}{m_j \bar h-H}\right)
	\leq y_{j,m_j}^+ - \bar h -  H = y_{j,m_j}^- - \bar h.
\end{align*}
Obviously, the slow shadowing pair with the ratio $\chi_j$ satisfies the slow shadowing condition with the ratio $\chi'$ for all $\chi'\in [\chi_j, 1]$.
Therefore, arranging several $\chi_{j,i}$'s in the slow shadowing pair $\{N_\epsilon^{j,i-1}, N_\epsilon^{j,i}\}$, we can take the $h$-set $M_\epsilon^{j,m_j}$ on $(N_\epsilon^{j,m_j})_{y_{j,i}^-}$.
Statements of the lemma follow from the same arguments as the proof of Proposition \ref{prop-CE-2-3}.
Schematic pictures of the proof are shown in Fig. \ref{fig-shadow-ratio}.
\end{proof}

\begin{figure}[htbp]\em
\begin{minipage}{0.48\hsize}
\centering
\includegraphics[width=6.0cm]{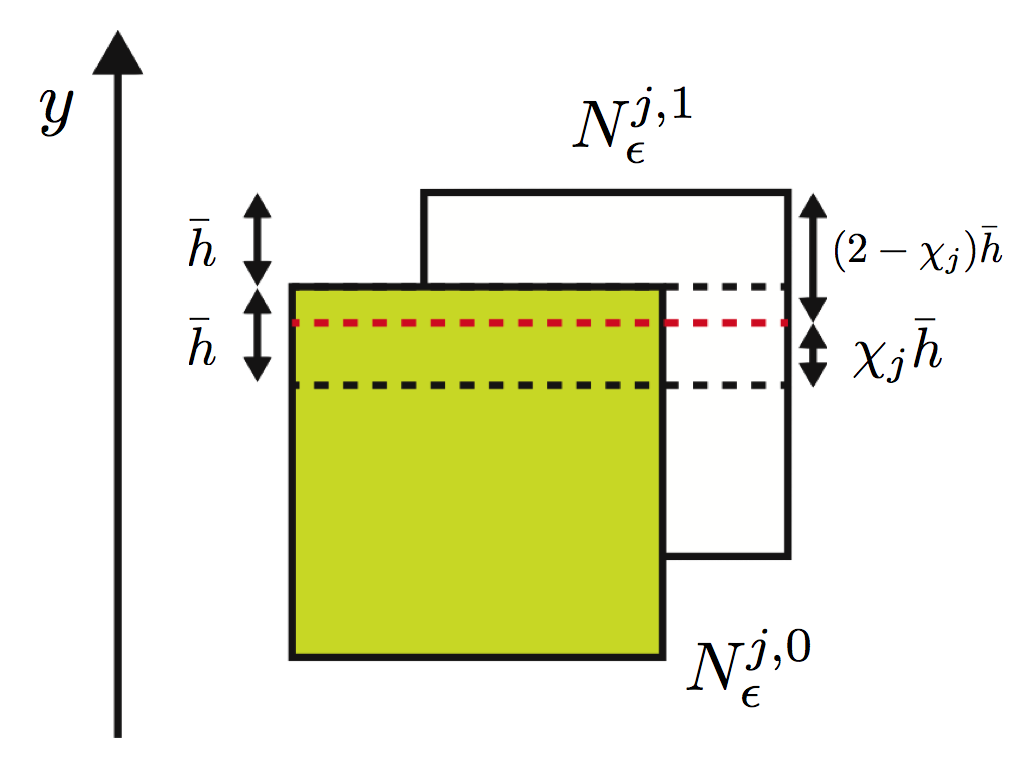}
(a)
\end{minipage}
\begin{minipage}{0.48\hsize}
\centering
\includegraphics[width=6.0cm]{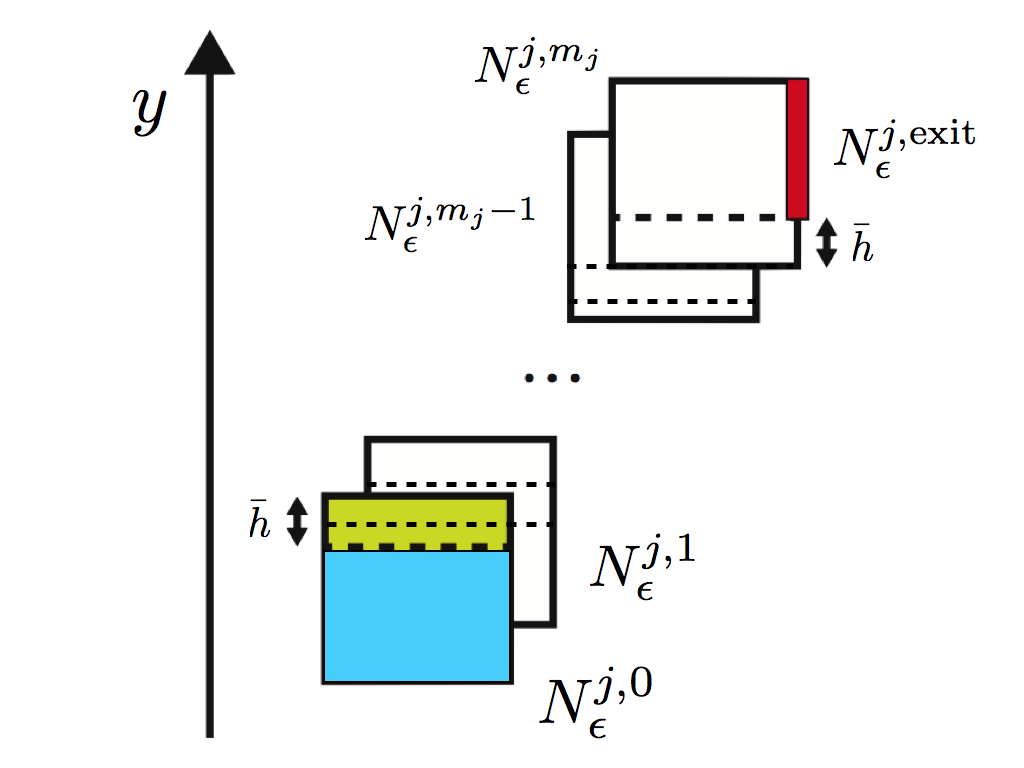}
(b)
\end{minipage}
\caption{Schematic pictures of Lemma \ref{lem-shadow-ratio} ($q=+1$)}
\label{fig-shadow-ratio}
Dotted lines denote sections $(N_\epsilon^{j,i})_y$ for each $i$. \\
(a) : If the sequence of fast-saddle-type blocks $\{N_\epsilon^{j,i}\}_{i=0}^{m_j}$ is a slow shadowing sequence with the ratio $\chi_j$, 
the section $(N_\epsilon^{j,i+1})_y$ where the covering relation $M_\epsilon^{j,i} \overset{P_\epsilon^{j,i}}{\Longrightarrow} M_\epsilon^{j,i+1} \subset (N_\epsilon^{j,i+1})_y$ holds gets lower than the usual version ($\chi_j = 1$).\\
(b) : Repeating the procedure in (a) sufficiently many times, we can take the section $(N_\epsilon^{j,m_j})_y$ before {\bf Jump} at the bottom of $N_\epsilon^{j,m_j}$; namely, $(N_\epsilon^{j,m_j})_{y_{j,m_j}^-}$. 
As a consequence, we can take the fast-exit face $N_\epsilon^{j,{\rm exit}}$ large keeping the target $h$-set $(N_\epsilon^{j,0})_{\leq y}$ in {\bf Drop} large.
\end{figure}

Thanks to Lemma \ref{lem-shadow-ratio}, we replace Step 2 by the following, which enables us to verify assumptions of results in Section \ref{section-existence} with large fast-exit faces and large target $h$-sets in {\bf Drop}:
\begin{description}
\item[Step 2'] Replace the statement \lq\lq verify the slow shadowing condition" in Step 2 by \lq\lq verify the slow shadowing condition with the slow shadowing ratio $\chi_j$", where $\chi_j < 1$ is a given number satisfying (\ref{choice-ratio}). 
\end{description}

If {\bf Steps 1, 2', 3 and 4} pass, then all assumptions of either Theorem \ref{thm-periodic-1}, Corollaries \ref{cor-periodic-2} or  \ref{cor-periodic-3} are satisfied in the case of periodic orbits for all $\epsilon \in (0,\epsilon_0]$. Similarly, all assumptions of either Theorem \ref{thm-heteroclinic-1}, Corollaries \ref{cor-heteroclinic-2} or \ref{cor-heteroclinic-3} are satisfied in the case of heteroclinic or homoclinic orbits.

\bigskip
In Step 2, the predictor-corrector approach discussed in Section \ref{section-block-pred-corr} is used for choosing local coordinates around (normally hyperbolic) invariant manifolds. 
A concrete form for validations is shown in Appendix \ref{appendix-concrete-FN}.

Note that all our examples below are cases $u = 1$. In such cases, one can directly verify covering relation $\mathcal{F}_\epsilon^{j-1} \overset{\varphi_\epsilon(T_j,\cdot )}{\Longrightarrow} M_\epsilon^{j,0}$ by checking all assumptions in Proposition \ref{prop-CR-u1}. 
In Step 4, we apply this proposition to validating covering relations.

\bigskip
We gather parameters we deal with computations in our settings except ones which arise in (\ref{FN}), before moving to practical computation examples.
These parameters are set for each branch of slow manifolds which we try to find.
Let $j\in \{0,1,\cdots, \rho\}$ be the number of branches.
\begin{itemize}
\item $\bar h$ : The height in the $w$-direction for slow shadowing condition (\ref{shadow}).
A sequence of equilibria $\{(u_i, v_i, w_i)\}$ for (\ref{layer}) are set so that $|w_{i+1}-w_i|\equiv \bar h$ for all $i$.
\item $H$ : The height of fast-saddle-type blocks in $w$-direction. Each $w$-interval of the length $H$ corresponds to the set $K$ in Section \ref{section-block-basic}.
\item $\bar w_0$ : A given number such that $|\bar w_0 - w_0| < \bar h$, where $w_0$ is the $w$-component of an equilibrium $(u_0, v_0, w_0)$. 
The equilibrium $(u_0, v_0, w_0)$ is computed numerically and becomes the center of the fast-saddle-type block $N_\epsilon^{j,0}$; the target $h$-set of {\bf Drop}.
\item $\bar w_{m_j}$ : A given number such that $|\bar w_{m_j} - w_{m_j}| < \bar h$, where $w_{m_j}$ is the $w$-component of an equilibrium $(u_{m_j}, v_{m_j}, w_{m_j})$. 
The equilibrium $(u_{m_j}, v_{m_j}, w_{m_j})$ is computed numerically and becomes the center of the fast-saddle-type block $N_\epsilon^{j,_{m_j}}$; the last $h$-set of the $j$-th slow shadowing sequence containing a fast-exit face.
\item $\chi$ : The slow shadowing ratio given by
\begin{equation*}
\chi = 1- \frac{H}{\bar h}\left( \frac{|\bar w_{m_j}- \bar w_{0}|-H-2\bar h}{\bar h} \right)^{-1}.
\end{equation*}
One can easily check that $\chi$ satisfies (\ref{choice-ratio}).
\item $r_a, r_b$ : The length of spaces in fast-saddle-type blocks introduced in (\ref{setting-shadow}). 
For simplicity, these numbers are identical for all blocks.
Moreover, they are assumed to be identical each other.
\item $d_a, d_b$ : Positive numbers less than $1$ introduced in (SS5).
\item $m_u, m_s$ : Positive numbers determining the sharpness of unstable and stable $m$-cones, respectively.
\item $\ell_u, \ell_s$ : Positive numbers determining the length $\ell$ of unstable and stable $m$-cones in (\ref{set-verify-unstable-m-cone}) and (\ref{set-verify-stable-m-cone}), respectively.
By using these numbers, we compute bounds of the departure time $T_{\rm dep}$ in (\ref{departure}) to construct the fast-exit face $(C_m^u)^{\rm exit}$ of the unstable $m$-cone in (\ref{exit-cone}), and the arrival time $T_{\rm arr}$ in (\ref{arrival}).
The arrival time $T_{\rm arr}$ is used to validate {\bf Drop} to the target $h$-set $(N_\epsilon^{j,0}\cup C_m^s)_{y_0^+-\bar h}$ in the $w$-direction corresponding to (\ref{ineq-y-cov}), as stated in Proposition \ref{prop-cov-stable}.
\par
If we apply the predictor-corrector form (Section \ref{section-block-pred-corr}) to constructing fast-saddle-type blocks, the practical length $\ell_u$ is set as $\ell_u - |f_u(\bar u)^{-1}|H$, following Step 3 and Fig. \ref{fig-cut-cones}. 
The factor $f_u(\bar u)^{-1}$ is the differential of $u$ by $w$ at the center point $(\bar u, \bar v, \bar w)$ via the implicit function differential for (\ref{FN}).
Details are shown in Appendix \ref{appendix-concrete-FN}.
\end{itemize}

\bigskip
All computations are done by MacBook Air 2011 model (1.6 GHz, Intel Core i5 Processor, 4GB Memory), GCC version 4.2.1 with {\tt -O2} option and CAPD library \cite{CAPD} version 3.0.

%
%
\subsection{Demonstration 1 : slow shadowing sequences with the ratio $\chi$}
\label{section-demo-shadow}
Validations of not only slow manifolds near critical manifolds consisting of equilibria for (\ref{layer}) but also the existence of trajectories which shadow slow manifolds are our starting points of whole considerations.
Slow manifolds for (\ref{FN}) is now expected to be near the nullcline $\{v=0, f(u) = w\}$. 
The aim of this subsection is to test how large slow manifolds can be validated in terms of slow shadowing sequence.
Following Step 2 at the beginning of this section, we validate slow shadowing sequences.
Note that validations in this section also verify {\bf Jump} in Proposition \ref{prop-CE-2-3}.

As a demonstration, we fix $a=0.3$, $\gamma = 10.0$, $\delta = 9.0$ and $c\in [0.799,0.801]$. 
These parameters are also used in Sections \ref{section-demo-homo} and \ref{section-homo-continuation}.

\begin{car}
\label{car-shadow}
Consider (\ref{FN}) with $a=0.3$, $\gamma = 10.0$ and $\delta = 9.0$. 
Then for all $c \in [0.799,0.801]$ and $\epsilon \in [0,5.0\times 10^{-5}]$, the branch of slow manifolds near the nullcline $\{v=0, f(u) = w\}$ in $\{-1.765629966434\times 10^{-1}\leq u \leq 2.017612584956\times 10^{-3}, -6.0\times 10^{-4}\leq w\leq 0.099\}$ is validated. 
In particular, the slow shadowing condition with $q=-1$ between blocks around this slow manifold is validated with parameters listed \lq\lq First branch" in Table \ref{table-shadowing}.

Similarly, the branch of slow manifolds near the nullcline $\{v=0, f(u) = w\}$ in $\{0.8504842978868\leq u \leq 1.021440903396, -1.58\times 10^{-2}\leq w\leq 0.07\}$ is validated. 
In particular, the slow shadowing condition with $q=+1$ between blocks around this slow manifold is validated with parameters listed \lq\lq Third branch" in Table \ref{table-shadowing}.
\end{car} 

\begin{center}
  \begin{table}[h]
    \begin{center}
      \begin{tabular}{|c|c|c|} \hline
	Parameters & First branch & Third branch \\ 
	\hline
	$\chi$ & $0.8807339449541285$ & $0.8786764705882354$ \\
	$\bar h$ & $0.003$ & $0.003$ \\
	$H$ & $0.0065$ & $0.0066$ \\
	$d_a$ & $0.75$ & $0.75$ \\
	$d_b$ & $0.7$ & $0.75$ \\
	$r_a$ & $0.008$ & $0.008$ \\
	$r_b$ & $0.0085$ & $0.0078$ \\
	$m_u$ & $100$ & $100$ \\
	$m_s$ & $100$ & $100$ \\
	computation time & $0.566$ sec. & $0.467$ sec. \\
        \hline
      \end{tabular}
    \end{center}
    \caption{Validation parameters of slow shadowing in Computer Assisted Result \ref{car-shadow}.}
    \label{table-shadowing}
  \end{table}
\end{center}

This validation result implies that we have already validated trajectories with appropriately chosen initial data, say $h$-sets, which shadow slow manifolds with an arbitrary length {\em for all $\epsilon \in (0,5.0\times 10^{-5}]$}, as long as slow shadowing are validated.
Moreover, {\bf Jump} has been also validated for any fast-exit face with an appropriate height from the bottom (for $q=+1$).
The height $H$ can be explicitly determined from (\ref{choice-ratio}) in our setting.
Remark that the range of our validating slow manifolds is not the limit of our verifications. 
Validations of slow shadowing sequences are just iterations of Step 2 and can be validated very fast, if we have fast solver of linear algebra.
Notice that slow shadowing sequences for $\epsilon\in (0,\epsilon_0]$ validate trajectories which shadow slow manifolds {\em without solving any differential equations for all $\epsilon\in (0,\epsilon_0]$}.

On the other hand, there is a trade-off for validating slow shadowing sequences. 
For example, if we raise the value of $\epsilon$, say $6.0\times 10^{-5}$, the slow shadowing condition (\ref{shadow}) violates, since the slow speed becomes faster than expansion and contraction of $h$-sets in hyperbolic directions around slow manifolds.
Factors determining (\ref{shadow}) are $\bar h$, $H$, $d_a$, $d_b$, $r_a$, $r_b$ as well as eigenvalues and size of fast-saddle-type blocks.
One expects that, the larger parameters, say $\bar h$, $r_a$, $r_b$ are, the easier the validation of (\ref{shadow}) will be.
However, in such a case, the covering relation in Proposition \ref{prop-SS5} is often violated.
In particular, Assumption (SS5) is violated.
This is mainly because the distance between two centers $(u_i, v_i, w_i)$ and $(u_{i+1}, v_{i+1}, w_{i+1})$ become larger and hence the affine map $T_{x,12}$ moves $h$-sets larger, if we increase $\bar h$, $r_a$, $r_b$. 
This is also the case if we increase parameters $d_a$ and $d_b$.

%
%
\subsection{Demonstration 2 : $m$-cones}
\label{section-demo-cones}
Next, we show a demonstration of $m$-cones. 
When we want to construct a covering-exchange sequence, we need to verify covering relation $\mathcal{F}_\epsilon^1 \overset{\varphi_\epsilon(T_0,\cdot )}{\Longrightarrow}\mathcal{S}_\epsilon$, 
where $\mathcal{F}_\epsilon^1$ is a fast-exit face of a fast-saddle type block and $\mathcal{S}_\epsilon$ is an other block. 
In order to verify this covering relation, we solve differential equation with the initial data $\mathcal{F}_\epsilon^1$. 
On the other hand, we can replace $\mathcal{F}_\epsilon^1$ by a fast-exit face $\mathcal{F}_\epsilon^2$ of an extended cone, thanks to discussions in Section \ref{section-m-cones}.
Here we solve (\ref{FN}) with two initial data, $\mathcal{F}_\epsilon^1$ and $\mathcal{F}_\epsilon^2$, to see the following two points:
\begin{enumerate}
\item accuracy of solution orbits, and
\item verification of covering relations.
\end{enumerate}
As an example, we set $a=0.01$, $\gamma = 0.0$, $\delta = 5.0$, $c\in [0.495, 0.505]$ and $\epsilon \in [0.0, 5.0\times 10^{-6}]$. These are parameter values used in Section \ref{section-demo-per}. 
In demonstrating computations, we used the ODE solver in CAPD library based on Lohner's method discussed in \cite{ZLoh}. 
The order of Taylor expansion is set $p=6$ and time step size is set $\Delta t = 0.0001$.
Computation result is shown in Fig. \ref{fig-demo-cones}.

\begin{figure}[htbp]\em
\begin{minipage}{0.48\hsize}
\centering
\includegraphics[width=6.0cm]{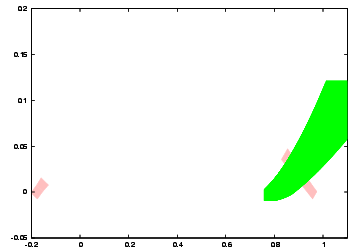}
(a)
\end{minipage}
\begin{minipage}{0.48\hsize}
\centering
\includegraphics[width=6.0cm]{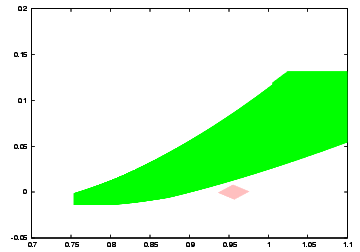}
(b)
\end{minipage}
\par

\bigskip
\begin{minipage}{0.32\hsize}
\centering
\includegraphics[width=5.0cm]{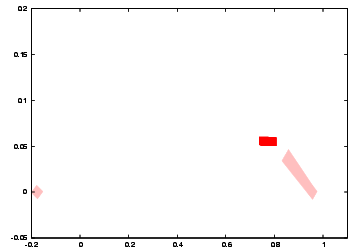}
(c)
\end{minipage}
\begin{minipage}{0.32\hsize}
\centering
\includegraphics[width=5.0cm]{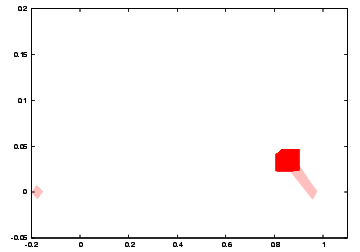}
(d)
\end{minipage}
\begin{minipage}{0.32\hsize}
\centering
\includegraphics[width=5.0cm]{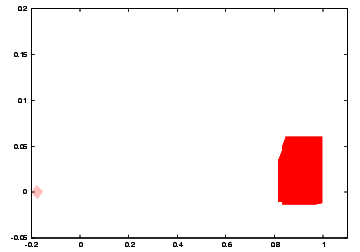}
(e)
\end{minipage}
\caption{Comparison of solution enclosures with and without $m$-cones.}
\label{fig-demo-cones}
Horizontal axis: $u$. Vertical axis: $v$. Each figure represents the projection of trajectories and cones on $(u,v)$-plane.
\par

\bigskip
(a) Validation of solution orbits (green) with initial data $\mathcal{F}_\epsilon^2$. 
Pink regions are the union of fast-saddle-type blocks and extended cones. Computation step of ODEs is $590$.
\par

\bigskip
(b) The same computation result as (a). 
In this case the stable $m$-cone is not validated. The tiny pink region around $(u,v) = (0.956721, 0)$ is the validated fast-saddle-type block. 
Readers see that enclosures of trajectories are much bigger than the fast-saddle-type block.
\par

\bigskip
(c) Validation of solution orbits (red) with initial data $\mathcal{F}_\epsilon^1$ (i.e. without unstable $m$-cones) and the same time steps as (a). Validated trajectories do not arrive at target region $\mathcal{S}_\epsilon$ yet.
\par

\bigskip
(d) Validated trajectories with additional time step computations to (c). More precisely, computation step of ODEs is $620$. Enclosures become bigger and bigger.
\par

\bigskip
(e) Validated trajectories with additional time step computations to (d). More precisely, computation step of ODEs is $650$. Enclosures are already bigger than stable cones, which implies that validation of covering relations can be never done.
\end{figure}

\bigskip
First we compare the case we use stable $m$-cones with the case we do not use stable $m$-cones. Fig. \ref{fig-demo-cones}-(a) and (b) show the same computational results. 
The only difference is whether or not a stable $m$-cone is validated. 
In this example, we validate the stable $29$-cone with the length $\ell_s = 0.108232$ of a fast-saddle-type block around $(0.956575, 0, 0.0392)\in \mathbb{R}^3$ (see Fig. \ref{fig-demo-cones}-(a)).
We solved ODE with initial data 
\begin{equation*}
\mathcal{F}_\epsilon^2 \cap \{w\in [0.039249948844,0.03926994850296]\}
\end{equation*}
after dividing it into uniform $30$ small pieces. 
Here $\mathcal{F}_\epsilon^2$ denotes the fast-exit face of unstable $21$-cone with the length $\ell_u = 0.0247787$ of a fast-saddle type block around the origin in $\mathbb{R}^3$. 
In this example, we solved ODE in $590$ steps.

In general, validated fast-saddle type blocks corresponding to $\mathcal{S}_\epsilon$ are very small, as shown in Fig. \ref{fig-demo-cones}-(b) (colored by pink). 
On the other hand, validated trajectories are quite bigger than blocks. 
In our example, validated trajectories are already bigger than the block, which implies that we can {\em never} validate $\mathcal{F}_\epsilon^1 \overset{\varphi_\epsilon(T_0,\cdot )}{\Longrightarrow}\mathcal{S}_\epsilon$ in this setting. 
A direct settlement of this problem would be a refinement of initial data, which leads to huge computational costs in many cases and is not realistic. Instead, we consider the problem with the help of stable $m$-cones, which is shown in Fig. \ref{fig-demo-cones}-(a).
In this case, the target block corresponding to $\mathcal{S}_\epsilon$ becomes big enough to validate covering relations. 
Thanks to Section \ref{section-m-cones}, we can discuss validation of trajectories with extended cones, which is much easier than verifications without cones.

\bigskip
Next, we compare the case which we use unstable $m$-cones with the case which we do not use unstable $m$-cones. 
Computation result with unstable $m$-cone is Fig. \ref{fig-demo-cones}-(a). 
Figs. \ref{fig-demo-cones}-(c), (d) and (e) show enclosure of trajectories with initial data $\mathcal{F}_\epsilon^1$, namely, a fast-exit face of a small fast-saddle-type block.
If we do not use unstable $m$-cones, we need to solve ODEs for longer time steps than the case we use unstable $m$-cones (Fig. \ref{fig-demo-cones}-(c)). Such extra computations cause additional computational errors and there is little hope to validate covering relations, as indicated in Figs. \ref{fig-demo-cones}-(d), (e).

%
%
\subsection{Periodic orbits}
\label{section-demo-per}
We go to validations of global orbits for (\ref{FN}).
Our first example is validation of periodic orbits. 
As a demonstration we set $a=0.01$, $\gamma = 0.0$ and $\delta = 5.0$ and $c\approx 0.5$. 
All validation of covering-exchange sequences with extended cones yield the following computer assisted result.

\begin{car}
\label{car-periodic}
Consider (\ref{FN}) with $a=0.01$, $\gamma = 0.0$ and $\delta = 5.0$. Then for all $c \in [0.495,0.505]$ the following trajectories are validated.  
\begin{enumerate}
\item At $\epsilon = 0$, there is a singular heteroclinic chain $H_0$ consisting of
\begin{itemize}
\item heteroclinic orbits from $p_0$ to $q_0$, and from $p_1$ to $q_1$, 
\item branches $M^0$, $M^1$ of nullcline $\{v=0, f(u) = w\}$. $M^0$ contains $p_0$ and $q_0$. Similarly, $M^1$ contains $p_1$ and $q_1$.
\end{itemize}
Equilibria $p_0$, $q_0$, $p_1$ and $q_1$ are validated by
\begin{align*}
&|\pi_{u,v}(p_0) - (-0.177098234,2.18166218\times 10^{-6})| < 2.49103628\times 10^{-2},\\
&|\pi_y (p_0) - 0.0395| < 3.25\times 10^{-3},\\
&|\pi_{u,v}(q_1) - (0.956125336,-5.98704406\times 10^{-6})| < 2.10571434\times 10^{-2},\\
&|\pi_y (q_1) - 0.03932| < 3.3\times 10^{-3},\\
&|\pi_{u,v}(p_1) - (0.850811351,-9.47162333\times 10^{-6})| < 2.44899980\times 10^{-2},\\
&|\pi_y (p_1)-0.10602| < 3.3\times 10^{-3},\\
&|\pi_{u,v}(q_0) - (-0.282970990, 1.85607735\times 10^{-6})| < 2.12552591\times 10^{-2},\\
&|\pi_y (q_0)-0.10675| < 3.25\times 10^{-3}.
\end{align*}
\item For all $\epsilon \in (0,5.0\times 10^{-6}]$, there exists a periodic orbit $H_\epsilon$ near $H_0$.
\end{enumerate}
Parameters for validations are listed in Table \ref{table-per}.
\end{car} 

We omit computation times for slow shadowing and {\bf Jump}, since they take only a few seconds as stated in Section \ref{section-demo-shadow}.

\begin{center}
  \begin{table}[h]
    \begin{center}
      \begin{tabular}{|c|c|c|} \hline
	Parameters & On $M^0$ ($q=-1$) & On $M^1$ ($q=+1$) \\ 
	\hline
	$\chi$ & $0.8922056384742952$ & $0.8895212587880817$ \\
	$\bar h$ & $0.0025$ & $0.0023$ \\
	$H$ & $0.0065$ & $0.0066$ \\
	$d_a$ & $0.8$ & $0.8$ \\
	$d_b$ & $0.7$ & $0.8$ \\
	$r_a$ & $0.008$ & $0.008$ \\
	$r_b$ & $0.0085$ & $0.008$ \\
	$m_u$  & $21$ (around $p_0$) & $55$ (around $p_1$)\\
	$m_s$ & $28$  (around $q_0$) & $29$ (around $q_1$)\\
	$\ell_u$ & $0.0247787$  (around $p_0$) & $0.0186724$ (around $p_1$)\\
	$\ell_s$ & $0.113375$  (around $q_0$) & $0.108232$ (around $q_1$)\\\hline
	 & $N_\epsilon^{0,\exit} \overset{\varphi_\epsilon(T_0,\cdot )}{\Longrightarrow} N_\epsilon^{1,0}$
	 & $N_\epsilon^{1,\exit} \overset{\varphi_\epsilon(T_1,\cdot )}{\Longrightarrow} N_\epsilon^{0,0}$ \\ \hline
	 $T_i$ & $0.059$ (with $\Delta t = 1.0\times 10^{-4}$) & $0.0266$ (with $\Delta t = 2.0\times 10^{-5}$) \\
	computation time & $70$ min. $6$ sec. & $78$ min. $53$ sec. \\
        \hline
      \end{tabular}
    \end{center}
    \caption{Validation parameters of slow shadowing in Computer Assisted Result \ref{car-periodic}.}
    \label{table-per}
  \end{table}
\end{center}

\begin{figure}[htbp]\em
\begin{minipage}{0.32\hsize}
\centering
\includegraphics[width=5.0cm]{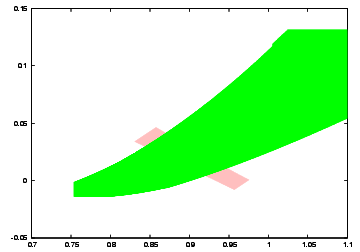}
(a-1)
\end{minipage}
\begin{minipage}{0.32\hsize}
\centering
\includegraphics[width=5.0cm]{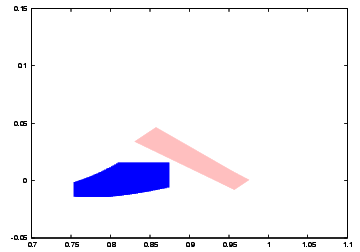}
(a-2)
\end{minipage}
\begin{minipage}{0.32\hsize}
\centering
\includegraphics[width=5.0cm]{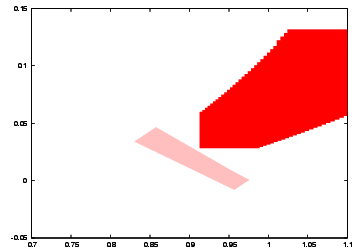}
(a-3)
\end{minipage}\par
\begin{minipage}{0.32\hsize}
\centering
\includegraphics[width=5.0cm]{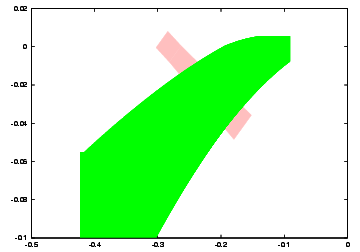}
(b-1)
\end{minipage}
\begin{minipage}{0.32\hsize}
\centering
\includegraphics[width=5.0cm]{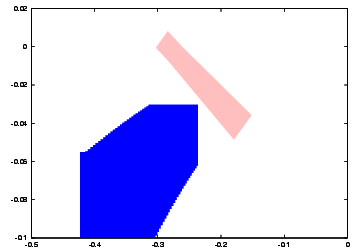}
(b-2)
\end{minipage}
\begin{minipage}{0.32\hsize}
\centering
\includegraphics[width=5.0cm]{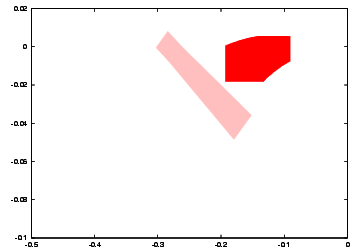}
(b-3)
\end{minipage}
\caption{Validation of covering-exchange sequences in Computer Assisted Result \ref{car-periodic}. }
\label{fig-periodic}
Horizontal axis: $u$. Vertical axis: $v$. Each figure represents the projection of trajectories and cones on $(u,v)$-plane. 
\par
(a-1) : Validation of $\varphi_\epsilon(T_0, (C_{m_u}^u)^\exit) \cap \{w \in [0.0365499948149, 0.0422098969893467]\}$.
\par
(a-2) : Validation of $\varphi_\epsilon(T_0, (C_{m_u}^u)^\exit) \cap \{w \in [0.0365499948149, 0.0365699944692267]\}$.
\par
(a-3) : Validation of $\varphi_\epsilon(T_0, (C_{m_u}^u)^\exit) \cap \{w \in [0.04218989733502, 0.0422098969893467] \}$.
\par
(b-1) : Validation of $\varphi_\epsilon(T_1, (C_{m_u}^u)^\exit) \cap \{w \in [0.1037942789937, 0.10913580929979]\}$.
\par
(b-2) : Validation of $\varphi_\epsilon(T_1, (C_{m_u}^u)^\exit) \cap \{w \in [0.1037942789937, 0.103814744627057]\} $.
\par
(b-3) : Validation of $\varphi_\epsilon(T_1, (C_{m_u}^u)^\exit) \cap \{w \in [0.109115343666433, 0.10913580929979] \}$.
\end{figure}

%
%
\subsection{Heteroclinic cycles}
\label{section-demo-cycle}
The second example is a family of heteroclinic cycles. 
Consider the cubic curve $w = f(u)$. One can see that it is symmetric with respect to the inflection point 
\begin{equation*}
(u_{\inf}, w_{\inf}) := \left( \frac{1+a}{3}, \frac{(1+a)(1-2a)(2-a)}{27}\right)
\end{equation*}
and we can choose $(u_3, w_3)$ from the curve $w=f(u)$ to be the point that is symmetric to the origin $(u,w) =(0,0)$ with respect to the inflection point. Deng \cite{D2} shows that $\gamma_0 := 9/(2-a)(1-2a)$ and $c_0:= (1-2a)/\sqrt{2}$ with $\delta = 1.0$ admit a heteroclinic loop of $(0,0)$ and $(u_3, w_3)$ for sufficiently small $\epsilon$. 
Symmetry of the cubic curve $w=f(u)$ with respect to $(u_{\inf}, w_{\inf})$ implies that the vector field (\ref{FN}) with $(\gamma, c) = (\gamma_0, c_0)$ is symmetric under the following transformation:
\begin{equation*}
\tilde u = \frac{2(1+a)}{3} - u,\quad \tilde v = -v,\quad \tilde w = \frac{2(1+a)(1-2a)(2-a)}{27} - w.
\end{equation*}
It is thus sufficient to validate a heteroclinic orbit from the origin to $(u_3, 0, v_3)$
if we want to validate heteroclinic cycles.
Here we validate heteroclinic cycles for concrete $\epsilon$ and specific $a$ and $\delta$.
All validations of covering-exchange sequence with extended cones yield the following computer assisted result.

\begin{car}
\label{car-cycle}
Consider (\ref{FN}) with $a=0.3$, $\gamma = \gamma_0$, $c =c_0$. Then, for $\delta =1.0$, the following trajectories are validated.  
\begin{enumerate}
\item At $\epsilon = 0$, there is a singular heteroclinic chain $H_0$ consisting of
\begin{itemize}
\item heteroclinic orbits from $p_0$ to $q_1$, and from $p_1$ to $q_0$, 
\item branches $M^0$, $M^1$ of nullcline $\{v=0, f(u) = w\}$. $M^0$ contains $p_0$ and $q_0$. Similarly, $M^1$ contains $p_1$ and $q_1$. 
\end{itemize}
Equilibria $p_0$ and $q_1$ are validated by
\begin{align*}
&|\pi_{u,v}(p_0) - (3.34681997\times 10^{-4}, -1.97213258\times 10^{-9})| < 1.35013586\times 10^{-2},\\
&|\pi_y (p_0) | < 1.5\times 10^{-3},\\
&|\pi_{u,v}(q_1) - (0.999441149,-1.32078036\times 10^{-5})| < 1.89700763\times 10^{-2},\\
&|\pi_y (q_1) -2.0\times 10^{-4}| < 2.5\times 10^{-3}.
\end{align*}
The equilibrium $p_0$ admits an attracting isolating block on $S_\epsilon$ contained in $\{(u,v,w)\in S_\epsilon  \cap \{u \leq 1.80738697\times 10^{-2}\} \mid w\in [-1.200\times 10^{-5}, 6.010\times 10^{-4}]\}$.

Note that $p_1$ is symmetric to $p_0$ with respect to the inflection point $(u_{\inf}, 0, w_{\inf})$. Similarly, $q_0$ is symmetric to $q_1$ with respect to the inflection point $(u_{\inf}, 0, w_{\inf})$.
\item For all $\epsilon \in (0,5.0\times 10^{-6}]$, there exists a heteroclinic cycle $H_\epsilon$ near $H_0$. 
The cycle $H_\epsilon$ consists of heteroclinic orbits from $p_0$ to $q_1$ and from $p_1$ to $q_0$.
\end{enumerate}
Parameters for validations are listed in Table \ref{table-cycle}.
Finally, assumptions of Lemma \ref{lem-validation-unstable} are validated with the attracting isolating block $\{w\in [-1.208036756757\times 10^{-3}, 1.208036756757\times 10^{-3}]\}$ on the slow manifold containing $p_0$ and the unstable $37$-cone condition. 
\end{car}

\begin{center}
  \begin{table}[h]
    \begin{center}
      \begin{tabular}{|c|c|c|} \hline
	Parameters & On $M^0$ ($q=-1$) & On $M^1$ ($q=+1$) \\ 
	\hline
	$\chi$ & $0.9358974358974359$ & $0.9168053244592347$ \\
	$\bar h$ & $0.0001$ & $0.0001$ \\
	$H$ & $0.003$ & $0.005$ \\
	$d_a$ & $0.8$ & $0.8$ \\
	$d_b$ & $0.7$ & $0.8$ \\
	$r_a$ & $0.003$ & $0.006$ \\
	$r_b$ & $0.0045$ & $0.008$ \\
	$m_u$  & $14$ (around $p_0$) & $-$ \\
	$m_s$ & $-$ & $2.2$ (around $q_1$)\\
	$2({\rm diam}(\pi_a(N))-r_a)/m_u$ & $4.271697297297\times 10^{-4}$ & $-$\\
	$\ell_u$ & $0.0151974$  (around $p_0$) & $-$ \\
	$\ell_s$ & $-$ & $0.0949713$ (around $q_1$)\\\hline
	 & $N_\epsilon^{0,\exit} \overset{\varphi_\epsilon(T_0,\cdot )}{\Longrightarrow} N_\epsilon^{1,0}$
	 & $N_\epsilon^{1,\exit} \overset{\varphi_\epsilon(T_1,\cdot )}{\Longrightarrow} N_\epsilon^{0,0}$ \\ \hline
	 $T_i$ & $0.045$ (with $\Delta t = 1.0\times 10^{-4}$) & $-$ \\
	 computation time & $43$ min. $51$ sec. & $-$ \\
        \hline
      \end{tabular}
    \end{center}
    \caption{Validation parameters of slow shadowing and isolating blocks in Computer Assisted Result \ref{car-cycle}.}
    \label{table-cycle}
  \end{table}
\end{center}

\begin{figure}[htbp]\em
\begin{minipage}{0.32\hsize}
\centering
\includegraphics[width=5.0cm]{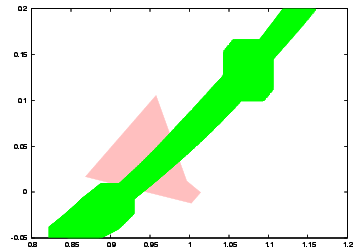}
(a-1)
\end{minipage}
\begin{minipage}{0.32\hsize}
\centering
\includegraphics[width=5.0cm]{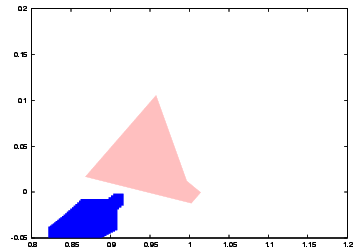}
(a-2)
\end{minipage}
\begin{minipage}{0.32\hsize}
\centering
\includegraphics[width=5.0cm]{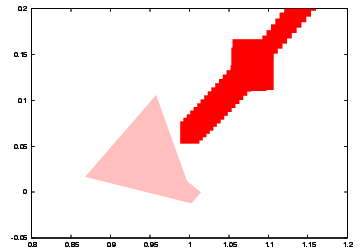}
(a-3)
\end{minipage}\par
\caption{Validation of covering-exchange sequences in Computer Assisted Result \ref{car-cycle}. }
\label{fig-computation-cycle}
Horizontal axis: $u$. Vertical axis: $v$. Each figure represents the projection of trajectories and cones on $(u,v)$-plane.
\par
(a-1) : Validation of $\varphi_\epsilon(T_0, (C_{m_u}^u)^\exit) \cap \{w \in [-0.00103331528021667, 9.35989145343001\times 10^{-4}]\}$.
\par
(a-2) : Validation of $\varphi_\epsilon(T_0, (C_{m_u}^u)^\exit) \cap \{w \in [-0.00103331528021667, -0.001023982083887]\}$.
\par
(a-3) : Validation of $\varphi_\epsilon(T_0, (C_{m_u}^u)^\exit) \cap \{w \in [9.26655949013333\times 10^{-4}, 9.35989145343001\times 10^{-4}] \}$.
\end{figure}

%
%
\subsection{Heteroclinic orbits}
\label{section-demo-hetero}
The third example is a family of heteroclinic orbits which are different from heteroclinic cycles.

In this example we apply the method in Section \ref{section-inv-set-on-mfd} to isolating blocks on slow manifolds, which validates equilibria on slow manifolds. 

Let $(\bar u, 0, \bar w) \not \approx (0,0,0)$ be a point in $\mathbb{R}^3$ such that $f(\bar u) = \bar w$ and that $\bar u = \gamma \bar w$. Assume that $f_u(\bar u)\not = 0$. Then there is a function $u = h(w)$ which is unique in a small neighborhood of $(\bar u,0, \bar w)$ such that $\bar u = h(\bar w)$, $f(h(\bar w)) = \bar w$ and $f(h(w)) = w$ hold in such a neighborhood. This is due to the Implicit Function Theorem.
Assuming that $f_u(u)\not = 0$ holds for all $u$ in a given neighborhood of $\bar u$, the above implicit function representation holds in the given neighborhood.
By using implicit function differential, the vector field $g(u,v,w) = c^{-1}(u-\gamma w)$ is rewritten by
\begin{equation*}
g(u,v,w) = \frac{1}{c}\left(\frac{1}{f_u(\bar u)}-\gamma\right)(w-\bar w) + \frac{1}{c}\left((u-\bar u)-\frac{w-\bar w}{f_u(\bar u)}\right)
\end{equation*}
near $(h(\bar w), 0, \bar w)\in \mathbb{R}^3$. Remark that $\bar u = \gamma \bar w$. This expression leads to an effective estimate of vector fields on slow manifolds, as stated in Section \ref{section-inv-set-on-mfd}.

All validation of slow shadowing sequences with extended cones and isolating blocks on slow manifolds yield the following computer assisted result.

\begin{car}
\label{car-heteroclinic}
Consider (\ref{FN}) with $a=0.2$, $\gamma =15.0$, $\delta = 5.0$. 
\begin{enumerate}
\item For each $c \in [0.947, 0.948]$, the following two kinds of trajectories are validated: 
\begin{enumerate}
\item At $\epsilon = 0$, there is a singular heteroclinic chain $H^0$ consisting of a heteroclinic orbit from $p_0 = (0,0,0)$ to $q_1$ and a branch $M^1$ of nullcline $\{v=0, f(u) = w\}$ connecting $q_1$ and $p_1 = (u_{m_1}^1,0,w_{m_1}^1)$. Equilibria have the following estimates:
\begin{align*}
&|\pi_{u,v}(p_0) - (8.00733430\times 10^{-4}, 2.18164458\times 10^{-7})| < 2.77714227\times 10^{-2},\\
&|\pi_y (p_0)-3.3\times 10^{-4}| < 2.75\times 10^{-3},\\
&|\pi_{u,v}(q_1) - (1.00015243, -1.21520324\times 10^{-5})| < 2.11148288\times 10^{-2},\\
&|\pi_y (q_1) + 4.6\times 10^{-4}| < 3.0\times 10^{-3},\\
&|\pi_{u,v}(p_1) - (0.882676299, 6.72075379\times 10^{-7})| < 2.64847328\times 10^{-2},\\
&|\pi_y (p_1)-0.0596| < 3.0\times 10^{-3}.
\end{align*}
\item For all $\epsilon \in (0,1.0\times 10^{-5}]$, there exists a heteroclinic orbit $H^1_\epsilon$ from  $p_0$ to $p_1$ near $H^1_0$. 
The equilibrium $p_1$ admits an attracting isolating block on $S_\epsilon$ contained in $\{(u,v,w)\in S_\epsilon  \cap \{u \geq 0.848998902\} \mid w\in [5.9080\times 10^{-2}, 5.9988\times 10^{-2}]\}$.
\end{enumerate}
Parameters for validations are listed in Table \ref{table-hetero-ptoq}.
Finally, assumptions of Lemma \ref{lem-validation-unstable} are validated with the attracting isolating block $\{w\in [-2.11442375\times 10^{-3}, 2.77442375\times 10^{-3}]\}$ on the slow manifold containing $p_0$ and the unstable $80$-cone condition. 
\item For each $c \in [0.1995, 0.2005]$, there exist the following two kinds of trajectories: 
\begin{enumerate}
\item At $\epsilon = 0$, there is a singular heteroclinic chain $H^2_0$ consisting of a heteroclinic orbit from $p_1$ to $q_0$ and a branch $M^0$ of nullcline $\{v=0, f(u) = w\}$ connecting $q_0$ and $p_0$. Equilibria have the following estimates:
\begin{align*}
&|\pi_{u,v}(p_1) - (0.876694738, 2.76020695\times 10^{-6})| < 2.69669311\times 10^{-2},\\
&|\pi_y (p_1)-0.0618| < 4.0\times 10^{-3},\\
&|\pi_{u,v}(q_0) - (-0.128335342,1.50674261\times 10^{-5})| < 1.52471761\times 10^{-2},\\
&|\pi_y (q_0) - 0.0622| < 4.0\times 10^{-3},\\
&|\pi_{u,v}(p_0) - (-2.67285871\times 10^{-3}, 1.20922437\times 10^{-6})| < 2.32761788\times 10^{-2},\\
&|\pi_y (p_0) + 1.2\times 10^{-3}| < 4.0\times 10^{-3}.
\end{align*}
\item For all $\epsilon \in (0,1.0\times 10^{-5}]$, there exists a heteroclinic orbit $H^2_\epsilon$ from  $p_1$ to $p_0$ near $H^2_0$.  The equilibrium $p_0$ admits an attracting isolating block on $S_\epsilon$ contained in $\{(u,v,w)\in S_\epsilon  \cap \{u \leq 1.06035231\times 10^{-2}\} \mid w\in [-1.040\times 10^{-4}, 1.264\times 10^{-3}]\}$.
\end{enumerate}
Parameters for validations are listed in Table \ref{table-hetero-qtop}.
Finally, assumptions of Lemma \ref{lem-validation-unstable} are validated with the attracting isolating block $\{w\in [5.800831058824 \times 10^{-2}, 6.51916894117 \times 10^{-2}]\}$ on the slow manifold containing $p_1$ and the unstable $85$-cone condition. 
\end{enumerate}
\end{car}

In this case we have to care about the existence of an equilibrium on the slow manifold in $S_\epsilon^1$. Easy calculations yield that $S_\epsilon^1$ possesses at most one equilibrium.

\begin{center}
  \begin{table}[h]
    \begin{center}
      \begin{tabular}{|c|c|c|} \hline
	Parameters & On $M^0$ ($q=-1$) & On $M^1$ ($q=+1$)\\ 
	\hline
	$\chi$ & $0.8949178448605274$ & $0.8877665544332212$ \\
	$\bar h$ & $0.00023$ & $0.00022$ \\
	$H$ & $0.0055$ & $0.006$ \\
	$d_a$ & $0.8$ & $0.75$ \\
	$d_b$ & $0.8$ & $0.8$ \\
	$r_a$ & $0.008$ & $0.008$ \\
	$r_b$ & $0.0085$ & $0.008$ \\
	$m_u$  & $50$ (around $p_0$) & $-$ \\
	$m_s$ & $-$ & $25.5$ (around $q_1$)\\
	$2({\rm diam}(\pi_a(N))-r_a)/m_u$ & $4.169025\times 10^{-4}$ & $-$\\
	$\ell_u$ & $0.0136582$  (around $p_0$) & $-$ \\
	$\ell_s$ & $-$ & $0.117888$ (around $q_1$)\\\hline
	 & $N_\epsilon^{0,\exit} \overset{\varphi_\epsilon(T_0,\cdot )}{\Longrightarrow} N_\epsilon^{1,0}$
	 & $N_\epsilon^{1,\exit} \overset{\varphi_\epsilon(T_1,\cdot )}{\Longrightarrow} N_\epsilon^{0,0}$ \\ \hline
	 $T_i$ & $0.064$ (with $\Delta t = 1.0\times 10^{-4}$) & $-$ \\
	 computation time & $38$ min. $21$ sec. & $-$ \\
        \hline
      \end{tabular}
    \end{center}
    \caption{Validation parameters of slow shadowing and isolating blocks in Computer Assisted Result \ref{car-heteroclinic} with $c\in [0.947, 0.948]$.}
    \label{table-hetero-ptoq}
  \end{table}
\end{center}

\begin{center}
  \begin{table}[h]
    \begin{center}
      \begin{tabular}{|c|c|c|} \hline
	Parameters & On $M^0$ ($q=-1$) & On $M^1$ ($q=+1$)\\ 
	\hline
	$\chi$ & $0.8484848484848486$ & $0.8540145985401461$ \\
	$\bar h$ & $0.0001$ & $0.0002$ \\
	$H$ & $0.008$ & $0.008$ \\
	$d_a$ & $0.8$ & $0.75$ \\
	$d_b$ & $0.8$ & $0.8$ \\
	$r_a$ & $0.0042$ & $0.006$ \\
	$r_b$ & $0.0045$ & $0.0068$ \\
	$m_u$ & $-$ & $20$ (around $p_1$)  \\
	$m_s$  & $26.5$ (around $q_0$) & $-$\\
	$2({\rm diam}(\pi_a(N))-r_a)/m_u$ & $-$ & $4.083105882353\times 10^{-4}$ \\
	$\ell_u$ & $-$ & $0.0539878$  (around $p_1$)\\
	$\ell_s$  & $0.113106$ (around $q_0$) & $-$\\\hline
	 & $N_\epsilon^{0,\exit} \overset{\varphi_\epsilon(T_0,\cdot )}{\Longrightarrow} N_\epsilon^{1,0}$
	 & $N_\epsilon^{1,\exit} \overset{\varphi_\epsilon(T_1,\cdot )}{\Longrightarrow} N_\epsilon^{0,0}$ \\ \hline
	 $T_i$ & $-$ & $0.055$ (with $\Delta t = 1.0\times 10^{-4}$) \\
	 computation time & $-$ & $39$ min. $41$ sec. \\
        \hline
      \end{tabular}
    \end{center}
    \caption{Validation parameters of slow shadowing and isolating blocks in Computer Assisted Result \ref{car-heteroclinic} with $c\in [0.1995, 0.2005]$.}
    \label{table-hetero-qtop}
  \end{table}
\end{center}

\begin{figure}[htbp]\em
\begin{minipage}{0.32\hsize}
\centering
\includegraphics[width=5.0cm]{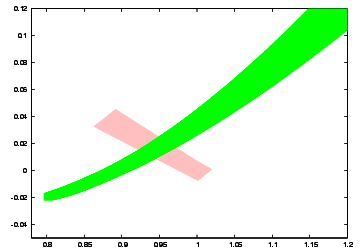}
(a-1)
\end{minipage}
\begin{minipage}{0.32\hsize}
\centering
\includegraphics[width=5.0cm]{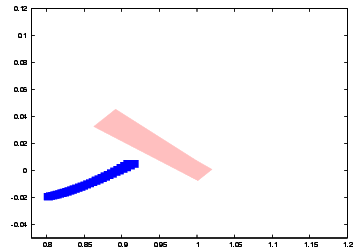}
(a-2)
\end{minipage}
\begin{minipage}{0.32\hsize}
\centering
\includegraphics[width=5.0cm]{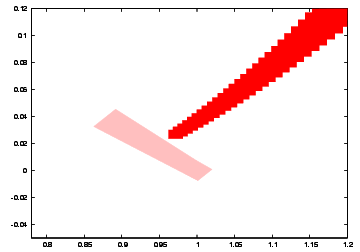}
(a-3)
\end{minipage}\par
\begin{minipage}{0.32\hsize}
\centering
\includegraphics[width=5.0cm]{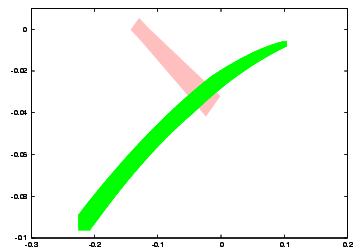}
(b-1)
\end{minipage}
\begin{minipage}{0.32\hsize}
\centering
\includegraphics[width=5.0cm]{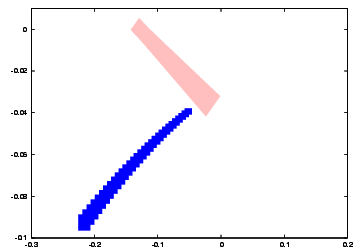}
(b-2)
\end{minipage}
\begin{minipage}{0.32\hsize}
\centering
\includegraphics[width=5.0cm]{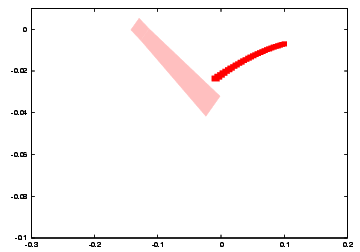}
(b-3)
\end{minipage}
\caption{Validation of covering-exchange sequences in Computer Assisted Result \ref{car-heteroclinic}. }
\label{fig-computation-heteroclinic}
Horizontal axis: $u$. Vertical axis: $v$. Each figure represents the projection of trajectories and cones on $(u,v)$-plane.
\par
(a-1) : Validation of $\varphi_\epsilon(T_0, (C_{m_u}^u)^\exit) \cap \{w \in [-0.00142875124866067, 0.00177996171276667]\}$.
\par
(a-2) : Validation of $\varphi_\epsilon(T_0, (C_{m_u}^u)^\exit) \cap \{w \in [-0.00142875124866067, -0.00141195170436]\}$.
\par
(a-3) : Validation of $\varphi_\epsilon(T_0, (C_{m_u}^u)^\exit) \cap \{w \in [0.001763162168466, 0.00177996171276667]\}$.
\par
(b-1) : Validation of $\varphi_\epsilon(T_1, (C_{m_u}^u)^\exit) \cap \{w \in [0.058532027216316, 0.06509312870908]\}$.
\par
(b-2) : Validation of $\varphi_\epsilon(T_1, (C_{m_u}^u)^\exit) \cap \{w \in [0.058532027216316, 0.058557359654512]\}$.
\par
(b-3) : Validation of $\varphi_\epsilon(T_1, (C_{m_u}^u)^\exit) \cap \{w \in [0.065067796270884, 0.06509312870908]\}$.
\end{figure}

%
%
\subsection{Homoclinic orbits}
\label{section-demo-homo}
The final example is a family of homoclinic orbits. Set $a=0.3$, $\gamma = 10.0$, $\delta = 9.0$ and $c \approx 0.8$. 
All validation of covering-exchange sequences with extended cones yield the following computer assisted result.

\begin{car}
\label{car-homoclinic}
Consider (\ref{FN}) with $a=0.3$, $\gamma = 10.0$ and $\delta = 9.0$. Then for all $c \in [0.799,0.801]$ the following trajectories are validated.  
\begin{enumerate}
\item At $\epsilon = 0$, there is a singular heteroclinic chain $H_0$ consisting of
\begin{itemize}
\item heteroclinic orbits from $p_0$ to $q_1$, and from $p_1$ to $q_0$, 
\item branches $M^0$, $M^1$ of nullcline $\{v=0, f(u) = w\}$. $M^0$ contains $p_0$ and $q_0$. Similarly, $M^1$ contains $q_1$ and $p_1$.
\end{itemize}
Equilibria $p_0$, $q_1$, $p_1$ and $q_0$ are validated by
\begin{align*}
&|\pi_{u,v}(p_0) - (-4.22605959\times 10^{-3}, 4.67157371\times 10^{-7})| < 2.90338686\times 10^{-2},\\
&|\pi_y (p_0) - 1.94\times 10^{-3}| < 3.25\times 10^{-3},\\
&|\pi_{u,v}(q_1) - (0.996532931,-4.66925590\times 10^{-6})| < 2.13488628\times 10^{-2},\\
&|\pi_y (q_1) - 2.02\times 10^{-3}| < 3.3\times 10^{-3},\\
&|\pi_{u,v}(p_1) - (0.870020061,-2.91627199\times 10^{-6})| < 2.85183938\times 10^{-2},\\
&|\pi_y (p_1)-0.06362| < 3.3\times 10^{-3},\\
&|\pi_{u,v}(q_0) - (-0.129665170,3.70642116\times 10^{-6})| < 2.20129903\times 10^{-2},\\
&|\pi_y (q_0)-0.06335| < 3.25\times 10^{-3}.
\end{align*}
\item For all $\epsilon \in (0,5.0\times 10^{-6}]$, there exists a homoclinic orbit $H_\epsilon$ of $p_0$ near $H_0$. 
The equilibrium $p_0$ admits an attracting isolating block on $S_\epsilon$ contained in $\{(u,v,w)\in S_\epsilon \cap \{u \leq 2.10010357\times 10^{-2}\} \mid w\in [-3.1550\times 10^{-4}, 2.1545\times 10^{-3}]\}$.
\end{enumerate}
Parameters for validations are listed in Table \ref{table-homo}.
Finally, assumptions of Lemma \ref{lem-validation-unstable} are validated with the attracting isolating block $\{w\in [-1.09722\times 10^{-3}, 4.97722\times 10^{-3}]\}$ on the slow manifold and the unstable $120$-cone condition. 
\end{car}

\begin{center}
  \begin{table}[h]
    \begin{center}
      \begin{tabular}{|c|c|c|} \hline
	Parameters & On $M^0$ ($q=-1$) & On $M^1$ ($q=+1$)\\ 
	\hline
	$\chi$ & $0.8812568505663135$ & $0.8792535675082328$ \\
	$\bar h$ & $0.00023$ & $0.00022$ \\
	$H$ & $0.0065$ & $0.0066$ \\
	$d_a$ & $0.8$ & $0.8$ \\
	$d_b$ & $0.75$ & $0.8$ \\
	$r_a$ & $0.008$ & $0.008$ \\
	$r_b$ & $0.0085$ & $0.0078$ \\
	$m_u$  & $50$ (around $p_0$) & $52$ (around $p_1$) \\
	$m_s$ & $45$ (around $q_0$) & $55$ (around $q_1$)\\
	$2({\rm diam}(\pi_a(N))-r_a)/m_u$ & $2.9606\times 10^{-4}$ & $-$\\
	$\ell_u$ & $0.0441793$  (around $p_0$) & $0.0370798$ (around $p_1$) \\
	$\ell_s$ & $0.12217$ (around $q_0$) & $0.0984568$ (around $q_1$)\\\hline
	 & $N_\epsilon^{0,\exit} \overset{\varphi_\epsilon(T_0,\cdot )}{\Longrightarrow} N_\epsilon^{1,0}$
	 & $N_\epsilon^{1,\exit} \overset{\varphi_\epsilon(T_1,\cdot )}{\Longrightarrow} N_\epsilon^{0,0}$ \\ \hline
	 $T_i$ & $0.067$ (with $\Delta t = 1.0\times 10^{-4}$) & $0.0288$ (with $\Delta t = 2.0\times 10^{-5}$) \\
	 computation time & $61$ min. $59$ sec. & $73$ min. $57$ sec. \\
        \hline
      \end{tabular}
    \end{center}
    \caption{Validation parameters of slow shadowing and isolating blocks in Computer Assisted Result \ref{car-homoclinic}.}
    \label{table-homo}
  \end{table}
\end{center}

\begin{rem}\rm
In this case we easily know that the only equilibrium of (\ref{FN}) for $\epsilon > 0$ is $(u,v,w) = (0,0,0)$ and the dynamics around $(0,0,0)$ can be easily understood to show that $(0,0,0)$ is attracting on the slow manifold in $\mathcal{S}_\epsilon^0$. 

Note that appropriate arrangements of computation schemes can improve the accuracy of validated trajectories and the validation range of parameters including $\epsilon$.

In many articles, the uniqueness of global orbits is also discussed. 
From the geometrical viewpoint, typical arguments for the uniqueness require the transversality of locally invariant manifolds. 
In our cases, however, transversality in the sense of differential manifolds is not mentioned. 
In other words, we cannot prove the uniqueness of validated orbits in the current setting. 
\end{rem}

\begin{figure}[htbp]\em
\begin{minipage}{0.32\hsize}
\centering
\includegraphics[width=5.0cm]{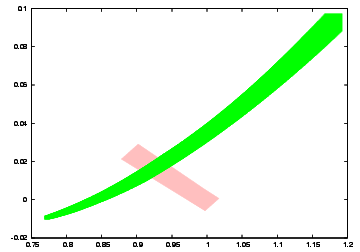}
(a-1)
\end{minipage}
\begin{minipage}{0.32\hsize}
\centering
\includegraphics[width=5.0cm]{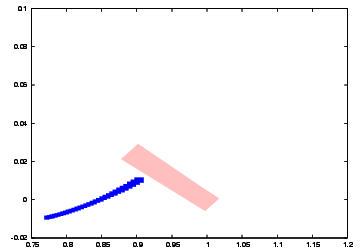}
(a-2)
\end{minipage}
\begin{minipage}{0.32\hsize}
\centering
\includegraphics[width=5.0cm]{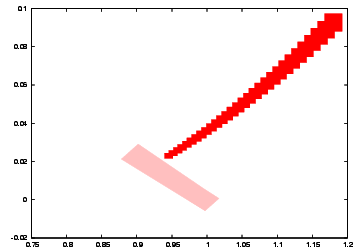}
(a-3)
\end{minipage}\par
\begin{minipage}{0.32\hsize}
\centering
\includegraphics[width=5.0cm]{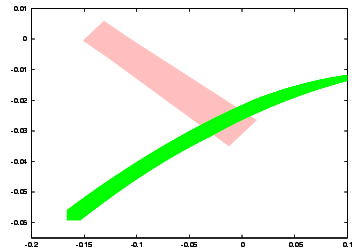}
(b-1)
\end{minipage}
\begin{minipage}{0.32\hsize}
\centering
\includegraphics[width=5.0cm]{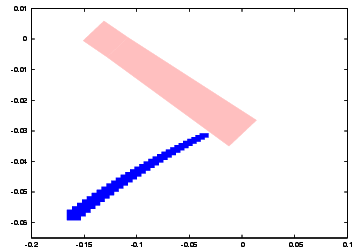}
(b-2)
\end{minipage}
\begin{minipage}{0.32\hsize}
\centering
\includegraphics[width=5.0cm]{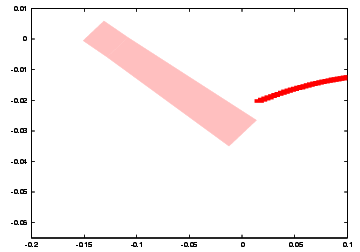}
(b-3)
\end{minipage}
\caption{Validation of covering-exchange sequences in Computer Assisted Result \ref{car-homoclinic}. }
\label{fig-computation-homoclinic}
Horizontal axis: $u$. Vertical axis: $v$. Each figure represents the projection of trajectories and cones on $(u,v)$-plane.
\par
(a-1) : Validation of $\varphi_\epsilon(T_0, (C_{m_u}^u)^\exit) \cap \{w \in 
[-0.001128757897204, 0.00412596711158]\}$.
\par
(a-2) : Validation of $\varphi_\epsilon(T_0, (C_{m_u}^u)^\exit) \cap \{w \in 
[-0.001128757897204, -0.00110862485119333]\}$.
\par
(a-3) : Validation of $\varphi_\epsilon(T_0, (C_{m_u}^u)^\exit) \cap \{w \in 
[0.00410583406556933, 0.00412596711158]\}$.
\par
(b-1) : Validation of $\varphi_\epsilon(T_1, (C_{m_u}^u)^\exit) \cap \{w \in 
[0.06094493968776, 0.0668173357334667]\}$.
\par
(b-2) : Validation of $\varphi_\epsilon(T_1, (C_{m_u}^u)^\exit) \cap \{w \in 
[0.06094493968776, 0.0609654725410667]\}$.
\par
(b-3) : Validation of $\varphi_\epsilon(T_1, (C_{m_u}^u)^\exit) \cap \{w \in 
[0.06679680288016, 0.0668173357334667]\}$.
\end{figure}

%
%
\subsection{Continuation of homoclinic orbits}
\label{section-homo-continuation}
Finally, we verify the $\epsilon$-continuation of homoclinic orbits validated in the previous subsection.
It is very hard to validate trajectories of (\ref{FN})$_\epsilon$ for all $\epsilon \in (0,\epsilon_0]$ with large $\epsilon_0$ at a time.
In most cases, covering relations in the fast dynamics fail.
To overcome this difficulty, we divide a closed interval $[0,\epsilon_0]$ into sub-intervals
\begin{equation*}
[0, \epsilon_0^1]\cup [\epsilon_0^1, \epsilon_0^2]\cup \cdots \cup [\epsilon_0^{m-1}, \epsilon_0^m],\quad \epsilon_0^m = \epsilon_0
\end{equation*}
and validate trajectories within each subinterval.

In our demonstrating example, we divide the interval $[0,5.0\times 10^{-5}]$ into
\begin{align*}
&[0, 5.0\times 10^{-6}] \cup [5.0\times 10^{-6}, 1.2\times 10^{-5}]\cup 
	[1.2\times 10^{-5}, 2.2\times 10^{-5}]\\
	&\cup [2.2\times 10^{-5}, 3.2\times 10^{-5}] \cup [3.2\times 10^{-5}, 4.2\times 10^{-5}]\cup [4.2\times 10^{-5}, 5.0\times 10^{-5}].
\end{align*}

Validating homoclinic orbits for each sub-interval and summarizing them, we obtain the following.
Let $p_0, q_0, p_1$ and $q_1$ be points corresponding to those stated in Computer Assisted Result \ref{car-homoclinic}.

\begin{car}
\label{car-continuation}
Consider (\ref{FN}) with $a=0.3$, $\gamma = 10.0$ and $\delta = 9.0$. 
Then for all $c \in [0.799,0.801]$ and for all $\epsilon \in (0,5.0\times 10^{-5}]$, there exists a homoclinic orbit $H_\epsilon$ of $p_0$ near $H_0$, where $H_0$ is the singular homoclinic orbit for (\ref{FN})$_0$ obtained in Computer Assisted Result \ref{car-homoclinic}. 
Several numerical data for validations of $H_\epsilon$ are listed in Table \ref{table-continuation-p0} - \ref{table-continuation-time}.
\end{car} 

\begin{center}
  \begin{table}[h]
    \begin{center}
      \begin{tabular}{|c|c|c|c|c|c|} \hline
	$\epsilon$ & $\bar w^0_{m_0}$  & $\bar h$ & $H$ & $\chi$ & $\ell_u$ \\ 
        \hline
	$[0.0, 5.0\times 10^{-6}]$ & $0.00175$ & $0.00023$ & $0.0065$ & $0.8812568505663135$ & $0.04417933$ \\
	$[5.0\times 10^{-6}, 1.2\times 10^{-5}]$ & $0.00175$ & $0.00023$ & $0.0065$ & $0.8812568505663135$ & $0.0441793$ \\
	$[1.2\times 10^{-5}, 2.2\times 10^{-5}]$ & $0.00175$ & $0.00023$ & $0.0065$ & $0.8813652126300421$ & $0.0441725$ \\
	$[2.2\times 10^{-5}, 3.2\times 10^{-5}]$ & $0.00175$ & $0.00023$ & $0.0065$ & $0.8813652126300421$ & $0.0441528$ \\
	$[3.2\times 10^{-5}, 4.2\times 10^{-5}]$ & $0.00175$ & $0.00023$ & $0.0065$ & $0.8813652126300421$ & $0.0441429$ \\ 
	$[4.2\times 10^{-5}, 5.0\times 10^{-5}]$ & $0.0019$ & $0.00025$ & $0.0065$ & $0.8809523809523810$ & $0.0441474$ \\ 
        \hline
      \end{tabular}
    \end{center}
    \caption{Main information of blocks around $p_0=(u_{m_0}^0,v_{m_0}^0, w_{m_0}^0)$ with $|w^0_{m_0}- \bar w^0_{m_0}| < \bar h$.}
    \label{table-continuation-p0}
    The sharpness $m_u$ of unstable cones is an identical value : $m_u = 50$.
    The ratio $d_a, d_b$ in (\ref{shadow}) are set as identical values : $d_a=0.8, d_b = 0.75$.
    The space length parameters $r_a, r_b$ of fast-saddle-type blocks are set as identical values : $r_a=0.008, r_b = 0.0085$. 
  \end{table}
\end{center}

\begin{center}
  \begin{table}[h]
    \begin{center}
      \begin{tabular}{|c|c|c|c|c|c|c|} \hline
	$\epsilon$ & $\bar w_0^1$  & $\bar h$ & $H$ & $\chi$ & $\ell_s$ \\ 
        \hline
	$[0.0, 5.0\times 10^{-6}]$ & $0.0633$ & $0.00023$ & $0.0065$ & $0.8812568505663135$ & $0.12217$ \\
	$[5.0\times 10^{-6}, 1.2\times 10^{-5}]$ & $0.0633$ & $0.00023$ & $0.0065$ & $0.8812568505663135$ & $0.12217$ \\
	$[1.2\times 10^{-5}, 2.2\times 10^{-5}]$ & $0.0635$ & $0.00023$ & $0.0065$ & $0.8813652126300421$ & $0.122164$ \\
	$[2.2\times 10^{-5}, 3.2\times 10^{-5}]$ & $0.0635$ & $0.00023$ & $0.0065$ & $0.8813652126300421$ & $0.122145$ \\
	$[3.2\times 10^{-5}, 4.2\times 10^{-5}]$ & $0.0635$ & $0.00023$ & $0.0065$ & $0.8813652126300421$ & $0.122141$ \\ 
	$[4.2\times 10^{-5}, 5.0\times 10^{-5}]$ & $0.0635$ & $0.00025$ & $0.0065$ & $0.8809523809523810$ & $0.122144$ \\ 
        \hline
      \end{tabular}
    \end{center}
    \caption{Main information of blocks around $q_0=(u_0^1,v_0^1, w_0^1)$ with $|w_0^1- \bar w_0^1| < \bar h$.}
    \label{table-continuation-q0}
    The sharpness $m_s$ of stable cones is an identical value : $m_s = 45$.
    The ratio $d_a, d_b$ in (\ref{shadow}) are set as identical values : $d_a=0.8, d_b = 0.75$.
    The space length parameters $r_a, r_b$ of fast-saddle-type blocks are set as identical values : $r_a=0.008, r_b = 0.0085$. 
  \end{table}
\end{center}

\begin{center}
  \begin{table}[h]
    \begin{center}
      \begin{tabular}{|c|c|c|c|c|c|} \hline
	$\epsilon$ & $\bar w^1_{m_1}$  & $\bar h$ & $H$ & $\chi$ & $\ell_u$\\ 
        \hline
	$[0.0, 5.0\times 10^{-6}]$ & $0.0636$ & $0.00022$ & $0.0066$ & $0.8792535675082328$ & $0.0370798$ \\
	$[5.0\times 10^{-6}, 1.2\times 10^{-5}]$ & $0.0636$ & $0.00022$ & $0.0066$ & $0.8792535675082328$ & $0.0370798$ \\
	$[1.2\times 10^{-5}, 2.2\times 10^{-5}]$ & $0.0636$ & $0.00022$ & $0.0066$ & $0.8792535675082328$ & $0.0370719$ \\
	$[2.2\times 10^{-5}, 3.2\times 10^{-5}]$ & $0.0636$ & $0.00022$ & $0.0066$ & $0.8792535675082328$ & $0.0370534$ \\
	$[3.2\times 10^{-5}, 4.2\times 10^{-5}]$ & $0.0636$ & $0.00022$ & $0.0066$ & $0.8792535675082328$ & $0.0370464$ \\ 
	$[4.2\times 10^{-5}, 5.0\times 10^{-5}]$ & $0.0634$ & $0.0003$ & $0.0065$ & $0.8786764705882354$ & $0.0370888$ \\ 
        \hline
      \end{tabular}
    \end{center}
    \caption{Main information of blocks around $p_1=(u_{m_1}^1,v_{m_1}^1, w_{m_1}^1)$ with $|w^1_{m_1}- \bar w^1_{m_1}| < \bar h$.}
    \label{table-continuation-p1}
    The sharpness $m_u$ of unstable cones is an identical value : $m_u = 52$.
    The ratio $d_a, d_b$ in (\ref{shadow}) are set as identical values : $d_a=0.8, d_b = 0.8$ for $\epsilon \in (0.0, 4.2\times 10^{-4}]$, $d_a=0.75, d_b = 0.75$ for $\epsilon \in [4.2\times 10^{-4}, 5.0\times 10^{-5}]$.
    The space length parameters $r_a, r_b$ of fast-saddle-type blocks are set as identical values : $r_a=0.008, r_b = 0.0078$. 
  \end{table}
\end{center}

\begin{center}
  \begin{table}[h]
    \begin{center}
      \begin{tabular}{|c|c|c|c|c|c|} \hline
	$\epsilon$ & $\bar w_0^0$  & $\bar h$ & $H$ & $\chi$ & $\ell_s$ \\ 
        \hline
	$[0.0, 5.0\times 10^{-6}]$ & $0.0019$ & $0.00022$ & $0.0066$ & $0.8792535675082328$ & $0.0984568$ \\
	$[5.0\times 10^{-6}, 1.2\times 10^{-5}]$ & $0.0019$ & $0.00022$ & $0.0066$ & $0.8792535675082328$ & $0.0984568$ \\
	$[1.2\times 10^{-5}, 2.2\times 10^{-5}]$ & $0.0019$ & $0.00022$ & $0.0066$ & $0.8792535675082328$ & $0.0984452$ \\
	$[2.2\times 10^{-5}, 3.2\times 10^{-5}]$ & $0.0019$ & $0.00022$ & $0.0066$ & $0.8792535675082328$ & $0.098424$ \\
	$[3.2\times 10^{-5}, 4.2\times 10^{-5}]$ & $0.0019$ & $0.00022$ & $0.0066$ & $0.8792535675082328$ & $0.0984201$ \\ 
	$[4.2\times 10^{-5}, 5.0\times 10^{-5}]$ & $0.0018$ & $0.0003$ & $0.0065$ & $0.8786764705882354$ & $0.0984281$ \\ 
        \hline
      \end{tabular}
    \end{center}
    \caption{Main information of blocks around $q_1=(u_0^0,v_0^0, w_0^0)$ with $|w_0^0- \bar w_0^0| < \bar h$.}
    \label{table-continuation-q1}
    The sharpness $m_s$ of stable cones is an identical value : $m_s = 55$.
    The ratio $d_a, d_b$ in (\ref{shadow}) are set as identical values : $d_a=0.8, d_b = 0.8$ for $\epsilon \in (0.0, 4.2\times 10^{-4}]$, $d_a=0.75, d_b = 0.75$ for $\epsilon \in [4.2\times 10^{-4}, 5.0\times 10^{-5}]$.
    The space length parameters $r_a, r_b$ of fast-saddle-type blocks are set as identical values : $r_a=0.008, r_b = 0.0078$. 
  \end{table}
\end{center}

\begin{center}
  \begin{table}[h]
    \begin{center}
      \begin{tabular}{|c|c|c|c|c|c|} \hline
	$\epsilon$ & $N_\epsilon^{0,\exit} \overset{\varphi_\epsilon(T_0,\cdot )}{\Longrightarrow} N_\epsilon^{1,0}$
	 & $N_\epsilon^{1,\exit} \overset{\varphi_\epsilon(T_1,\cdot )}{\Longrightarrow} N_\epsilon^{0,0}$ \\ 
        \hline
	$[0.0, 5.0\times 10^{-6}]$ & $61$ min. $59$ sec. & $73$ min. $57$ sec.  \\
	$[5.0\times 10^{-6}, 1.2\times 10^{-5}]$ & $61$ min. $27$ sec. & $75$ min. $57$ sec. \\
	$[1.2\times 10^{-5}, 2.2\times 10^{-5}]$ & $61$ min. $38$ sec. & $74$ min. $55$ sec. \\
	$[2.2\times 10^{-5}, 3.2\times 10^{-5}]$ & $59$ min. $48$ sec. & $71$ min. $45$ sec. \\
	$[3.2\times 10^{-5}, 4,2\times 10^{-5}]$ & $58$ min. $57$ sec. & $71$ min. $18$ sec. \\ 
	$[4.2\times 10^{-5}, 5.0\times 10^{-5}]$ & $59$ min. $31$ sec. & $73$ min. $53$ sec. \\ \hline
	$[0.0, 5.0\times 10^{-5}]$ & (Total) $363$ min. $20$ sec. & (Total) $441$ min. $45$ sec. \\ 
        \hline
      \end{tabular}
    \end{center}
    \caption{Computation times of {\bf Drop} in Computer Assisted Result \ref{car-continuation}.}
    \label{table-continuation-time}
 It has taken about $13.4$ hours for validating {\bf Drop} in $\{\epsilon \in [0,5.0\times 10^{-5}]\}$ in our computation environments.
 As for the slow shadowing and {\bf Jump}, it totally takes less than $30$ seconds to validate.
  \end{table}
\end{center}

%
%
\bigskip
\section*{Conclusion}
\label{section-conclusion}
We have proposed a topological methodology for validating singularly perturbed periodic, homoclinic and heteroclinic orbits for fast-slow systems with its applicability to concrete systems with computer assistance.
The main features of our proposing methodology are the following.
\begin{itemize}
\item Our central strategy consists of applications of well-known topological tools, covering relations, cones and isolating blocks, with taking the singular perturbation structure of normally hyperbolic invariant manifolds into account. 
This is one of different points from preceding works such as \cite{AK2015}.
\item All our procedures can be validated via rigorous numerics (e.g. interval arithmetics). 
As a consequence, one can validate the continuation of singular limit orbits for all $\epsilon\in (0,\epsilon_0]$ with a given $\epsilon_0 > 0$ with computer assistance.
\end{itemize}

A main concept for realizing the above points simultaneously is the covering-exchange; the singular perturbation's version of covering relations. 
We have proposed not only its primitive form but also its generalization; a collection of local behavior named \lq\lq slow shadowing", \lq\lq drop" and \lq\lq jump". 
The notion of covering-exchange describes the behavior of trajectories in the full system which shadow normally hyperbolic invariant manifolds as well as their stable and unstable manifolds, even for sufficiently small $\epsilon$.
In particular, this notion enables us to validate such trajectories {\em without solving any differential equations in practical computations}.
Moreover, as mentioned above, it exceeds the limit of the multiple timescale parameter range: \lq\lq sufficiently small $\epsilon$", to $\epsilon$ with an explicit range in practical applications with computer assistance.

Of course, further applications of topological tools such as covering relations and the Conley index enable us to prove the existence of trajectories with complicated behavior. 
We believe that all these ideas will overcome difficulties of a broad class of singular perturbation problems.

\bigskip
We end this paper proposing further directions of our arguments.

\begin{description}
\item[Bridging validation results to \lq\lq direct" approach]
\end{description}
Our methodology concentrates on validations of trajectories for $\epsilon\in (0,\epsilon_0]$ on the basis of geometric singular perturbation theory.
One of challenges concerning with continuation of trajectories is the integration of singularly perturbed trajectories with those by a {\em direct approach}; namely, validation of trajectories without taking account of singular perturbation structure of systems for positive $\epsilon$ bounded away from $0$ (e.g. \cite{AK2015}).
In our validation examples, under a specific choice of parameter values, solution trajectories has been validated for $\epsilon \in (0,5.0\times 10^{-5}]$ in the case of homoclinic orbits for the FitzHugh-Nagumo system (\ref{FN}).
In the case of (\ref{FN}), validations of each branch of slow manifolds are completed in less than one second.

This parameter range is still far from validation ranges with direct approaches; $\epsilon = 0.01$ in \cite{AK2015}, for example.
Validations of slow shadowing for further range of $\epsilon$ are not easy, which is mainly because there is a trade-off between the slow shadowing condition (\ref{shadow}) and the covering relation in Assumption (SS5), as discussed in Section \ref{section-demo-shadow}.
Both conditions are essential to describing slow shadowing, and hence our task for overcoming this situation will be developments of an improved topological and numerical method, like {\em multi-step methods} in numerical initial value problems, keeping the essence of slow shadowing phenomena. 
Obviously, if one can validate branches of normally hyperbolic slow manifolds by {\em one} blocks, there is no problem in this direction, in which case we can apply the primitive form of covering-exchange.
After improvements of validations of slow shadowing or blocks containing slow manifolds with computer assistance, there will be a possibility that validated trajectories can be further continued in $\epsilon$ and, we hope, be connected to validated ones via direct approach under appropriate choice of various parameters or solvers.
Further works in this direction from the viewpoint of topological and numerical analysis are ones of our future studies.

\begin{description}
\item[Uniqueness and stability of validated orbits]
\end{description}
In our topological theorems (Theorems \ref{thm-periodic-1}, \ref{thm-heteroclinic-1}, Corollaries  \ref{cor-periodic-2}, \ref{cor-periodic-3}, \ref{cor-heteroclinic-2} and \ref{cor-heteroclinic-3}), only the existence of desiring orbits is stated. 
As for the uniqueness of such orbits, we have no answers yet. 
More precisely, these theorems only require topological transversality in terms of covering relations,  which is weaker than the transversality in terms of differential manifolds. 
If we can even verify transversality in the sense of differential manifolds, then Exchange Lemma gives us the local uniqueness of singularly perturbed orbits. Transversality is also essential to discuss stability of connecting orbits. 
Jones \cite{Jones1984} discusses stability of homoclinic orbits for (\ref{FN}) as the traveling wave solutions of FitzHugh-Nagumo PDE (\ref{FN-PDE}) from geometric viewpoints. 
For this stability arguments, some information about the nature of transversality between two manifolds play a central role.
\par
One of standard approaches to transversality of manifolds is to use {\em Melnikov integrals} via solving variational equations.
If we take rigorous numerics into account, the $C^1$-Lohner method \cite{ZLoh} provides the variational information of trajectories as well as solutions themselves and will give an answer to solve transversality problems.

\begin{description}
\item[Covering-Exchange for fast-slow systems with multi-dimensional slow variables]
\end{description}
Our validation arguments can be applied only to fast-slow systems with one-dimensional slow variable at present. 
It is natural to question whether our concept is applicable to fast-slow systems with multi-dimensional slow variables. 
The key problem concerning this extension is how we should track true trajectories near slow manifolds in terms of covering-exchange properties. 
Systems with multi-dimensional slow variables have rich behavior even on slow manifolds. 
The validation of slow manifolds by one fast-saddle-type block as well as concrete vector fields is thus more difficult than systems with one-dimensional slow variables because of nonlinearity of manifolds. 
It is not thus realistic to extend Theorem \ref{thm-periodic-1} and \ref{thm-heteroclinic-1} to systems with multi-dimensional slow variable directly from the viewpoint of practical computations. 

By the way, existence theorems of trajectories with covering relations imply that true trajectories shadow reference orbits. 
In the case of singular perturbation problems, reference orbits correspond to singular orbits consisting of heteroclinic orbits and trajectories on slow manifolds. 
Slow shadowing discussed in Section \ref{section-show-shadowing} reflects the aspect of shadowing trajectories in fast-slow systems with one-dimensional slow variable.
This expectation will be valid even for systems with multi-dimensional slow variables. 
Such an extended sequence will yield the existence of true trajectories which shadow those on slow manifolds. 
Higher dimensional extension of slow shadowing condition or its analogue will give an explicit criterion for validations with rigorous numerics in reasonable processes.

\begin{description}
\item[Extension of procedures for slow manifolds with non-hyperbolic points]
\end{description}
Our validation arguments are based on the assumption that limiting critical manifolds are normally hyperbolic everywhere in consideration. 
Our procedure in this paper can be never applied if there is a point which is non-hyperbolic, like fold points, inside limiting critical manifolds. 
Nevertheless, there are a lot of cases that the interesting phenomena in singular perturbation problems are involved by such non-hyperbolic points. 
One of famous examples is {\em canard solutions},  which was discovered and first analyzed by Benoit, Callot, Diener and Diener \cite{BCDD}. 
A canard solution is a solution of a singular perturbed system which is contained in the intersection of an attracting slow manifold and a repelling one. 
Canard solutions provide a rich phenomenon such as {\em canard explosion}, namely, a transition from a small limit cycle to a relaxation oscillation through a sequence of {\em canard cycles} \cite{DR}. 
Since canard cycles involve fold points as jump points, our current implementations can never validate canard cycles. 
Krupa and Szmolyan \cite{KS} discuss the extension of geometric singular perturbation theory for slow manifolds including fold and canard points via the technique of blow-up. 
Simultaneously, Liu \cite{L} discusses the Exchange Lemma in case that the critical manifold contains loss-of-stability turning points, corresponding the phenomenon called {\em the delay of stability loss}. 
In \cite{L}, the Fenichel-type coordinate which overcomes non-normal hyperbolicity is constructed by the other type of invariant manifold theorem proved by Chow-Liu-Yi \cite{CLY}.
We also mention sequential works by Schecter and Szmolyan \cite{SS2004, S2008_1, S2008_2, SS2009}, which discuss the singularly perturbed Riemann-Dafermos solutions as the small perturbation of composite waves consisting of constant waves, shock waves and rarefaction waves in systems of conservation laws.
There, non-hyperbolic points called gain-of-stability turning points naturally arise in invariant manifolds.
They derive the General Exchange Lemma for describing behavior of tracking manifolds near such non-hyperbolic manifolds with the help of blow-ups.

We have to take such mathematical techniques into account, if we deal with invariant manifolds which are not normally hyperbolic with computer assistance.

\section*{Acknowledgements}
This research was partially supported by Coop with Math Program, a commissioned project by MEXT, Japan. 
The author thanks to reviewers for providing him with helpful suggestions about contents and organizations of this paper.

\bibliographystyle{plain}
\bibliography{fast_slow_1dim}

%
%
\appendix
\section{Concrete terms for (\ref{FN}) according to Section \ref{section-block-pred-corr}}
\label{appendix-concrete-FN}
Here we show concrete forms of error terms and Jacobian matrices calculated from the vector field (\ref{FN})
 for validating fast-saddle-type blocks, and cone conditions with the predictor-corrector approach (Section \ref{section-block-pred-corr}). 
\subsection{Fast-saddle-type blocks}

We list various terms in Section \ref{section-block-pred-corr} in the case of FitzHugh-Nagumo system for readers' accessibility.
Let $(\bar u, 0, \bar w)$ be a numerical zero of $-f(u)+w$, where $f(u) = u(u-a)(1-u), a\in (0,1/2)$.

Use the coordinate $(\tilde u, \tilde v, \tilde w)$ given by the following:
\begin{equation}
\label{transform-FN}
u = \tilde u + (\bar u + f_u(\bar u)^{-1}\tilde w),\quad v = \tilde v,\quad w = \tilde w + \bar w,
\end{equation}
where $(\bar u, 0, \bar w)$ is a (numerical) root of $\{v=0, cv -f(u) + w = 0\}$ and $f_u(\bar u)$ is assumed to be invertible.
The transformed vector field as the extended system is then 
\begin{align*}
\tilde u' &= u' - f_u(\bar u)^{-1}\tilde w'\\
	&= v - f_u(\bar u)^{-1}\epsilon c^{-1}(u-\gamma w)\\
	&= \tilde v - \epsilon c^{-1} f_u(\bar u)^{-1} (\tilde u + (f_u(\bar u)^{-1} - \gamma )\tilde w + \bar u   -\gamma \bar w),\\
\tilde v' &= \delta^{-1}(cv - f(u) + w)\\
	&= \delta^{-1}\left\{ c\tilde v - f \left(\tilde u + \bar u + f_u(\bar u)^{-1}\tilde w \right) + \tilde w + \bar w \right\},\\
\tilde w' &= \epsilon c^{-1} (\tilde u + (f_u(\bar u)^{-1} - \gamma )\tilde w + \bar u   -\gamma \bar w),\\
\eta' &= 0,
\end{align*}
where we introduced an auxiliary variable $\eta$ to be $\epsilon = \epsilon_0 \eta$, $\eta \in [0,1]$, with constant $\epsilon_0$.

The fast component of the vector field is 
\begin{align*}
\begin{pmatrix}
\tilde u'  \\ \tilde v' 
\end{pmatrix} & = 
\begin{pmatrix}
\tilde v - \epsilon c^{-1} f_u(\bar u)^{-1} (\tilde u + (f_u(\bar u)^{-1} - \gamma )\tilde w + \bar u   -\gamma \bar w)\\
\delta^{-1}\left\{ c\tilde v - f \left(\tilde u + \bar u + f_u(\bar u)^{-1}\tilde w \right) + \tilde w + \bar w \right\},\\
\end{pmatrix}\\
&= 
\begin{pmatrix}
0 & 1 \\
-\delta^{-1} f_u(\bar u) & \delta^{-1} c
\end{pmatrix}
\begin{pmatrix}
\tilde u \\ \tilde v
\end{pmatrix}
+
\begin{pmatrix}
-\epsilon c^{-1} f_u(\bar u)^{-1} (\tilde u + (f_u(\bar u)^{-1} - \gamma )\tilde w + \bar u   -\gamma \bar w)\\
\delta^{-1}\left\{ - f (u) + f(\bar u)+f_u(\bar u)\tilde u + \tilde w \right\}\\
\end{pmatrix}.
\end{align*}

\bigskip
Denote (\ref{FN}) by
\begin{equation*}
X = F(X,Y,\epsilon),\quad Y=\epsilon G(X,Y,\epsilon),
\end{equation*}
where $X=(u,v)^T$ and $Y=w$; namely, 
\begin{equation*}
F(X,Y,\epsilon) = \begin{pmatrix}
v \\ \delta^{-1}(cv-f(u)+w)
\end{pmatrix},\quad 
G(X,Y,\epsilon) = c^{-1}(u-\gamma w).
\end{equation*}
The Jacobian matrices at $(\bar u, 0, \bar w)$ are
\begin{align*}
&F_X\mid_{(\bar u, 0, \bar w)} = 
 \begin{pmatrix}
0 & 1 \\ -\delta^{-1}f_u(\bar u) & \delta^{-1}c
\end{pmatrix},\quad 
\left(F_X\mid_{(\bar u, 0, \bar w)}\right)^{-1} = 
\frac{\delta}{f_u(\bar u)} \begin{pmatrix}
\delta^{-1}c & -1 \\ \delta^{-1}f_u(\bar u) & 0
\end{pmatrix},\\
&F_Y\mid_{(\bar u, 0, \bar w)} = 
\begin{pmatrix}
0 \\ \delta^{-1}
\end{pmatrix},\quad \left(F_X^{-1} \circ F_Y\right) \mid_{(\bar u, 0, \bar w)} = \begin{pmatrix}
-1 /f_u(\bar u) \\ 0
\end{pmatrix}.
\end{align*}
Around the numerical equilibrium $(\bar X, \bar Y)\equiv ((\bar u, 0), w) = (\bar u, 0, w)$, we introduce the affine transformation $T: (Z,W)\mapsto (X,Y)$ of the predictor-corrector form by
\begin{align*}
(X,Y) &= T(Z,W) := \left( PZ + \bar X -\left(F_X^{-1} \circ F_Y\right) \mid_{(\bar u, 0, \bar w)} W , W+\bar Y \right).
\end{align*}
where $P$ is the nonsingular matrix such that $P^{-1} F_X\mid_{(\bar u, 0, \bar w)} P = \Lambda = \diag(\lambda_1,\lambda_2)$.
Then, denoting $Z=(z_1,z_2)^T$,
\begin{align*}
\begin{pmatrix}
z_1' \\ z_2'
\end{pmatrix}
&=
\begin{pmatrix}
\lambda_1 z_1 \\ \lambda_2 z_2
\end{pmatrix}
+  P^{-1}\left( \begin{pmatrix}
0 \\ -\delta^{-1}W
\end{pmatrix}  + \hat F(X,Y,\epsilon) + \epsilon \begin{pmatrix}
- G(X,Y,\epsilon)/f_u(\bar u) \\ 0
\end{pmatrix} \right)\\
&=\begin{pmatrix}
\lambda_1 z_1 \\ \lambda_2 z_2
\end{pmatrix}
+  P^{-1}\left( \begin{pmatrix}
0 \\ -\delta^{-1}W
\end{pmatrix}  + \delta^{-1}\begin{pmatrix}
0 \\ -(f(u) - f_u(\bar u)(u-\bar u))+W
\end{pmatrix} + \epsilon \begin{pmatrix}
-G(X,Y,\epsilon)/f_u(\bar u) \\ 0
\end{pmatrix} \right)\\
&=\begin{pmatrix}
\lambda_1 z_1 \\ \lambda_2 z_2
\end{pmatrix}
+  P^{-1}\begin{pmatrix}
- \epsilon G(X,Y,\epsilon)/f_u(\bar u) \\ -\delta^{-1}(f(u) - f_u(\bar u)(u-\bar u))
\end{pmatrix}
\end{align*}
Note that the linear order term of $W$-variable in the error term is eliminated.

Let $N\subset \mathbb{R}^3$ be an $h$-set (in the $(u,v,w)$-coordinate) containing $(\bar u, 0, \bar w)$. 
Then the Mean Value Theorem implies that the higher order term $f(u) - f_u(\bar u)(u-\bar u)$ is included in the enclosure
\begin{equation*}
\left\{\frac{f''(\tilde u)}{2}(u-\bar u)^2 \mid u,\tilde u \text{ such that }(u,v,w), (\tilde u, v,w)\in N\right\}.
\end{equation*}
Thanks to the term $(u-\bar u)^2$, this set is in general very small if $N$ is small.
In our example, $f''(u) = -6u+2(a+1)$.
Obviously, the first component $-\epsilon G(X,Y,\epsilon)/f_u(\bar u)$ is very small if $\epsilon > 0$ is sufficiently small.
Note that the denominator $f_u(\bar u)$ is bounded away from $0$ since we are focusing on validation of normally hyperbolic invariant manifolds.
Finally, for given compact set $N$ and $\epsilon_0 > 0$, we obtain enclosures of the error term
\begin{align*}
P^{-1}\begin{pmatrix}
\left[ -\epsilon c^{-1}(u-\gamma w)/f_u(\bar u)\mid (u,v,w)\in N,\ \epsilon\in [0,\epsilon_0] \right]
 \\ \left[ \delta^{-1}\{3\tilde u-(a+1)\}(u-\bar u)^2 \mid u,\tilde u \text{ such that }(u,v,w), (\tilde u, v,w)\in N \right]
\end{pmatrix}\subset
\begin{pmatrix}
[\delta_1^-, \delta_1^+]\\
[\delta_2^-, \delta_2^+] 
\end{pmatrix},
\end{align*}
which can be computed by interval arithmetics and, in principle, returns very small enclosures for small $N$ and $\epsilon_0$.

\subsection{Jacobian matrices for cone conditions}

The Jacobian matrix in this coordinate at $(\tilde u, \tilde v, \tilde w, \eta)$ ignoring the differential of $\eta'$-term is
\begin{equation}
\label{Jacobian-FN}
\begin{pmatrix}
-\epsilon (c  f_u(\bar u))^{-1}  & 1 & \epsilon (c f_u(\bar u))^{-1}  ( - f_u(\bar u)^{-1} + \gamma ) & - \epsilon_0 (c f_u(\bar u))^{-1} (\tilde u + (f_u(\bar u)^{-1} - \gamma )\tilde w + \bar u  -\gamma \bar w) \\
-\delta^{-1} f_u (u) & \delta^{-1} c &  (\delta f_u(\bar u))^{-1} \left\{ - f_u(u) + f_u(\bar u)\right\} & 0 \\
\epsilon c^{-1} & 0 & \epsilon c^{-1} (f_u(\bar u)^{-1} - \gamma ) & \epsilon_0 c^{-1} (\tilde u + (f_u(\bar u)^{-1} - \gamma )\tilde w + \bar u   -\gamma \bar w)
\end{pmatrix},
\end{equation}
where the variable $u$ in the second row corresponds to (\ref{transform-FN}).
We additionally transform $(\tilde u, \tilde v)$ linearly to $(z_1, z_2)$ by $(\tilde u, \tilde v) = P(z_1, z_2)$, where
$P$ is the nonsingular matrix such that
\begin{equation*}
P^{-1} \begin{pmatrix}
0 & 1 \\
-\delta^{-1} f_u (\bar u) & \delta^{-1} c 
\end{pmatrix} P = 
\begin{pmatrix}
\lambda_1 & 0 \\
0 & \lambda_2 
\end{pmatrix}.
\end{equation*}
The Jacobian matrix (\ref{Jacobian-FN}) in the new coordinate $(z_1,z_2,\tilde w)$ is then written as 
\begin{align}
\notag
&\begin{pmatrix}
\lambda_1  & 0 & 0 & 0 \\
0 & \lambda_2 & 0 & 0 \\
0 & 0 & 0 & 0 \\
\end{pmatrix}\\
\label{Jacobian-FN-normal}
&+\begin{pmatrix}
F_{11}  & F_{12} & \epsilon (c f_u(\bar u))^{-1}  ( - f_u(\bar u)^{-1} + \gamma ) & - \epsilon_0 (c f_u(\bar u))^{-1} (\tilde u + (f_u(\bar u)^{-1} - \gamma )\tilde w + \bar u   -\gamma \bar w) \\
F_{21} & F_{22} &  (\delta f_u(\bar u))^{-1} \left\{ - f_u(u) + f_u(\bar u)\right\} & 0 \\
\epsilon c^{-1} & 0 & \epsilon c^{-1} (f_u(\bar u)^{-1} - \gamma ) & \epsilon_0 c^{-1} (\tilde u + (f_u(\bar u)^{-1} - \gamma )\tilde w + \bar u   -\gamma \bar w)
\end{pmatrix},\\
\notag
&\begin{pmatrix}
F_{11}  & F_{12} \\
F_{21}  & F_{22}
\end{pmatrix} = P^{-1}\begin{pmatrix}
-\epsilon (c  f_u(\bar u))^{-1} & 0 \\
\delta^{-1} (f_u(\bar u)-f_u(u)) & 0
\end{pmatrix}P.
\end{align}
In order to verify cone conditions in a given $h$-set $N$ containing $(\bar u, 0, \bar w)$, we compute corresponding maximal singular values in $N$.
All such calculations are done by interval arithmetics.
As for calculations of the enclosure of $f_u(\bar u)-f_u(u)$ in $N$, it is useful to use the mean value form
\begin{align*}
f_u(\bar u)-f_u(u) &\in \{-f_{uu}(\hat u)(u-\bar u)\mid u, \hat u \text{ such that }(u,v,w), (\hat u,v,w)\in N\}\\
	&= \{(6\hat u^2 - 2(a+1))(u-\bar u)\mid u, \hat u \text{ such that }(u,v,w), (\hat u,v,w)\in N\}.
\end{align*}

We apply the matrix (\ref{Jacobian-FN-normal}) as the Fr\'{e}chet differential of the vector field in (\ref{abstract-form}) to validating cone conditions discussed in Section \ref{section-inv-mfd}. 
The $m$-cone conditions can be treated in the similar manner.

\end{document}